\documentclass[11pt,a4paper]{amsart}
\usepackage[utf8]{inputenc}
\usepackage[english]{babel}
\usepackage[left=2cm,right=2cm,top=2cm,bottom=2cm]{geometry}
\usepackage{graphicx}
\usepackage{amssymb}
\usepackage{amsmath}
\usepackage{amsthm}
\usepackage{algorithm}
\usepackage{algorithmic}
\usepackage{comment}
\usepackage{pdfpages}
\usepackage{appendix}
\usepackage{stmaryrd}
\usepackage{tikz}
\usepackage{tikz-cd} 


\usepackage{hyperref}
\hypersetup{
  colorlinks   = true, 
  linkcolor    = black,
  citecolor    = blue      
}


\providecommand{\ed}{\mathrm e}
\providecommand{\prob}{\mathbb P}

\providecommand{\N}{\mathbb N}
\providecommand{\Z}{\mathbb Z}
\providecommand{\R}{\mathbb R}

\providecommand{\C}{\mathbb C}

\newcommand\norm[1]{\left\lVert#1\right\rVert}
\providecommand{\Ocal}{\mathcal{O}}
\providecommand{\Gr}{\mathbb Gr}
\providecommand{\diff}{\mathrm d}

\numberwithin{equation}{section}
\newtheorem{theorem}{Theorem}[section]
\newtheorem{proposition}[theorem]{Proposition}
\newtheorem{corollary}[theorem]{Corollary}
\newtheorem{lemma}[theorem]{Lemma}
\newtheorem{definition}[theorem]{Definition}

\newtheorem{remark}[theorem]{Remark}

\title{Positive formula for the product of conjugacy classes on the unitary group}
\author
{Quentin François}
\address
{Quentin François: CEREMADE, UMR CNRS 7534, Universit\'e
Paris-Dauphine, PSL Research university, Place du Mar\'echal
de Lattre de Tassigny 75016 Paris, France.}
\email{\href{mailto:quentin.francois@dauphine.psl.eu}{quentin.francois@dauphine.psl.eu}}

\author
{Pierre Tarrago}
\address{Pierre Tarrago: Laboratoire de Probabilit\'es, Statistique et Mod\'elisation, UMR
CNRS 8001, Sorbonne Universit\'e, 4 Place Jussieu, 75005 Paris, France.}
\email{\href{mailto:pierre.tarrago@sorbonne-universite.fr}{pierre.tarrago@sorbonne-universite.fr}}
\begin{document}
\begin{abstract}
The convolution product of two conjugacy classes of the unitary group $U_n$ is described by a probability distribution on the space of central measures. Relating this convolution to the quantum cohomology of Grassmannians and using recent results describing the structure constants of the latter, we give a manifestly positive formula for the density of the probability distribution for the product of generic conjugacy classes. In the same flavor as the hive model of Knutson and Tao, this formula is given in terms of a subtraction-free sum of volumes of explicit polytopes. As a consequence, this expression also provides a positive and explicit formula for the volume of $SU_n$-valued flat connections on the three-holed two dimensional sphere, which was first given by Witten in terms of an infinite sum of characters.
\end{abstract}
\maketitle

\section{Introduction}
 
Irreducible characters and conjugacy classes of a compact group are two dual facets of its space of central measures. Thanks to the underlying group multiplication, this space is given a convolution product which allows to expand the product of two characters along the basis of irreducible characters. The same holds in the case of convolution of conjugacy classes, for which the expansion is described by a central measure on the group. It is then a classical fact that both problems involve only non-negative quantities, and it is in general very hard to provide manifestly positive formulas for those expansions (see \cite{murnaghan1937direct} and \cite{JACKSON1988363} for the two corresponding open problems in the case of the symmetric group $S_n$). 

In the framework of compact Lie groups, the problem of finding positive combinatorial formulas for the expansion of characters has been solved in the case of the unitary group $U_n$ by Littlewood and Richardson \cite{littlewood1934group} and in full generality by Luzstig \cite{lusztig2010introduction}, Kashiwara \cite{Kashiwara1994CrystalGF} and Littelmann \cite{littelmann1994littlewood}. The case of conjugacy classes is still open in full generality, and the goal of the present paper is to address this problem in the case of $U_n$. Conjugacy classes of $U_n$ are indexed by the symmetrized torus $\mathcal{H}=[0,1[^n/S_n$, and the decomposition of the product of two conjugacy classes $\alpha$ and $\beta$ is described by a probability distribution $\mathbb{P}[\cdot\vert\alpha,\beta]$ on $\mathcal{H}$. As for any compact Lie group, $\mathbb{P}[\cdot\vert\alpha,\beta]$ can be expressed in a weak sense as a complex weighted sum of characters, each of which being seen as a function on $\mathcal{H}$. When the conjugacy classes $\alpha$ and $\beta$ have maximal dimension, in which case $\alpha$ and $\beta$ are called regular, $\mathbb{P}[\cdot\vert\alpha,\beta]$ has a density $\diff \mathbb{P}[\cdot\vert\alpha,\beta]$ with respect to the Lebesgue measure on $\mathcal{H}$. The present paper gives then a positive formula for this density as a subtraction-free sum of volumes of some explicit polytopes.

The convolution of conjugacy classes of a compact Lie group $G$ is also intimately related to the moduli spaces of $G$-valued flat connections on punctured Riemannian surfaces. Moduli spaces of flat connections on Riemann surfaces are a central object in algebraic geometry \cite{NarasiSesha_1965} and mathematical physic \cite{Atiyah_bott_1983}, which appear in particular as the semiclassical limit of two dimensional Yang-Mills measures, \cite{Forman_1993,levy2003yang}. In \cite{Witten_quantum_gauge,witten1992two}, Witten first proposed a general expression of the volume of such moduli spaces as an infinite sum of characters on the corresponding Lie group $G$. This formula has then been then proved by Jeffrey and Kirwan \cite{jeffrey1998intersection} using localisation techniques from symplectic geometry. The involved infinite series have been later simplified \cite{meinrenken1999moduli} to an alternating sum of volumes of coadjoint orbits, see also \cite{dooley1993harmonic} for a similar result using the wrapping map. As it has been observed in \cite{witten1992two}, the study of moduli spaces of flat connections on punctured Riemann surfaces with prescribed holonomies can be reduced to the case of the three punctured sphere, see also \cite{meinrenken1999moduli}.  The formula obtained in this paper for the convolution of conjugacy classes directly translates into a simple and manifestly positive expression for the volume of the moduli space of flat connections on the three-punctured sphere for $G=SU_n$. This is up to our knowledge the first expression of those volumes as the volume of explicit polytopes.

Let us briefly explain the conceptual path leading to the previous formulas. Conjugacy classes on $U_n$ are naturally related through the exponential map to co-adjoint orbits on the underlying Lie algebra $\mathfrak{u}_n$ of Hermitian matrices, which are indexed by the quotient space $\mathbb{R}^n/S_n$. In the same way as the multiplicative structure yields a convolution product on conjugacy classes of the Lie group $U_n$, the additive structure yields a convolution product on co-adjoint orbits of $\mathfrak{u}_n$. For co-adjoint orbits of maximal dimension, a positive formula for the density of the convolution product has already been obtained following the work of Knutson and Tao, see \cite{Knutson_1999,Coquereaux_2018}. Besides their apparent beauty, having positive formulas for such convolution products offered new tools to tackle difficult probabilistic problems concerning invariant measures on the Lie algebras. For example, the positive formulas for the convolution of coadjoint orbits of $U_n$ lie at the heart of the recent results of Narayanan and Sheffield \cite{narayanan2024large} on large deviations for the spectrum of sums of conjugation invariant Hermitian random matrices.

The convolution structure on co-adjoint orbits can be seen as a semi-classical limit of the ring of characters of the Lie group \cite{Kirillov_2004}. Thanks to this relation, the formula for the positive density in the $\mathfrak{u}_n$ case is up to an explicit factor a limit of Littlewood-Richardson coefficients with growing partitions. Such a coefficient had been expressed by Berenstein and Zelevinsky \cite{berenstein2001tensor} as the number of integers points in a convex polytope whose boundaries depend on the involved partitions. A reformulation of this expression by Knutson and Tao turned this convex polytope into a simple convex body, called discrete hive. Using the discrete hive model, the semiclassical limit yields then a positive formula for the density of convolutions of regular co-adjoint orbits, which the consists of the volume of certain polytopes called continuous hives.   

When replacing co-adjoint orbits on $\mathfrak{u}_n$ by conjugacy classes of $U_n$, the convolution product has to be seen instead as a semi-classical limit of the fusion ring of the Lie algebra with growing level \cite{Witten_quantum_gauge} (see also \cite{Defosseux_2016} for a more probabilistic approach). For each given level, the fusion ring is a specialization of the ring of characters at roots of unity and the structure coefficients  of the multiplication of characters in this quotient ring are non-negative integers called fusion coefficients. There exist no general effective combinatorial expression for those coefficients, but in the case of $U_n$ this fusion ring is isomorphic to the quantum cohomology ring of Grassmannians on $\mathbb{P}^1$ (see \cite{witten1993verlinde} for a geometric explanation of this fact). Through this isomorphism, characters of the fusion ring are Schubert classes of the quantum cohomology. 

Using a reinterpretation of the quantum cohomology of Grassmannians in terms of complex bundles on $\mathbb{P}^1$ \cite{Buch2003_solo}, Buch, Kresch and Tamvakis \cite{Buch_2003} related the coefficients appearing in the multiplication of Schubert classes, called quantum Littelwood-Richardson coefficients, to the structure coefficients of the cohomology ring of the two-step flag variety. Starting from this relation and proving a conjectural formula of Knutson on the two-step flag variety, those authors and Purbhoo \cite{puzzle_conj_two_step} gave a positive formula for the quantum Littlewood-Richardson coefficients in terms of certain puzzles (see also \cite{buch2015mutations} for an equivariant version). Such puzzles are generalizations of puzzles that already appeared in the work of Knutson, Tao and Woodward \cite{knutson2004honeycomb} in a reformulation of the hive model. The puzzle approach has since then been systematically used to obtain combinatorial expressions of structure coefficients either in the cohomology or in the K-theory of some partial flag manifolds, see \cite{knutson2017schubert}.

In this paper, we translate the program achieved in the co-adjoint case to the conjugacy one. One of the main difficulties in the present framework is the absence of convex formulations for the quantum Littlewood-Richardson coefficients. Based on the puzzle formulation of \cite{puzzle_conj_two_step}, we first express the structure constants of the two-step flag manifold as the counting of integer points in a finite union of convex polytopes indexed by a smaller tiling model, see Theorem \ref{thm:bijection_puzzle_dual_hive}. In order to operate an asymptotic counting, we then turn those convex polytopes into convex bodies, whose boundaries are not fully explicit. Fortunately, the asymptotic integer counting then simplifies the boundaries leading to a formula for the convolution of two regular conjugacy classes as a finite sum of volumes of explicit convex bodies, see Theorem \ref{thm:main}.

Before this manuscript, several results have been achieved in the description on the convolution of conjugacy classes of $U_n$. One of the most important results, independently obtained by Belkale \cite{belkale2008quantum} and Agnihotri and Woodward \cite{agnihotri1998eigenvalues}, is the description of the support of the convolution of two non necessarily regular conjugacy classes (see also \cite{teleman2003parabolic}). This support happens to be a convex set with boundaries described by certain quantum Littlewood-Richardson coefficients, in the same vein as the solution to the Horn problem given by Klyachko \cite{klyachko1998stable} and Knutson and Tao \cite{Knutson_1999}. The regularity of the convolution of two conjugacy classes has also been investigated, with for example results of \cite{wright2011sums} showing that the convolution of two conjugagy classes has an $L^2$ density when the image is an open set in $\mathcal{H}$.

\subsection*{Acknowledgement}
Both authors are supported by the Agence Nationale de la Recherche funding CORTIPOM ANR-21-CE40-0019. We thank David Garcia-Zelada, C\'{e}dric Lecouvey and Thierry L\'{e}vy for numerous fruitful discussions surrounding this work.
\section{Notations and statement of the result}\label{Sec:Notation_statement}

Fix $n\geq 3$ throughout this manuscript (the case $n=2$ can be handled by direct computation) and denote by $U_n$ the unitary group of size $n$. Then, recall that the set of conjugacy classes of $U_n$ is homeomorphic to the quotient space $\mathcal{H}=[0,1[^n/S_n$, where the symmetric group $S_n$ acts on $[0,1[^n$ by permutation of the coordinates. This quotient space is described by the set of non-increasing sequences of $[0,1[^n$. For $\theta=(\theta_1\geq \theta_2\geq \dots\geq\theta_n) \in \mathcal{H}$, denote by $\mathcal{O}(\theta)$ the corresponding conjugacy class defined by
$$\mathcal{O}(\theta):=\left\lbrace Ue^{2i\pi\theta}U^*, \,U\in U_n\right\rbrace\text{, where } e^{2i\pi\theta}=\begin{pmatrix} e^{2i\pi \theta_1}&0&\dots&\\0&e^{2i\pi\theta_2}&&\\\vdots&&\ddots&\\&&&e^{2i\pi\theta_n}\end{pmatrix}.$$
The product structure on $U_n$ translates into a convolution product $\ast:\mathcal{M}_1(\mathcal{H})\times \mathcal{M}_1(\mathcal{H})\rightarrow \mathcal{M}_1(\mathcal{H})$ on the space of probability distributions on $\mathcal{H}$ such that for $\theta,\theta'\in \mathcal{H}$, $\delta_{\theta}\ast \delta_{\theta'}$ is the distribution of $p(U_{\theta}U_{\theta'})$, where $U_{\theta}$ (resp. $U_{\theta'}$) is sampled uniformly on $\mathcal{O}(\theta)$ (resp. $\mathcal{O}(\theta')$) and $p:U_n\rightarrow \mathcal{H}$ maps an element of $U_n$ to its conjugacy class in $\mathcal{H}$.  \\

\noindent
Let us denote by $\mathcal{H}_{reg}=\{\theta\in \mathcal{H},\theta_1>\theta_2>\ldots>\theta_n\}$ the set of regular conjugacy classes of $U_n$, namely the ones of maximal dimension in $U_n$. For $\alpha,\beta\in \mathcal{H}_{reg}$, $\delta_{\alpha}\ast \delta_{\beta}$ admits a density $\diff \mathbb{P}[\cdot\vert\alpha,\beta]$ with respect to the Lebesgue measure on $\{\gamma\in\mathcal{H},\sum_{i=1}^n\alpha_i+\sum_{i=1}^n\beta_i-\sum_{i=1}^n\gamma_i \in \mathbb{N}\}$ (see Section \ref{Sec:link_quantum_cohomolgy} for a concrete proof of this classical result).

\subsection*{The toric hive cones \texorpdfstring{$\mathcal{C}_g$}{Lg} } The main result of the present manuscript is a positive formula for $\diff \mathbb{P}[\cdot\vert\alpha,\beta]$ in terms of the volume of polytopes similar to the hive model of Knutson and Tao \cite{Knutson_1999}. For $0\leq d\leq n$, define the toric hive $R_{d,n}$ as the set $$R_{d,n}=\left\{(v_1,v_2)\in \llbracket 0,n\rrbracket^2, d\leq v_1+v_2\leq n+d\right\},$$
which can be represented as a discrete hexagon through the map $(v_1,v_2)\mapsto v_1+v_2e^{i\pi/3}$, see Figure \ref{Fig:R_dn} for a particular case and its hexagonal representation.

\begin{figure}[H]
\begin{tikzpicture}
\node (P10) at (1,0) {$\bullet$};
\node (P20) at (2,0) {$\bullet$};
\node (P30) at (3,0) {$\bullet$};

\node (P01) at (0+1/2,1*0.866) {$\bullet$};
\node (P11) at (1+1/2,1*0.866) {$\bullet$};
\node (P21) at (2+1/2,1*0.866) {$\bullet$};
\node (P31) at (3+1/2,1*0.866) {$\bullet$};

\node (P02) at (0+2/2,2*0.866) {$\bullet$};
\node (P12) at (1+2/2,2*0.866) {$\bullet$};
\node (P22) at (2+2/2,2*0.866) {$\bullet$};

\node (P03) at (0+3/2,3*0.866) {$\bullet$};
\node (P13) at (1+3/2,3*0.866) {$\bullet$};
\end{tikzpicture}
\caption{The set $R_{1,3}$ represented through the map $(v_1,v_2)\mapsto v_1+v_2e^{i\pi/3}$. \label{Fig:R_dn}}
\end{figure}
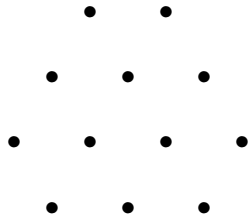

\subsubsection*{Boundary of the toric hive}
For any set $S$ and any function $f:R_{d,n}\rightarrow S$, we denote by $f^A$ (resp $f^B$, $f^C$) the vector $(f((d-i)\vee 0, (n+d-i) \wedge n)_{0\leq i\leq n}$ (resp. $(f(n+d-i \wedge n,i))_{0 \leq i \leq n}$, resp. $(f(n-i,i+d-n\vee 0))_{0\leq i\leq n}$). The vectors $f^A,f^B$ and $f^C$ correspond respectively to the north-west, east and south-west boundaries of $R_{d,n}$ through the hexagonal representation, see Figure \ref{Fig:boundary_toric_hive}.
 \begin{figure}[H]
\scalebox{0.5}{\begin{tikzpicture}
\node at (1,0) {$\bullet$};
\node at (2.7,0) {$\circ$};
\node at (3,0) {$\circ$};
\node at (3.3,0) {$\circ$};
\node at (5,0) {$\bullet$};
\node at (6,0) {$\bullet$};

\node at (6+1*1/2,1*0.866) {$\bullet$};
\node at (6+2.7*1/2,2.7*0.866) {$\circ$};
\node at (6+3*1/2,3*0.866) {$\circ$};
\node at (6+3.3*1/2,3.3*0.866) {$\circ$};
\node at (6+5*1/2,5*0.866) {$\bullet$};
\node at (6+6*1/2,6*0.866) {$\bullet$};

\node at (5+7*1/2,7*0.866) {$\bullet$};
\node at (3.3+8.7*1/2,8.7*0.866) {$\circ$};
\node at (3+9*1/2,9*0.866) {$\circ$};
\node at (2.7+9.3*1/2,9.3*0.866) {$\circ$};
\node at (1+11*1/2,11*0.866) {$\bullet$};
\node at (0+12*1/2,12*0.866) {$\bullet$};

\node at (-1+12*1/2,12*0.866) {$\bullet$};
\node at (-2.7+12*1/2,12*0.866) {$\circ$};
\node at (-3+12*1/2,12*0.866) {$\circ$};
\node at (-3.3+12*1/2,12*0.866) {$\circ$};
\node at (-5+12*1/2,12*0.866) {$\bullet$};
\node at (-6+12*1/2,12*0.866) {$\bullet$};

\node at (-6+11*1/2,11*0.866) {$\bullet$};
\node at (-6+9.3*1/2,9.3*0.866) {$\circ$};
\node at (-6+9*1/2,9*0.866) {$\circ$};
\node at (-6+8.7*1/2,8.7*0.866) {$\circ$};
\node at (-6+7*1/2,7*0.866) {$\bullet$};
\node at (-6+6*1/2,6*0.866) {$\bullet$};

\node at (-5+5*1/2,5*0.866) {$\bullet$};
\node at (-3.3+3.3*1/2,3.3*0.866) {$\circ$};
\node at (-3+3*1/2,3*0.866) {$\circ$};
\node at (-2.7+2.7*1/2,2.7*0.866) {$\circ$};
\node at (-1+1*1/2,1*0.866) {$\bullet$};
\node at (0+0*1/2,0*0.866) {$\bullet$};

\node at (1,0-0.4) {$f^C_{n-d-1}$};
\node at (5,0-0.4) {$f^C_1$};
\node at (6+0.4,0-0.4) {$f^C_0=f^B_0$};

\node at (0.4+6+1*1/2,1*0.866) {$f^B_1$};
\node at (0.5+6+5*1/2,5*0.866) {$f^B_{d-1}$};
\node at (0.4+6+6*1/2,6*0.866) {$f^B_d$};

\node at (0.6+5+7*1/2,7*0.866) {$f^B_{d+1}$};
\node at (0.6+1+11*1/2,11*0.866) {$f^B_{n-1}$};
\node at (0.6+0+12*1/2,0.5+12*0.866) {$f^B_n=f^A_0$};

\node at (-1+12*1/2,0.4+12*0.866) {$f^A_1$};
\node at (-5+12*1/2,0.4+12*0.866) {$f^A_{d-1}$};
\node at (-0.4+-6+12*1/2,0.4+12*0.866) {$f^A_{d}$};

\node at (-0.6+-6+11*1/2,11*0.866) {$f^A_{d+1}$};
\node at (-0.6+-6+7*1/2,7*0.866) {$f^A_{n-1}$};
\node at (-1+-6+6*1/2,6*0.866) {$f^A_{n}=f^C_{n}$};

\node at (-0.6+-5+5*1/2,5*0.866) {$f^C_{n-1}$};
\node at (-0.8+-1+1*1/2,1*0.866) {$f^C_{n-d+1}$};
\node at (-0.4+0*1/2,-0.4+0*0.866) {$f^C_{n-d}$};

\end{tikzpicture}}
\caption{The set boundary vectors $f^{A}$, $f^{B}$ and $f^{C}$. \label{Fig:boundary_toric_hive}}
\end{figure}
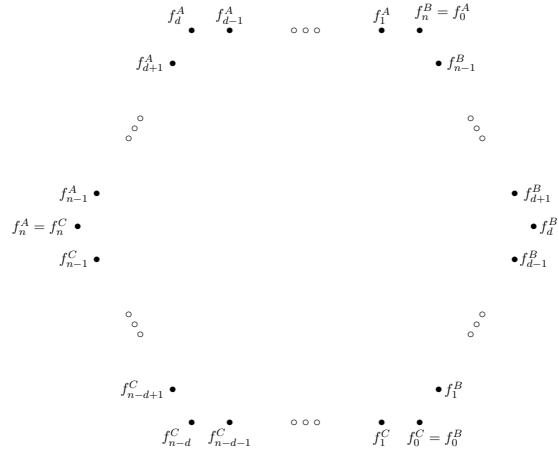

\subsubsection*{Toric rhombus concavity}Let us call a lozenge of $R_{d,n}$ any sequence $(v^1,v^2,v^3,v^4)\in (R_{d,n})^4$ corresponding to one of the three configurations of Figure \ref{Fig:possible_lozenges} in the hexagonal representation (in which $\vert v^i-v^{i+1}\vert=1$ for $1\leq i\leq 3$). 

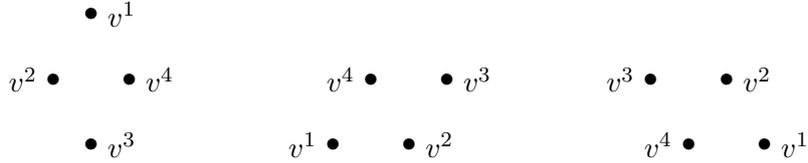
\begin{figure}[h!]
\begin{tikzpicture}
\node (P10) at (1,0) {$\bullet$};

\node (P01) at (0+1/2,1*0.866) {$\bullet$};
\node (P11) at (1+1/2,1*0.866) {$\bullet$};

\node (P02) at (0+2/2,2*0.866) {$\bullet$};

\node (P10) at (0.4+1,0) {$v^3$};

\node (P01) at (-0.4+0+1/2,1*0.866) {$v^2$};
\node(P11) at (0.4+1+1/2,1*0.866) {$v^4$};

\node (P02) at (0.4+0+2/2,2*0.866) {$v^1$};
\end{tikzpicture}
\hspace{1cm}
\begin{tikzpicture}
\node (P10) at (1,0) {$\bullet$};
\node (P20) at (2,0) {$\bullet$};

\node (P11) at (1+1/2,1*0.866) {$\bullet$};
\node (P21) at (2+1/2,1*0.866) {$\bullet$};
\node (P10) at (-0.4+1,0) {$v^1$};
\node (P20) at (0.4+2,0) {$v^2$};

\node (P11) at (-0.4+1+1/2,1*0.866) {$v^4$};
\node(P21) at (0.4+2+1/2,1*0.866) {$v^3$};
\end{tikzpicture}
\hspace{1cm}
\begin{tikzpicture}
\node (P10) at (1,0) {$\bullet$};
\node (P20) at (2,0) {$\bullet$};

\node (P01) at (0+1/2,1*0.866) {$\bullet$};
\node (P11) at (1+1/2,1*0.866) {$\bullet$};

\node (P10) at (-0.4+1,0) {$v^4$};
\node (P20) at (0.4+2,0) {$v^1$};

\node (P01) at (-0.4+0+1/2,1*0.866) {$v^3$};
\node (P11) at (0.4+1+1/2,1*0.866) {$v^2$};
\end{tikzpicture}
\caption{The three possible lozenges $(v^1,v^2,v^3,v^4)$ (beware of the position of the vertices which can not be permuted). \label{Fig:possible_lozenges}}
\end{figure}
\begin{definition}[Regular labeling]
\label{def:regular_labeling}
A labeling $g:R_{d,n}\rightarrow \mathbb{Z}_3$ is called regular whenever 
\begin{itemize}
\item $g^A_i=n+i[3]$, $g^B_i=i[3]$ and $g^C_i=i[3]$,
\item on any lozenge $\ell=(v^1,v^2,v^3,v^4)$, $$(g(v^2)=g(v^4))\Rightarrow \{g(v^1),g(v^3)\}=\{g(v^2)+1,g(v^2)+2\}.$$
\end{itemize}
A lozenge $(v^1,v^2,v^3,v^4)$ for which $(g(v^1), g(v^2), g(v^3), g(v^4)) =(a,a+1,a+2,a+1)$ for some $a \in \{0,1,2\}$ is called \textit{rigid}. \\
\noindent
The \textit{support} of a regular labeling $g:R_{d,n}\rightarrow \mathbb{Z}_3$ is the subset $Supp(g)\subset R_{d,n}$ of vertices of $R_{d,n}$ which are not a vertex $v_4$ of a rigid lozenge $(v^1,v^2,v^3,v^4)$.
\end{definition}
By the boundary condition of a regular labeling, any vertex $v_4$ of a rigid lozenge of $g$ can not be on the boundary of $R_{d,n}$, so that the latter is always contained in $Supp(g)$. 

\begin{remark}Although given above in a compact form, there may be better ways of considering  a regular labeling for growing $n$. By seeing $R_{d,n}$ through its hexagonal representation, a regular embedding is equivalent to a tiling of $R_{d,n}$ with either blue or red equilateral triangles of size $1$ or lozenges of size $1$ with alternating colors on its boundaries, such that the six boundary edges of $R_{d,n}$ are alternatively colored red and blue, starting with the color red on the south edge $\{(v_1,v_2)\in R_{d,n}, \, v_2=0\}$. The bijection from the former representation to the latter is given by assigning the red (resp. blue) color to any edge of the form $(v,v+e^{2i\pi\ell}), \, 0\leq \ell\leq 2$ along which the labels of $g$ increase by $1$ (resp. decrease by $1$),  see Figure \ref{Fig:label_color} for an example with $n=4, d=1$ and Proposition \ref{prop:regular_label_color} for a proof of this fact.
\end{remark}
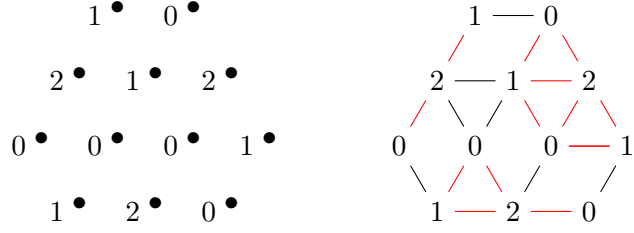
\begin{figure}[h!]
\begin{tikzpicture}
\node (P10) at (1,0) {$\bullet$};
\node (P20) at (2,0) {$\bullet$};
\node (P30) at (3,0) {$\bullet$};

\node (P01) at (0+1/2,1*0.866) {$\bullet$};
\node (P11) at (1+1/2,1*0.866) {$\bullet$};
\node (P21) at (2+1/2,1*0.866) {$\bullet$};
\node (P31) at (3+1/2,1*0.866) {$\bullet$};

\node (P02) at (0+2/2,2*0.866) {$\bullet$};
\node (P12) at (1+2/2,2*0.866) {$\bullet$};
\node (P22) at (2+2/2,2*0.866) {$\bullet$};

\node (P03) at (0+3/2,3*0.866) {$\bullet$};
\node (P13) at (1+3/2,3*0.866) {$\bullet$};

\node (P10) at (-0.3+1,-0.1+0) {$1$};
\node (P20) at (-0.3+2,-0.1+0) {$2$};
\node (P30) at (-0.3+3,-0.1+0) {$0$};

\node (P01) at (-0.3+0+1/2,-0.1+1*0.866) {$0$};
\node (P11) at (-0.3+1+1/2,-0.1+1*0.866) {$0$};
\node (P21) at (-0.3+2+1/2,-0.1+1*0.866) {$0$};
\node (P31) at (-0.3+3+1/2,-0.1+1*0.866) {$1$};

\node (P02) at (-0.3+0+2/2,-0.1+2*0.866) {$2$};
\node (P12) at (-0.3+1+2/2,-0.1+2*0.866) {$1$};
\node (P22) at (-0.3+2+2/2,-0.1+2*0.866) {$2$};

\node (P03) at (-0.3+0+3/2,-0.1+3*0.866) {$1$};
\node (P13) at (-0.3+1+3/2,-0.1+3*0.866) {$0$};
\end{tikzpicture}
\hspace{1cm}
\begin{tikzpicture}
\node (P10) at (1,0) {$1$};
\node (P20) at (2,0) {$2$};
\node (P30) at (3,0) {$0$};

\node (P01) at (0+1/2,1*0.866) {$0$};
\node (P11) at (1+1/2,1*0.866) {$0$};
\node (P21) at (2+1/2,1*0.866) {$0$};
\node (P31) at (3+1/2,1*0.866) {$1$};

\node (P02) at (0+2/2,2*0.866) {$2$};
\node (P12) at (1+2/2,2*0.866) {$1$};
\node (P22) at (2+2/2,2*0.866) {$2$};

\node (P03) at (0+3/2,3*0.866) {$1$};
\node (P13) at (1+3/2,3*0.866) {$0$};

\draw [color=red] (P10)--(P20);
\draw [color=red] (P20)--(P30);
\draw [color=red] (P10)--(P11);
\draw [color=red] (P11)--(P20);
\draw [color=red] (P21)--(P31);
\draw [color=red] (P01)--(P02);
\draw [color=red] (P21)--(P12);
\draw [color=red] (P21)--(P22);
\draw [color=red] (P21)--(P31);
\draw [color=red] (P31)--(P22);
\draw [color=red] (P02)--(P03);
\draw [color=red] (P12)--(P13);
\draw [color=red] (P12)--(P22);
\draw [color=red] (P22)--(P13);

\draw (P10)--(P01);
\draw (P30)--(P31);
\draw (P20)--(P21);
\draw (P11)--(P02);
\draw (P11)--(P12);
\draw (P02)--(P12);
\draw (P03)--(P13);

\end{tikzpicture}
\caption{A regular labeling on $R_{d,n}$ and its colored representation \label{Fig:label_color}}
\end{figure}
\begin{definition}[Toric hive cone]
A function $f:R_{d,n}$ is called rhombus concave with respect to a regular labeling $g:R_{d,n}\rightarrow \mathbb{Z}_3$ when $f(v_2)+f(v_4)\geq f(v_1)+f(v_3)$ on any lozenge $\ell=(v^1,v^2,v^3,v^4)$,  with equality if $\ell$ is rigid with respect to $g$.

For any regular labeling $g$, the toric hive cone $\mathcal{C}_{g}$ with respect to $g$ is the cone 
$$\mathcal{C}_g=\left\{f_{\vert Supp(g)},\text{$f:R_{d,n}\rightarrow  \mathbb{R}$ toric rhombus concave with respect to $g$}\right\}.$$
\end{definition}

As we will see later, for any regular labeling $g$, $Supp(g)$ has cardinal $(n-1)(n-2)/2+3n$ and so is the dimension of $\mathcal{C}_g$. As such, we recover the usual dimension of the classical hive cone from \cite{Knutson_1999}. The latter is then a particular case of toric hive cone for $d=0$. An example of a toric rhombus concave function in the case $n=3,d=1$ is given in Figure \ref{Fig:toric_hive}.
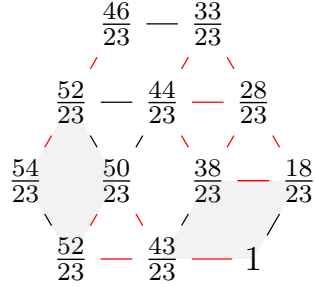
\begin{figure}[h!]
\scalebox{1.2}{\begin{tikzpicture}
\node (P10) at (1,0) {$\frac{52}{23}$};
\node (P20) at (2,0) {$\frac{43}{23}$};
\node (P30) at (3,0) {$1$};

\node (P01) at (0+1/2,1*0.866) {$\frac{54}{23}$};
\node (P11) at (1+1/2,1*0.866) {$\frac{50}{23}$};
\node (P21) at (2+1/2,1*0.866) {$\frac{38}{23}$};
\node (P31) at (3+1/2,1*0.866) {$\frac{18}{23}$};

\node (P02) at (0+2/2,2*0.866) {$\frac{52}{23}$};
\node (P12) at (1+2/2,2*0.866) {$\frac{44}{23}$};
\node (P22) at (2+2/2,2*0.866) {$\frac{28}{23}$};

\node (P03) at (0+3/2,3*0.866) {$\frac{46}{23}$};
\node (P13) at (1+3/2,3*0.866) {$\frac{33}{23}$};

\draw [color=red] (P10)--(P20);
\draw [color=red] (P20)--(P30);
\draw [color=red] (P10)--(P11);
\draw [color=red] (P11)--(P20);
\draw [color=red] (P21)--(P31);
\draw [color=red] (P01)--(P02);
\draw [color=red] (P21)--(P12);
\draw [color=red] (P21)--(P22);
\draw [color=red] (P21)--(P31);
\draw [color=red] (P31)--(P22);
\draw [color=red] (P02)--(P03);
\draw [color=red] (P12)--(P13);
\draw [color=red] (P12)--(P22);
\draw [color=red] (P22)--(P13);

\draw (P10)--(P01);
\draw (P30)--(P31);
\draw (P20)--(P21);
\draw (P11)--(P02);
\draw (P11)--(P12);
\draw (P02)--(P12);
\draw (P03)--(P13);

\fill[color=gray,opacity=0.1] (1,0)--(1/2,1*0.866)--(2/2,2*0.866)--(1+1/2,1*0.866)--cycle;
\fill[color=gray,opacity=0.1] (2,0)--(2+1/2,1*0.866)--(3+1/2,1*0.866)--(3,0)--cycle;
\end{tikzpicture}
}
\caption{A toric rhombus convave function for $n=3,d=1$\label{Fig:toric_hive}: the regular labeling is depicted through colored edges and the shaded lozenge are the rigid ones yielding the equality cases in the toric rhombus concavity.}
\end{figure}

\begin{definition}[Polytope $P^g_{\alpha,\beta,\gamma}$]
\label{def:p_g_alpha_beta_gamma}
    Let $n\geq 3$ and let $\alpha,\beta,\gamma\in \mathcal{H}_{reg}$ be such that $\sum_{i=1}^n\alpha_i+\sum_{i=1}^n\beta_i=\sum_{i=1}^n\gamma_i+d$ with $d\in\mathbb{N}$. Let $g$ be a regular labeling on $R_{d,n}$. Then, $P^g_{\alpha,\beta,\gamma}$ is the polytope of $\mathbb{R}^{Supp(g)\setminus \partial R_{d,n}}$ consisting of functions in $\mathcal{C}_{g}$ such that
    
    \begin{equation*}
    f^A=\left(\sum_{s=1}^n \beta_s+\sum_{s=1}^{i}\alpha_s\right)_{0\leq i\leq n},\,
f^B=\left((d-i)^{+}+\sum_{s=1}^i\beta_s\right)_{0\leq i\leq n},\,
f^C=\left(d+\sum_{s=1}^i\gamma_s\right)_{0\leq i\leq n}.
    \end{equation*}
 
\end{definition}

\noindent
An example of an element of $P^g_{\alpha,\beta,\gamma}$ for $n=3$ and $d=1$ is depicted in Figure \ref{Fig:toric_hive}, for $\alpha=\left(\frac{13}{23}\geq \frac{6}{23}\geq\frac{2}{23}\right)$, $\beta=\left(\frac{18}{23}\geq \frac{10}{23}\geq \frac{5}{23}\right)$ and $\gamma=\left(\frac{20}{23}\geq \frac{9}{23}\geq \frac{2}{23}\right)$.

\subsubsection*{Statement of the results:}
Our main result gives then a formula  for the density of the convolution of regular conjugacy classes as a sum of volumes of polytopes coming from $\mathcal{C}_g$ for regular labeling $g$.

\begin{theorem}[Probability density for product of conjugacy classes]
\label{thm:main}
Let $n\geq 3$ and let $\alpha,\beta,\gamma\in \mathcal{H}_{reg}$ be such that $\sum_{i=1}^n\alpha_i+\sum_{i=1}^n\beta_i=\sum_{i=1}^n\gamma_i+d$ with $d\in\mathbb{N}$. Then,
\begin{equation}
    \label{eq:main_result}
    \diff \mathbb{P}[\gamma\vert\alpha,\beta]=\frac{(2 \pi)^{(n-1)(n-2)/2}\prod_{k=1}^{n-1}k!\Delta(\ed^{2i \pi \gamma}) }{n!\Delta(\ed^{2i \pi \alpha} )\Delta(\ed^{2i \pi \beta})}\sum_{g:R_{d,n}\rightarrow \mathbb{Z}_3 \text{ regular}}Vol_g(P^g_{\alpha,\beta,\gamma}),
\end{equation}
where $\Delta(e^{2i\pi\theta})=2^{n(n-1)/2}\prod_{i<j}\sin\left(\pi(\theta_i-\theta_j)\right)$ for $\theta\in\mathcal{H}$ and $Vol_g$ denotes the volume with respect to the Lebesgue measure on $\mathbb{R}^{Supp(g)\setminus \partial R_{d,n}}$.
\end{theorem}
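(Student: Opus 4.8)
The overall plan is to transport to the conjugacy (multiplicative) side the Knutson--Tao program that produced the continuous hive formula in the co-adjoint (additive) case: descend from $\diff\mathbb{P}[\gamma\vert\alpha,\beta]$ through a chain of ring identifications to quantum Littlewood--Richardson coefficients, feed those into Theorem \ref{thm:bijection_puzzle_dual_hive}, and close with an Ehrhart-type asymptotic count. The first step, the content of Section \ref{Sec:link_quantum_cohomolgy}, is the semiclassical limit of the fusion ring. For the fusion ring at level $\ell$, the quantum dimension $\mathcal{D}$ is a one-dimensional character, so $p^{(\ell)}_\nu:=N^{\nu}_{\lambda^\ell\mu^\ell}\,\mathcal{D}_\nu/(\mathcal{D}_{\lambda^\ell}\mathcal{D}_{\mu^\ell})$ is a genuine probability distribution on level-$\ell$ weights $\nu$, once $\lambda^\ell,\mu^\ell$ are chosen so that their images $x_{\lambda^\ell}=(\lambda^\ell+\rho)/(\ell+n)$ and $x_{\mu^\ell}$ in the fundamental alcove are nearest to $\alpha,\beta$. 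One shows (the semiclassical limit, in the vein of Witten and Defosseux) that $\sum_\nu p^{(\ell)}_\nu\,\delta_{x_\nu}$ converges to $\delta_\alpha\ast\delta_\beta$ in a local sense, so that $\diff\mathbb{P}[\gamma\vert\alpha,\beta]=\lim_{\ell}(\mathrm{mesh\ volume})^{-1}\,p^{(\ell)}_{\nu^\ell}$ with $\nu^\ell$ nearest $\gamma$ and the mesh volume of order $\ell^{-(n-1)}$ times an explicit constant. Plugging in $\mathcal{D}_\lambda=\prod_{\phi>0}\sin(\pi\langle\phi,x_\lambda\rangle)\big/\prod_{\phi>0}\sin(\pi\langle\phi,\rho/(\ell+n)\rangle)$ with $\rho=(n-1,n-2,\dots,0)$, so that $\mathcal{D}_{\lambda^\ell}\sim \frac{\ell^{n(n-1)/2}\Delta(\ed^{2i\pi\alpha})}{(2\pi)^{n(n-1)/2}\prod_{k=1}^{n-1}k!}$ (and likewise for $\beta,\gamma$), and collecting the surviving power $n(n-1)/2-(n-1)=(n-1)(n-2)/2$ of $\ell$ together with the constants (the $n!$ from the mesh normalization, the $2\pi$'s and the superfactorial from the quantum dimensions), one arrives at
\[
\diff\mathbb{P}[\gamma\vert\alpha,\beta]=\frac{(2\pi)^{(n-1)(n-2)/2}\prod_{k=1}^{n-1}k!\;\Delta(\ed^{2i\pi\gamma})}{n!\;\Delta(\ed^{2i\pi\alpha})\Delta(\ed^{2i\pi\beta})}\;\lim_{\ell\to\infty}\frac{N^{\nu^\ell}_{\lambda^\ell\mu^\ell}}{\ell^{(n-1)(n-2)/2}},
\]
the exponent $(n-1)(n-2)/2$ being exactly $|Supp(g)\setminus\partial R_{d,n}|$.

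\textbf{Step 2: down to the puzzle polytopes.} Next I would apply the three identifications recalled in the introduction. The fusion ring at level $\ell$ is isomorphic to the small quantum cohomology of a Grassmannian on $\mathbb{P}^1$; under it $N^{\nu^\ell}_{\lambda^\ell\mu^\ell}$ becomes a quantum Littlewood--Richardson coefficient whose quantum degree, forced by the content constraint, converges as $\ell\to\infty$ to the integer $d$ of the statement. By Buch, Kresch and Tamvakis this coefficient equals an ordinary structure constant of the cohomology of a two-step flag variety, for the partitions obtained from $\lambda^\ell,\mu^\ell,\nu^\ell$ by the explicit cut at $d$. Finally, by Theorem \ref{thm:bijection_puzzle_dual_hive} this structure constant equals $\sum_{g\text{ regular}}\#\big(\widetilde P^g_{\lambda^\ell,\mu^\ell,\nu^\ell}\cap\mathbb{Z}^{Supp(g)\setminus\partial R_{d,n}}\big)$, the sum over regular labelings $g$ of $R_{d,n}$ of the number of lattice points of an explicit lattice polytope $\widetilde P^g$ whose facets contain the toric rhombus concavity relations (with the rigid equalities of Definition \ref{def:regular_labeling}) and whose boundary vectors are the integer partial sums built as in Definition \ref{def:p_g_alpha_beta_gamma}.

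\textbf{Step 3: the asymptotic count.} Since $n$ --- and hence $d$ and the finite set of regular labelings of $R_{d,n}$ --- is fixed, it now suffices to prove, for each regular $g$ separately, that
\[
\ell^{-(n-1)(n-2)/2}\,\#\big(\widetilde P^g_{\lambda^\ell,\mu^\ell,\nu^\ell}\cap\mathbb{Z}^{Supp(g)\setminus\partial R_{d,n}}\big)\;\longrightarrow\;Vol_g\big(P^g_{\alpha,\beta,\gamma}\big),
\]
and then to sum over $g$ and combine with Step 1. For the displayed limit I would identify the limit shape: after dividing all coordinates by $\ell$, the boundary vectors of $\widetilde P^g$ rescale to $f^A,f^B,f^C$ of Definition \ref{def:p_g_alpha_beta_gamma} --- the bounded shifts, namely $\rho$, the $o(\ell)$ error in $\lambda^\ell\approx\ell\alpha$, and the $(d-i)^+$ type terms, being negligible in the limit by continuity of $Vol_g$ in the boundary data --- while the facet inequalities of the intermediate convex bodies that are not among the toric rhombus concavity relations become redundant in the regular regime; hence $\ell^{-1}\widetilde P^g_{\lambda^\ell,\mu^\ell,\nu^\ell}$ converges in Hausdorff distance to $P^g_{\alpha,\beta,\gamma}$, and a standard lattice-point (Riemann-sum) estimate gives the limit, including the degenerate cases where $Vol_g(P^g_{\alpha,\beta,\gamma})=0$ and the count is $o(\ell^{(n-1)(n-2)/2})$. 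Regularity of $\alpha,\beta,\gamma$ enters precisely here, to keep the dimension of $P^g_{\alpha,\beta,\gamma}$ stable and $Vol_g$ continuous. Assembling the pieces yields \eqref{eq:main_result}.

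\textbf{Main obstacle.} The delicate points are twofold. First, in Step 1, one must extract the exact prefactor from the semiclassical limit, which requires careful bookkeeping of the $2\pi$'s from the modular $S$-matrix, the $n!$ from the mesh normalization and Weyl integration, the superfactorial $\prod_{k=1}^{n-1}k!$ from the quantum dimensions, and the reality and positivity of the trigonometric Vandermondes $\Delta(\ed^{2i\pi\cdot})$ on $\mathcal{H}_{reg}$. Second, in Step 3, one must prove that the non-explicit facets of the intermediate polytopes genuinely become inactive after rescaling, so that the limiting body is exactly $P^g_{\alpha,\beta,\gamma}$; this is where the combinatorial geometry of toric rhombus concavity from Definition \ref{def:regular_labeling} and the regularity hypothesis carry the argument.
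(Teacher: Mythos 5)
Your overall architecture mirrors the paper's: a semiclassical reduction of the density to a rescaled limit of quantum Littlewood--Richardson coefficients, a puzzle/dual-hive combinatorial expression for those coefficients, and an Ehrhart-type lattice-point asymptotic. For Step~1 you take a different but viable route (fusion-ring quantum dimensions in the spirit of Defosseux) from the paper's argument, which instead uses the inverse Fourier transform on $SU(n)$ together with Rietsch's determinantal formula for $c_{\lambda,\mu}^{\nu,d}$ and a dominated-convergence argument over orbit representatives; either route can produce the prefactor $(2\pi)^{(n-1)(n-2)/2}\prod_{k=1}^{n-1}k!\,\Delta(\ed^{2i\pi\gamma})/(n!\Delta(\ed^{2i\pi\alpha})\Delta(\ed^{2i\pi\beta}))$, though the paper's version gives an explicit uniformly summable bound while yours is only sketched.

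The genuine gap is in Step~2, and it propagates into Step~3. Theorem~\ref{thm:bijection_puzzle_dual_hive} does \emph{not} assert that the two-step structure constant equals $\sum_{g\text{ regular}}\#\big(\widetilde P^g\cap\mathbb{Z}^{Supp(g)\setminus\partial R_{d,n}}\big)$. It equates that constant with the number of two-colored discrete dual hives $(C,L)$, where $L$ is a label map defined on \emph{edges} of $T_{n+d}$ (equivalently $E_{n,d}$), subject to one equality per triangular face, one equality per rigid lozenge, and a family of inequalities; for fixed color map $C$ this is a degenerate polytope living in a space of dimension on the order of $\vert E_{n,d}\vert$, not in $\mathbb{R}^{Supp(g)\setminus\partial R_{d,n}}$. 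Reducing this to a lattice-point count in a genuine $D$-dimensional convex body indexed by regular labelings is precisely the content you have skipped: the quasi-hive relaxation, the hexagon-rotation ``color swap'' machinery that reduces any regular $C$ to $C_0$ (Proposition~\ref{prop:reduction_to_simple_color_map}), the affine parameterization $\Phi^C:\mathbb{R}^D\to\widetilde L^C$ of Section~\ref{sec:color_swap}, and finally the edge-to-vertex duality $\Psi^C:\widetilde H^C\to\widetilde P^C_{\alpha,\beta,\gamma}$ of Section~\ref{Sec:convergence} together with the fact that $\vert\det(\Psi^C\circ\Phi^C)\vert=1$ (Proposition~\ref{prop:volume_preserving}), which is what makes the Ehrhart limit compute the Lebesgue volume in $\mathbb{R}^{Supp(g)\setminus\partial R_{d,n}}$. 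Your Step~3 then speaks of $\ell^{-1}\widetilde P^g_{\lambda^\ell,\mu^\ell,\nu^\ell}$ converging in Hausdorff distance to $P^g_{\alpha,\beta,\gamma}$ and of ``non-explicit facets becoming inactive,'' but these bodies do not even live in the same space until the parameterization and duality are constructed, and the phenomenon at play is not facet inactivation but the collapse of a degenerate, non-rational system of edge-label constraints to a full-dimensional convex body of vertex labels. Without these ingredients the outline cannot be completed.
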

\noindent
Note that the case $n=2$ admits explicit formulas which do not need such a machinery. Numerical experiments for $n=3$ suggest that there are $\alpha,\beta\in\mathcal{H}_{reg}$ for which any regular labeling $g$ yields a non-empty polytope $P^g_{\alpha,\beta,\gamma}$ for some $\gamma\in \mathcal{H}_{reg}$. However, for a fixed triple $(\alpha,\beta,\gamma)\in \mathcal{H}_{reg}$ there seems to be generically only a strict subset of $\{P^g_{\alpha,\beta,\gamma}\}_{g\text{ regular}}$ which are not empty and do contribute. Finally, remark that we only considered the case of regular conjugacy classes to ensure the existence of a density for the convolution product. Such a hypothesis is regularly assumed (see for example \cite{Witten_quantum_gauge, meinrenken1999moduli}). However, we expect similar results to hold in cases where less than $n/2$ coordinates of $\mathcal{H}$ are equal, in which case the convolution product is still expected to have a density.

\subsubsection*{Volume of moduli spaces of flat $SU_n$-connections on the sphere:} The main application of the previous result is the computation of the volume of moduli spaces of flat $SU_n$-connections on the three-punctured sphere. Computing such volume is an important task in the study of the Yang-Mills measure on Riemann surfaces in the small surface limit \cite{Forman_1993}, and it has been shown in \cite{witten1992two,meinrenken1999moduli} that this computation for arbitrary compact Riemann surfaces can be reduced to case of the three-punctured sphere by a sewing phenomenon. A similar inductive procedure is used in \cite{mirzakhani2007weil} to reduce the volume problem for the moduli space of curves to the genus zero case.

Denote by $\Sigma_0^3$ the sphere with three generic marked points $a,b,c$ removed. We then denote by $\mathcal{M}(\Sigma_0^3,\alpha,\beta,\gamma)$ the moduli space of flat $SU_n$-valued connections on $\Sigma_0^3$ for which the holonomies around $a,b,c$ respectively belong to $\mathcal{O}_{\alpha},\mathcal{O}_{\beta}$ and $\mathcal{O}_{\gamma}$. In the specific case of the punctured sphere, this moduli space can be alternatively described as 
$$\mathcal{M}(\Sigma_0^3,\alpha,\beta,\gamma)=\{(U_1,U_2,U_3)\in \mathcal{O}_{\alpha}\times\mathcal{O}_{\beta}\times \mathcal{O}_{\gamma},\, U_1U_2U_3=Id_{SU_n}\}/SU_n,$$
where $SU_n$ acts diagonally by conjugation, see \cite{agnihotri1998eigenvalues}, see also \cite[Section 3]{meinrenken1999moduli} for a general introduction to $SU_n$-valued flat connections and a proof of the latter equality. As a corollary of Theorem \ref{thm:main}, we thus get an expression of the volume of $\mathcal{M}(\Sigma_0^3,\alpha,\beta,\gamma)$ as a sum of volumes of explicit polytopes.

\begin{corollary}[Volume of flat $SU_n$-connections on the sphere]
\label{cor:volume_flat_connection}
Let $n\geq 3$ and consider the canonical volume form on $SU_n$. For $\alpha,\beta,\gamma\in \mathcal{H}_{reg}$  such that $\vert \alpha\vert_1,\,\vert \beta\vert_1,\,\vert \gamma\vert_1 \in \mathbb{N}$, then $Vol\left[(\mathcal{M}(\Sigma_0^3,\alpha,\beta,\gamma)\right]\not=0$ only if $\sum_{i=1}^n\alpha_i+\sum_{i=1}^n\beta_i+\sum_{i=1}^n\gamma_i=n+d$ for some $d\in\mathbb{N}$, in which case
$$Vol\left[(\mathcal{M}(\Sigma_0^3,\alpha,\beta,\gamma)\right]=\frac{2^{(n+1) [2]}(2\pi)^{(n-1)(n-2)}}{n!\Delta(\ed^{2i \pi \gamma})\Delta(\ed^{2i \pi \alpha} )\Delta(\ed^{2i \pi \beta})} \sum_{g:R_{d,n}\rightarrow \mathbb{Z}_3 \text{ regular}}Vol_g(P^g_{\alpha,\beta,\widetilde{\gamma}}),$$
where $\widetilde{\gamma}=(1-\gamma_n,\ldots,1-\gamma_1)$ and the polytopes $P^g_{\alpha,\beta,\widetilde{\gamma}}$ are defined in Theorem \ref{thm:main}.
\end{corollary}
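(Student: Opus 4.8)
The plan is to deduce the volume formula from Theorem \ref{thm:main} by relating the symplectic/Riemannian volume of $\mathcal{M}(\Sigma_0^3,\alpha,\beta,\gamma)$ to the density of the convolution $\delta_\alpha \ast \delta_\beta$ evaluated at a point determined by $\gamma$. First I would set up the dictionary between the moduli space and the convolution product: since $\mathcal{M}(\Sigma_0^3,\alpha,\beta,\gamma)$ is the symplectic (or GIT) quotient of $\{(U_1,U_2,U_3)\in\mathcal{O}_\alpha\times\mathcal{O}_\beta\times\mathcal{O}_\gamma: U_1U_2U_3=\mathrm{Id}\}$ by the diagonal $SU_n$-conjugation action, its volume is — up to the volume of a generic stabilizer and of $SU_n$ itself — the "density" of the equation $U_1U_2=U_3^{-1}$ with $U_1,U_2$ uniform on $\mathcal{O}_\alpha,\mathcal{O}_\beta$ and $U_3^{-1}\in\mathcal{O}_{\widetilde\gamma}$, where $\widetilde\gamma=(1-\gamma_n,\ldots,1-\gamma_1)$ records the conjugacy class of $U_3^{-1}=U_2^{-1}U_1^{-1}$. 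Concretely, $\mathrm{d}\mathbb{P}[\widetilde\gamma\,\vert\,\alpha,\beta]$ is exactly the pushforward density of the map $(U_1,U_2)\mapsto p(U_1U_2)$ at $\widetilde\gamma$, and disintegrating this pushforward along the fibers (which are precisely the moduli spaces, after quotienting by the residual conjugation) gives $\mathrm{d}\mathbb{P}[\widetilde\gamma\,\vert\,\alpha,\beta]$ as a product of $\mathrm{Vol}[\mathcal{M}(\Sigma_0^3,\alpha,\beta,\gamma)]$ with explicit Jacobian factors coming from the Weyl integration formula on the fiber direction.

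The second step is to pin down those Jacobian factors precisely. The conjugacy class $\mathcal{O}_{\widetilde\gamma}$, as a submanifold of $SU_n$ (or $U_n$), carries a natural volume induced from the bi-invariant metric, and the symplectic volume of a regular conjugacy class is $\frac{(2\pi)^{\cdot}}{\prod k!}\,$-type expression times the Weyl denominator $\Delta(\mathrm e^{2i\pi\widetilde\gamma})$ evaluated at its parameter — this is the group analogue of the Kirillov orbit volume formula, and such a computation appears implicitly when one proves the density statement recalled in Section \ref{Sec:link_quantum_cohomolgy}. So I would: (i) write $\mathrm{Vol}[\mathcal{O}_{\widetilde\gamma}]$ explicitly, noting $\Delta(\mathrm e^{2i\pi\widetilde\gamma})=\Delta(\mathrm e^{2i\pi\gamma})$ up to sign since $\sin$ is odd and one reverses the order of coordinates; (ii) record $\mathrm{Vol}(SU_n)$ and the generic stabilizer (a maximal torus, hence volume $(2\pi)^{n-1}$-ish for $SU_n$, i.e. the $n-1$-torus) against which one divides; (iii) assemble everything. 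The relation $|\alpha|_1+|\beta|_1+|\gamma|_1 = n+d$ follows from $|\widetilde\gamma|_1 = n - |\gamma|_1$ together with the constraint $|\alpha|_1+|\beta|_1 = |\widetilde\gamma|_1 + d$ in Theorem \ref{thm:main}, and the vanishing statement is just the fact that the density is supported where $d\in\mathbb{N}$.

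Combining, $\mathrm{Vol}[\mathcal{M}(\Sigma_0^3,\alpha,\beta,\gamma)] = c_n\, \Delta(\mathrm e^{2i\pi\gamma})^{-1}\,\mathrm{d}\mathbb{P}[\widetilde\gamma\,\vert\,\alpha,\beta] \cdot (\text{stabilizer/group volume factors})$, and plugging in \eqref{eq:main_result} with $\gamma$ replaced by $\widetilde\gamma$ cancels the $\Delta(\mathrm e^{2i\pi\widetilde\gamma})=\pm\Delta(\mathrm e^{2i\pi\gamma})$ in the numerator of Theorem \ref{thm:main} against the $\Delta(\mathrm e^{2i\pi\gamma})$ I introduced, leaving $\Delta(\mathrm e^{2i\pi\gamma})$ in the denominator as in the statement; the remaining powers of $2$ and $2\pi$ and the factorials must then be bookkept to match $\frac{2^{(n+1)[2]}(2\pi)^{(n-1)(n-2)}}{n!}$. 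The main obstacle I anticipate is exactly this normalization bookkeeping: getting the exact power of $2\pi$ (here $(n-1)(n-2)$, which is twice the exponent in Theorem \ref{thm:main}, reflecting that the fiber contributes a second copy) and the parity factor $2^{(n+1)[2]}$ right requires carefully fixing the "canonical volume form on $SU_n$", the induced form on conjugacy classes, and the symplectic normalization on $\mathcal{M}$, and then tracking how the Weyl integration formula distributes Jacobians between base and fiber — a place where factors of $n!$, $\prod_{k=1}^{n-1}k!$, and powers of $2$ are easy to misplace. I would cross-check the final constant against the known $n=2$ case and against Witten's character-sum formula in a small example to make sure the normalization is consistent.
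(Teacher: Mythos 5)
Your overall strategy --- expressing $Vol[\mathcal{M}(\Sigma_0^3,\alpha,\beta,\gamma)]$ as $\diff\mathbb{P}[\cdot\,\vert\,\alpha,\beta]$ evaluated at (the reflection of) $\gamma$, up to explicit group-theoretic constants, and then feeding in Theorem~\ref{thm:main} --- is the same as the paper's. The route to that identity is genuinely different, though. The paper does not disintegrate at all: it takes as input Witten's character-sum formula (Eq.~(4.116) of \cite{Witten_quantum_gauge}, proven rigorously by Jeffrey--Kirwan), which expresses $Vol[\mathcal{M}]$ as
$\frac{\#Z(SU_n)Vol(SU_n)}{Vol((\mathbb{R}/2\pi\mathbb{Z})^{n-1})^3}\sum_\lambda\frac{1}{\dim V_\lambda}\chi_\lambda(\ed^{2i\pi\alpha})\chi_\lambda(\ed^{2i\pi\beta})\chi_\lambda(\ed^{2i\pi\gamma})$,
and then simply matches this character sum term-by-term against \eqref{eq:density_horn_1}, so that the identification of $Vol[\mathcal{M}]$ with $\diff\mathbb{P}[-\gamma\,\vert\,\alpha,\beta]$ and all normalization constants is read off immediately. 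Your proposed direct coarea/disintegration argument, relating the pushforward density of $(U_1,U_2)\mapsto p(U_1U_2)$ to the fiber volume, would essentially be re-deriving the Jeffrey--Kirwan result in this special case. That is a legitimate alternative route, but it is substantially more work than what the paper does and it is exactly where the normalization traps live.

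There is also one concrete error in your plan that would derail the constant-matching: you identify the ``generic stabilizer'' of the diagonal $SU_n$-conjugation on $\{(U_1,U_2,U_3):U_1U_2U_3=\mathrm{Id}\}$ as a maximal torus of volume $\sim(2\pi)^{n-1}$. For generic $(\alpha,\beta,\gamma)$ the stabilizer is the \emph{center} $Z(SU_n)$, a finite group, not a maximal torus --- generic triples generate the full group, so only central elements commute with all three. The factor $Vol((\mathbb{R}/2\pi\mathbb{Z})^{n-1})$ does appear in the final answer, but it enters cubed from the normalization of the symplectic form in Witten's formula and once with the opposite sign from the Jacobian $\frac{|\Delta(\ed^{2i\pi\gamma})|^2}{(2\pi)^{n-1}n!}$ in Proposition~\ref{prop:first_density}; it is not a stabilizer volume. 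If you carry out the disintegration with the maximal torus in place of $Z(SU_n)$, the final constant will be wrong by a ratio of torus volume to center order. Your instinct to cross-check against Witten's character-sum formula is exactly right --- but at that point you may as well cite it directly as the paper does.
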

Note that the choice of normalization for the volume of $SU_n$ slightly differs from the one used in \cite{Witten_quantum_gauge} for numerical applications. As a consequence of this corollary, the volume is a piecewise polynomial in $\alpha,\beta,\gamma$, up to the normalization factor coming from the volume of the conjugacy classes. Such a phenomenon, which is a reflect of the underlying symplectic structure, had already been observed in \cite{meinrenken1999moduli}. A same phenomenon occurs in the co-adjoint case, see \cite{Coquereaux_2018,etingof2018mittag} and in the study of moduli spaces of curves, \cite{mirzakhani2007weil}.

\subsubsection*{Sketch of the proof of Theorem \ref{thm:main} and Corollary \ref{cor:volume_flat_connection}}
Let us give the structure of the paper while sketching the proof of our main statement. The first step of the prooF is the semi-classical approximation of the density of the convolution product by a limit of quantum Littlewood-Richardson coefficients. Such approximation scheme is done in Section \ref{Sec:link_quantum_cohomolgy}. All the work of the remaining part of the manuscript consists in turning known expressions for the  quantum Littlewood-Richardson coefficients into integers points counting in convex bodies, for which the convergence towards volumes of polytopeS is straightforward, see \cite{Knutson_1999}. Section \ref{Sec:Puzzle_skeleton} introduces the puzzle expression for those coefficients obtained in \cite{puzzle_conj_two_step} and gives a first simplification of the puzzle formulation by only keeping the position of certain pieces of the puzzles. It is then deduced in Section \ref{Sec:discrete_two_colored_hive} an expression of the coefficients as the counting of integers points in a family of convex polytopes indexed by certain two-colored tilings which are reminiscent of Figure \ref{Fig:label_color}, see Theorem \ref{thm:puzzle_two_steps} and Corollary \ref{cor:qLR_dual_hive}. Up to this point, those polytopes are degenerated and non-rational polytopes in a higher dimensionAL space, which prevents any proper asymptotic counting in the semi-classical limit (a similar problematic situation already occurred with Berenstein-Zelevinsky polytopes in the co-adjoint case, leading to the hive formulation of Knutson and Tao, see \cite{Knutson_1999}). By a combinatorial work on the underlying two-colored tiling, we give in Section \ref{Sec:color_swap} a parametrization of the integer points of those polytopes in terms of integer points of genuine convex bodies. Remark that the results of Section \ref{Sec:discrete_two_colored_hive} hold more generally for any coefficient of the two-step flag variety, a fact which is not true anymore from Section \ref{sec:color_swap}. The asymptotic counting of integers points in convex bodies is then much more tractable, and the conclusion of the proof of Theorem \ref{thm:main} and Corollary \ref{cor:volume_flat_connection} is done in Section \ref{Sec:convergence}.

\section{Density formula via the quantum cohomology of the Grassmannians}
\label{Sec:link_quantum_cohomolgy}

\noindent
Let $n \geq 1$ and consider $\alpha, \beta \in \mathcal{H}_{reg}^2$ : $\alpha = (\alpha_1, \dots, \alpha_n)$, $\beta = (\beta_1, \dots, \beta_n)$ where
\begin{equation*}
    1 \geq \alpha_1 > \alpha_2 > \dots > \alpha_n \geq 0 \text{ and }  1 \geq \beta_1 > \beta_2 > \dots > \beta_n \geq 0.
\end{equation*}
Up to multiplication by the center of $U(n)$, suppose furthermore without loss of generality that 
\begin{equation}
\label{su_n_condition}
    \sum_{i=1}^n \alpha_i = k \text{ and } \sum_{i=1}^n \beta_i = k' 
\end{equation}
for some $k,k' \in \Z$. Let $A = U \ed^{2i \pi \alpha} U^*, B = V \ed^{2i \pi \beta} V^*$,  where $U, V$ are independent Haar distributed matrices on $U(n)$. Remark that $A$ and $B$ are respectively uniformly distributed on the conjugacy classes $\Ocal(\alpha)$ and $\Ocal(\beta)$, which lie in $SU(n)\subset U(n)$. The goal of this section, see Theorem \ref{th:density_limit_quantum_coef}, is to give a simple proof of the density formula \eqref{eq:density_limit_quantum_coefs} linking the probability $\diff \prob[\gamma | \alpha, \beta]$ that  $AB \in \Ocal(\gamma)$ for $\gamma \in \mathcal{H}$ to the structure constants of the quantum cohomology of Grassmannians defined in Section \ref{subsec:link_quantum_coho}. Such a semi-classical convergence had been already suggested and proven several times in different forms (see \cite{Witten_quantum_gauge} for a similar approach with fusion coefficient and \cite{Defosseux_2016} for a convergence in distribution).  In Section \ref{subsec:first_density}, we recall in Proposition \ref{prop:first_density} a classical expression of the density in terms of characters of irreducible representations of $SU(n)$. In Section \ref{subsec:link_quantum_coho}, we link the density of Proposition \ref{prop:first_density} to the structure constants and derive Theorem \ref{th:density_limit_quantum_coef}.

\subsection{A first density formula}
\label{subsec:first_density}

This part aims at recalling a proof of the formula \eqref{eq:density_horn_1} which gives the value of $\diff \prob[\gamma | \alpha, \beta]$ as an infinite sum of characters. A similar treatment of the convolution of orbit measures in the general context of Lie algebras can be found in \cite[Sec. 7]{Defosseux_2016}.\\
\\
Let us denote by $\diff g$ the \textit{normalized} Haar measure on $U(n)$ and for $\theta \in \mathcal{H}$, $\varphi_\theta$ the map 
\begin{align*}
    \varphi_\theta: U(n) &\rightarrow \Ocal(\theta) \subset SU(n) \\
    U &\mapsto U \ed^{2i \pi \theta} U^*.
\end{align*}

\noindent Let us write
\begin{equation}
    \label{eq:mu_theta}
m_\theta = \varphi_\theta \# \diff g
\end{equation}
for the push-forward of $\diff g$ by $ \varphi_\theta$. The measure $m_\theta$ is a measure on $\Ocal(\theta)$ called the orbital measure. For any function $f: \Ocal(\theta) \rightarrow \R$, 
\begin{equation*}
    \int_{\Ocal(\theta)} f \diff m_\theta  = \int_{U(n)} f(g \ed^{2i \pi \theta} g^{-1}) dg.
\end{equation*}

\noindent
Recall that the irreducible representations of the compact group $SU(n)$ are parameterized by $\lambda \in \Z_{\geq 0}^{n-1}$ and we denote by $(\rho_\lambda, V_\lambda)$ the corresponding representation where $\rho_\lambda: SU(n) \rightarrow V_\lambda$ and $\chi_\lambda: End_{V_\lambda} \rightarrow \C, x \mapsto Tr[x]$ is the associated character.

\begin{proposition}[Induced density of eigenvalues]
\label{prop:first_density}
Let $(\alpha, \beta) \in \mathcal{H}_{reg}^2$ and let $A, B \in \Ocal(\alpha) \times \Ocal(\beta)$ be two independent random variables sampled from $m_\alpha$ and $m_\beta$ respectively. Let $C = AB \in \Ocal(\gamma)$ for some random $\gamma \in [0, 1[^n / S_n$. The density of  $\gamma = \gamma_1 > \dots > \gamma_n \geq 0$ is given by the absolute convergent series

\begin{equation}
\label{eq:density_horn_1}
    \diff \prob[\gamma | \alpha, \beta] = \frac{|\Delta(\ed^{2i \pi \gamma})|^2}{(2 \pi)^{n-1}n!} \sum_{\lambda \in \Z_{\geq 0}^{n-1}} \frac{1}{\dim V_\lambda} \chi_\lambda(\ed^{2i \pi \alpha}) \chi_\lambda(\ed^{2i \pi \beta}) \chi_\lambda(\ed^{-2i \pi \gamma}).
\end{equation}
\end{proposition}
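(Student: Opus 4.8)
The plan is to compute the density of $\gamma$ by realizing $m_\alpha * m_\beta$ as a function on $SU(n)$ (or on the conjugation-invariant functions thereof), expand that function in the orthonormal basis of characters via Peter--Weyl, and then push the resulting spectral measure forward to $\mathcal H$ using the Weyl integration formula. Concretely, I would first argue that the convolution $m_\alpha * m_\beta$, defined as the law of $AB$ with $A\sim m_\alpha$, $B\sim m_\beta$ independent, is a central (conjugation-invariant) probability measure on $SU(n)$, and that it is absolutely continuous with a density $h_{\alpha,\beta}$ with respect to Haar measure because $m_\alpha$ (being the pushforward of Haar on $U(n)$ under $\varphi_\alpha$ with $\alpha$ regular) has enough smoothness — in fact one can check $\widehat{m_\alpha}(\lambda):=\int \chi_\lambda\,dm_\alpha = \chi_\lambda(\ed^{2i\pi\alpha})/\dim V_\lambda$ has the decay needed for the series to converge. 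The key computational input is the classical fact that for central measures the Fourier coefficients multiply: since $m_\theta$ is central, $\int_{SU(n)}\rho_\lambda(g)\,dm_\theta(g) = \frac{\chi_\lambda(\ed^{2i\pi\theta})}{\dim V_\lambda}\,\mathrm{Id}_{V_\lambda}$ by Schur's lemma, hence $\widehat{(m_\alpha*m_\beta)}(\lambda) = \widehat{m_\alpha}(\lambda)\widehat{m_\beta}(\lambda) = \frac{\chi_\lambda(\ed^{2i\pi\alpha})\chi_\lambda(\ed^{2i\pi\beta})}{(\dim V_\lambda)^2}\mathrm{Id}$.

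From there, Peter--Weyl gives $h_{\alpha,\beta}(g) = \sum_\lambda (\dim V_\lambda)\,\mathrm{Tr}\!\left[\widehat{(m_\alpha*m_\beta)}(\lambda)\,\rho_\lambda(g^{-1})\right] = \sum_\lambda \frac{1}{\dim V_\lambda}\chi_\lambda(\ed^{2i\pi\alpha})\chi_\lambda(\ed^{2i\pi\beta})\chi_\lambda(g^{-1})$, with absolute convergence justified by the aforementioned decay estimate (bounding $|\chi_\lambda(\ed^{2i\pi\alpha})|$ for regular $\alpha$ against $\dim V_\lambda$ times a factor that, divided by $(\dim V_\lambda)^2$, is summable — this is where regularity of $\alpha,\beta$ is used). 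Finally I would pass from the density on $SU(n)$ to the density of the eigenangles: the pushforward of $h_{\alpha,\beta}(g)\,dg$ under $p:SU(n)\to\mathcal H$ is obtained from the Weyl integration formula, which contributes the factor $\frac{1}{n!}|\Delta(\ed^{2i\pi\gamma})|^2$ (the Weyl denominator, with the $1/n!$ from the order of the Weyl group) and the overall $(2\pi)^{-(n-1)}$ normalization coming from the chosen Lebesgue measure on the torus / the volume of the maximal torus of $SU(n)$. Evaluating $h_{\alpha,\beta}$ at a diagonal element $\ed^{2i\pi\gamma}$ gives $\chi_\lambda(\ed^{-2i\pi\gamma})$ and yields \eqref{eq:density_horn_1}.

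The main obstacle I anticipate is not the algebra — the multiplicativity of Fourier coefficients and the Weyl integration formula are standard — but rather the two analytic points that make the statement rigorous: (i) justifying that $m_\alpha*m_\beta$ genuinely has an $L^1$ (indeed continuous) density, i.e. that the character series converges absolutely, which requires a quantitative bound $|\chi_\lambda(\ed^{2i\pi\alpha})| \le C_\alpha\,(\dim V_\lambda)^{1-\epsilon}$ or a comparison argument exploiting that $\varphi_\alpha$ is a submersion onto a maximal-dimensional orbit when $\alpha$ is regular; and (ii) tracking the precise normalization constants through the Weyl integration formula so that the prefactor comes out exactly as $\frac{|\Delta(\ed^{2i\pi\gamma})|^2}{(2\pi)^{n-1}n!}$ rather than off by a power of $2\pi$ or a combinatorial factor. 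I would handle (i) by recalling that orbital measures on regular orbits of compact Lie groups have densities whose Fourier coefficients decay polynomially (a known estimate, e.g. via stationary phase on the orbit), and (ii) by carefully fixing the Haar normalization on $SU(n)$, the Lebesgue normalization on $\mathfrak t^*$, and writing the Weyl integration formula in those coordinates before specializing.
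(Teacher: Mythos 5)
Your plan follows the same route as the paper: define the Fourier transform of central measures on $SU(n)$, use Schur's lemma to compute $\widehat{m_\theta}(\lambda)=\frac{\chi_\lambda(\ed^{2i\pi\theta})}{\dim V_\lambda}\mathrm{id}_{V_\lambda}$, use multiplicativity of Fourier coefficients under convolution, invert by Peter--Weyl, and finally apply the Weyl integration formula to push forward to the eigenangle density with the Jacobian $\frac{|\Delta(\ed^{2i\pi\gamma})|^2}{(2\pi)^{n-1}n!}$. The one place where you diverge, and where your plan is vaguer than what is needed, is the convergence argument: you propose to invoke a general stationary-phase or orbital-integral decay estimate of the form $|\chi_\lambda(\ed^{2i\pi\alpha})|\le C_\alpha(\dim V_\lambda)^{1-\epsilon}$, whereas the paper proceeds entirely by hand, writing $\chi_\lambda$ via the Weyl character formula, bounding the determinant $|\det[\ed^{2i\pi\alpha_r\lambda_s'}]|\le n^{n}$ by Hadamard's inequality, using $\dim V_\lambda=\Delta(\lambda')/sf(n-1)$, and reducing to the elementary estimate $\sum_{\lambda_1>\dots>\lambda_n=0}\Delta(\lambda)^{-2}<\infty$ proved by induction on $n$. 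The paper's route is more self-contained and yields the explicit $L^2$ bound used to justify the inverse Fourier transform; your stationary-phase route would work but would import a heavier result, and you would still need to track the constant to ensure the series is actually summable after dividing by $\dim V_\lambda$ (which in the paper's argument requires $n\ge 3$, as reflected in the standing assumption of the manuscript).
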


\noindent
Another expression of the density \eqref{eq:density_horn_1} is given in \eqref{eq:density_horn_2}. The rest of this section is devoted to the proof of Proposition \ref{prop:first_density}.

\begin{definition}[Fourier Transform on $SU(n)$]
    Let $m$ be a measure on $SU(n)$. The Fourier transform $ \widehat{m}$ of $m$ is defined as
    \begin{equation}
        \widehat{m}: \lambda \in \Z_{\geq 0}^{n-1} \mapsto \widehat{m}(\lambda) =  \int_{SU(n)} \rho_\lambda(g) \diff m(g) \in \mathrm{End}_{V_\lambda}.
        \label{eq:def_fourier_theta}
    \end{equation}
    
\end{definition}

\noindent
In the case where $m = m_\theta$, the expression $\widehat{m_\theta}(\lambda)$ is also known as the spherical transform introduced in \cite[eq. (56)]{Zhang_2021}.

\begin{lemma}[Fourier Transform of $m_\theta$]
    One has, for $\lambda \in \Z_{\geq 0}^{n-1}$,
    \begin{equation}
    \label{eq:fourier_theta}
        \widehat{m_\theta}(\lambda) = \frac{\chi_\lambda(\ed^{2i \pi \theta})}{\dim V_\lambda} \mathrm{id}_{V_\lambda},
    \end{equation}
    where $\mathrm{id}_{V_\lambda}$ is the identity element of $V_\lambda$.
\end{lemma}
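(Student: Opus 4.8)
The strategy is to exploit the invariance properties of $m_\theta$ together with Schur's lemma. First I would observe that for any $U \in SU(n)$, the pushforward $\varphi_\theta \# \diff g$ is invariant under conjugation: since $\diff g$ is the Haar measure, for fixed $V \in SU(n)$ the map $g \mapsto Vg$ preserves $\diff g$, and $\varphi_\theta(Vg) = V\varphi_\theta(g)V^{*}$, so $m_\theta$ is invariant under the conjugation action $x \mapsto VxV^{*}$. Consequently, for each $\lambda$, the operator $\widehat{m_\theta}(\lambda) = \int_{SU(n)} \rho_\lambda(g)\,\diff m_\theta(g)$ satisfies $\rho_\lambda(V)\,\widehat{m_\theta}(\lambda)\,\rho_\lambda(V)^{-1} = \widehat{m_\theta}(\lambda)$ for all $V$, because $\int \rho_\lambda(VxV^{*})\,\diff m_\theta(x) = \rho_\lambda(V)\big(\int \rho_\lambda(x)\,\diff m_\theta(x)\big)\rho_\lambda(V)^{-1}$ and the left side equals $\widehat{m_\theta}(\lambda)$ by invariance. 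Hence $\widehat{m_\theta}(\lambda)$ commutes with the whole image $\rho_\lambda(SU(n))$, and since $V_\lambda$ is irreducible, Schur's lemma forces $\widehat{m_\theta}(\lambda) = c_\lambda(\theta)\,\mathrm{id}_{V_\lambda}$ for some scalar $c_\lambda(\theta) \in \C$.

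To identify the scalar $c_\lambda(\theta)$, I would take the trace of both sides. On one hand $\mathrm{Tr}\big(\widehat{m_\theta}(\lambda)\big) = c_\lambda(\theta)\dim V_\lambda$. On the other hand, by definition of $m_\theta$ as the pushforward and linearity of the trace,
\begin{equation*}
\mathrm{Tr}\big(\widehat{m_\theta}(\lambda)\big) = \int_{SU(n)} \mathrm{Tr}\big(\rho_\lambda(g)\big)\,\diff m_\theta(g) = \int_{U(n)} \chi_\lambda\big(U\ed^{2i\pi\theta}U^{*}\big)\,\diff g = \chi_\lambda(\ed^{2i\pi\theta}),
\end{equation*}
where the last equality uses that $\chi_\lambda$ is a class function, hence constant on the conjugacy class $\Ocal(\theta)$, equal to its value $\chi_\lambda(\ed^{2i\pi\theta})$. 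Dividing by $\dim V_\lambda$ gives $c_\lambda(\theta) = \chi_\lambda(\ed^{2i\pi\theta})/\dim V_\lambda$, which is exactly \eqref{eq:fourier_theta}.

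This argument is entirely standard and I do not expect any genuine obstacle; it is the compact-group analogue of the computation of the spherical function for a symmetric space. The only minor point requiring a word of care is the passage from an integral over $U(n)$ with the normalized Haar measure to a statement about $SU(n)$-representations: one should note that $\ed^{2i\pi\theta} \in SU(n)$ by the normalization \eqref{su_n_condition}, that $\rho_\lambda$ and $\chi_\lambda$ are defined on $SU(n)$, and that $U\ed^{2i\pi\theta}U^{*}$ depends only on the class of $U$ in $PU(n)$, so the integrand is well-defined and the computation goes through verbatim. Everything else is Schur's lemma plus the definition of the pushforward measure.
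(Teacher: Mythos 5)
Your proposal is correct and follows essentially the same route as the paper: both establish via Haar invariance that $\widehat{m_\theta}(\lambda)$ intertwines the irreducible representation $\rho_\lambda$, apply Schur's lemma to conclude it is a scalar multiple of the identity, and then take the trace to identify the scalar as $\chi_\lambda(\ed^{2i\pi\theta})/\dim V_\lambda$. The only cosmetic difference is that you phrase the first step as conjugation-invariance of the central measure $m_\theta$, whereas the paper obtains the intertwining relation by a direct substitution $h\mapsto gh$ inside the integral; the underlying calculation is identical.
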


\begin{proof}
    For any $\lambda \in \Z_{\geq 0}^{n-1}$ and $g\in SU(n)$, since the Haar measure is invariant by translation,
    \begin{align*}
        \widehat{m_\theta}(\lambda)\rho_\lambda(g)=  \left(\int_{U(n)} \rho_\lambda(h \ed^{2i \pi \theta} h^{-1}) dh\right) \rho_\lambda(g)=&\int_{U(n)} \rho_\lambda(h \ed^{2i \pi \theta} h^{-1}g) dh \\
        =&\rho_\lambda(g)\int_{U(n)} \rho_\lambda(g^{-1}h \ed^{2i \pi \theta} h^{-1}g) dh\\
        =&\rho_{\lambda}(g)\int_{U(n)} \rho_\lambda(h \ed^{2i \pi \theta} h^{-1}) dh=\rho_\lambda(g)\widehat{m_\theta}(\lambda).
    \end{align*}
    Hence, $\widehat{m_\theta}(\lambda)$ is a morphism of the irreducible representation $\rho_\lambda$ and thus $\widehat{m_\theta}(\lambda) = c \cdot \mathrm{id}_{V_\lambda}$ for some $c\in \mathbb{C}$. One computes the value of $c$ by taking the trace which gives 
    \begin{equation}
        c = \frac{\chi_\lambda(\ed^{2i \pi \theta})}{\dim V_\lambda}.
    \end{equation}
\end{proof}

\begin{definition}[Convolution of measures]
\label{def:convolution_of_measures}
    Let $m, m' $ be two measures on $SU(n)$. Let $m \boxtimes m' $ be the product measure on $SU(n) \times SU(n)$. Define $mult: SU(n) \times SU(n) \rightarrow SU(n)$ to be the multiplication on $SU(n)$ : $mult(g_1, g_2) = g_1g_2$.
    The convolution of $m$ and $m'$, denoted by $m * m'$, is defined as 
    \begin{align}
        m * m' = mult \# (m \boxtimes m'),
    \end{align}
    which means that for any function $f$ on $SU(n)$,
    \begin{equation*}
        \int_{SU(n)} f(g) \diff (m * m')(g) = \int_{SU(n)} \int_{SU(n)} f(g_1g_2) \diff m(g_1) \diff m(g_2).
    \end{equation*}
    For $(\alpha, \beta) \in \mathcal{H}^2$, we write $m_{\alpha, \beta} := m_\alpha * m_\beta $ the convolution of $m_\alpha$ and $m_\beta$.
\end{definition}

\noindent
By Definition \ref{def:convolution_of_measures}, the measure $m_{\alpha, \beta}$ is the law on $SU(n)$ of $C = A \cdot B$ where $A$ and $B$ are sampled from measures $\mu_\alpha$ and $\mu_\beta $ on $\Ocal(\alpha)$ and $\Ocal(\beta)$ respectively. Recall that for two measures $m, m' $ on $SU(n)$,
    \begin{equation}
        \widehat{m*m'}(\lambda) = \widehat{m}(\lambda) \widehat{m}'(\lambda).
    \end{equation}
    In particular 
    \begin{equation}
    \label{eq:prod_fourier}
        \widehat{m}_{\alpha, \beta}(\lambda) = \widehat{m}_\alpha(\lambda) \widehat{m}_\beta(\lambda).
    \end{equation}

\noindent
Recall that we are interested in the measure $\mu_{\alpha, \beta}$. By \eqref{eq:prod_fourier}, one knows how to compute its Fourier transform. Let us define the inverse Fourier transform. 

\begin{definition}[Inverse Fourier Transform]

Let $f: \lambda \in \Z_{\geq 0}^{n-1} \mapsto f(\lambda) \in End_{V_\lambda}$ be a function such that 

\begin{equation}
\label{eq:condition_L2}
    \norm{f}^2 = \sum_{\lambda \in \Z_{\geq 0}^{n-1}} \dim V_\lambda \cdot Tr[ f(\lambda)f^*(\lambda) ] < \infty.
\end{equation}

\noindent
The inverse Fourier transform of $f$ is 
\begin{align}
    f^\lor: SU(n) &\rightarrow \C \\
    g & \mapsto \sum_{\lambda \in \Z_{\geq 0}^{n-1}} \dim V_\lambda \cdot Tr[ \rho_\lambda(g^{-1})f(\lambda) ].
\label{eq:inv_fourier}
\end{align}
    
\end{definition}

\noindent
In order to apply inverse Fourier transform to $\widehat{m}_{\alpha, \beta}$, one needs to check condition \eqref{eq:condition_L2}. This is the purpose of the next lemma.

\begin{lemma}[Product Fourier transform is $L^2$]
\label{lem:convolution_is_l2}
    For $(\alpha, \beta) \in \mathcal{H}_{reg}^2$,
    \begin{equation}
        \sum_{\lambda \in \Z_{\geq 0}^{n-1}} \dim (V_\lambda) Tr[\widehat{m}_{\alpha, \beta}(\lambda)\widehat{m}_{\alpha, \beta}(\lambda)^*] < \infty.
    \end{equation}
\end{lemma}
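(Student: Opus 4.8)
The plan is to reduce the claimed $L^2$ bound to a statement about a single orbital measure and then exploit the known product formula for the Fourier transform. By \eqref{eq:prod_fourier} we have $\widehat{m}_{\alpha,\beta}(\lambda)=\widehat{m}_\alpha(\lambda)\widehat{m}_\beta(\lambda)$, and by \eqref{eq:fourier_theta} each factor is a scalar multiple of the identity, namely $\widehat{m}_\theta(\lambda)=\frac{\chi_\lambda(\ed^{2i\pi\theta})}{\dim V_\lambda}\,\mathrm{id}_{V_\lambda}$. Hence $\widehat{m}_{\alpha,\beta}(\lambda)=\frac{\chi_\lambda(\ed^{2i\pi\alpha})\chi_\lambda(\ed^{2i\pi\beta})}{(\dim V_\lambda)^2}\,\mathrm{id}_{V_\lambda}$, so that
\begin{equation*}
\dim(V_\lambda)\,\mathrm{Tr}\!\left[\widehat{m}_{\alpha,\beta}(\lambda)\widehat{m}_{\alpha,\beta}(\lambda)^*\right]
=\frac{|\chi_\lambda(\ed^{2i\pi\alpha})|^2\,|\chi_\lambda(\ed^{2i\pi\beta})|^2}{(\dim V_\lambda)^2}.
\end{equation*}
Thus it suffices to prove $\sum_{\lambda}\frac{|\chi_\lambda(\ed^{2i\pi\alpha})|^2|\chi_\lambda(\ed^{2i\pi\beta})|^2}{(\dim V_\lambda)^2}<\infty$, and since $|\chi_\lambda|\le\dim V_\lambda$ on both factors it is even enough to bound $\sum_\lambda \frac{|\chi_\lambda(\ed^{2i\pi\alpha})|^2|\chi_\lambda(\ed^{2i\pi\beta})|^2}{(\dim V_\lambda)^2}$ by, say, $\sum_\lambda \frac{|\chi_\lambda(\ed^{2i\pi\alpha})|^2}{(\dim V_\lambda)^2}$, which in turn I will bound using the Weyl character and dimension formulas.

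The key computation is the Weyl character formula. Writing $\lambda+\rho$ for the shifted weight and $\Delta$ for the Weyl denominator, one has
\begin{equation*}
\chi_\lambda(\ed^{2i\pi\theta})=\frac{\sum_{w\in S_n}\varepsilon(w)\,\ed^{2i\pi\langle w(\lambda+\rho),\theta\rangle}}{\Delta(\ed^{2i\pi\theta})},
\qquad
\dim V_\lambda=\prod_{i<j}\frac{(\lambda+\rho)_i-(\lambda+\rho)_j}{\rho_i-\rho_j}.
\end{equation*}
Since $\alpha$ is regular, $\Delta(\ed^{2i\pi\alpha})\neq 0$, so $|\chi_\lambda(\ed^{2i\pi\alpha})|\le \frac{n!}{|\Delta(\ed^{2i\pi\alpha})|}$, a bound uniform in $\lambda$. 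On the other hand $\dim V_\lambda$ grows polynomially in $\lambda$ of degree exactly $\binom{n}{2}=(n-1)n/2$, and more precisely $\dim V_\lambda\ge c\prod_{i<j}\big(1+(\lambda+\rho)_i-(\lambda+\rho)_j\big)$ for a constant $c=c(n)>0$. Therefore
\begin{equation*}
\sum_{\lambda\in\Z_{\geq 0}^{n-1}}\frac{|\chi_\lambda(\ed^{2i\pi\alpha})|^2}{(\dim V_\lambda)^2}
\le \frac{(n!)^2}{|\Delta(\ed^{2i\pi\alpha})|^2}\sum_{\lambda\in\Z_{\geq 0}^{n-1}}\frac{1}{(\dim V_\lambda)^2}
\le \frac{C}{|\Delta(\ed^{2i\pi\alpha})|^2}\sum_{\lambda}\prod_{i<j}\frac{1}{\big(1+(\lambda+\rho)_i-(\lambda+\rho)_j\big)^2},
\end{equation*}
and the last sum converges provided $2\binom{n}{2}>n-1$, i.e. $n>2$, which holds since $n\ge 3$ throughout. (A clean way to see the convergence: parametrize $\lambda$ by the $n-1$ consecutive differences $\mu_k=(\lambda+\rho)_k-(\lambda+\rho)_{k+1}\ge 1$; then $1+(\lambda+\rho)_i-(\lambda+\rho)_j\ge 1+\mu_i+\cdots+\mu_{j-1}$, and the product over $i<j$ dominates $\prod_k(1+\mu_k)^{n-1}\cdot(\text{something}\ge1)$ when $n\ge 3$, after which $\sum_{\mu_k\ge 1}(1+\mu_k)^{-(n-1)}<\infty$ factorizes.)

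The only slightly delicate point, and the step I expect to require the most care, is the quantitative comparison between $\dim V_\lambda$ and the product of hook-type factors $\prod_{i<j}(1+(\lambda+\rho)_i-(\lambda+\rho)_j)$: one must check that the denominators $\rho_i-\rho_j$ contribute only a constant $c(n)$ and that every factor in the numerator is bounded below by a positive multiple of $1+(\lambda+\rho)_i-(\lambda+\rho)_j$ uniformly (which holds because $(\lambda+\rho)_i-(\lambda+\rho)_j\ge j-i\ge 1$ for $i<j$, so the factor is $\ge\tfrac12(1+(\lambda+\rho)_i-(\lambda+\rho)_j)$). Once this elementary estimate is in place, the convergence is a routine geometric-type series estimate in the $n-1$ gap variables, and since we only ever used one of the two regular parameters, the symmetric use of $\beta$ only improves the bound. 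This proves the claimed finiteness, which is exactly what is needed to legitimately apply the inverse Fourier transform to $\widehat m_{\alpha,\beta}$ in the proof of Proposition~\ref{prop:first_density}.
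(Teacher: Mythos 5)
Your overall route is the same as the paper's: reduce via $\widehat{m}_{\alpha,\beta}(\lambda)=\widehat{m}_\alpha(\lambda)\widehat{m}_\beta(\lambda)$ and the scalar form of $\widehat{m}_\theta(\lambda)$ to the series $\sum_\lambda |\chi_\lambda(\ed^{2i\pi\alpha})|^2|\chi_\lambda(\ed^{2i\pi\beta})|^2/(\dim V_\lambda)^2$, bound the characters uniformly in $\lambda$ via the Weyl character formula and regularity, and then prove $\sum_\lambda (\dim V_\lambda)^{-2}<\infty$ from the Weyl dimension formula (the paper does this via Hadamard's inequality for the numerator determinants and an induction showing $\sum_{\lambda_1>\dots>\lambda_n=0}\Delta(\lambda)^{-2}<\infty$). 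However, your intermediate reduction is wrong as stated: from $|\chi_\lambda(\ed^{2i\pi\beta})|\le\dim V_\lambda$ you get $\frac{|\chi_\alpha|^2|\chi_\beta|^2}{(\dim V_\lambda)^2}\le|\chi_\alpha|^2$, \emph{not} $\le\frac{|\chi_\alpha|^2}{(\dim V_\lambda)^2}$; the claimed comparison would need $|\chi_\beta|\le 1$, which is false. The correct (and easy) fix is to bound \emph{both} characters uniformly by $n!/|\Delta(\ed^{2i\pi\alpha})|$ and $n!/|\Delta(\ed^{2i\pi\beta})|$, which uses the regularity of both $\alpha$ and $\beta$ — your closing remark that only one regular parameter is ever used is incorrect in substance: if $\beta$ is central, $\chi_\lambda(\ed^{2i\pi\beta})$ has modulus $\dim V_\lambda$, $m_{\alpha,\beta}$ is a translate of the singular measure $m_\alpha$, and the series diverges.

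A second, smaller issue is the parenthetical convergence sketch: the asserted domination $\prod_{i<j}\bigl(1+\mu_i+\cdots+\mu_{j-1}\bigr)\ge\prod_k(1+\mu_k)^{n-1}$ is false already for $n=3$, $\mu_1=\mu_2=1$ (left side $12$, right side $16$). It is also unnecessary: since the summand carries the square, simply discard the non-consecutive factors to get $\prod_{i<j}\bigl(1+\mu_i+\cdots+\mu_{j-1}\bigr)^{-2}\le\prod_{k}(1+\mu_k)^{-2}$, and the sum factorizes as $\bigl(\sum_{m\ge1}(1+m)^{-2}\bigr)^{n-1}<\infty$ (no condition $n>2$ is needed for this step). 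With these two repairs your argument is correct and coincides in all essentials with the paper's proof.
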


\begin{proof}
Using \eqref{eq:prod_fourier} together with \eqref{eq:fourier_theta}, one has
    \begin{equation}
        \sum_{\lambda \in \Z_{\geq 0}^{n-1}} \dim (V_\lambda )Tr[\widehat{m}_{\alpha, \beta}(\lambda)\widehat{m}_{\alpha, \beta}(\lambda)^*] = \sum_{\lambda \in \Z_{\geq 0}^{n-1}} \frac{1}{\dim(V_\lambda)^2} \left| \chi_\lambda(\ed^{2i \pi \alpha}) \right|^2 \left| \chi_\lambda(\ed^{2i \pi \beta}) \right|^2.
    \end{equation}
Using Weyl's character formula \cite[eq. (21)]{Coquereaux_2018},

\begin{equation}
\label{eq:weyl_character_formula}
    \chi_\lambda(\ed^{i\theta}) = \frac{\det [\ed^{i \theta_r \lambda_s'}]_{1 \leq r,s \leq n}}{\Delta(\ed^{i \theta})}
\end{equation}
where $\lambda' = (\lambda_1,\ldots,\lambda_{n-1},0) + \rho$ with $\rho=(n-1,\ldots,0)$ and where $\Delta(x_1, \dots, x_n) = \prod_{1 \leq i < j \leq n} (x_i - x_j)$ is the Vandermonde determinant. Recall that by assumption, $\alpha_i \neq \alpha_j$ for $i \neq j$ and the same holds for $\beta$, so that the expressions $\Delta(\ed^{2i \pi \alpha})$ and $\Delta(\ed^{2i \pi \beta})$ are well-defined. The previous sum becomes
    \begin{align}
        & \frac{1}{|\Delta(\ed^{2i \pi \alpha})\Delta(\ed^{2i \pi \beta})|^2} \sum_{\lambda_1 \geq \dots \geq \lambda_{n-1} \geq \lambda_n = 0 } \frac{ \left|\det [\ed^{2i \pi \alpha_r \lambda_s'}] \right|^2 \left|\det [\ed^{2i \pi \beta_r \lambda_s'}] \right|^2}{\dim (V_\lambda)^2} \\
        &\leq \frac{ n^{2n}}{|\Delta(\ed^{2i \pi \alpha})\Delta(\ed^{2i \pi \beta})|^2} \sum_{\lambda_1 \geq \dots \geq \lambda_{n-1} \geq \lambda_n = 0 } \frac{1}{\dim (V_\lambda)^2} 
    \end{align}
where we used Hadamard's inequality for the upper bound on determinants. From the identity 
\begin{equation}
    \dim (V_\lambda) = \frac{\Delta(\lambda')}{sf(n-1)},
\end{equation}
 which can be found in \cite[Cor. 11.2.5]{Faraut_2008} and where $sf(n) = \prod_{1 \leq j \leq n} j!$, it suffices to show that

 \begin{equation}
 \label{eq:conv_inv_vander}
     V_n = \sum_{\lambda_1 > \dots > \lambda_{n}= 0} \frac{1}{\Delta(\lambda)^2}
 \end{equation}
converges for $n \geq 2$. One has that $V_2 = \sum_{k \geq 1} k^{-2} < \infty$. Let us write 
\begin{align*}
     \sum_{\lambda_1 > \dots > \lambda_{n} = 0} \frac{1}{\Delta(\lambda)^2} &=  \sum_{\lambda_1 > \dots > \lambda_{n} = 0} \prod_{1 \leq i<j \leq n}(\lambda_i - \lambda_j)^{-2} \\
     &= \sum_{\lambda_2 > \dots > \lambda_{n} = 0} \left[ \sum_{\lambda_1 > \lambda_2} \prod_{2 \leq j \leq n}(\lambda_1 - \lambda_j)^{-2}  \right] \prod_{2 \leq i<j \leq n}(\lambda_i - \lambda_j)^{-2} \\
     &\leq \sum_{\lambda_2 > \dots > \lambda_{n} = 0} \left[ \sum_{\lambda_1 > \lambda_2} (\lambda_1 - \lambda_2)^{-2(n-1)}  \right] \prod_{2 \leq i<j \leq n}(\lambda_i - \lambda_j)^{-2}
\end{align*}
the innermost sum is bounded by $\sum_{k \geq 1} k^{-2(n-1)} \leq \sum_{k \geq 1} k^{-2}  = V_2$ for $n \geq 2$. Thus,
\begin{equation*}
    V_n \leq V_2 V_{n-1}
\end{equation*}
so that for $n \geq 2$, $V_n \leq (V_2)^{n-1}$ which proves the convergence.

\end{proof}

\noindent
Lemma \ref{lem:convolution_is_l2} shows that the Fourier transform of $\mu_\alpha * \mu_\beta$ is in $L^2$, so that one can take its inverse Fourier Transform. This leads to the following result.

\begin{lemma}[Inverse Fourier of Convolution]

Let $(\alpha, \beta) \in \mathcal{H}_{reg}^2$ and $g \in SU(n)$. Then, 
\begin{equation}
\label{eq:inverse_of_convolution}
    (\widehat{m}_{\alpha, \beta})^\lor (g) = \sum_{\lambda \in \Z_{\geq 0}^{n-1}} \frac{1}{\dim V_\lambda} \chi_\lambda(\ed^{2i \pi \alpha}) \chi_\lambda(\ed^{2i \pi \beta}) \chi_\lambda(g^{-1}),
\end{equation}
where the sum converges in $L^2(SU_n)$.
\end{lemma}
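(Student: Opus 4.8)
The plan is to compute $(\widehat{m}_{\alpha,\beta})^\lor$ directly from the definition of the inverse Fourier transform, using the already-established formulas for the Fourier transforms of the orbital measures. By \eqref{eq:prod_fourier} we have $\widehat{m}_{\alpha,\beta}(\lambda) = \widehat{m}_\alpha(\lambda)\widehat{m}_\beta(\lambda)$, and by \eqref{eq:fourier_theta} each factor is a scalar multiple of the identity, so
$$\widehat{m}_{\alpha,\beta}(\lambda) = \frac{\chi_\lambda(\ed^{2i\pi\alpha})\chi_\lambda(\ed^{2i\pi\beta})}{(\dim V_\lambda)^2}\,\mathrm{id}_{V_\lambda}.$$
Substituting this into the definition \eqref{eq:inv_fourier} of the inverse Fourier transform and using $Tr[\rho_\lambda(g^{-1})\mathrm{id}_{V_\lambda}] = \chi_\lambda(g^{-1})$ gives the claimed identity, since one factor of $\dim V_\lambda$ from the definition cancels against one in the denominator. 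The only nontrivial points are that the manipulation is legitimate (i.e. that $\widehat{m}_{\alpha,\beta}$ actually satisfies condition \eqref{eq:condition_L2}, which is precisely Lemma \ref{lem:convolution_is_l2}) and that the resulting series converges in $L^2(SU_n)$, which is again guaranteed by Lemma \ref{lem:convolution_is_l2} together with the Plancherel/Parseval theorem for compact groups.

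First I would invoke Lemma \ref{lem:convolution_is_l2} to conclude that $\widehat{m}_{\alpha,\beta}$ lies in the domain of the inverse Fourier transform, so that $(\widehat{m}_{\alpha,\beta})^\lor$ is a well-defined element of $L^2(SU_n)$ and the defining series converges in the $L^2$ norm. Next I would plug in the explicit scalar form of $\widehat{m}_{\alpha,\beta}(\lambda)$ obtained above and simplify term by term: the $\lambda$-th summand of \eqref{eq:inv_fourier} is $\dim V_\lambda \cdot Tr[\rho_\lambda(g^{-1})\,\widehat{m}_{\alpha,\beta}(\lambda)]$, which equals
$$\dim V_\lambda \cdot \frac{\chi_\lambda(\ed^{2i\pi\alpha})\chi_\lambda(\ed^{2i\pi\beta})}{(\dim V_\lambda)^2}\,Tr[\rho_\lambda(g^{-1})] = \frac{1}{\dim V_\lambda}\chi_\lambda(\ed^{2i\pi\alpha})\chi_\lambda(\ed^{2i\pi\beta})\chi_\lambda(g^{-1}).$$
Summing over $\lambda \in \Z_{\geq 0}^{n-1}$ yields \eqref{eq:inverse_of_convolution}, with convergence in $L^2(SU_n)$ as asserted.

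There is essentially no obstacle here: the statement is a bookkeeping consequence of the preceding lemmas, the main content having already been extracted in Lemma \ref{lem:convolution_is_l2} (the $L^2$ bound) and the Fourier transform computation \eqref{eq:fourier_theta}. If one wanted to be careful, the one subtlety worth a sentence is the justification that the inverse Fourier transform genuinely inverts the Fourier transform on $L^2(SU_n)$ — i.e. that $(\widehat{m}_{\alpha,\beta})^\lor$ represents the density of $m_{\alpha,\beta}$ and not merely some $L^2$ function with the same Fourier coefficients — but this is the standard Peter–Weyl/Plancherel theory for compact Lie groups and can simply be cited. The remainder of the proof of Proposition \ref{prop:first_density} will then consist of identifying this $L^2$ limit with the density $\diff\prob[\gamma|\alpha,\beta]$, pulling back along $\varphi_\gamma$, and inserting the Weyl integration Jacobian $|\Delta(\ed^{2i\pi\gamma})|^2/((2\pi)^{n-1}n!)$, but that is beyond the statement at hand.
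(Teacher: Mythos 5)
Your proposal is correct and follows essentially the same route as the paper's own proof: both substitute the scalar form of $\widehat{m}_{\alpha,\beta}(\lambda)$ from \eqref{eq:prod_fourier} and \eqref{eq:fourier_theta} into the definition \eqref{eq:inv_fourier}, cancel one factor of $\dim V_\lambda$, and invoke Lemma \ref{lem:convolution_is_l2} for the $L^2$ convergence.
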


\begin{proof}
    Using \eqref{eq:inv_fourier} together with \eqref{eq:prod_fourier} yields

    \begin{align}
        (\widehat{m}_{\alpha, \beta})^\lor (g) &= \sum_{\lambda \in \Z_{\geq 0}^{n-1}} \dim V_\lambda \cdot Tr[ \rho_\lambda(g^{-1}) \widehat{m}_\alpha(\lambda) \widehat{m}_\beta(\lambda) ] \\
        &= \sum_{\lambda \in \Z_{\geq 0}^{n-1}} \dim V_\lambda \cdot Tr \left[   \frac{\chi_\lambda(\ed^{2i \pi \alpha})\chi_\lambda(\ed^{2i \pi \beta})}{\dim V_\lambda^2} \rho_\lambda(g^{-1})  \right] \\
        &= \sum_{\lambda \in \Z_{\geq 0}^{n-1}} \frac{1}{\dim V_\lambda} \chi_\lambda(\ed^{2i \pi \alpha}) \chi_\lambda(\ed^{2i \pi \beta}) \chi_\lambda(g^{-1}).
    \end{align}
    
\end{proof}

\begin{proof}[Proof of Proposition \ref{prop:first_density}]
    The induced density on $\gamma$ is given by the density \eqref{eq:inverse_of_convolution} multiplied by the Jacobian of the diagonalization map $g \mapsto V \ed^{2i \pi \gamma} V^* $ with $\gamma = (\gamma_1, \dots, \gamma_n)$ such that $\sum \gamma_i \in \Z$. Since this Jacobian is $\frac{|\Delta(\ed^{2i \pi \gamma})|^2}{(2 \pi)^{n-1}n!}$, see \cite[Thm 11.2.1]{Faraut_2008}, we obtain the desired expression.
\end{proof}

\noindent
Writing the density \eqref{eq:density_horn_1} using \eqref{eq:weyl_character_formula} and the fact that $\dim V_{\lambda}=\frac{\Delta(\lambda')}{sf(n-1)}$ yields

\begin{align}
    \diff \prob[\gamma | \alpha, \beta] =& \frac{\Delta(\ed^{2i \pi \gamma}) sf(n-1)}{(2 \pi)^{n-1}n! \Delta(\ed^{2i \pi \alpha} )\Delta(\ed^{2i \pi \beta})} \sum_{\lambda \in \Z_{\geq 0}^{n-1}} \frac{1}{\Delta(\lambda')} 
    \det [\ed^{2i \pi \alpha_r \lambda_s'}]
    \det [\ed^{2i \pi \beta_r \lambda_s'}]
    \det [\ed^{-2i \pi \gamma_r \lambda_s'}]\notag\\
    =&\frac{sf(n-1)(2 \pi)^{(n-1)(n-2)/2}\Delta(\ed^{2i \pi \gamma}) }{\Delta(\ed^{2i \pi \alpha} )\Delta(\ed^{2i \pi \beta})n!}J[\gamma | \alpha, \beta],
    \label{eq:density_horn_2}
\end{align}
where 
\begin{equation}
\label{eq:definition_J}
J[\gamma | \alpha, \beta]=\frac{1}{(2 \pi)^{n(n-1)/2} } \sum_{\lambda \in \Z_{\geq 0}^{n-1}} \frac{1}{\Delta(\lambda')} 
    \det [\ed^{2i \pi \alpha_r \lambda_s'}]
    \det [\ed^{2i \pi \beta_r \lambda_s'}]
    \det [\ed^{-2i \pi \gamma_r \lambda_s'}].
\end{equation}
is called the volume function for the unitary Horn problem.

\subsection{Link with quantum cohomology of the Grassmannians}
\label{subsec:link_quantum_coho}

The goal of this section is to link the volume function \eqref{eq:definition_J} with structure constants of the quantum cohomology ring of Grassmannians $QH^\bullet(\Gr)$ in the same way as the volume function in the coadjoint case is related to the classical cohomology ring of Grassmannians, see \cite{Coquereaux_2018}. The structure constants in the unitary case are the Gromov-Witten invariants, which are related to characters via \cite[Cor. 6.2]{Rietsch_2001}. We refer the reader to \cite{book_j_holomorphic_curves} and \cite{Buch2003_solo} for an introduction to the subject. \\
\\
For $N \geq n$, denoted by $\Z_{N-n}^n$ the set of partition $\lambda\in \Z^n$ such that $N-n \geq \lambda_1 \geq \dots \geq \lambda_n \geq 0$. Then, the ring $QH^\bullet(\Gr(n, N))$ has an additive basis $(q^d \otimes \sigma_\lambda, d\geq 0, \lambda \in \Z_{N-n}^n)$. We will denote by $c_{\lambda, \mu}^{\nu, d}$ the structure constants of this ring so that 

$$ \sigma_\lambda \cdot \sigma_\mu = \sum_{\nu, d \geq 0} c_{\lambda, \mu}^{\nu, d} q^d\otimes\sigma_{\nu}.$$
where the sum is over pairs $(\nu, d) \in \Z_{N-n}^{n}\times \mathbb{N}$ such that $|\lambda| + |\mu| =  |\nu|  + Nd$. The structure coefficients $c_{\lambda, \mu}^{\nu, d}$ are the degree $d$ Gromov-Witten invariants associated to the Schubert cycles $\sigma_\lambda,\sigma_\mu,\sigma_{\nu^\lor}$, see \cite[Cor. 6.2]{Rietsch_2001}. The main result of this section is Theorem \ref{th:density_limit_quantum_coef} below.

\begin{theorem}[Density as limit of quantum coefficients]
\label{th:density_limit_quantum_coef}
    Let $(\alpha, \beta, \gamma) \in \mathcal{H}_{reg}^3$. For each $N \geq 1$, let $(\lambda_N, \mu_N, \nu_N)$ be three partitions in $\Z_{N-n}^{n}$ such that $ \vert \lambda_N \vert + \vert \mu_N \vert = \vert \nu_N \vert  + dN$ for some $d \in \Z_{\geq 0}$ and such that $\frac{1}{N} \lambda_N = \alpha + o(1)$, $\frac{1}{N} \mu_N = \beta + o(1)$ and $\frac{1}{N} \nu_N = \gamma + o(1)$ as $N \rightarrow + \infty$. Then,
\begin{equation}
\label{eq:density_limit_quantum_coefs}
    \lim_{N \rightarrow \infty} N^{-(n-1)(n-2)/2} c_{\lambda_N, \mu_N}^{\nu_N, d}=J[\gamma | \alpha, \beta]  =\frac{\Delta(\ed^{2i \pi \alpha} )\Delta(\ed^{2i \pi \beta} ) n! }{sf(n-1)(2\pi)^{(n-1)(n-2)/2} \Delta(\ed^{2i \pi \gamma} )} \diff \prob[\gamma | \alpha, \beta].
\end{equation}

\end{theorem}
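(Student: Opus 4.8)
The plan is to pass through the volume function $J[\gamma\vert\alpha,\beta]$ of \eqref{eq:definition_J}, whose identification with $\frac{\Delta(\ed^{2i\pi\alpha})\Delta(\ed^{2i\pi\beta})n!}{sf(n-1)(2\pi)^{(n-1)(n-2)/2}\Delta(\ed^{2i\pi\gamma})}\diff\prob[\gamma\vert\alpha,\beta]$ is already recorded in \eqref{eq:density_horn_2}; so the entire content is the first equality, the limit statement. First I would observe that the Gromov–Witten invariant $c_{\lambda_N,\mu_N}^{\nu_N,d}$ admits a determinantal/character expression via the quantum Pieri or, more efficiently, via the Bertram–Vafa–Intriligator type residue formula (equivalently \cite[Cor. 6.2]{Rietsch_2001}): writing $\lambda_N'=(\lambda_N)_i+(n-i)$ etc., one has $c_{\lambda_N,\mu_N}^{\nu_N,d}$ as a finite sum over $N$-th roots of unity of products of Schur polynomials. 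Unwinding this into a sum of the form $\sum \frac{1}{\Delta(\mu')}\det[\zeta_r^{(\lambda_N)_s'}]\det[\zeta_r^{(\mu_N)_s'}]\det[\zeta_r^{-(\nu_N)_s'}]$, where $\zeta_1,\dots,\zeta_n$ range over $n$-subsets of the $N$-th roots of unity, one recognizes exactly a Riemann-sum discretization of the integral hidden in $J[\gamma\vert\alpha,\beta]$.

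The core of the argument is then a Riemann-sum convergence. Rescaling by $N^{-(n-1)(n-2)/2}$ is precisely the normalization needed to turn the sum over the $\binom{N}{n}$-many index tuples into an integral over a simplex/torus of dimension $(n-1)(n-2)/2$: one sets $\lambda_s'=Nx_s+o(N)$ and notes that the summand, after the rescaling, converges pointwise to the integrand of \eqref{eq:definition_J} while the number of lattice points in a window of size $N$ contributes the Jacobian factor. The hypotheses $\frac1N\lambda_N\to\alpha$, $\frac1N\mu_N\to\beta$, $\frac1N\nu_N\to\gamma$ with $\alpha,\beta,\gamma$ regular guarantee that the Vandermonde factors $\Delta(\ed^{2i\pi\alpha})$ etc. stay bounded away from $0$, so the pointwise limit is not spoiled by a vanishing denominator, and the absolute convergence established in Lemma \ref{lem:convolution_is_l2} (the bound $V_n\leq (V_2)^{n-1}$) provides a summable dominating function uniformly in $N$, justifying dominated-convergence interchange of limit and sum. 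Concretely I would split the sum into a large central block where Riemann-sum convergence is applied term by term, and a tail, controlled uniformly in $N$ by the very estimate of Lemma \ref{lem:convolution_is_l2}, which goes to $0$.

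The main obstacle I anticipate is matching the combinatorial normalizations exactly: the passage from the Gromov–Witten residue formula to a clean sum of triple determinants over $n$-subsets of $\mu_N(N)$-th roots of unity involves keeping careful track of (i) the shift by $\rho=(n-1,\dots,0)$ versus the affine shift $N$ appearing in the quantum deformation, (ii) the factor $sf(n-1)$ coming from $\dim V_\lambda=\Delta(\lambda')/sf(n-1)$, and (iii) the power of $2\pi$ and of $N$ produced by converting a sum into an integral in dimension $(n-1)(n-2)/2$ rather than $n(n-1)/2$ (the drop by $n-1$ is the number of "radial" constraints $\sum x_i$ fixed). A secondary subtlety is that $c_{\lambda_N,\mu_N}^{\nu_N,d}$ is defined inside $QH^\bullet(\Gr(n,N))$ with a ceiling $N-n\geq\lambda_1$, so one must check that for $N$ large the truncation is inactive, i.e. the relevant partitions genuinely fit, which follows from $\alpha_1<1$ and regularity. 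Once these bookkeeping points are pinned down, the convergence is the standard Riemann-sum argument identical in spirit to the coadjoint case treated in \cite{Coquereaux_2018}, and \eqref{eq:density_horn_2} closes the chain.
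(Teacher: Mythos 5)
Your skeleton is right --- Rietsch's formula as the starting point, dominated convergence as the tool, and \eqref{eq:density_horn_2} to close the chain --- but the central picture you describe is off. You say the rescaled sum is ``a Riemann-sum discretization of the integral hidden in $J[\gamma\vert\alpha,\beta]$'', with the $N^{-(n-1)(n-2)/2}$ playing a Jacobian/lattice-point role. Look again at \eqref{eq:definition_J}: $J$ is an \emph{infinite discrete series} over $\lambda\in\Z_{\geq 0}^{n-1}$, not an integral over a simplex or torus. The convergence proved in the paper is term-by-term convergence of a finite sum of growing cardinality to an infinite series, not a Riemann-sum limit. If you simply feed the $N^{-n}\sum_{I\in I_{n,N}}F(I,\cdot)$ of the Rietsch formula into dominated convergence with the $N^{-(n-1)(n-2)/2}$ rescaling, each term tends to $0$ (the exponents give a stray $N^{-1}$), so there is no pointwise limit to dominate. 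The missing ingredient is the translation invariance of the summand $F$ (Lemma \ref{lem:trans_inv}) and the resulting decomposition of the index set into orbits of size $N$ under the shift action $\Phi_N$ (Lemmas \ref{lem:orbit_struct}, \ref{lem:orbit_decomp}). This converts $N^{-n}\sum_{I\in I_{n,N}}$ into $N^{-(n-1)}\sum_{I:\,I_n=0}$, and only \emph{then} does each fixed term, under the extra $N^{-(n-1)(n-2)/2}$, converge to the corresponding term of $J$. That power of $N$ is the asymptotics of $1/\Delta(\xi^I)$ for fixed $I$ as the root of unity degenerates, not a Jacobian.

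A second issue: the dominating function you invoke is the wrong one. Lemma \ref{lem:convolution_is_l2} controls the $L^2$ norm of the character series and is not directly the bound used here; the relevant estimate is Lemma \ref{lem:unif_control}, whose proof crucially rests on a non-trivial combinatorial fact you did not anticipate, the uniform spacing Lemma \ref{lem:unif_spacing}: the minimal element of each orbit has $I_1/N\leq 1-1/n$. Without it, $\sin(\pi(I_r-I_s)/N)$ need not be comparable to $\pi(I_r-I_s)/N$ uniformly in $N$ (it fails when $I_1/N\to 1$), and the domination collapses. The regularity of $\alpha,\beta,\gamma$, which you cite, controls $\Delta(\ed^{2i\pi\alpha})$ etc.\ in the prefactor, but is orthogonal to this spacing issue in the summation index. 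So while the broad strategy is recognizable, the two technical pillars of the actual argument --- the orbit reduction and the uniform spacing estimate --- are absent from your plan, and the ``integral'' picture would lead you astray if you tried to execute it.
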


\noindent
The rest of this section is devoted to the proof of Theorem \ref{th:density_limit_quantum_coef}. In subsection \ref{subsec:det_expression} we prove a determinantal formula for the coefficients $c_{\lambda, \mu}^{\nu, d}$ along with some results on the quantities involved in the expression. In subsection \ref{subsec:convergence_to_density} we prove Theorem \ref{th:density_limit_quantum_coef} using Lemmas \ref{lem:pt_conv} and \ref{lem:unif_control}.

\subsubsection{Determinantal expression for $c_{\lambda, \mu}^{\nu, d}$ }
\label{subsec:det_expression}

For $1 \leq n \leq N$, set
   $$ I_{n, N} = \left \{ (I_1, \dots, I_n) \in \left( \Z +\left(\frac{1}{2}\right)^{(n-1)[2]}\right)^n : -\frac{n-1}{2} \leq I_n < \dots < I_1 \leq N - \frac{n+1}{2} \right \}.$$

\begin{lemma}[Determinantal expression for $c_{\lambda, \mu}^{\nu, d}$]
    Let $\lambda, \mu, \nu$ such that $|\lambda| + |\mu|   = |\nu| + Nd$. Then,
    \begin{equation}
    \label{eq:det_expression_quant_const}
    c_{\lambda, \mu}^{\nu, d} = \frac{1}{N^n} \sum_{I \in I_{n, N}} \frac{\det [\ed^{ \frac{2i \pi I_r (\lambda_s+(s-1)) }{N}}]\det [\ed^{ \frac{2i \pi I_r(\mu_s+(s-1)) }{N}}]\det [\ed^{ -\frac{2i \pi I_r (\nu_s+(s-1)) }{N}}]}{\Delta(\xi^I)}
    \end{equation}
\end{lemma}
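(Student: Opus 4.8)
The plan is to deduce \eqref{eq:det_expression_quant_const} from the Verlinde/Vafa--Intriligator description of the three-point Gromov--Witten invariants of $\Gr(n,N)$ recorded in \cite[Cor.~6.2]{Rietsch_2001}, combined with the bialternant form of Schur polynomials/characters already used in \eqref{eq:weyl_character_formula}.

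First I would recall the needed input. Specialising the quantum parameter to $1$, the ring $QH^\bullet(\Gr(n,N))|_{q=1}$ is the reduced finite-dimensional algebra $\C[x_1,\dots,x_n]^{S_n}/(h_{N-n+1},\dots,h_{N-1},\,h_N-(-1)^{n-1})$; it is a semisimple commutative Frobenius algebra whose spectrum is the set of $n$-element subsets $S=\{x_1,\dots,x_n\}$ of the solutions of $x^N=(-1)^{n-1}$, the Schubert class $\sigma_\lambda$ evaluating at such an $S$ to the Schur polynomial $s_\lambda(x_1,\dots,x_n)$. Since $c_{\lambda,\mu}^{\nu,d}=\langle\sigma_\lambda,\sigma_\mu,\sigma_{\nu^\lor}\rangle_d$ with the degree $d=(|\lambda|+|\mu|-|\nu|)/N$ forced, Poincaré duality gives $c_{\lambda,\mu}^{\nu,d}=\varepsilon(\sigma_\lambda\cdot\sigma_\mu\cdot\sigma_{\nu^\lor})$, where $\varepsilon$ is the co-unit (the coefficient of the point class $\sigma_{((N-n)^n)}$); evaluating $\varepsilon$ in the idempotent basis $\{e_S\}$ then yields
\[
c_{\lambda,\mu}^{\nu,d}=\sum_{S}\varepsilon(e_S)\,s_\lambda(x_S)\,s_\mu(x_S)\,s_{\nu^\lor}(x_S),
\]
where $\varepsilon(e_S)$ is the inverse of the Jacobian of the defining system at $S$ (a Grothendieck residue), equal to the standard Vafa--Intriligator value $\varepsilon(e_S)=|\Delta(x_S)|^2/\bigl(N^n\,(x_1\cdots x_n)^{N-n}\bigr)$ up to a universal sign, with $\Delta$ the Vandermonde.

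Next I would put this in the form \eqref{eq:det_expression_quant_const}. The two index sets match: for $\xi=\ed^{2i\pi/N}$ and $I=(I_1>\dots>I_n)\in I_{n,N}$ the tuple $\xi^I=(\xi^{I_1},\dots,\xi^{I_n})$ consists of $n$ distinct solutions of $x^N=\ed^{2i\pi I_j}=(-1)^{n-1}$ (the half-integer shift occurs precisely for $n$ even, which is exactly when $(-1)^{n-1}=-1$), and the window $-\tfrac{n-1}{2}\le I_n<\dots<I_1\le N-\tfrac{n+1}{2}$ contains exactly $N$ admissible positions, so $I\mapsto\{\xi^{I_1},\dots,\xi^{I_n}\}$ is a bijection onto the $\binom{N}{n}$ subsets $S$, and by symmetry of Schur polynomials the summand depends only on the subset. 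Then, writing each factor as a bialternant $s_\lambda(x)=\det[x_r^{\lambda_s+(s-1)}]/\Delta(x)$ (the exponents of \eqref{eq:weyl_character_formula} up to the choice of $\rho$-representative, which only affects an overall sign), using that $|x_i|=1$ gives $s_\nu(x^{-1})=\overline{s_\nu(x)}$ and $\Delta(x^{-1})=\overline{\Delta(x)}$, together with the duality identity $s_{\nu^\lor}(x)=(x_1\cdots x_n)^{N-n}\,\overline{s_\nu(x)}$ (obtained by reversing the columns in the bialternant of $s_{\nu^\lor}$ and invoking $\Delta(x^{-1})=(-1)^{\binom n2}\Delta(x)/(x_1\cdots x_n)^{n-1}$), one checks that $\varepsilon(e_S)\,s_\lambda(x_S)s_\mu(x_S)s_{\nu^\lor}(x_S)$ equals $N^{-n}\det[x_r^{\lambda_s+(s-1)}]\det[x_r^{\mu_s+(s-1)}]\det[x_r^{-(\nu_s+(s-1))}]/\Delta(x_S)$; summing over $S\leftrightarrow I\in I_{n,N}$ gives \eqref{eq:det_expression_quant_const}.

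I expect the main obstacle to be entirely a matter of bookkeeping rather than of ideas: pinning down the universal sign and the exact power of $N$ in the weight $\varepsilon(e_S)$ (equivalently the normalisation of the modular $S$-matrix entry $S_{0,S}$ responsible for the single factor $\Delta(\xi^I)$ in the denominator), verifying that the shift and window defining $I_{n,N}$ correspond to the $\rho$-shift and the fundamental alcove on the representation-theoretic side, and tracking the duality $\nu\leftrightarrow\nu^\lor$ (equivalently $x\leftrightarrow x^{-1}$) so that the third determinant ends up with negated exponents. None of these is deep, but each has to be reconciled against the conventions of \cite{Rietsch_2001} for the clean identity \eqref{eq:det_expression_quant_const} to come out on the nose.
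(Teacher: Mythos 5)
Your proposal is correct and takes essentially the same route as the paper: both start from the Vafa--Intriligator/Verlinde formula of \cite[Cor.~6.2]{Rietsch_2001}, use the duality identity $S_{\nu^\lor}(\xi^I)/S_{(N-n)}(\xi^I)=\overline{S_\nu(\xi^I)}$ (which the paper cites as \cite[eq.~(4.3)]{Rietsch_2001} and which you re-derive via $s_{\nu^\lor}(x)=(x_1\cdots x_n)^{N-n}\overline{s_\nu(x)}$ on the unit circle), and then expand the Schur factors as bialternants so that $|\Delta(\xi^I)|^2/\Delta(\xi^I)^2\overline{\Delta(\xi^I)}=1/\Delta(\xi^I)$. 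The only difference is that you unpack the Frobenius-algebra origin of the residue weight $\varepsilon(e_S)$ where the paper simply quotes Rietsch; the computation itself is identical.
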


\begin{proof}
    
Let $\xi = \exp{(2i \pi /N)}$ and for $I \in I_{n, N}$, set $\xi^I = (\xi^{I_1}, \dots,\xi^{I_n})$. Let $S_\lambda(x_1, \dots, x_n) = \frac{\det[x_r^{(\lambda_s + (s-1))}, 1 \leq r,s \leq n]}{\Delta(x)}$ be the Schur function corresponding to the partition $\lambda$. Using \cite[Corollary 6.2]{Rietsch_2001}:

\begin{equation}
\label{eq:rietsch_1}
    c_{\lambda, \mu}^{\nu, d} = \frac{1}{N^n} \sum_{I \in I_{n,N}} S_\lambda(\xi^I) S_\mu(\xi^I) S_{\nu^\lor}(\xi^I) \frac{|\Delta(\xi^I)|^2}{S_{(N-n)}(\xi^I)}.
\end{equation}
Moreover, by \cite[eq. (4.3)]{Rietsch_2001}, one has 

   $$ \frac{S_{\nu^\lor}(\xi^I)}{S_{(N-n)}(\xi^I)} = \overline{S_{\nu}(\xi^I)}$$

so that 

\begin{align}
    c_{\lambda, \mu}^{\nu, d} &= \frac{1}{N^n} \sum_{I \in I_{n,m}} S_\lambda(\xi^I) S_\mu(\xi^I) S_{\nu}(\overline{\xi^I}) |\Delta(\xi^I)|^2 \nonumber\\
    &= \frac{1}{N^n} \sum_{I \in I_{n,m}} \frac{\det [\ed^{ \frac{2i \pi I_r (\lambda_s+(s-1)) }{N}}]\det [\ed^{ \frac{2i \pi I_r(\mu_s+(s-1)) }{N}}]\det [\ed^{ -\frac{2i \pi I_r (\nu_s+(s-1)) }{N}}]}{\Delta(\xi^I)} 
    \label{eq:gromov_det}
\end{align}

\end{proof}

\noindent
We are interested in the asymptotic behaviour of the previous expression as $N \rightarrow \infty$. Define

\begin{equation}
\label{eq:def_F}
    F(I, \lambda, \mu, \nu, N) =  \frac{\det [\ed^{ \frac{2i \pi I_r (\lambda_s+(s-1)) }{N}}]\det [\ed^{ \frac{2i \pi I_r(\mu_s+(s-1)) }{N}}]\det [\ed^{ -\frac{2i \pi I_r (\nu_s+(s-1)) }{N}}]}{\Delta(\xi^I)}.
\end{equation}

\begin{lemma}[Translation invariance]
\label{lem:trans_inv}
Let $I \in \left( \frac{1}{2} \Z \right)^n$ and $a \in \frac{1}{2} \Z$. We still assume that $|\lambda| + |\mu| = |\nu|  + Nd$ for some $d \in \Z_{\geq 0}$. Then,
\begin{equation}
    F(I+a, \lambda, \mu, \nu, N) = F(I, \lambda, \mu, \nu, N).
\end{equation}
    
\end{lemma}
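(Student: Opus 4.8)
The plan is to reduce the claimed invariance to an elementary algebraic identity about the three determinants and the Vandermonde $\Delta(\xi^I)$ under the shift $I \mapsto I + a$. First I would write $\xi = e^{2i\pi/N}$ and observe that shifting $I_r \mapsto I_r + a$ replaces each entry $e^{2i\pi I_r(\lambda_s + (s-1))/N}$ by $e^{2i\pi a(\lambda_s + (s-1))/N}\, e^{2i\pi I_r(\lambda_s + (s-1))/N}$; that is, the matrix gets right-multiplied column-wise by the diagonal matrix with entries $e^{2i\pi a(\lambda_s + (s-1))/N}$, so its determinant picks up the scalar factor $\prod_{s=1}^n e^{2i\pi a(\lambda_s + (s-1))/N} = e^{2i\pi a(|\lambda| + \binom{n}{2})/N}$. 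The same computation applies to the $\mu$-determinant with factor $e^{2i\pi a(|\mu| + \binom{n}{2})/N}$ and to the $\nu$-determinant with factor $e^{-2i\pi a(|\nu| + \binom{n}{2})/N}$.

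Next I would handle the denominator: $\Delta(\xi^I) = \prod_{r<s}(\xi^{I_r} - \xi^{I_s})$, and shifting $I \mapsto I + a$ multiplies each factor $\xi^{I_r} - \xi^{I_s}$ by $\xi^a = e^{2i\pi a/N}$, hence $\Delta(\xi^{I+a}) = (\xi^a)^{\binom{n}{2}}\Delta(\xi^I) = e^{2i\pi a \binom{n}{2}/N}\Delta(\xi^I)$. Collecting all four contributions, the total multiplicative factor acquired by $F$ is
\begin{equation*}
\frac{e^{2i\pi a(|\lambda| + \binom{n}{2})/N}\, e^{2i\pi a(|\mu| + \binom{n}{2})/N}\, e^{-2i\pi a(|\nu| + \binom{n}{2})/N}}{e^{2i\pi a \binom{n}{2}/N}} = e^{2i\pi a(|\lambda| + |\mu| - |\nu| + \binom{n}{2})/N}.
\end{equation*}
Using the hypothesis $|\lambda| + |\mu| = |\nu| + Nd$, the exponent becomes $2i\pi a(Nd + \binom{n}{2})/N = 2i\pi a d + 2i\pi a\binom{n}{2}/N$. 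Here I would note the slight subtlety: the factor is exactly $1$ only when $a\binom{n}{2}/N \in \Z$ as well, so strictly the statement should be read with $a d \in \Z$ automatic (since $d \in \Z_{\geq 0}$ and $a \in \frac12\Z$ gives $ad \in \frac12\Z$, which is not always integral); in fact one needs the combined exponent $a(|\lambda|+|\mu|-|\nu|+\binom n2)/N$ to be an integer, and I expect this is exactly the role of the constraint $I \in I_{n,N}$ (half-integer shifts tied to the parity $(n-1)[2]$) together with $|\lambda|+|\mu|-|\nu| = Nd$, which is why $a$ is restricted to $\frac12\Z$ and the relevant shifts are those preserving the index set. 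I would therefore carry out the bookkeeping carefully to confirm that for the shifts $a$ actually used later (most importantly $a \in \Z$, or $a$ a half-integer compatible with $n$), the exponent is an integer multiple of $2i\pi$ and the factor is genuinely $1$.

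The only real obstacle is this parity/integrality accounting — making sure the $\binom{n}{2}/N$ term and the half-integer nature of $a$ and $I$ interact correctly — but this is bookkeeping rather than a conceptual difficulty; the determinant-shift and Vandermonde-shift computations themselves are one-line each. I would present the proof as: (1) the column-scaling identity for each of the three determinants, (2) the scaling of $\Delta(\xi^I)$, (3) multiply everything out and invoke $|\lambda| + |\mu| = |\nu| + Nd$ to see the exponent is $2i\pi$ times an integer, hence $F(I+a,\lambda,\mu,\nu,N) = F(I,\lambda,\mu,\nu,N)$.
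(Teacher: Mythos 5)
Your strategy is the same as the paper's: column-scale the three determinants, scale the Vandermonde, divide and simplify. The three determinant computations and the Vandermonde computation are all correct. The slip is in the final combination. The numerator exponent is
\[
\frac{2\pi a}{N}\Bigl((|\lambda|+\tbinom n2)+(|\mu|+\tbinom n2)-(|\nu|+\tbinom n2)\Bigr)
=\frac{2\pi a}{N}\Bigl(|\lambda|+|\mu|-|\nu|+\tbinom n2\Bigr),
\]
and after dividing by the Vandermonde factor $e^{2i\pi a\binom n2/N}$ the $\binom n2$ cancels \emph{entirely}, leaving
\[
\frac{F(I+a,\lambda,\mu,\nu,N)}{F(I,\lambda,\mu,\nu,N)}=e^{2i\pi a(|\lambda|+|\mu|-|\nu|)/N}=e^{2i\pi a d}.
\]
You instead kept the $\binom n2$ in the simplified exponent (your displayed right-hand side is the numerator factor, not the quotient), which is what led you to worry about a spurious extra condition $a\binom n2/N\in\Z$. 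That condition never arises; the whole $\binom n2/N$ dependence is exactly what the Vandermonde is there to cancel, precisely as in the paper.

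The residual issue you identify — that $e^{2i\pi ad}=1$ requires $ad\in\Z$, which fails for $a\in\frac12\Z\setminus\Z$ and $d$ odd — is real and is the right thing to worry about. The paper's own proof passes over this: it asserts $\exp\bigl(\tfrac{2\pi i a}{N}(|\lambda|+|\mu|-|\nu|)\bigr)=1$ from $|\lambda|+|\mu|-|\nu|\equiv0\ [N]$ alone, which is only valid when $ad\in\Z$. For the integer shifts used in the orbit action this holds automatically, but for the one half-integer shift $a=\tfrac{n-1}{2}$ (relevant when $n$ is even) the factor is $(-1)^{(n-1)d}=(-1)^d$. So you have put your finger on a genuine subtlety that the paper's proof does not address, even though your own bookkeeping of the $\binom n2$ terms went astray along the way. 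To complete the argument one should state the lemma with the correct conclusion $F(I+a)=e^{2i\pi ad}F(I)$ and then check, at the point of use, that the shifts employed make this factor equal to one.
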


\begin{proof}
    Since 
    \begin{equation*}
        \det \left[ \exp{ \left( \frac{2i \pi (I_r+a) (\lambda_s+s-1)}{N} \right)} \right] 
        = \det \left[\exp{ \left( \frac{2i \pi I_r (\lambda_s+s-1)}{N} \right) } \right] \exp{ \left( a \frac{2i \pi (|\lambda| + \sum_{l=0}^{n-1}l)}{N} \right)},
    \end{equation*}
    the numerator of $F(I+a, \lambda, \mu, \nu, N)$ is the one of $F(I, \lambda, \mu, \nu, N))$ times the factor
    
    \begin{equation*}
        \exp{\left( a\frac{2i \pi}{N} \left(|\lambda| + |\mu| - |\nu| + n(n-1)/2 \right) \right)} = \exp{ \left( a \frac{i \pi n(n-1)}{N}  \right)}
    \end{equation*}
    since $|\lambda| + |\mu| - |\nu| = 0 \ [N]$. The Vandermonde in the denominator of $F(I+a, \lambda, \mu, \nu, N)$ is
    \begin{align*}
        \Delta(\xi^{I + a}) &= \prod_{1 \leq r < s \leq n } \left( \exp{ \left( \frac{2i \pi (I_r+a) }{N} \right)} - \exp{ \left( \frac{2i \pi (I_s+a) }{N} \right)} \right) \\
        &= \prod_{1 \leq r < s \leq n } \left( \exp{ \left( \frac{2i \pi I_r) }{N} \right)} - \exp{ \left( \frac{2i \pi I_s }{N} \right)} \right) \exp{ \left( a \frac{i \pi n(n-1)}{N}  \right)} \\
        &= \Delta(\xi^I)\exp{ \left( a \frac{i \pi n(n-1)}{N}  \right)}
    \end{align*}
    so that the quotient cancels the common additional factor appearing in the numerator and denominator of $F(I+a, \lambda, \mu, \nu, N)$.
\end{proof}

\noindent
From Lemma \ref{lem:trans_inv}, we can shift $I$ by $\frac{n-1}{2}$. In the following, we will assume that $0 \leq I_n < \dots < I_1 \leq N - 1$ and that the $I\in\Z^n$.  Denote by $J_{n,N}$ the set 
$$J_{n,N}=\{I\in \{0,\ldots,N-1\}^n,\, i_1>i_2>\ldots>i_n\}.$$

\begin{definition}[Action $\Phi_N$ and orbits]
    
The translation action of $\Z$ on $ J_{n,N}$ is given by
\begin{align*}
    \Phi_N: & \ \Z \times J_{n,N} \hspace{2.1cm}\rightarrow J_{n,N}\\
    & (l, I= (i_1 > \dots > i_n)) \mapsto I + (l, \dots, l) \ [N]
\end{align*}
where the tuple $I + (l, \dots, l) [N]$ consists of the sequence of elements $i_1 + l [N], \dots, i_n + l [N]$ sorted in the decreasing order.
\end{definition}
\noindent
Lemma \ref{lem:trans_inv} shows that $F$ is invariant under the action of $\Phi_N$.\\
\\
In order to give some properties of orbits of $\Phi_N$, let us recall the lexicographic order on $\mathbb{Z}_{\geq 0}^n$. For $I, J \in \Z_{\geq 0}^{n-1}$, let $r^* = \inf_{1 \leq r \leq n} I_r \neq J_r$, with the convention that $r^* = 0$ if $I = J$. We say that $I > J$ if $I_{r^*} > J_{r^*}$ and $I < J$ if $I_{r^*} < J_{r^*}$. This defines a total order on $\Z_{\geq 0}^{n}$ and by restriction on $J_{n,N}$. \\
\\
Let $Orbits(N)$ denote the orbits of the action of $\Phi_N$ on $J_{n,N}$. For $\Omega_N \in Orbits(N)$, denote by $\min(\Omega_N)$ its minimal element with respect to the lexicographic order. Then, necessarily, $(\min(\Omega_N))_n = 0$, otherwise $\Phi(-1, \min(\Omega_N))$ would be an element of $\Omega_N$ strictly inferior to $\min(\Omega_N)$. For an ordered $n$-tuple $I$ of $\{0, \dots, N-1 \}^n$, let $\Omega(I, N)$ denote its orbit under the action of $\Phi_N$.

\begin{lemma}[Orbit structure for large $N$]
\label{lem:orbit_struct}

Let $I = (I_1 > \dots > I_{n-1} > I_n = 0)$. Then, for $N$ large enough, the orbit of $I$ under the action of $\Phi_N$ has cardinal $N$ and $I$ is its minimal element:
\begin{equation}
    \exists M=M(I), \forall N \geq M: I = \min(\Omega(I, N)) \text{ and }  |\Omega(I, N)| = N.
\end{equation}

\end{lemma}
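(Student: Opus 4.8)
The statement is about the orbit of a fixed tuple $I = (I_1 > \dots > I_{n-1} > I_n = 0)$ under the cyclic shift action $\Phi_N$, for $N$ large. Two things must be shown: (i) $I$ is the lexicographic minimum of its orbit, and (ii) the orbit has full cardinality $N$. The key idea is that for $N$ much larger than $I_1$, the ``gap at the top'' of $I$ — namely the distance from $I_1$ up to $N-1$ — is huge, while the configuration $\{I_1,\dots,I_{n-1}\}$ sits inside the bounded window $[0, I_1]$ which does not grow with $N$.

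\emph{Minimality.} I would argue that for $N > 2I_1 + 1$, say, any nonzero shift $\Phi_N(\ell, I)$ with $1 \le \ell \le N-1$ is lexicographically $> I$. Since $I_n = 0$, the only way a shifted tuple could be $\le I$ in lex order is for its first few entries to agree with those of $I$ and then have a strictly smaller entry, or for it to equal $I$. Write the shifted multiset as $\{i_1 + \ell, \dots, i_n + \ell\} \pmod N$, re-sorted decreasingly. If $\ell$ is small enough that no entry wraps around (i.e. $I_1 + \ell \le N-1$), then the sorted tuple is $(I_1+\ell, \dots, I_{n-1}+\ell, \ell)$, whose first entry $I_1 + \ell > I_1$, so it is lex-larger. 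If some entries wrap, then at least one entry becomes $\le \ell - (N - I_1) $; more carefully, after wrapping, the entries that wrapped lie in $\{0,\dots,\ell-1\}$ and the entries that did not lie in $\{I_? + \ell, \dots\}$, which (since the non-wrapped ones include the largest original entry not wrapping) still produces a top entry exceeding $I_1$ whenever not all entries wrap — and not all can wrap for $\ell \le N-1$ unless $\ell > N - 1 - 0 = N-1$, impossible, OR all of $i_1,\dots,i_n$ wrap, which needs $\ell \ge N - i_n = N$, also impossible. Hence for every nonzero $\ell$ the top entry of the shifted tuple is strictly larger than $I_1$, so $I = \min(\Omega(I,N))$. (This also uses $I_n = 0$ crucially: the minimal element of any orbit has last coordinate $0$, and among tuples with last coordinate $0$ in the orbit, a bit of case analysis on $\ell$ shows $I$ itself is smallest.)

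\emph{Full cardinality.} The stabilizer of $I$ in $\Z/N\Z$ is a subgroup, hence of the form $(N/m)\Z/N\Z$ for some divisor $m \mid N$, and $|\Omega(I,N)| = m$. A nontrivial stabilizer means $I$ is invariant under a shift by $N/m$ with $m \ge 2$, i.e. $N/m \le N/2$. But invariance under shift by $s := N/m$ forces the multiset $\{I_1,\dots,I_n\} \pmod N$ to be a union of cosets of $\langle s \rangle$, so in particular $n$ is a multiple of $m$ and the $n$ points are spread out at spacing-pattern periodic with period $s = N/m \ge N/2$. Since all the $I_j$ lie in $[0, I_1]$, an interval of bounded length, while $s \ge N/2 \to \infty$, for $N$ large enough this is impossible unless $m = 1$. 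Concretely: take $M(I) = 2I_1 + 2$; for $N \ge M(I)$, any period $s \ge N/2 \ge I_1 + 1 > I_1$ exceeds the diameter of $\{I_1,\dots,I_n\}$, so the set cannot contain two distinct points differing by $s \pmod N$ in the required periodic fashion, forcing $m=1$ and $|\Omega(I,N)| = N$.

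\emph{Main obstacle.} The genuinely fiddly part is the minimality claim: the re-sorting after a shift-with-wraparound makes a clean formula awkward, so the case analysis on the shift amount $\ell$ (no wrap / partial wrap) needs to be done carefully to confirm the top coordinate always strictly increases. I would handle it by splitting on whether $\ell \le N - 1 - I_1$ (no wraparound, trivial) or $\ell > N-1-I_1$ (then $I_1 - (N - \ell) \in \{0,\dots,I_1-1\}$ wraps low, but at least the entry $0 = I_n$ shifts to $\ell \ge N - I_1 > I_1$, which after possible further re-sorting is still $\ge I_1 + 1$, giving a strictly larger top entry). The cardinality part is comparatively routine once phrased via the stabilizer subgroup and the bounded-diameter observation. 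Taking $M(I) = 2I_1 + 2$ works for both parts simultaneously.
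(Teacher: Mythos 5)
Your proof is correct and takes essentially the same approach as the paper: the cardinality claim is handled via the stabilizer subgroup $p_N\Z \supset N\Z$ with $p_N \mid N$, forcing $p_N \geq N-I_1 > N/2$ and hence $p_N=N$, and the minimality claim via showing the leading coordinate after any nonzero shift strictly exceeds $I_1$. The only cosmetic difference is that the paper first reduces to the $n-1$ shifts $\Phi_N(-I_k,I)$ that restore the last coordinate to $0$ (since the orbit minimum must have last coordinate zero) and checks each explicitly, while you run the wraparound case analysis over all shifts $\ell$; both give the same threshold up to an additive constant.
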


\begin{proof}
    Let $G_{I,N}$ denote the stabilizer of $I$ under $\Phi_N$. Then, $N \Z \subset G_{I,N}$ so that $G_{I,N} = p_N \Z$ for some $p_N \geq 1$ such that $p_N | N$. Set $d(N) = N - I_1 \geq 1$. Then, for $p_N \Z$ to be the stabilizer of $I$, one must have $d(N) \leq p_N$ (recall that $I_n = 0$), for otherwise $I_1+p_N\not\in \{I_2,\ldots,I_n\}$. However, as $I_1$ is fixed, for $N > 2I_1 $, $\frac{N}{2} < d(N) \leq p_N$ which implies with $p_N | N$ that $p_N = N$. Hence, for such $N$, the corresponding orbit has cardinal $N$. \\
    \\
    Let us show that $I$ is minimal in its orbit $\Omega(I, N)$ when $N$ is large enough. Take $N$ such that $I_1 <\frac{N}{2}$. The only points $J$ in the orbit of $I$ such that $J_n=0$ and $J\not=I$ are
    \begin{equation*}
        \{ \Phi_N(-I_{n-1}, I), \Phi_N(-I_{n-2}, I), \dots, \Phi_N(-I_{1}, I) \}
    \end{equation*}
    These tuples are all strictly greater than $I$ since $ N - (I_k - I_{k+1}) - I_1 \geq N - 2I_1 > 0$. Since the minimal element of an orbit must have $I_n=0$, the only possibility is $I$.
\end{proof}

\begin{lemma}[Orbit decomposition]
\label{lem:orbit_decomp}
    Let $n, N$ be fixed. Then,
    \begin{equation}
        \sum_{I \in I_{n, N}} F(I, \lambda, \mu, \nu, N) = \sum_{I: I_n =0} \mathrm{1}_{I = \min(\Omega(I, N))} |\Omega(I, N) | F(I, \lambda, \mu, \nu, N).
    \end{equation}
\end{lemma}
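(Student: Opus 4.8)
The plan is to establish the claimed identity in Lemma \ref{lem:orbit_decomp} by decomposing the sum over $I_{n,N}$ according to the orbits of the $\Phi_N$-action, using that $F$ is $\Phi_N$-invariant (which is exactly the content of Lemma \ref{lem:trans_inv}, since $\Phi_N$ acts by integer translations modulo $N$). Recall that after the shift by $\tfrac{n-1}{2}$ we have identified $I_{n,N}$ with $J_{n,N} = \{I \in \{0,\dots,N-1\}^n : I_1 > \dots > I_n\}$, so it suffices to partition $J_{n,N}$ into the disjoint orbits $\Omega \in Orbits(N)$ and sum $F$ over each.

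First I would write
\begin{equation*}
\sum_{I \in J_{n,N}} F(I,\lambda,\mu,\nu,N) = \sum_{\Omega \in Orbits(N)} \ \sum_{I \in \Omega} F(I,\lambda,\mu,\nu,N),
\end{equation*}
which is valid because the orbits partition $J_{n,N}$. By Lemma \ref{lem:trans_inv}, $F$ is constant on each orbit $\Omega$, so the inner sum equals $|\Omega| \cdot F(I_\Omega,\lambda,\mu,\nu,N)$ for any chosen representative $I_\Omega \in \Omega$; I would take $I_\Omega = \min(\Omega)$, the lexicographically minimal element. This gives
\begin{equation*}
\sum_{I \in J_{n,N}} F(I,\lambda,\mu,\nu,N) = \sum_{\Omega \in Orbits(N)} |\Omega| \, F(\min(\Omega),\lambda,\mu,\nu,N).
\end{equation*}

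Next I would re-index this sum over orbits as a sum over their minimal elements. As noted in the text preceding the lemma, if $I = \min(\Omega)$ then necessarily $I_n = 0$; conversely, the map $\Omega \mapsto \min(\Omega)$ is a bijection from $Orbits(N)$ onto the set $\{I \in J_{n,N} : I_n = 0,\ I = \min(\Omega(I,N))\}$, with inverse $I \mapsto \Omega(I,N)$. Substituting and writing $|\Omega(I,N)|$ for $|\Omega|$ yields
\begin{equation*}
\sum_{I \in J_{n,N}} F(I,\lambda,\mu,\nu,N) = \sum_{I : I_n = 0} \mathbf{1}_{I = \min(\Omega(I,N))} \, |\Omega(I,N)| \, F(I,\lambda,\mu,\nu,N),
\end{equation*}
which is the assertion (after replacing the summation index $I_{n,N}$ on the left by $J_{n,N}$, to which it was identified via Lemma \ref{lem:trans_inv}). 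There is no real obstacle here: the only points requiring care are the bookkeeping that $\Phi_N$-invariance of $F$ is legitimate on $I_{n,N}$ itself (handled by the shift argument already given) and that the indicator $\mathbf{1}_{I = \min(\Omega(I,N))}$ correctly selects exactly one representative per orbit among the tuples with last coordinate $0$ — which holds because any orbit has at least one element with last coordinate $0$ but possibly several, so the minimality condition is genuinely needed to avoid overcounting.
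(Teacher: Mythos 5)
Your proposal is correct and follows the same route as the paper: identify $I_{n,N}$ with $J_{n,N}$ via the half-integer shift, partition $J_{n,N}$ into $\Phi_N$-orbits, use $\Phi_N$-invariance of $F$ (from Lemma \ref{lem:trans_inv}) to collapse each orbit to $|\Omega|\,F(\min(\Omega))$, and re-index over minimal representatives with the indicator. The only thing both you and the paper gloss over equally is that $\Phi_N$-invariance needs, on top of the translation invariance of Lemma \ref{lem:trans_inv}, the $N$-periodicity of the exponentials and the fact that re-sorting flips the numerator determinants and the Vandermonde by compensating signs; this is routine and does not affect the argument.
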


\begin{proof}[Proof of Lemma \ref{lem:orbit_decomp}]
By the translation invariance of Lemma \ref{lem:trans_inv},

        $$\sum_{I \in I_{n, N}} F(I, \lambda, \mu, \nu, N) = \sum_{ 0 \leq I_n < \dots < I_1 \leq N-1 } F(I, \lambda, \mu, \nu, N) $$

\noindent
We decompose the elements $0 \leq I_n < \dots < I_1 \leq N-1$ along orbits of the action defined in Section \ref{subsec:det_expression}. 
\begin{align}
    \sum_{ 0 \leq I_n < \dots < I_1 \leq N-1 } F(I, \lambda, \mu, \nu, N) &= \sum_{\Omega \in Orbits(N)} \sum_{I \in \Omega} F(I, \lambda, \mu, \nu, N) \nonumber\\
    &= \sum_{\Omega \in Orbits(N)} |\Omega| F(\min(\Omega), \lambda, \mu, \nu, N)\nonumber \\
    &=  \sum_{ 0 \leq I_n < \dots < I_1 \leq N-1 } \mathrm{1}_{I = \min(\Omega(I, N))} |\Omega(I, m)| F(I, \lambda, \mu, \nu, N) \nonumber
\end{align} 
    
\end{proof}

\noindent
We will need the following result which asserts that $I_1/N$ cannot be arbitrary close to one.

\begin{lemma}[Uniform spacing of $I_1$]
\label{lem:unif_spacing}
    Let $n$ be fixed. Then,
  
       $$ \forall N \geq n, \forall \ \Omega \in Orbits(N): \frac{(\min(\Omega))_1}{N} \leq 1 - \frac{1}{n}.$$

\end{lemma}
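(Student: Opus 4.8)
The statement asserts that for every orbit $\Omega \in Orbits(N)$, its minimal element $I = \min(\Omega)$ satisfies $I_1 \leq N - N/n = N(n-1)/n$. The plan is to argue by contradiction: suppose $I_1 > N(n-1)/n$. Since $I = (I_1 > I_2 > \dots > I_n = 0)$ lives in $\{0,\dots,N-1\}^n$ and has $n$ distinct entries, the $n$ values $I_1, \dots, I_n$ occupy $n$ distinct residues mod $N$, and we want to find a shift $l$ so that $\Phi_N(l, I)$ is lexicographically strictly smaller than $I$, contradicting minimality.

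\textbf{Key steps.} First I would observe that, since $I$ is minimal, no cyclic shift of $I$ can produce a tuple whose top entry is smaller than $I_1$ while keeping the sorted-decreasing shape; more precisely, after the shift $\Phi_N(l,I)$ the new largest coordinate is $\max_j (I_j + l \bmod N)$. The gaps between consecutive entries (cyclically, including the wrap-around gap $N - I_1 + I_n = N - I_1$ from $I_n=0$ back up to $I_1$) partition the circle $\Z/N\Z$ into $n$ arcs of lengths $g_1 = I_1 - I_2, g_2 = I_2 - I_3, \dots, g_{n-1} = I_{n-1} - I_n, g_n = N - I_1$, summing to $N$. The largest coordinate of $\Phi_N(l, I)$, as $l$ ranges, realizes exactly the values $N - g_k$ for the various arcs (the top entry after shifting is one less than the start of the largest "gap arc" that gets pushed past $N$). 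Since $I$ is minimal, its own top entry $I_1 = N - g_n$ must be the smallest such value, i.e. $g_n = N - I_1$ must be the \emph{largest} gap: $g_n \geq g_k$ for all $k$. Because the $n$ gaps are positive integers summing to $N$, the largest one is at least $N/n$, hence $g_n = N - I_1 \geq N/n$, which gives $I_1 \leq N(1 - 1/n)$, as required.

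\textbf{Filling the combinatorial detail.} The one point requiring care is the precise identification of which tuple in the orbit has which top coordinate, i.e. justifying that minimality of $I$ forces the wrap-around gap $g_n$ to be the maximal gap. Here I would argue directly: for each index $k \in \{1,\dots,n\}$, applying the shift $l = N - I_k$ sends $I_k \mapsto 0$ and sends the entry $I_{k-1}$ (the next one up, cyclically, i.e. $I_n = 0$ when $k=1$... actually one must track the cyclic predecessor) to $I_{k-1} - I_k = g_{k-1} \bmod N$; the resulting sorted tuple has $0$ as its smallest entry and its largest entry equals $N - g_k$ where $g_k$ is the gap immediately "below" position $k$ going around. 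Running over all $k$, minimality of $I$ in the lexicographic order — together with the fact that the minimal element of any orbit has last coordinate $0$ (already noted in the text before Lemma~\ref{lem:orbit_struct}) — forces the top coordinate of $I$ itself, namely $I_1 = N - g_n$, to be $\leq N - g_k$ for all $k$, hence $g_n = \max_k g_k \geq \lceil N/n \rceil \geq N/n$.

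\textbf{Main obstacle.} The genuinely substantive step is the bookkeeping in the previous paragraph: matching cyclic shifts to the orbit elements whose last coordinate vanishes, and checking that among those the one with smallest top coordinate corresponds to the largest gap, so that minimality pins down $g_n$ as the maximal gap. Everything after that is the elementary pigeonhole fact that $n$ positive integers summing to $N$ have a maximum $\geq N/n$. No analysis or asymptotics is needed; the whole argument is finite and combinatorial, valid for every $N \geq n$.
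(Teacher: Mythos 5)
Your argument is correct and rests on the same underlying combinatorial fact as the paper's proof (a circular-gap pigeonhole argument), but your packaging is cleaner and more structural. The paper argues by contradiction: assuming $I_1 > N(1-1/n)$, it subdivides $]0,N]$ into $n$ intervals of length $N/n$, notes $I_1,I_n$ both land in $P_n$, concludes some $P_j$ is empty, and then explicitly exhibits a shift $\Phi_N(-I_r,I)$ whose first coordinate is $\leq N - N/n < I_1$. You instead directly parametrize the $n$ cyclic gaps $g_1,\dots,g_n$ (with $g_n = N - I_1$ the wrap-around gap), identify the top coordinates of the $n$ orbit elements with last entry zero as exactly $\{N - g_k\}$, and read off from lexicographic minimality that $g_n = \max_k g_k \geq N/n$. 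This avoids the contradiction and the interval bookkeeping entirely; the two arguments are the same pigeonhole in disguise, but your version makes the extremal structure (why $g_n$ must be the largest gap) explicit rather than indirect. The one detail worth stating carefully, which you flagged, is that after shifting by $-I_k$ the largest entry is $\max(I_1 - I_k,\, N - g_k) = N - g_k$, which holds because $N + I_{k+1} > I_1$ always; once that is pinned down, minimality of $I$ among the $n$ orbit elements with last coordinate $0$ immediately gives $N - g_n = I_1 \leq N - g_k$ for every $k$, and the pigeonhole on $\sum g_k = N$ finishes.
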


\begin{proof}[Proof of Lemma \ref{lem:unif_spacing}]
    Let $N \geq n$ and consider $\Omega \in Orbits(N)$. Denote $I = \min(\Omega)$ its minimal element. Assume for the sake of contradiction that $I_1 > N - \frac{N}{n}$. Divide the interval $]0,N]$ in $n$ disjoint sub-intervals $P_1, \dots P_n$ of length $\frac{N}{n}$ with
    \begin{equation*}
        P_j = \left] j-1 \frac{N}{n} , j\frac{N}{n} \right], 1 \leq j \leq n.
    \end{equation*}
    Since $I_1 > N - \frac{N}{n}$ and $I_n=0$, $I_1$ and $I_n$ both belong to the last interval $P_n$. There are $n-2$ remaining elements $I_2 > \dots > I_{n-1}$ to be placed inside the $n-1$ unused intervals $P_1, \dots, P_{n-1}$ and $P_n$. Thus, there exists $1 \leq j \leq n-1$ such that $I \cap P_j = \emptyset$. Take the maximal such $j$ and consider $r = \max \{ l \in [1, n]: I_l \geq  j \frac{N}{n} \}$ the index of the smallest element of $I$ greater than $P_j$. We claim that
    \begin{equation*}
        J = \Phi_N(- I_r, I) < I
    \end{equation*}
    Indeed, since $P_j$ is empty, $J_1 \leq N - \frac{N}{n} < I_1$. This contradicts the fact that $I$ is minimal in the orbit $\Omega_N$.

    \begin{figure}[h]
        \centering
        \includegraphics[scale=1.2]{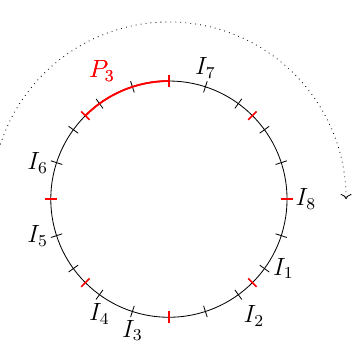}
        \caption{Illustration of the argument for $N=20$ and $n=8$. Red ticks are the $j\frac{N}{n}$ for $0 \leq j \leq n-1$ delimiting the $P_j$'s. Here $j=3$ is the maximal index for which $P_j$ is empty, see the red arc for $P_3$ and $r=6$ with $I_r=9$. The rotation $\Phi_N(-I_r, I) = \Phi_N(-9, I) = J$ is represented by the dotted arrow. $J$ has $J_1 = I_7 - 9 [20] = 15$ which is strictly inferior to $I_1 = 18$ leading to a contradiction as $I$ should be minimal in its orbit.}
        \label{fig:enter-label}
    \end{figure}
    
\end{proof}

\subsubsection{Convergence of scaled coefficients}
\label{subsec:convergence_to_density}

\begin{lemma}[Control of $F(I, \lambda, \mu, \nu, N)$]
\label{lem:unif_control}
    Let $n, N$ be fixed with $n \geq 3$. Then,
    \begin{equation}
       \frac{N^{-n+1}}{N^{(n-1)(n-2)/2}} | F(I, \lambda, \mu, \nu, N) | \leq C_I.
    \end{equation}
    for some $C_I$ such that $\sum_{I: I_n = 0} C_I < \infty$.
    
\end{lemma}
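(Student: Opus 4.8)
The plan is to estimate $|F(I,\lambda,\mu,\nu,N)|$ from above as a numerator bound over a denominator bound, arranged so that the powers of $N$ cancel exactly against the normalisation $N^{-n+1}N^{-(n-1)(n-2)/2}$, and then to prove that the resulting constants are summable. The only indices $I$ that matter here are those produced by the orbit decomposition of Lemma~\ref{lem:orbit_decomp}, namely $I$ with $I_n=0$ and $I=\min(\Omega(I,N))$; for such $I$, Lemma~\ref{lem:unif_spacing} gives $I_1\le N\,(1-1/n)$, and this is exactly what makes a lower bound on the Vandermonde $\Delta(\xi^I)$ possible that does not degenerate as $N\to\infty$.

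\emph{The bound on $F$.} Each of the three determinants in \eqref{eq:def_F} is the determinant of an $n\times n$ matrix all of whose entries have modulus $1$, so by Hadamard's inequality each has modulus at most $n^{n/2}$; hence the numerator of $F$ is at most $n^{3n/2}$. For the denominator, $|\xi^{I_r}-\xi^{I_s}|=2|\sin(\pi(I_r-I_s)/N)|$ yields $|\Delta(\xi^I)|=2^{n(n-1)/2}\prod_{r<s}\sin(\pi(I_r-I_s)/N)$. For $r<s$ we have $0<I_r-I_s\le I_1\le N(1-1/n)$, so each argument lies in $(0,\pi(1-1/n)]$; since $x\mapsto\sin x/x$ is decreasing on $(0,\pi)$, on that interval $\sin x\ge \kappa_n\,x$ with $\kappa_n:=\sin(\pi/n)/\bigl(\pi(1-1/n)\bigr)>0$. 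Therefore
\[
|\Delta(\xi^I)|\ \ge\ (2\pi\kappa_n)^{n(n-1)/2}\,N^{-n(n-1)/2}\,\Delta(I),\qquad \Delta(I):=\prod_{1\le r<s\le n}(I_r-I_s)>0.
\]
Combining the two bounds and using $-(n-1)-\tfrac{(n-1)(n-2)}2+\tfrac{n(n-1)}2=0$, all powers of $N$ cancel and
\[
\frac{N^{-n+1}}{N^{(n-1)(n-2)/2}}\,|F(I,\lambda,\mu,\nu,N)|\ \le\ C_I:=\frac{n^{3n/2}}{(2\pi\kappa_n)^{n(n-1)/2}\,\Delta(I)}.
\]

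\emph{Summability, where $n\ge 3$ enters.} It remains to check $\sum_{I:I_n=0}\Delta(I)^{-1}<\infty$. Writing $a_j:=I_j-I_{j+1}\ge 1$ for $1\le j\le n-1$, the map $I\mapsto(a_1,\dots,a_{n-1})$ is a bijection onto $\Z_{\ge1}^{\,n-1}$ and $\Delta(I)=\prod_{1\le r<s\le n}(a_r+a_{r+1}+\cdots+a_{s-1})$. By the arithmetic–geometric mean inequality each factor is $\ge (a_r\cdots a_{s-1})^{1/(s-r)}$, so $\Delta(I)\ge\prod_{j=1}^{n-1}a_j^{e_j}$ with $e_j=\sum_{1\le r\le j<s\le n}(s-r)^{-1}$. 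For $j\in\{1,n-1\}$ one has $e_j=H_{n-1}\ge 3/2$ (using $n\ge 3$), while for $2\le j\le n-2$, $e_j\ge \tfrac11+\tfrac12+\tfrac12=2$; thus $e_j\ge 3/2$ for all $j$, and
\[
\sum_{I:I_n=0}\frac1{\Delta(I)}\ \le\ \sum_{a_1,\dots,a_{n-1}\ge1}\ \prod_{j=1}^{n-1}a_j^{-e_j}\ \le\ \zeta(3/2)^{\,n-1}\ <\ \infty,
\]
which gives $\sum_{I:I_n=0}C_I<\infty$.

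\emph{Main obstacle.} Two points need care. The first is the $N$-uniform lower bound on $|\Delta(\xi^I)|$: this is precisely where Lemma~\ref{lem:unif_spacing} (hence, implicitly, restricting to the orbit minimisers arising in Lemma~\ref{lem:orbit_decomp}) is indispensable, for without $I_1/N\le 1-1/n$ the factor $\sin(\pi I_1/N)$ could be arbitrarily small and $|F|$ arbitrarily large. The second, and the genuinely new estimate, is $\sum_{I:I_n=0}\Delta(I)^{-1}<\infty$: the computation in Lemma~\ref{lem:convolution_is_l2} only controlled $\sum\Delta(I)^{-2}$, and the first power requires the convexity (AM–GM) reduction above together with the standing hypothesis $n\ge 3$, which is exactly what forces every exponent $e_j$ to exceed $1$. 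Point two is the heart of the argument; everything else is a bookkeeping of constants and powers of $N$.
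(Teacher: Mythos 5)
Your proof is correct, and the first half of it (Hadamard bound $n^{3n/2}$ on the numerator, plus the lower bound on $|\Delta(\xi^I)|$ via $\sin x \geq \kappa_n x$ on $(0,\pi(1-1/n)]$, which relies on Lemma~\ref{lem:unif_spacing} exactly as you note) matches the paper's argument. Where you diverge is the reduction $\sum_{I:I_n=0}\Delta(I)^{-1}<\infty$: the paper proves this by induction on $n$, bounding the innermost sum over $I_1$ by $\sum_k k^{-(n-1)}\le \pi^2/6$ at each step, with a separate $n=3$ base case handled through an integral comparison. You instead change variables to the gaps $a_j=I_j-I_{j+1}\geq 1$, apply AM--GM to each Vandermonde factor $a_r+\dots+a_{s-1}\geq (a_r\cdots a_{s-1})^{1/(s-r)}$, and obtain a clean product bound $\Delta(I)\geq\prod_j a_j^{e_j}$ with $e_j=\sum_{r\le j<s}(s-r)^{-1}\geq 3/2$, so the sum factorizes into $\zeta(3/2)^{n-1}$. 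Both are valid; your AM--GM route is more uniform (no separate base case) and makes the role of the hypothesis $n\geq 3$ transparent, since it is exactly what pushes every exponent $e_j$ strictly above $1$, whereas the paper's induction buries that fact in the explicit $n=3$ computation. One small remark: like the paper, your bound $C_I$ is only valid for $I$ that are minimal in their orbit (so that Lemma~\ref{lem:unif_spacing} applies); this is harmless since that is the only case the lemma is applied to in the proof of Theorem~\ref{th:density_limit_quantum_coef}, and you flag it correctly.
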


\begin{proof}[Proof of Lemma \ref{lem:unif_control}]
    One has

       $$ | F(I, \lambda, \mu, \nu, N) | = \left| \frac{\det [\ed^{ \frac{2i \pi I_r \lambda_s' }{N}}]\det [\ed^{ \frac{2i \pi I_r \mu_s' }{N}}]\det [\ed^{ -\frac{2i \pi I_r (\nu)_s' }{N}}]}{\Delta(\xi^I)} \right| \\
        \leq \frac{n^{3n}}{ | \Delta(\xi^I) |}. $$
First, 

\begin{align*}
    \frac{1}{ | \Delta(\xi^I) |} &=  \prod_{1 \leq r < s \leq n } \left| \exp{ \left( \frac{2i \pi I_r }{N} \right)} - \exp{ \left( \frac{2i \pi I_s }{N} \right)} \right|^{-1} \\
    &= \prod_{1 \leq r < s \leq n } \left| 2 \sin{ \left( \frac{ \pi (I_r-I_s) }{N} \right)} \right|^{-1} .
\end{align*}
Recall that on $[0, c]$ for $0<c<\pi$, one has by concavity $\sin(x) \geq \frac{\sin(c)}{c}x$. Using Lemma \ref{lem:unif_spacing} for $I$ minimal in its orbit,

   $$ \forall 1 \leq  r < s \leq n: \pi \frac{ I_r-I_s }{N} \leq \pi \frac{ I_1 }{N} \leq \pi \left( 1 - \frac{1}{n} \right),$$
so that 
    $$\sin{ \left( \frac{ \pi (I_r-I_s) }{N} \right)} \geq  c_n  \frac{ I_r - I_s}{N}$$
with $c_n = \frac{\sin{\pi(1 - 1/n)}}{ (1- 1/n)} $. Thus,

    $$\frac{1}{ | \Delta(\xi^I) |} \leq \left( \frac{N}{2c_n} \right)^{n(n-1)/2} \prod_{1 \leq r < s \leq n } \frac{1}{I_r - I_s}.$$
\noindent
It remains to prove that $\sum_{I: I_n = 0} \frac{1}{\Delta(I)} < \infty$. We will proceed by induction on $n$. For $n =3$, the sum is

    $$\sum_{I_1 > I_2 > I_3 = 0} \frac{1}{(I_1 - I_2)I_1 I_2} =  \sum_{I_2 \geq 1} \frac{1}{I_2} \sum_{I_1 \geq I_2 +1} \frac{1}{(I_1 - I_2)I_1}.$$
Moreover, for $I_2 \geq 1$,

$$    \sum_{I_1 \geq I_2 +1} \frac{1}{(I_1 - I_2)I_1} \leq \frac{1}{I_2 + 1} + \int_{I_2+1}^{\infty} \frac{1}{t(t - I_2)} \diff t =  \frac{1}{I_2 + 1} + \frac{\ln(I_2+1)}{I_2}$$
which proves the convergence for $n=3$ since $ \sum_{I_2 \geq 1} \frac{1}{I_2} \left( \frac{1}{I_2 + 1} + \frac{\ln(I_2+1)}{I_2} \right) = C < \infty$ . For $n \geq 4$, 

   $$ \sum_{I_1 > \dots > I_{n-1} > I_n = 0} \prod_{1 \leq r < s \leq n } (I_r - I_s)^{-1} = \sum_{I_2 > \dots > I_{n-1} > I_n = 0} \prod_{2 \leq r < s \leq n } (I_r - I_s)^{-1} \sum_{I_1 > I_2} \prod_{2 \leq s \leq n } (I_1 - I_s)^{-1}$$
and, since 

$$    \sum_{I_1 > I_2} \prod_{2 \leq s \leq n } (I_1 - I_s)^{-1} \leq \sum_{I_1 > I_2} (I_1 - I_2)^{-(n-1)} \leq c_3 = \frac{\pi^2}{6},$$
we have  

$$    \sum_{I_1 > \dots > I_{n-1} > I_n = 0} \prod_{1 \leq r < s \leq n } (I_r - I_s)^{-1} \leq c_3  \sum_{I_2 > \dots > I_{n-1} > I_n = 0} \prod_{2 \leq r < s \leq n } (I_r - I_s)^{-1} \leq c_3^{n-3} C < \infty.$$

\noindent
Therefore, 

   $$\frac{N^{-n+1}}{N^{(n-1)(n-2)/2}} | F(I, \lambda, \mu, \nu, N) | \leq \frac{n^{3n}}{ | \Delta(\xi^I) |} =  C_I.$$
with $\sum_{I: I_n = 0} C_I < \infty$ as wanted.

\end{proof}

\begin{lemma}[Pointwise convergence]
\label{lem:pt_conv}
    Let $(\alpha, \beta, \gamma) \in \mathcal{H}^3$ such that $\sum_{i=1}^n\alpha_i+\sum_{i=1}^n\beta_i=\sum_{i=1}^n\gamma_i+d$ for $d\in\mathbb{N}$. For $N \geq 1$, let $(\lambda_N, \mu_N, \nu_N)$ be three partition in $\Z_{N-n}^n$ such that $ \vert \lambda_N \vert + \vert \mu_N \vert = \vert \nu_N \vert + dN$ for some $d \in \Z_{\geq 0}$ and such that $\frac{1}{N} \lambda_N = \alpha + o(1)$, $\frac{1}{N} \mu_N = \beta + o(1)$ and $\frac{1}{N} \nu_N = \gamma + o(1)$ as $N \rightarrow + \infty$. Let  $I = I_1 > \dots > I_{n-1} > I_n =0 $ be fixed. Then,
    \begin{align}
       \lim_{N \rightarrow \infty} \frac{N^{-n}\delta_{I = \min(\Omega(I, N))}|\Omega(I, N)|}{N^{(n-1)(n-2)/2}}  & F(I, \lambda_N, \mu_N, \nu_N, N) =
       \lim_{N \rightarrow \infty}  \frac{N^{-n+1}}{N^{(n-1)(n-2)/2}}  F(I, \lambda_N, \mu_N, \nu_N, N) \\
       &= (2 \pi)^{-n(n-1)/2} \frac{1}{\Delta(I)} \det [\ed^{2i \pi \alpha_r I_s}] \det [\ed^{2i \pi \beta_r I_s}] \det [\ed^{-2i \pi \gamma_r I_s}].
    \end{align}
\end{lemma}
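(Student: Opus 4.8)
\textbf{Proof plan for Lemma \ref{lem:pt_conv}.}

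The plan is to establish the two asserted limits separately: first, the equality of the two limits (the ``combinatorial'' one with orbit factors and the ``analytic'' one with $N^{-n+1}$), and second, the actual value of the limit as a Riemann-sum approximation of a determinantal expression.

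\emph{Step 1: the orbit factors are asymptotically trivial.} By Lemma \ref{lem:orbit_struct}, there is $M = M(I)$ such that for all $N \geq M$ one has both $I = \min(\Omega(I,N))$ and $|\Omega(I,N)| = N$. Consequently, for $N \geq M$,
\begin{equation*}
\frac{N^{-n}\,\delta_{I = \min(\Omega(I,N))}\,|\Omega(I,N)|}{N^{(n-1)(n-2)/2}}\,F(I,\lambda_N,\mu_N,\nu_N,N)
= \frac{N^{-n+1}}{N^{(n-1)(n-2)/2}}\,F(I,\lambda_N,\mu_N,\nu_N,N),
\end{equation*}
so the two sequences are eventually identical and hence the first equality in the statement is immediate. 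It remains to compute the common limit.

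\emph{Step 2: expand $F$ and rescale each matrix entry.} Write $\lambda_s' = \lambda_s + (s-1)$, etc., so that
\begin{equation*}
F(I,\lambda_N,\mu_N,\nu_N,N) = \frac{\det\!\big[\ed^{\,2i\pi I_r (\lambda_N)_s'/N}\big]\,\det\!\big[\ed^{\,2i\pi I_r (\mu_N)_s'/N}\big]\,\det\!\big[\ed^{\,-2i\pi I_r (\nu_N)_s'/N}\big]}{\Delta(\xi^I)}.
\end{equation*}
Since $(\lambda_N)_s'/N = (\lambda_N)_s/N + (s-1)/N = \alpha_s + o(1)$ as $N\to\infty$ (the shift $(s-1)/N\to 0$ and the hypothesis $\frac1N\lambda_N = \alpha + o(1)$), each entry $\ed^{\,2i\pi I_r (\lambda_N)_s'/N}$ converges to $\ed^{\,2i\pi I_r \alpha_s}$; the index set is finite ($n\times n$) and the determinant is a continuous (polynomial) function of its entries, so
\begin{equation*}
\det\!\big[\ed^{\,2i\pi I_r (\lambda_N)_s'/N}\big]\xrightarrow[N\to\infty]{} \det\!\big[\ed^{\,2i\pi I_r \alpha_s}\big],
\end{equation*}
and likewise for $\mu_N\to\beta$ and $\nu_N\to\gamma$. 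These limits use no cancellation and do not require any uniformity, only entrywise convergence.

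\emph{Step 3: the Vandermonde in the denominator.} Here is where the powers of $N$ enter. Using $\xi = \ed^{2i\pi/N}$,
\begin{equation*}
\Delta(\xi^I) = \prod_{1\le r<s\le n}\big(\ed^{2i\pi I_r/N} - \ed^{2i\pi I_s/N}\big)
= \prod_{1\le r<s\le n}\ed^{\,i\pi(I_r+I_s)/N}\,\big(2i\sin(\pi(I_r-I_s)/N)\big).
\end{equation*}
As $N\to\infty$ each phase factor $\ed^{\,i\pi(I_r+I_s)/N}\to 1$, and $2i\sin(\pi(I_r-I_s)/N) = 2i\,\pi(I_r-I_s)/N\cdot(1+o(1))$. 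There are $\binom{n}{2} = n(n-1)/2$ factors, so
\begin{equation*}
\Delta(\xi^I) = \Big(\tfrac{2i\pi}{N}\Big)^{n(n-1)/2}\,\Delta(I)\,(1+o(1)),
\end{equation*}
where $\Delta(I) = \prod_{r<s}(I_r - I_s)$. Hence $N^{-n(n-1)/2}/\Delta(\xi^I) \to (2i\pi)^{-n(n-1)/2}/\Delta(I)$, up to the unimodular factor $(2i)^{-n(n-1)/2}$ which, combined with the overall prefactor, is accounted for by writing the answer with $(2\pi)^{-n(n-1)/2}$ (the powers of $i$ from numerator determinants and denominator Vandermonde being tracked carefully).

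\emph{Step 4: collect powers of $N$.} Multiply the three numerator determinants (each $O(1)$ in $N$) by $1/\Delta(\xi^I)$: the denominator contributes a factor $N^{n(n-1)/2}$. Comparing $N^{n(n-1)/2}$ with the normalization prefactor $N^{-n+1}/N^{(n-1)(n-2)/2}$, observe that $n(n-1)/2 - (n-1) - (n-1)(n-2)/2 = (n-1)[\,n/2 - 1 - (n-2)/2\,] = (n-1)\cdot 0 = 0$, so the powers of $N$ cancel exactly. Therefore
\begin{equation*}
\frac{N^{-n+1}}{N^{(n-1)(n-2)/2}}F(I,\lambda_N,\mu_N,\nu_N,N)\xrightarrow[N\to\infty]{} (2\pi)^{-n(n-1)/2}\,\frac{1}{\Delta(I)}\,\det[\ed^{2i\pi\alpha_r I_s}]\,\det[\ed^{2i\pi\beta_r I_s}]\,\det[\ed^{-2i\pi\gamma_r I_s}],
\end{equation*}
which is the claimed value.

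\emph{Main obstacle.} None of the steps is deep; the only real care is bookkeeping: one must match the power of $N$ (the identity $n(n-1)/2 = (n-1) + (n-1)(n-2)/2$ is exactly the source of the exponent $(n-1)(n-2)/2$ in Theorem \ref{thm:main}) and track the constants $i$, $2$, $\pi$ passing from $\ed^{2i\pi\cdot/N}-\ed^{2i\pi\cdot/N}$ to $2i\sin$, so that the final constant reads $(2\pi)^{-n(n-1)/2}$ with no stray sign or factor of $i$. Everything else is continuity of polynomials in finitely many entries together with the eventual stabilization of the orbit size from Lemma \ref{lem:orbit_struct}. This lemma provides pointwise convergence; combined with the dominated-convergence bound of Lemma \ref{lem:unif_control} it will yield Theorem \ref{th:density_limit_quantum_coef} by summing over $I$.
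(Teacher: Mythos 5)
Your proof takes essentially the same route as the paper's: use Lemma \ref{lem:orbit_struct} to show the orbit factor is eventually $N$, then pass to the limit entrywise in the three numerator determinants (continuity of $\det$), and expand $\Delta(\xi^I)$ via $\sin(\pi(I_r-I_s)/N)\sim\pi(I_r-I_s)/N$, with the powers of $N$ matching through $n(n-1)/2=(n-1)+(n-1)(n-2)/2$. Your Step 3 is slightly more explicit than the paper's one-line justification, and you correctly flag the phase $i^{n(n-1)/2}$ hiding in $\Delta(\xi^I)\sim(2i\pi/N)^{n(n-1)/2}\Delta(I)$ which the paper's proof glosses over as well; otherwise the arguments coincide.
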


\begin{proof}[Proof of Lemma \ref{lem:pt_conv}]

The first equality is derived from Lemma \ref{lem:orbit_struct} which implies that for any $I$ and $N$ large enough
\begin{equation*}
    \delta_{I = \min(\Omega(I, N))}|\Omega(I, N)| = N.
\end{equation*}
For a fixed $n \geq 3$, by continuity, 

\begin{align*}
     \lim_{N \rightarrow \infty}  &\det \left[\exp{ \left( \frac{2i \pi I_r (\lambda_{N, s} +s-1) }{N} \right)} \right] = \det [\exp{ (2i \pi I_r \alpha_s )}] \\
    \lim_{N \rightarrow \infty}  &\det \left[\exp{ \left( \frac{2i \pi I_r (\mu_{N, s} + s-1) }{N} \right)} \right] = \det [\exp{ (2i \pi I_r \beta_s )}] \\
    \lim_{N \rightarrow \infty}  &\det \left[\exp{ \left( -\frac{2i \pi I_r (\nu_{N, s}+s-1) }{N} \right)} \right] = \det [\exp{ (-2i \pi I_r \gamma_s )}] \\
    \lim_{N \rightarrow \infty}  & \frac{N^{-n+1}}{N^{(n-1)(n-2)/2}\Delta(\xi^I)} = \left( \frac{1}{2 \pi} \right)^{n(n-1)/2} \frac{1}{\Delta(I)}
\end{align*}
where for the last convergences, we used that $\sin{ \left( \frac{ \pi (I_r-I_s) }{N} \right)} \sim \frac{ \pi (I_r-I_s) }{N} $ for a fixed subset $I$. The four convergences above imply the result.

\end{proof}

\begin{proof}[Proof of Theorem \ref{th:density_limit_quantum_coef}]

From $\eqref{eq:gromov_det}$, together with Lemma \ref{lem:orbit_decomp}, one has

 $$   c_{\lambda_N, \mu_N}^{\nu_N, d}  = N^{-n}  \sum_{I: I_n = 0} \delta_{I = \min(\Omega(I, N))} |\Omega(I, N)| F(I, \lambda_N, \mu_N, \nu_N, N).$$

\noindent
By Lemma \ref{lem:pt_conv} and Lemma \ref{lem:unif_control} using the dominated convergence theorem we have that
$$    \lim_{N \rightarrow \infty} N^{-(n-1)(n-2)/2} c_{\lambda_N, \mu_N}^{\nu_N, d}  = \sum_{I: I_n = 0} \frac{(2 \pi)^{-n(n-1)/2}}{\Delta(I)} \det [\ed^{2i \pi \alpha_r I_s}] \det [\ed^{2i \pi \beta_r I_s}] \det [\ed^{-2i \pi \gamma_r I_s}] \\ = J[\gamma | \alpha, \beta],$$
where $J[\gamma | \alpha, \beta]$ was defined in \eqref{eq:definition_J} and is such that 

    $$\diff \prob[\gamma | \alpha, \beta] = \frac{sf(n-1)(2 \pi)^{(n-1)(n-2)/2}\Delta(\ed^{2i \pi \gamma}) }{\Delta(\ed^{2i \pi \alpha} )\Delta(\ed^{2i \pi \beta})n!}J[\gamma | \alpha, \beta].$$

\end{proof}

\section{Puzzles of the quantum cohomology of Grassmannians and their skeleton}\label{Sec:Puzzle_skeleton}
The main goal of this section is a rewriting of the puzzle formula of \cite{puzzle_conj_two_step} for the expression of quantum LR-coefficient in terms of a a more compact form approaching the hive model yielding the classical LR-coefficients, see \cite{Knutson_1999}. 

\subsection{Triangular grid}
\begin{definition}[Triangular grid]
\label{def:triangular_grid}
The triangular grid of size $N$, denoted by $T_N$, is the planar graph whose vertices are the set $V_N=\{r+se^{i\pi/3}, r,s\in \mathbb{N}, r+s\leq N\}$ and edges are the set $E_N=\{(x,y), x,y\in T_N, \vert y-x\vert=1\}$.

The set $F_N$ of faces of $T_N$ are triangles which are called direct (resp. reversed) if the corresponding vertices $(x^1,x^2,x^3)\in V_N^3$ can be labelled in such a way that $x^2-x^1=1$ and $x_3-x_1=e^{i\pi/3}$ (resp. $x_3-x_1=e^{-i\pi/3}$). 

Any union of two triangles sharing an edge $e$ is called a lozenge, and $e$ is then called the middle edge of the lozenge.
\end{definition}

\noindent
We denote by $F_N^+$ (resp. $F_N^-$) the set of directed (resp. reversed) triangles, so that $F_N=F_N^+\cup F_N^-$.  For $e\in E_N,f\in F_N$, we write $e\in f$ when $e$ is an edge on the boundary of $f$. \\
Remark that the set of edges can be partitioned into three subset depending on their orientation. If $x=r+se^{i\pi/3}\in T_N$ we define three coordinates 
$$x_0=N-(r+s),\,x_1=r,\, x_2=s,$$
and we usually denote an element of $T_N$ by those three coordinates to emphasize the threefold symmetry of the triangle.
We say that an edge $e=(x,x+v)$ is of type $\ell, \, \ell\in\{0,1,2\}$  when $v=e^{i\pi+2\ell i \pi/3}$.
\begin{figure}[h!]
  
   \begin{tikzpicture}
 \node[draw,circle,fill=gray!50] (P) at (0,0) {};
 \node[draw,circle,fill=gray!50] (P0) at (-2,0) {};
 \node[draw,circle,fill=gray!50] (P1) at (2-2*1/2,-2*0.866) {};
 \node[draw,circle,fill=gray!50] (P2) at (0+2*1/2,2*0.866) {};
 
 \draw (P) -- (P0);
 \draw (P) -- (P1);
 \draw (P) -- (P2);
 
 \node  at (-1,0.3) {Type $0$};
 \node  at (1.3,-1*0.866) {Type $1$};
 \node  at (1.3,1*0.866) {Type $2$};
 \node (Eti) at (-4,-2) {Origin of the edge};

 \draw[->,dashed]  (Eti) to [bend right] (P);
 
   \end{tikzpicture}
    \caption{Type of an edge in $T_N$}
    \label{fig:type_edge}
\end{figure}
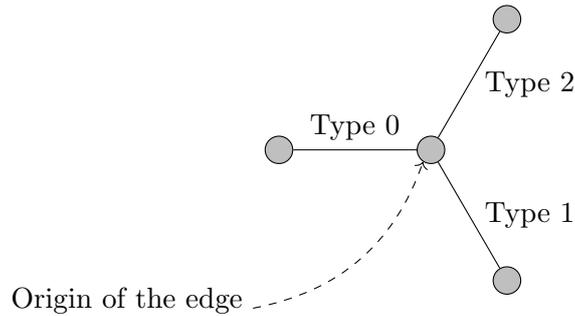

\begin{definition}[Edge coordinates]
\label{def:edge_coordinate}
For $x\in T_N$ and $\ell\in\{0,1,2\}$ such that $x+e^{i\pi+2\ell i\pi/3}\in T_N$, the coordinates of the edge $e=(x,x+e^{i\pi+2\ell i \pi/3})$ of type $\ell$ is the triple $(e_0,e_1,e_2)$ given by $e_i=x_i$. We define the height $h(e)$ of an edge of type $\ell$ by 
$$h(e)=e_{\ell}.$$
\end{definition}
\noindent
If $e=(x,y)$ is of type $\ell$, we have 
\begin{equation}\label{eq:coor_vertices_edge}
y_\ell=x_{\ell}+1,\quad y_{\ell-1}=x_{\ell-1},\quad y_{\ell+1}=x_{\ell+1}-1.
\end{equation}
 We denote by $E_k^{(\ell)}$ the set of edges of type $\ell$. Remark that the height of an edge does not characterize its position, since for example the translations of an edge of type $1$ by $e^{i\pi/3}$ will have the same height.

\begin{definition}[Discrete boundary]
The boundary $\partial T_N$ of the triangular grid $T_N$ is the set of edges $(x,y)$ lying on the boundary of the triangle $[0,N]\cup[N,Ne^{i\pi/3}]\cup[0,Ne^{i\pi/3}]$.
\end{definition}
The boundary $\partial T_N$ can be decomposed into three subsets $\partial T_N^{(i)}$, $0\leq i\leq 2$, where each set $\partial T_N^{(i)}$ consists of edges of type $i$. The coordinates of the corresponding edges are then the following.
    \begin{align*}
        \partial T_N^{(0)} &=  \left( (r,N-r,0), 0 \leq r \leq N-1 \right), \\
        \partial T_N^{(1)}&=  \left((0 , r,N-r), 0\leq r \leq N-1 \right), \\ 
        \partial T_N^{(2)} &=  \left( (N-r,0,r) , 0 \leq r \leq N-1 \right).
        \end{align*}

\subsection{Puzzles and the quantum-LR coefficients}
We will mainly work on puzzles describing the two-step flag cohomology from \cite{puzzle_conj_two_step}, in the special case where they describe the quantum Littlewood-Richardson coefficients previously introduced in Section \ref{Sec:link_quantum_cohomolgy}. 

Let us consider the set of puzzle pieces given in Figure \ref{fig:pieces}, which are considered as the assignment of a label in $\{0,\ldots,7\}$ to edges of $T_N$ around a triangular face. Each piece can be rotated by a multiple of $\frac{\pi}{3}$ but not reflected.

\begin{figure}[H]
    \centering
    \includegraphics{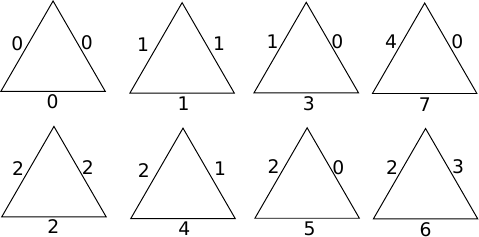}
    \caption{Possible pieces of the puzzle}
    \label{fig:pieces}
\end{figure}

\begin{definition}\label{def:puzzle} A triangular puzzle of size $N \geq 1$ is a map $P:E_N\rightarrow \{0,\ldots,7\}$ such that the value around each triangular face belongs to the set of possible puzzle piece displayed in Figure \ref{fig:pieces}.

The boundary coloring $\partial P$ of a puzzle $P$ is the sequence $(\omega_0,\omega_1,\omega_2)$ such that $\omega_\ell$ is the sequence $(P(e))_{e\in\partial^{(\ell)} T_N}$, where $\partial^{(\ell)} T_N$ is the sequence of boundary edges of $T_N$ of type $\ell$ ordered by their height.

\end{definition}
For any triple $(\omega_0,\omega_1,\omega_2)$ of words in $\{0,1,2\}^N$, we denote by $P(\omega_0,\omega_1,\omega_2)$ the set of puzzles whose boundary coloring is $(\omega_0,\omega_1,\omega_2)$. For $0\leq k_0\leq k_0+k_1\leq N$, denote by $\mathbb{F}l(k_0,k_1,N)$ the two-step flag manifold 
$$\mathbb{F}l(k_0,k_1,N)=\{V_0\subset V_1\subset \mathbb{C}^N,\,\dim V_0=k_0, \dim V_1=k_0+k_1\}.$$
The cohomology ring $H\mathbb{F}l(k_0,k_1,N)$ admits a basis $\{\sigma_{\omega}\}$ of Schubert cycles indexed by words in $\{0,1,2\}^N$ with $k_0$ occurrences of $0$ and $k_1$ occurrences of $1$. Proving a conjecture of Knutson, it has been shown in \cite{puzzle_conj_two_step} that the previously constructed puzzles describe the structure constants of $H\mathbb{F}l(k_0,k_1,N)$.

 \begin{theorem}[\cite{puzzle_conj_two_step}]\label{thm:puzzle_two_steps}
 For any triple $(\omega_0,\omega_1,\omega_2)$ of words in $\{0,1,2\}^N$ with same number of occurrences $k_0$ of $0$ and $k_1$ of $1$, 
 $$\langle \sigma_{\omega_0}\sigma_{\omega_1}\sigma_{\omega_2},\sigma_{0}\rangle_{H\mathbb{F}l(k_0,k_1,N)}=\#P(\omega_0,\omega_1,\omega_2),$$
 where $\sigma_{0}$ is the fundamental class of $H\mathbb{F}l(k_0,k_1,N)$.
 \end{theorem}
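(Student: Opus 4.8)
This is the two-step case of Knutson's puzzle conjecture, and no elementary proof is known, so the plan is to reconstruct one of the two available lines of attack. Either way, one proves that the numbers $\#P(\omega_0,\omega_1,\omega_2)$, indexed by triples of words with matching content, are the Schubert structure constants of $H\mathbb{F}l(k_0,k_1,N)$ by matching them against an independently computable description of those constants.

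\textbf{Geometric route.} First, pick flags $F_\bullet,G_\bullet,H_\bullet$ in general position, so that $\langle\sigma_{\omega_0}\sigma_{\omega_1}\sigma_{\omega_2},\sigma_0\rangle$ equals the cardinality of a transverse triple intersection $X_{\omega_0}(F_\bullet)\cap X_{\omega_1}(G_\bullet)\cap X_{\omega_2}(H_\bullet)$ of Schubert varieties in $\mathbb{F}l(k_0,k_1,N)$. Second, run a geometric Littlewood--Richardson degeneration (Vakil's method, in the two-step form worked out by Coskun via Mondrian tableaux): moving one flag toward a coordinate flag in a sequence of elementary moves, each intersection cycle rationally degenerates to a sum of (a priori possibly non-reduced) subvarieties, and after finitely many moves one is left with a subtraction-free count of transverse points indexed by the terminal Mondrian tableaux. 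Third --- the heart of the argument --- build an explicit bijection between these terminal tableaux and the puzzles in $P(\omega_0,\omega_1,\omega_2)$, and prove by induction on $N$ and on the number of degeneration steps that every elementary geometric move corresponds to a local modification of a partially completed puzzle, so that the two counts agree at every stage.

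\textbf{Integrable route.} Alternatively, read $\#P(\omega_0,\omega_1,\omega_2)$ as the partition function of a lattice model on the triangular grid $T_N$ whose allowed local configurations are exactly the eight pieces of Figure \ref{fig:pieces}. The plan is then: (i) find an $R$-matrix compatible with these weights and verify the associated Yang--Baxter (braid) relations, a finite check over local configurations; (ii) deduce that $\sigma_{\omega_0}\cdot\sigma_{\omega_1}:=\sum_{\omega_2}\#P(\omega_0,\omega_1,\omega_2)\,\sigma_{\omega_2^\vee}$ defines a commutative, associative, unital product on the free $\Z$-module with basis $\{\sigma_\omega\}$; (iii) compute $\#P$ when $\omega_1$ is a Schubert-divisor word and check that it reproduces the Chevalley--Monk rule in $H\mathbb{F}l(k_0,k_1,N)$; (iv) conclude: since that ring is generated by Schubert divisor classes, an associative product matching the Chevalley rule is uniquely determined, hence equals the cup product.

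\textbf{Main obstacle, and a caveat.} In both routes the difficulty is the non-generic combinatorics of the eight-piece set: rigid pieces force equalities and the colours propagate in a constrained way. Geometrically this surfaces as the need to identify which degeneration steps are reduced, to resolve the non-reduced (``gashed'') components, and to encode each step as a puzzle move without breaking the induction; on the integrable side it is the guessing of the correct $R$-matrix and the verification of the braid relations for these specific weights, followed by checking that the divisor rule really pins the ring down. Finally, the present application only needs the specialization to quantum Littlewood--Richardson coefficients of $\mathrm{Gr}(n,N)$; for that one additionally invokes the Buch--Kresch--Tamvakis identification of those coefficients with two-step structure constants, but that is an input to this paper rather than part of the proof of the displayed statement.
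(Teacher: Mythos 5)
This theorem is not proved in the paper at all: it is imported verbatim from \cite{puzzle_conj_two_step}, which is why the statement carries the citation in its bracket and the surrounding text says only that ``it has been shown in \cite{puzzle_conj_two_step}.'' There is therefore no argument in the paper to compare your proposal against; the authors deliberately treat the two-step puzzle theorem of Buch, Kresch, Purbhoo and Tamvakis as an external black box, and everything they actually prove (Theorem \ref{thm:bijection_puzzle_dual_hive}, Corollary \ref{cor:qLR_dual_hive}, and onward) starts \emph{from} it.

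Your write-up correctly recognizes this as the resolution of Knutson's two-step puzzle conjecture and correctly gestures at the two genuine programmes that exist in the literature --- a Vakil/Coskun geometric degeneration matched to puzzles via Mondrian tableaux, and an integrable-systems argument (commutativity and associativity via a Yang--Baxter relation, pinned down by a Chevalley--Monk check). But as you yourself note, what you have written is a roadmap, not a proof: neither the bijection between terminal degeneration states and puzzles, nor the existence and braid-relation verification of a compatible $R$-matrix, nor the Chevalley--Monk computation for the eight-piece puzzle set, is carried out, and each of these is precisely the substantive content of the cited work (the ``rigid'' pieces and gash propagation you flag as the main obstacle are indeed where most of the technical effort lives). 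For the purposes of this paper, the right posture is the one the authors adopt: state the theorem with the citation and move on, rather than attempt to reprove it.
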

Thanks to a previous work \cite{Buch_2003} relating the quantum cohomology of Grassmannians to the classical cohomology of the two-step flag manifold, Theorem \ref{thm:puzzle_two_steps} yields a similar expression in terms of puzzles for the quantum Littlewood-Richardson coefficients. 
 \begin{corollary}[\cite{puzzle_conj_two_step}]\label{cor:puzzle_qLR}
 Let $1\leq n\leq N$ and $\lambda^0,\lambda^1,\lambda^2$ be partitions of length $n$ with first part smaller than $N-n$ such that $\vert \lambda^1\vert+\vert \lambda^2\vert=\vert \lambda^0\vert+Nd$. Then, $c_{\lambda^1,\lambda^2}^{\lambda^0,d}=\#P(\omega_0,\omega_1,\omega_2)$, where $\omega_\ell, \ell\in\{0,1,2\}$ are constructed as follows :
 \begin{enumerate}
     \item for $\ell\in\{1,2\}$, set $\omega_\ell(\lambda^{\ell}_i+(n-i))=0$ for $1\leq i\leq n$ and $\omega_\ell(i)=2$ otherwise,
     \item set $\omega_0(N-1-(\lambda^{0}_i+(n-i)))=0$ for $1\leq i\leq n$ and $\omega_0(i)=2$ otherwise,
     \item for $\ell\in\{0,1,2\}$, replace the $d$ last occurrences of $0$ and the $d$ first occurrences of $2$ in $\omega_\ell$ by $1$. 
 \end{enumerate}
 \end{corollary}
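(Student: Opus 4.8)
The plan is to deduce the corollary by feeding the puzzle count of Theorem~\ref{thm:puzzle_two_steps} into the ``quantum $=$ classical'' theorem of Buch, Kresch and Tamvakis~\cite{Buch_2003}; all of the substance sits in those two results, and the remaining work is to reconcile the two indexing conventions for Schubert classes and to keep track of the degree-$d$ bookkeeping.

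\textbf{Step 1 (from quantum to classical).} First I would invoke~\cite{Buch_2003}: for $0\le d\le\min(n,N-n)$ (which is automatic as soon as the coefficient is not forced to vanish, and is also needed for the constructions in items~(1)--(3) to make sense), the degree-$d$ genus-$0$ three-point Gromov--Witten invariant of $\Gr(n,N)$ attached to the Schubert classes $\sigma_{\lambda^1},\sigma_{\lambda^2},\sigma_{(\lambda^0)^\vee}$ equals an ordinary triple intersection number $\int_{\mathbb{F}l(n-d,2d,N)}\widetilde\sigma_{\lambda^1}\widetilde\sigma_{\lambda^2}\widetilde\sigma_{(\lambda^0)^\vee}$ on the two-step flag variety with $\dim V_0=n-d$ and $\dim V_1=n+d$, where each Grassmannian Schubert class is lifted to the flag variety by a fixed combinatorial rule on its indexing word: promote the $d$ topmost $0$'s and the $d$ bottommost $2$'s to $1$'s. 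Recalling, as already used in Section~\ref{Sec:link_quantum_cohomolgy} via~\cite[Cor.~6.2]{Rietsch_2001}, that $c_{\lambda^1,\lambda^2}^{\lambda^0,d}$ \emph{is} exactly this Gromov--Witten invariant, and that $\int_X\widetilde\sigma\,\widetilde\sigma'\,\widetilde\sigma''=\langle\widetilde\sigma\,\widetilde\sigma'\,\widetilde\sigma'',\sigma_0\rangle$ with $\sigma_0$ the fundamental class, Theorem~\ref{thm:puzzle_two_steps} then yields $c_{\lambda^1,\lambda^2}^{\lambda^0,d}=\#P(\omega_0,\omega_1,\omega_2)$ for the triple of $\{0,1,2\}$-words produced by the lifting rule (note that all three such words have $k_0=n-d$ letters $0$ and $k_1=2d$ letters $1$, as required by Theorem~\ref{thm:puzzle_two_steps}).

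\textbf{Step 2 (matching the words).} It then remains to verify that these words are the ones written in the statement. A partition $\lambda$ of length $n$ with $\lambda_1<N-n$ corresponds to the $\{0,2\}$-word of length $N$ whose $0$'s sit at the positions $\lambda_i+(n-i)$, $1\le i\le n$; this is item~(1) applied to $\lambda^1$ and $\lambda^2$. Passing from $\lambda^0$ to its $\vee$-dual partition $(\lambda^0)^\vee$ (the complement of $\lambda^0$ in the $n\times(N-n)$ box) is exactly the reversal $\lambda^0_i+(n-i)\mapsto N-1-(\lambda^0_i+(n-i))$ of the set of $0$-positions, which is item~(2); the strict bound $\lambda^\ell_1<N-n$ keeps every such dual and every promotion inside the ambient box. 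Finally the lifting rule ``promote the $d$ topmost $0$'s and the $d$ bottommost $2$'s,'' applied uniformly to the three words (for $\omega_0$ after its reversal), is precisely item~(3), ``replace the $d$ last occurrences of $0$ and the $d$ first occurrences of $2$ by $1$.'' Along the way one records that the numericality of the degree-$d$ Gromov--Witten invariant, $|\lambda^1|+|\lambda^2|+|(\lambda^0)^\vee|=\dim\Gr(n,N)+Nd=n(N-n)+Nd$, together with $|(\lambda^0)^\vee|=n(N-n)-|\lambda^0|$, is exactly the hypothesis $|\lambda^1|+|\lambda^2|=|\lambda^0|+Nd$.

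\textbf{Expected obstacle.} Since both inputs are quoted as black boxes, the only place where genuine care is needed is Step~2: fixing the orientation conventions so that ``topmost $0$'s'' in the lifting rule of~\cite{Buch_2003} is really ``the $d$ last occurrences of $0$'' \emph{after} the dualizing reversal is applied to $\omega_0$ but with \emph{no} reversal on $\omega_1$ and $\omega_2$, and checking that the promotion is well defined (each word has at least $d$ occurrences of $0$ and at least $d$ of $2$, which the box condition ensures). Once these conventions are pinned down, the composition of Theorem~\ref{thm:puzzle_two_steps} with~\cite{Buch_2003} gives the stated identity, the case $d=0$ recovering the classical puzzle rule for $\Gr(n,N)$.
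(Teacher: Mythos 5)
Your proposal is correct and takes essentially the same route as the paper: Corollary~\ref{cor:puzzle_qLR} is stated with a citation, and the implicit justification is precisely to compose the BKT ``quantum $=$ two-step'' reduction \cite{Buch_2003} with the two-step puzzle rule of Theorem~\ref{thm:puzzle_two_steps} and then unwind the translation between partitions (and the $\vee$-dual for $\lambda^0$) and $\{0,1,2\}$-words; your Step~2 simply spells out the word dictionary the paper leaves implicit.
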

 The goal of this section and the next one is then to give a convex formulation of the latter results, yielding Theorem \ref{thm:bijection_puzzle_dual_hive} for the expression of the structure constants of $H\mathbb{F}l(k_0,k_1,N)$ and Corollary \ref{cor:qLR_dual_hive} for the corresponding result concerning the quantum Littlewood-Richardson coefficients.

\noindent

\subsection{Edge, vertex and face partitions}
 The set of pieces can be further simplified in two steps. First,  gluing two pieces with edges having label $2$ along an edge labeled $4,5$ or $6$ yields lozenges with edge labelled $2$ and either $0,1$ or $3$; then concatenating consecutively such lozenges having same label $0,1$ or $3$ and considering also the triangle with all edges labeled $2$ yield the pieces of Figure \ref{fig:type_II_pieces}, which are called pieces of type II. Let us then call pieces of type I any piece displayed in Figure \ref{fig:type_I_pieces} which consists of the first three triangles of Figure \ref{fig:pieces} and pieces obtained by concatenating two pieces with label $(2,4,1)$ and an arbitrary even number of pieces with label $(4,7,0)$. We will first show that a puzzle $P$ is completely characterized by the position of pieces of type I.

\begin{figure}[h!]
    \centering
    \includegraphics{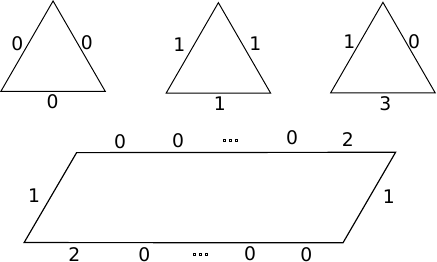}
    \caption{Puzzle pieces of type I.}
    \label{fig:type_I_pieces}
\end{figure}

\begin{figure}[h!]
    \centering
    \includegraphics{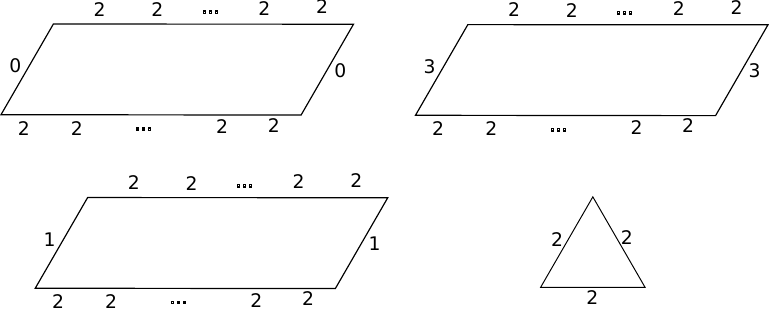}
    \caption{Puzzle pieces of type II.}
    \label{fig:type_II_pieces}
\end{figure}

Let us first mention a first general result on height of edges on the border of a same triangle.

\begin{lemma}[Triangle sum]\label{lem:triangle_sum}
Suppose that $f$ is a face of $T_N$ with edges $e^0,e^1,e^2$. Then, 
    \begin{align}
    \begin{split}
        h(e^0)+h(e^1)+h(e^2) &=N-1  \text{ if $f$ is direct} \\
        h(e^0)+h(e^1)+h(e^2) &=N-2 \text{ if $f$ is reversed}.  
    \end{split}
    \label{eq:triangle_sum}
    \end{align}
 Moreover, $e^i_{i-1}=e^{i-1}_{i-1}$ (resp. $e^{i}_{i-1}=e^{i+1}_{i-1}$), for $i\in\{0,1,2\}$ if $f$ is direct (resp. reversed).
\end{lemma}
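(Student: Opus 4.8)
The statement is purely combinatorial: it concerns the heights of the three edges bordering a single triangular face, so the natural approach is to work directly with the coordinate description of edges given in Definition \ref{def:edge_coordinate}. The plan is to fix a face $f$, write down the coordinates of its three vertices, read off the coordinates of its three edges, and then sum the heights. I would treat the two cases (direct and reversed) separately, since the incidence pattern of the edges around the face differs between them.

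\textbf{Direct case.} Suppose $f$ is direct with vertices $x^0, x^1, x^2$ labelled so that the edge $e^{\ell}$ opposite to $x^{\ell}$ (equivalently, the edge of type $\ell$) joins the two vertices other than $x^{\ell}$. Concretely, if $x$ is the ``bottom-left'' vertex of $f$ with coordinates $(x_0, x_1, x_2)$, then the direct triangle has vertices $x$, $x + 1$ (which has coordinates $(x_0-1, x_1+1, x_2)$ by \eqref{eq:coor_vertices_edge} applied with $\ell = 1$... actually I would just compute: $x+1$ has $T_N$-coordinates $(x_0, x_1+1, x_2)$ shifted so that the sum $x_0+x_1+x_2 = N$ is preserved, giving $(x_0 - 1, x_1 + 1, x_2)$), and $x + e^{i\pi/3}$ with coordinates $(x_0 - 1, x_1, x_2 + 1)$. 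The three edges are: the type-$0$ edge between $x+1$ and $x+e^{i\pi/3}$, the type-$1$ edge between $x$ and $x + e^{i\pi/3}$, and the type-$2$ edge between $x$ and $x+1$. Using $h(e) = e_{\ell}$ for a type-$\ell$ edge and the convention $e_i = (\text{origin})_i$, one computes $h(e^0) = x_0 - 1$, $h(e^1) = x_1$, $h(e^2) = x_2$ (choosing the correct origin vertex for each edge, namely the one at which the edge ``begins'' in the sense of Definition \ref{def:edge_coordinate}). Then $h(e^0) + h(e^1) + h(e^2) = x_0 + x_1 + x_2 - 1 = N - 1$, as claimed. The relation $e^i_{i-1} = e^{i-1}_{i-1}$ follows by checking that the relevant coordinate is unchanged along edges of the appropriate type, using the middle identity $y_{\ell-1} = x_{\ell-1}$ in \eqref{eq:coor_vertices_edge}.

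\textbf{Reversed case.} The same bookkeeping applies to a reversed triangle, whose vertices are $x$, $x + 1$, and $x + e^{-i\pi/3}$ for a suitable base vertex $x$; here one extra $-1$ appears in the coordinate sum relative to the direct case (the reversed triangle ``points down'', so its three vertices have coordinate-sums that force one additional decrement), yielding $h(e^0) + h(e^1) + h(e^2) = N - 2$. The incidence relation becomes $e^i_{i-1} = e^{i+1}_{i-1}$ because the orientation of the triangle cyclically permutes which neighbour shares the fixed coordinate. I would present this by writing the vertex coordinates explicitly and then, for each $i \in \{0,1,2\}$, identifying the origin of $e^i$ and reading off $h(e^i)$ and the coordinate $e^i_{i-1}$.

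\textbf{Main obstacle.} There is no real difficulty of substance here — the only thing to be careful about is the bookkeeping: correctly identifying, for each edge of the face, which of its two endpoints is its ``origin'' in the sense of Definition \ref{def:edge_coordinate} (so that $h(e) = e_\ell$ picks out the right coordinate), and keeping the cyclic labelling $\ell \in \{0,1,2\}$ of edge-types consistent with the labelling of vertices. The safest way to get this right is to do the computation once for a direct and once for a reversed triangle placed at a generic base point, using \eqref{eq:coor_vertices_edge} to propagate coordinates along each edge, and to double-check against the small picture in Figure \ref{fig:type_edge}. Everything else is a one-line addition of three integers.
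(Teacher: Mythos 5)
Your proposal is correct and takes essentially the same approach as the paper: both proofs write down the explicit $T_N$-coordinates of the three edges around a generic direct (respectively reversed) triangular face, read off the three heights from Definition \ref{def:edge_coordinate}, and add them, with the incidence relation $e^i_{i-1}=e^{i\mp1}_{i-1}$ read off directly from the same coordinate data. The paper parametrizes the edges by two integers $i,j$ while you parametrize by a base vertex $x$, but this is only a cosmetic difference in bookkeeping.
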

\begin{proof}
    A direct triangle has edges $e^0=(N-(i+j+1),i+1,j)$, $e^1=(N-(i+j+1),i,j+1)$ and $e^2=(N-(i+j),i,j)$ for some $0\leq i,j\leq N-1$ with $i+j\leq N-1$, so that 
    $$h(e^0)+h(e^1)+h(e^2)=N-1+0+0=N-(i+j+1)+i+j=N-1.$$
    A reversed triangle has edges $e^0=(N-(i+j+1),i,j+1)$, $e^1=(N-(i+j),i-1,j+1)$ and $e^2=(N-(i+j),i,j)$ for some $1\leq i,j\leq N-1$ with $i+j\leq N-1$, so that 
    $$h(e^0)+h(e^1)+h(e^2)=N-1+0+0=N-(i+j+1)+i-1+j=N-2.$$
    It is also clear from the coordinates of the edges that $e^{i}_{i-1}=e^{i-1}_{i-1}$ (resp. $e^i_{i-1}=e^{i+1}_{i-1})$ for $i\in\{0,1,2\}$ if the triangle is direct (resp. reversed).
\end{proof}

\begin{definition}[Edge set, vertex, edge and face partitions]
The edge set of a puzzle $P$ is the set $\mathcal{E}$ of edges labeled $0,1,3$ of either a type I piece or on the boundary. 

The vertex partition of $\mathcal{E}$ is the  covering $\mathcal{P}_v$ of $\mathcal{E}$ whose sets of size greater than one consist of all the edges colored $\{0,1,3\}$ of a same type I piece and singletons consists of edges of $\mathcal{E}$ on the boundary of $T_N$ not belonging to a type $I$ piece.

The edge partition of $\mathcal{E}$ is the set partition $\mathcal{P}_e$ of $\mathcal{E}$ whose blocks of size greater than one consist of edges of a common type II piece.

The face partition $\mathcal{P}_f$ is the set partition of $V_N$ whose blocks are the connected components of the subgraph of $T_N$ obtained by only keeping the edges colored $2$.
\end{definition}
Remark that $\mathcal{P}_e$ can also have singletons. An element $e\in\mathcal{E}$ is a singleton of $\mathcal{P}_e$ if and only if it is a common edge of two type I pieces. However, no element of $\mathcal{E}$ can be a singleton of both $\mathcal{P}_v$ and $\mathcal{P}_e$, since a border edge colored $0$ or $1$ not belonging to a type $I$ piece has to belong to a type II piece.

\begin{lemma}[Blocks of $\mathcal{P}_v$]
\label{lem:block_vertex}
A block of order $3$ in $\mathcal{P}_v$ consists of three edges $e^0,e^1,e^2$ of type $0,1,2$ such that 
\begin{itemize}
    \item either $e^i_{i-1}=e^{i-1}_{i-1}$, $i\in\{0,1,2\}$ and $\sum_{i=0}^2h(e^i)=N-1$, or $e^i_{i-1}=e^{i+1}_{i-1}$, $i\in\{0,1,2\}$ and $\sum_{i=0}^2h(e^i)=N-2$,
    \item $(c(e^0),c(e^1),c(e^2))$ is either $(0,0,0)$, $(1,1,1)$ or any cyclic permutation of $(0,1,3)$.
\end{itemize}
A block of order $2(r+1)$, $r\geq 2$ in $\mathcal{P}_v$ consists of $2(r+1)$ edges $\{e^0,f^1,\ldots,f^r,e^{0'},f^{1'},\ldots,f^{r'}\}$ such that 
\begin{itemize}
     \item $e^{0},e^{0'}$ are of type $i$ and $f^1,\ldots,f^r,f^{1'},\ldots,f^{r'}$ are of type $i+1\mod 3$ for some $i\in\{0,1,2\}$,
     \item $h(e^0)=h(e^{0'})$ and $f^1_i=\dots=f^{r}_i=f^{1'}_i+1=\dots=f^{r'}_i+1=h(e^0)+1$,
    \item $h(f^{s})=h(f^{s'})=h(f^{1})+(s-1)=e^{0'}_{i+1}$ for $1 \leq s \leq r$,
    \item the edges $f^{0}$ and $f^{r+1'}$ of type $i+1$ with $f^{r+1}_{i}=f^{r+1'}_{i}+1=h(e^0)+1$ and $h(f^{0})=e^{0'}_{i+1}-1$ and $h(f^{r+1'})=e^{0}_{i+1}-1$ are not in $\mathcal{E}$.
\end{itemize}
Any edge $e\in\mathcal{E}$ belongs to at most two blocks of $\mathcal{P}_v$.
\end{lemma}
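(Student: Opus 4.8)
The plan is to run through the finite list of type~I pieces displayed in Figure~\ref{fig:type_I_pieces} and, for each of them, read off the types, coordinates and heights of the edges it colours $0$, $1$ or $3$, using the edge--coordinate conventions of Definition~\ref{def:edge_coordinate}, the incidence relations \eqref{eq:coor_vertices_edge}, and the triangle identity of Lemma~\ref{lem:triangle_sum}. By the very definition of $\mathcal{P}_v$, its blocks of size larger than one are exactly these edge sets, so the statement amounts to a computation carried out piece by piece.

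There are two families of type~I pieces. The first consists of the three single-triangle pieces: the face with all three edges coloured $0$, the face with all three coloured $1$, and the face carrying a cyclic rotation of the labels $(0,1,3)$ --- the non-cyclic rotations being excluded because pieces may be rotated by a multiple of $\pi/3$ but not reflected, while the six rotations of a triangular face do realise each cyclic class in both the direct and the reversed orientation. For such a piece the three edges $e^0,e^1,e^2$ are automatically of types $0,1,2$, since every face of $T_N$ has exactly one edge of each type, so Lemma~\ref{lem:triangle_sum} gives verbatim the two alternatives $e^i_{i-1}=e^{i-1}_{i-1}$ with $\sum_i h(e^i)=N-1$ (direct face) and $e^i_{i-1}=e^{i+1}_{i-1}$ with $\sum_i h(e^i)=N-2$ (reversed face), and the admissible colour triples $(c(e^0),c(e^1),c(e^2))$ are precisely $(0,0,0)$, $(1,1,1)$ and the cyclic rotations of $(0,1,3)$. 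This settles the blocks of order~$3$.

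The second family consists of the snake pieces: a chain of $r+1$ lozenges, the two extreme ones of the form $(2,4,1)\cup(4,7,0)$ and the others of the form $(4,7,0)\cup(4,7,0)$, consecutive lozenges being glued along an edge coloured $7$ and the two faces inside each lozenge along an edge coloured $4$. Such a chain is straight, so its interior edges carry only the labels $4$ and $7$; the two edges coloured $1$ coming from the $(2,4,1)$ faces are parallel, of a common type~$i$ and, being the two extreme edges of the strip, of equal height $h(e^0)=h(e^{0'})$; the $2r$ edges coloured $0$ of the $(4,7,0)$ faces form two parallel rows of type $i+1$; and the two edges coloured $2$ of the $(2,4,1)$ faces close off those two rows at their far ends. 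Propagating the coordinates of one face along the strip by repeated use of \eqref{eq:coor_vertices_edge}, where exactly one coordinate increases by one at each step, yields the constancy $f^1_i=\dots=f^r_i$, the shift $f^{s'}_i=f^s_i-1$, and the arithmetic progression $h(f^s)=h(f^{s'})=h(f^1)+(s-1)$, all linked to $e^{0'}_{i+1}$ and to $h(e^0)+1$ as in the statement; the two closing edges, being coloured $2$, are the claimed edges $f^0$ and $f^{r+1'}$, which therefore do not belong to $\mathcal{E}$ --- the maximality part of the assertion.

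Finally, for the last claim: an edge $e\in\mathcal{E}$ is coloured $0$, $1$ or $3$, hence is never an interior edge of a snake (those being coloured $4$ or $7$) nor an interior edge of a single-triangle piece; consequently, for each of the at most two faces of $T_N$ incident to $e$, the piece containing that face either is of type~II --- in which case $e$ lies on $\partial T_N$ and is a singleton of $\mathcal{P}_v$ --- or is of type~I, in which case $e$ is a boundary edge of that piece. In either situation $e$ lies in at most two blocks of $\mathcal{P}_v$, and a singleton cannot coexist with a larger block at the same edge, so $e$ belongs to at most two blocks. The step I expect to be the main obstacle is the bookkeeping in the third paragraph: keeping track, along a snake, of which of the three edge-types each edge has and how the triple $(e_0,e_1,e_2)$ evolves, so that all relations come out in the precise normalisation of the statement, together with verifying that the colours of a snake's boundary edges are forced to be the claimed ones by propagating the admissible-pieces constraint along the strip.
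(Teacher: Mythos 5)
Your proposal follows essentially the same route as the paper: blocks of order $3$ are triangular faces, so their heights follow from Lemma \ref{lem:triangle_sum} and their colours from the list of type~I pieces; blocks of order $2(r+1)$ come from the snake-shaped type~I piece, whose strip of faces is traversed to propagate coordinates via \eqref{eq:coor_vertices_edge} (the paper phrases this as $r+1$ direct and $r+1$ reversed triangles glued along edges coloured $4$ and $7$ and applies Lemma \ref{lem:triangle_sum} to each, which is the same computation). One small point worth noting: the paper's proof leaves both the fourth bullet (the closing edges $f^0$, $f^{r+1'}$ being coloured $2$, hence not in $\mathcal{E}$) and the final ``at most two blocks'' claim implicit, whereas you address both; your argument for the latter --- that $e\in\mathcal{E}$ is never interior to a piece, so it contributes a block only through each of the at most two faces incident to it --- is correct, though the phrasing ``is of type~II, in which case $e$ lies on $\partial T_N$'' is loose: being incident on one side to a type~II piece forces only that $e$ is either on $\partial T_N$ or is also incident to a type~I piece on the other side (since $e\in\mathcal{E}$ requires $e$ to be on the boundary or in a type~I piece), but either way the count of at most two blocks holds.
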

\begin{proof}
In the case of a block of order $3$, $\{e^0,e^1,e^2\}$ is a triangle of $T_N$ and the results on the height of the edges is given by Lemma \ref{lem:triangle_sum}. The results on the color of the edges is given by the possible coloring of edges of Type I pieces from Figure \ref{fig:type_I_pieces}.

In the case of a block $B$ of order $2(r+1)$, the edges correspond to the boundary edges not colored $2$ of a puzzle piece of the last shape of Figure \ref{fig:type_I_pieces}. We can thus first label cyclically the boundary edges colored $0$ and $1$ as $\{e^0,f^1,\ldots,f^r,e^{0'},f^{1'},\ldots,f^{r'}\}$ so that $e^0$ and $e^{0'}$ are colored $1$ and of type $i\in\{0,1,2\}$ and $f^i,f^{i'},\, 1\leq i\leq r$ are colored $0$ and are of type $i+1$. Then, remark that such a piece is the concatenation of $r+1$ direct triangles $T_1,\ldots,T_{r+1}$ and $r+1$ reversed triangles $T'_1,\ldots,T'_{r+1}$ such that $T_i$ and $T'_i$ (resp. $T'_i$ and $T_{i+1}$) share an edge of type $i+1$ (resp. $i - 1$), $e^0$ (resp. $e^{0'}$ is the edge of type $i$ of $T_1$ (resp. $T'_{r+1}$) end $f^i$ (resp. $f^{i'}$) is the edge of type $i+1$ of $T'_{i}$ (resp. $T_{i+1}$). The relations giving the height and the labels of the edges are then direct consequences of Lemma \ref{lem:triangle_sum}.
\end{proof}

\begin{definition}[Admissible pair and strip]
\label{def:strip}
A pair $B=\{e^1,e^2\}\subset E_N$ of edges is called admissible if $e^1$ and $e^2$ have same type $j\in \{0,1,2\}$ and same height. 

The strip $S_B$ of an admissible pair $B=\{e^1,e^2\}$ of type $j$ is the set of all edges $e=(x,y)\in E_N$ such that $x,y$ belong to the parallelogram delimited by $e^1$ and $e^2$. Namely, if $e^1_j=e^2_j$ and $e^2_{j-1}\leq e^1_{j-1}$, $S_B$ is the set of edges $(x,y)$ such that $e^1_j\leq x_j,y_j\leq e^1_j+1$, $e^2_{j-1}\leq x_{j-1},y_{j-1}\leq e^1_{j-1}$. 

The boundary $\partial_2 S_B$ of a strip $S_B$ consists of all edges of $S_B$ type $j+1$.
\end{definition}
Remark that such a definition is still valid if $e^1=e^2$, in which case $B=\{e^1\}$ is always admissible and $S_B=\{e^1\}$.
In particular, for any $B\in \mathcal{P}_e$ for a puzzle $P$, $S_B$ consists of all edges appearing on the type II piece bordered by elements of $B$: $B$ coincide exactly the boundary edges of $S_B$ which are not labeled $2$ and $\partial_2 S_B$ consists of the boundary edges of the type II piece which are labeled $2$ (that is, all boundary edges except for $B$).

Moreover, given a pair $B=\{e^1,e^2\}$ of edges of same type $j$ and same height, one can rephrase the condition of belonging to the strip $S_B$ depending on the type of the edge we consider:
\begin{itemize}
    \item if $e$ has type $j$, $e\in S_B$ if and only if $e_j=e^1_j$ and $e_{j-1}\in [e^2_{j-1}, e^1_{j-1}],$
    \item if $e$ has type $j+1$, $e\in S_B$ if and only if $e_j\in\{e^1_j,e^1_j+1\}$ and $e_{j-1}\in ]e^2_{j-1}, e^1_{j-1}],$
    \item if $e$ has type $j-1$, $e\in S_B$ if and only if $e_j=e^1_j+1$ and $e_{j-1}\in [e^2_{j-1}, e^1_{j-1}[.$
\end{itemize}

\begin{lemma}[Edge partition pairs are admissible]
\label{lem:condition_pairing}
Let $P$ be a puzzle and $\mathcal{P}_e$ the corresponding edge partition. Any pair $\{e^1,e^2\}\in \mathcal{P}_e$ (with possibly $e^1=e^2$) is admissible and is such that $c(e^1)=c(e^2)\in\{0,1,3\}$.
For any different blocks $B,B'\in \mathcal{P}_e$, $S_B\cap S_{B'}\subset \partial_2 S_B\cap \partial_2S_{B'}$.
\end{lemma}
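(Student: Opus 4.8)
The plan is to establish the two assertions separately: the first by unwinding the construction of a type~II piece, the second by a short colouring argument that only uses that $\mathcal{P}_e$ is a partition of $\mathcal{E}$, together with the maximality of monochromatic strips built into the puzzle decomposition.

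\emph{Admissibility of the blocks.} Fix a block $B\in\mathcal{P}_e$. If $|B|=1$, then its element $e$ is, as noted earlier, a common edge of two type~I pieces, so $e\in\mathcal{E}$ has colour in $\{0,1,3\}$ and $B=\{e\}$ is admissible by the convention $e^1=e^2$ recorded after Definition~\ref{def:strip}. If $|B|\ge 2$, then $B$ is the set of edges of $\mathcal{E}$ lying on a single type~II piece $\pi$. By the gluing recipe preceding Figure~\ref{fig:type_II_pieces}, $\pi$ is a concatenation of lozenges all carrying one common colour $c\in\{0,1,3\}$ (the all-$2$ triangle, which carries no edge of $\mathcal{E}$, being the only other case), hence a width-one parallelogram strip in $T_N$: all of its boundary edges are labelled $2$ except for its two extremal edges $e^1,e^2$, which carry the colour $c$, while its interior edges lie neither on a type~I piece nor on $\partial T_N$. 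Consequently $B\subseteq\{e^1,e^2\}$, so $B$ has at most two elements, each of colour $c\in\{0,1,3\}$. Finally $e^1$ and $e^2$ are opposite sides of the strip, hence of a common type $j\in\{0,1,2\}$, and propagating the coordinate identities of Lemma~\ref{lem:triangle_sum} across the triangles of $\pi$ gives $e^1_j=e^2_j$, i.e. $h(e^1)=h(e^2)$; so $B$ is admissible, and along the way one sees $\pi=S_B$, with $\partial_2S_B$ exactly the set of $2$-coloured boundary edges of $\pi$.

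\emph{Disjointness up to the $2$-boundary.} Let $B\ne B'$ be distinct blocks and $e\in S_B\cap S_{B'}$. If $B$ is a singleton $\{e_0\}$ coming from two adjacent type~I pieces, then $S_B=\{e_0\}$ with $e_0$ adjacent to two type~I faces, so $e_0$ lies on no type~II piece and $S_B\cap S_{B'}=\emptyset$; likewise for $B'$, so we may assume both are genuine strips, with type~II pieces $\pi\ne\pi'$ (the piece determines the block). Since the decomposition of the puzzle into pieces partitions the faces of $T_N$, the two faces adjacent to $e$ cannot both lie in $\pi$ — otherwise $e$ would be interior to $\pi$ and could not be an edge of $\pi'$ — so $e$ is a boundary edge of both $\pi$ and $\pi'$. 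Now a boundary edge of a type~II piece is labelled $2$ or carries the piece's colour; if $e$ had colour in $\{0,1,3\}$ it would be an extremal edge of both $\pi$ and $\pi'$, forcing $\pi$ and $\pi'$ to be two colour-$c$ strips meeting along a $c$-edge, which the concatenation rule merges into a single type~II piece, contradicting $\pi\ne\pi'$. Hence $e$ is labelled $2$, that is $e\in\partial_2S_B\cap\partial_2S_{B'}$, which is the desired inclusion.

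The only genuinely delicate point is the first step: pinning down from Figure~\ref{fig:type_II_pieces} and the gluing recipe that a type~II piece is exactly a width-one strip with its two extremal edges coloured and every other boundary edge coloured $2$, and observing that maximality of monochromatic strips is implicit in the decomposition of a puzzle into pieces. Once that structural description is in hand, both assertions are a matter of a few lines as above.
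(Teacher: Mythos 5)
Your proof is correct in substance, but the second assertion is argued by a genuinely different route than the paper's. For the first assertion the two arguments coincide (the paper simply reads it off the shapes in Figure~\ref{fig:type_II_pieces}). For the second, the paper first establishes the key identity $S_B\cap\mathcal{E}=B$ for every block $B$ (boundary edges of the strip are either in $B$ or colored $2$; interior edges colored $0,1,3$ are shared by two half-lozenge faces and hence not in $\mathcal{E}$), and then, for an edge $e\in S_B\cap S_{B'}$ not colored $2$, shows that $e$ is interior to both strips and propagates along the strip direction until it reaches an edge of $B$ or $B'$ lying in the other strip, contradicting that identity. You instead invoke the fact that the triangular faces of the puzzle are partitioned among the type I and type II pieces, deduce that a shared edge must be a boundary edge of both strips, and then classify boundary edges (extremal $c$-colored edges versus $2$-colored long sides). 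Your route is shorter, but the step you yourself flag as ``implicit'' is exactly where the paper does its work: the claim that an interior rung of a strip (an edge colored $0,1$ or $3$ inside $S_B$) cannot lie on a type I piece is not automatic, because the half-lozenge triangles of Figure~\ref{fig:pieces} composing a colour strip carry the same labels as triangles occurring inside the long type I pieces of Figure~\ref{fig:type_I_pieces}; the paper rules this configuration out by inspecting the surrounding labels (the parenthetical remark in its proof), and this verification is what underwrites both $S_B\cap\mathcal{E}=B$ and, in your formulation, the face-partition and the maximality of monochromatic strips. So your argument stands provided you make that local verification explicit (or cite it as part of the well-definedness of $\mathcal{P}_e$). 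Two small remarks: in the case of a shared boundary edge colored in $\{0,1,3\}$ you can conclude directly that it would lie in $B\cap B'$, contradicting that $\mathcal{P}_e$ is a partition, without appealing to the merging of strips; and the equality of heights $h(e^1)=h(e^2)$ indeed follows by propagating Lemma~\ref{lem:triangle_sum} across the strip, as you indicate, which is the same computation the paper leaves implicit.
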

\begin{proof}
Recall that blocks of $\mathcal{P}_e$ of size greater that one corresponds to edges colored $0,1$ or $3$ on the border of a same Type II piece from \ref{fig:type_II_pieces}. In particular the blocks of size greater than ones are only pairs, and the first part of the lemma is a direct consequence of the possible type $II$ pieces of Figure \ref{fig:type_II_pieces}.

For the second part of the lemma, remark first that for any block $\{e^1,e^2\}$ in $\mathcal{P}_e$, $S_B\cap \mathcal{E}=B$: first, boundary edges of $S_B$ are either colored $0,1$ or $3$ and in $B$ or colored $2$ and not in $\mathcal{E}$. Then, interior edges of $S_B$ which are colored $0,1$ or $3$ are boundary edges of two  pieces with the same labelling of the second row of Figure \ref{fig:pieces} and thus are not in $\mathcal{E}$ (remark that the second piece of the second row of Figure \ref{fig:pieces} is used in the last type I piece of Figure \ref{fig:pieces} but is surrounded by pieces with different boundary labels).

Consider two different blocks $B,B'\in \mathcal{P}_e$. Let $e\in S_B\cap S_{B'}$. Remark that any edge of $S_B$ or $S_{B'}$ is not on the border of the strip if and only if it is neither in $B\cup B'$ nor colored $2$. Hence, if $e$ is colored $2$, then $e\in \partial_2S_{B}\cap\partial_2S_{B'}$. Suppose by contradiction that $e$ is not colored $2$. Since $B\cap S_{B'}=S_{B}\cap B'=\emptyset$ and $e$ is not colored $2$, $e$ belongs to the interior of both $S_B$ and $S_{B'}$ : hence, the two triangular puzzle pieces whose boundary is $e$ belong to $S_B$ and $S_{B'}$, and we deduce that there is an edge $f^1$ colored $0,1$ or $3$ which belong to $S_{B}\cap S_{B'}$. If $f^1\not \in B\cup B'$, $f^1$ belong to the interior of $S_B$ and $S_{B'}$, and thus there exists an edge $f^2\in S_{B}\cap S_{B'}$ with $f^2_{i+1}=f^1_{i+1}+1$. Let us repeat the process until there is an edge $f^s\in S_B\cap S_{B'}$ which belongs to either $B$ or $B'$. This means that $B\cap S_{B'}$ or $B'\cap S_B$ is not empty, which contradicts the fact proven previously that $S_B\cap \mathcal{E}=B$ and $S_{B'}\cap \mathcal{E}=B'$. Hence, any edge not colored $2$ does not belong to $S_B\cap S_{B'}$. 
\end{proof}

\begin{definition}[Crossing pairs] \label{def:crossing_pairs}
    We say that two  admissible pairs $B=\{e^1,e^2\}$ and $B'=\{e^3,e^4\}$ of $E_N$ cross when 
\begin{itemize}
    \item either $B\cap S_{B'}\not=\emptyset$ or $B'\cap S_B\not=\emptyset$,
    \item or $B=\{e^1,e^2\}, B'=\{e^3,e^4\}$ are blocks of size two which are of respective type $j,j+1$ for some $j\in\{0,1,2\}$, and
$$e^3_j\leq e_j^1=e_j^2\leq e_j^4 \quad e^1_{j+1}\leq e^3_{j+1}=e_{j+1}^4< e_{j+1}^2.$$
\end{itemize}
\end{definition}

\begin{lemma}[Non-crossing condition]
For any distinct pairs $B,B' \subset E_N$, the two following properties are equivalent :
\begin{enumerate}
\item $B$ and $B'$ do not cross,
\item $S_B\cap S_{B'}\subset \partial_2 S_B\cap \partial_2 S_{B'}$.
\end{enumerate}
\end{lemma}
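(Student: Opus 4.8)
Throughout write $j$ for the common type of the edges in $B=\{e^1,e^2\}$ and $j'$ for that of $B'=\{e^3,e^4\}$, with all type indices read modulo $3$. Recall from Definition \ref{def:strip} that $\partial_2 S_B$ is exactly the set of type-$(j+1)$ edges of $S_B$, so that $S_B\setminus\partial_2 S_B$ is the set of type-$j$ and type-$(j-1)$ edges of $S_B$; using the three bullet characterizations of membership in $S_B$, these form two parallel segments, the type-$j$ one being the segment of collinear type-$j$ edges between $e^1$ and $e^2$ (which in particular contains $B$ itself). A first useful remark: since an edge has a single type, $\partial_2 S_B\cap\partial_2 S_{B'}$ can only be non-empty when $j'=j$; when $j'\neq j$ the condition (2) reads simply $S_B\cap S_{B'}=\emptyset$.

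\emph{Cross $\Rightarrow \neg(2)$.} Suppose first that $B\cap S_{B'}\neq\emptyset$ (the case $B'\cap S_B\neq\emptyset$ being symmetric): pick $e\in B\cap S_{B'}$. Then $e\in B\subseteq S_B$, hence $e\in S_B\cap S_{B'}$, but $e$ has type $j\neq j+1$, so $e\notin\partial_2 S_B$ and a fortiori $e\notin\partial_2 S_B\cap\partial_2 S_{B'}$; thus (2) fails. Suppose now that the second crossing condition of Definition \ref{def:crossing_pairs} holds, so $B,B'$ have types $j,j+1$ and $e^3_j\le e^1_j=e^2_j\le e^4_j$, $e^1_{j+1}\le e^3_{j+1}=e^4_{j+1}<e^2_{j+1}$. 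Here $j'=j+1\neq j$, so $\partial_2 S_B\cap\partial_2 S_{B'}=\emptyset$ and it suffices to produce an edge in $S_B\cap S_{B'}$. Let $e$ be the type-$(j+1)$ edge determined by $e_{j+1}=e^3_{j+1}$ and $e_j=e^1_j$ (so $e_{j-1}=N-e^1_j-e^3_{j+1}$); one checks it lies in $T_N$ using the inequalities above ($e^2_{j-1}<e_{j-1}\le e^1_{j-1}$ follows from $e^1_{j+1}\le e^3_{j+1}<e^2_{j+1}$ and $e^1_j=e^2_j$, giving in particular $e_{j-1}\ge 1$). Then $e$ is a type-$(j+1)$ edge of $S_B$, and reading the inequality $e^3_j\le e^1_j\le e^4_j$ as $e_j\in[\min(e^3_j,e^4_j),\max(e^3_j,e^4_j)]$ together with $e_{j+1}=e^3_{j+1}=e^4_{j+1}$ shows that $e$ is a type-$j'$ edge of $S_{B'}$. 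Hence $e\in S_B\cap S_{B'}$, so (2) fails.

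\emph{$\neg(2)\Rightarrow$ cross.} Assume there exists $e\in S_B\cap S_{B'}$ with $e\notin\partial_2 S_B\cap\partial_2 S_{B'}$; without loss of generality $e\notin\partial_2 S_B$, so $e$ is a type-$j$ or type-$(j-1)$ edge of $S_B$. If $e\in B$, then $e\in B\cap S_{B'}$ and the first crossing condition holds. Otherwise (in particular $|B|=2$, since for $|B|=1$ there are no such edges), rewrite the membership conditions $e\in S_B$ and $e\in S_{B'}$ as inequalities among the coordinates of $e^1,e^2,e^3,e^4$, using the three bullets of Definition \ref{def:strip}, and split according to $j'-j\in\{0,1,2\}$. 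When $j'=j$ the type-$j$ (resp. type-$(j-1)$) lines of $S_B$ and $S_{B'}$ lie at the same level and their segments overlap (they share $e$); since two overlapping integer intervals always have an endpoint of one contained in the other, one of $e^1,e^2,e^3,e^4$ lies in the other strip, giving the first crossing condition. When $|j'-j|=1$, the same rewriting forces $B,B'$ to be of size two with types differing by one and shows (matching $e$ against the relevant bullet for $S_{B'}$, and using $e_0+e_1+e_2=N$ to convert between $e_{j-1}$ and $e_{j+1}$) that the coordinates of $e^1,e^2,e^3,e^4$ satisfy precisely the inequalities of the second crossing condition — up to swapping $e^3\leftrightarrow e^4$ — unless instead an endpoint of $B$ or $B'$ already lies in the other strip, again the first condition. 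In every case $B$ and $B'$ cross.

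\textbf{Main obstacle.} The conceptual content is short; the work lies in the bookkeeping of the last paragraph, namely keeping track of the several type configurations $j'-j\bmod 3$ and, within each, of the two possible levels of $e$ relative to each parallelogram, together with the degenerate cases $|B|=1$ or $|B'|=1$ (where the crossing relation reduces to $S_B\cap S_{B'}\neq\emptyset$ and the statement is immediate). All of these reduce to elementary inequalities between edge coordinates, so none is genuinely difficult, but the case split must be carried out carefully to be sure no configuration is missed.
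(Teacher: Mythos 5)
Your proof follows essentially the same route as the paper: reduce to coordinate inequalities via the three bullet characterizations of strip membership after Definition~\ref{def:strip}, and split on whether $B,B'$ are singletons, pairs of the same type, or pairs of adjacent types. Your forward direction (cross $\Rightarrow\lnot(2)$) is the same explicit construction as the paper's, and your observation that $\partial_2 S_B\cap\partial_2 S_{B'}$ is empty unless $j'=j$ matches theirs. The only real difference of emphasis is in the converse: the paper first argues that when $B\cap S_{B'}=B'\cap S_B=\emptyset$ any edge of $S_B\cap S_{B'}$ must lie in $\partial_2 S_B$ (type $j+1$) and then traces the two sub-cases $x_j=e^1_j$ and $x_j=e^1_j+1$, whereas you pick an edge of type $j$ or $j-1$ not in $\partial_2 S_B$; these are dual slices of the same computation. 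However, the step you summarize as ``the same rewriting forces \dots the inequalities of the second crossing condition --- unless an endpoint of $B$ or $B'$ already lies in the other strip'' is precisely where all the content lives (it is the boundary analysis the paper carries out at length, e.g.\ the sub-case $e^3_{j+1}=e^1_{j+1}-1$ forcing $e^1\in S_{B'}$), and you state it rather than verify it. So: correct plan, same method, but the hardest quarter of the case analysis is asserted rather than carried out; a careful reader would want those inequalities spelled out as in the paper's proof.
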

\begin{proof}
Let $B,B'$ be admissible pairs of $E_N$. 

If $B$  is a singleton, then $\partial_2 S_B=\emptyset$ and thus $\partial_2S_B\cap \partial_2S_{B'}=\emptyset$. Moreover, since $B$ is a singleton, $B=S_B$. Hence, the non-crossing condition is equivalent to $S_B\cap S_{B'}=\emptyset$, and $B$ and $B'$ do not cross if and only if $S_B\cap S_{B'}\subset\emptyset=\partial_2S_B\cap \partial_2S_{B'}$.

If $B=\{e^1,e^2\}$ and $B'=\{e^3,e^4\}$ are pairs of the same type $j\in\{0,1,2\}$, the non-crossing condition means that $B\cap S_{B'}=\emptyset$ and $B'\cap S_{B}=\emptyset$. Suppose that $B$ and $B'$ cross, and without loss of generality, assume that $B\cap S_{B'}\not= \emptyset$. Since $B\cap \partial_2S_B=\emptyset$, we deduce that $B\cap S_{B'}\not\subset \partial_2S_B\cap\partial_2S_{B'}$. Hence, $S_B\cap S_{B'}\not\subset \partial_2S_B\cap\partial_2S_{B'}$. 

Reciprocally, suppose that $S_B\cap S_{B'}\not\subset \partial_2S_B\cap\partial_2S_{B'}$, and assume without loss of generality that $S_B\cap S_{B'}\cap \left(S_B\setminus \partial_2S_B\right)\not=\emptyset$. Since $\partial_2S_B$ is the set of edges of $S_B$ of type $j+1$, there exists an edge $e=(x,y)$ of type $j$ or $j-1$ in $S_B\cap S_{B'}$. If $e$ is of type $j$, this means that $e_j=e^1_j=e^2_j$ and $e_{j+1}\in [e^{1}_{j+1},e^{2}_{j+1}]$. Similarly, $e_j=e^3_j=e^4_j$ and $e_{j+1}\in [e^{3}_{j+1},e^{4}_{j+1}]$. Hence, $[e^{1}_{j+1},e^{2}_{j+1}]\cap [e^{3}_{j+1},e^{4}_{j+1}]\not=\emptyset$, and the extremity of one of these intervals is contained in the other. Assume without loss of generality that $e^{1}_{j+1}\subset [e^{3}_{j+1},e^{4}_{j+1}]$. Then, $e_1$ is an edge of type $j$ such that $e^1_j=e^3_j=e^4_j$ and $e^1_{j+1}\in [e^{3}_{j+1},e^{4}_{j+1}]$, thus $e^1\in S_{B'}$ and thus $B\cap S_{B'}\not =\emptyset$.  If $e=(x,y)$ is of type $j-1$, then $x_{j-1}=y_{j-1}-1$ and $x_{j}=y_j+1$, see \eqref{eq:coor_vertices_edge}. Hence, the conditions $x_j,y_j\in \{e^1_j,e^1_j+1\}$ and $x_{j-1},y_{j-1}\in [e^2_{j-1},e^1_{j-1}]$ from Definition \ref{def:strip} yield that $e'=(x',x)$ with $x'_{j}=y_j$ and $x'_{j-1}=x_j$ is an edge of type $j$ which belongs to $S_B$. Similarly, $e'\in S_{B'}$, and the previous reasoning allows to conclude that $B\cap S_{B'}\not=\emptyset$ or $B'\cap S_B\not=\emptyset$.

Suppose finally that $B=\{e^1,e^2\}$ and $B'=\{e^3,e^4\}$ are pairs of respective type $j$ and $j+1$ for $j\in \{0,1,2\}$. Remark first that edges of $\partial_2S_B$ have type $j+1$ and edges of $\partial_2S_{B'}$ have type $j+2$, so that $\partial_2 S_B\cap \partial_2 S_{B'}=\emptyset$. 

Suppose that $B$ and $B'$ cross. First, if $B\cap S_{B'}\not=\emptyset$ or $B'\cap S_{B'}\not=\emptyset$, then $S_B\cap S_{B'}\not=\emptyset$ and thus $S_B\cap S_{B'}\not\subset \partial_2 S_B\cap \partial_2 S_{B'}$. Suppose that $B\cap S_{B'}=B'\cap S_{B'}=\emptyset$, and thus
$$e^3_j\leq e_j^1=e_j^2\leq e_j^4 \quad e^1_{j+1}\leq e^3_{j+1}=e_{j+1}^4<e_{j+1}^2.$$
Consider the edge $e=(x,y)$ of type $j+1$ with $x_j=e^1_j$ and $x_{j+1}=e^3_{j+1}$. Since $e$ is of type $j+1$, $y_j=x_j$ and $y_{j+1}=x_{j+1}+1$. First, since $x_{j+1},y_{j+1}\in\{e^3_{j+1},e^3_{j+1}+1\}$ and $x_j=y_j=e^1_j\in [e^{3}_j,e^4_{j}]$, $e\in S_{B'}$. Then, remark that 
$$x_{j-1}=N-(x_j+x_{j+1})=N-e^1_j-e^3_{j+1},\, e^1_{j-1}=N-e^1_j-e^1_{j+1},\,e^2_{j-1}=N-e^2_j-e^2_{j+1}.$$
From the equality $e^1_j=e^2_j$ and $e^1_{j+1}\leq e^3_{j+1}<e^2_{j+1}$, we deduce that $x_{j-1}\in ]e^2_{j-1},e^1_{j-1}]$. Since $y_{j-1}=N-y_j-y_{j+1}=N-x_j-x_{j+1}-1=x_{j-1}-1$, $y_{j-1}\in [e^2_{j-1},e^1_{j-1}]$. The two latter inclusions together with $y_j=x_j=e^1_j$ yield that $e\in S_B$. In particular, $S_B\cap S_{B'}\not=\emptyset$ and thus $S_B\cap S_{B'}\not\subset \partial_2 S_B\cap \partial_2 S_{B'}$.

Suppose that $S_B\cap S_{B'}\not=\emptyset$. If $B\cap S_{B'}\not=\emptyset$ or $B'\cap S_B\not=\emptyset$, then $B$ and $B'$ cross. When $B\cap S_{B'}=B'\cap S_{B}=\emptyset$, then $S_B\cap S_{B'}\not =\emptyset$ if and only if $\partial_2 S_B\cap  S_{B'}\not=\emptyset$. One implication is straightforward. For the other implication, remark that there is necessarily an edge $e\in S_B\cap S_{B'}$ which belongs to $B$ or $\partial_2S_B$, and since $B\cap S_{B'}=\emptyset$, $e\in \partial_2 S_B$. In particular, since $B$ has type $j$, edges of $\partial_2 S_B$ have type $j+1$ and thus $e$ has type $j+1$. 

The edge $e=(x,y)$ of type $j+1$ belongs to $\partial_2 S_B$ if and only
$$\left\lbrace \begin{aligned}
    &x_{j}=y_j\in\{e^1_{j},e^1_j+1\},\\
    &x_{j-1}\in[e^{2}_{j-1},e^{1}_{j-1}] \text{ and }y_{j-1}=x_{j-1}-1\in[e^{2}_{j-1},e^{1}_{j-1}].
\end{aligned}\right.$$
Similarly $e$ of type $j+1$ to belong to $S_{B'}$ if and only if  
$$\left\lbrace \begin{aligned}
    &x_{j+1}=y_{j+1}-1=e^3_{j+1}=e^4_{j+1},\\
    &x_{j}=y_j\in[e^{3}_{j},e^{4}_{j}].
\end{aligned}\right.$$
Hence, $e\in \partial_2 S_B\cap S_{B'} $ if and only if
$$\left\lbrace \begin{aligned}
    &x_j=y_j\in\{e^1_{j},e^1_j+1\}\cap[e^{3}_{j},e^{4}_{j}], \,x_{j+1}=y_{j+1}-1=e^3_{j+1}\\
    &N-x_j-e^3_{j+1}\in ]N-e^{2}_{j}-e^2_{j+1},N-e^{1}_{j}-e^1_{j+1}].
\end{aligned}\right.$$
If $x_j=e^1_j$, the latter conditions imply
$$\left\lbrace \begin{aligned}
    &e^{3}_{j}\leq e^1_{j}=e^2_j\leq e^{4}_{j}, \\
    & e^1_{j+1}\leq e^3_{j+1}=e^4_{j+1}<e^2_{j+1}.
\end{aligned}\right.$$
If $x_j=e^1_j+1$, the latter conditions yield
$$\left\lbrace \begin{aligned}
    &e^{3}_{j}-1\leq e^1_{j}=e^2_j\leq e^{4}_{j}-1,\,x_{j+1}=y_{j+1}-1=e^3_{j+1}\\
    & e^1_{j+1}-1\leq e^3_{j+1}=e^4_{j+1}<e^2_{j+1}-1.
\end{aligned}\right.$$
If $e^3_j-1=e^2_j$ and $e^1_{j+1}-1\leq e^3_{j+1}<e^2_{j+1}-1$, then $e^3_j=e^2_j+1$ and 
$$e^3_{j-1}=N-e^3_j-e^3_{j+1}\in ]N-e^2_j-e^2_{j+1},N-e^2_j-e^1_{j+1}]= ]e^2_{j-1},e^1_{j-1}],$$
so that $e^3\in S_B$ by the condition following Definition \ref{def:strip}. Likewise, if $e^3_{j+1}=e^1_{j+1}-1$ and $e^1_j\in[e^3_j,e^4_j]$, then $e^1\in S_{B'}$. Hence, the fact that $B\cap S_{B'}=B'\cap S_B=\emptyset$ strengthens the above condition to imply 
$$\left\lbrace \begin{aligned}
     &e^{3}_{j}\leq e^1_{j}=e^2_j\leq e^{4}_{j}, \\
    & e^1_{j+1}\leq e^3_{j+1}=e^4_{j+1}<e^2_{j+1}.
\end{aligned}\right.$$
\end{proof}

\noindent
From the latter lemma, we deduce a description of puzzles in terms of their type I pieces. 

\begin{proposition}[Partitions determine puzzles]
\label{prop:puzzle_from_partition}
The map $\Phi:P\mapsto (\mathcal{E},c)$ is a bijection from the set of puzzles to the set of subsets of $E_N$ with a coloring $c:\mathcal{E}\rightarrow \{0,1,3\}$ such that there exist a covering $\mathcal{P}_v$ and a partition $\mathcal{P}_e$ of $\mathcal{E}$ with
\begin{itemize}
    \item blocks of $\mathcal{P}_v$ are of size $3$ or $2(r+1),\,r\geq 1$ and satisfy the properties of Lemma \ref{lem:block_vertex},
    \item blocks of $\mathcal{P}_e$ are either singletons or pairs satisfying the properties of Lemma \ref{lem:condition_pairing},
    \item If $e$ belong to only one block of $\mathcal{P}_v$, then $e$ is not a singleton of $\mathcal{P}_e$ and if $e\not=e'$ belong to a same block of $\mathcal{P}_v$, then $\{e,e'\}\not\in \mathcal{P}_e$.
\end{itemize}
\end{proposition}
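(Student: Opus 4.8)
The plan is to show that $\Phi$ sends a puzzle to a decorated subset of the claimed form, and then that it is a bijection onto the set of all such decorated subsets by exhibiting an explicit inverse; injectivity and surjectivity will come out of the same reconstruction. For well-definedness, fix a puzzle $P$ and let $\mathcal P_v,\mathcal P_e$ be its vertex and edge partitions. That the blocks of $\mathcal P_v$ have size $3$ (from the triangular type I pieces) or $2(r+1)$ (from the long type I pieces) and satisfy the stated height and colour relations is exactly Lemma~\ref{lem:block_vertex}. That the blocks of $\mathcal P_e$ are singletons or admissible pairs $B=\{e^1,e^2\}$ with $c(e^1)=c(e^2)\in\{0,1,3\}$, pairwise satisfying $S_B\cap S_{B'}\subset\partial_2S_B\cap\partial_2S_{B'}$, is Lemma~\ref{lem:condition_pairing} combined with the non-crossing criterion established just above. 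The third bullet is an inspection of the type I and type II pieces: a boundary edge coloured $0$ or $1$ not lying on a type I piece necessarily lies on a type II piece (this is also the remark preceding the statement, giving the first half), and two edges coloured in $\{0,1,3\}$ of one type I piece bound, inside that piece, only faces of type I, hence cannot jointly bound a type II piece (giving the second half).

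Next I would prove that $(\mathcal E,c)$ determines $P$, by reconstructing the label of every edge, which yields injectivity. From $\mathcal P_v$ one reads off which faces of $T_N$ carry a type I piece together with all of their edge labels. The remaining faces are organised by $\mathcal P_e$: each pair $B\in\mathcal P_e$ bounds a strip $S_B$; by Lemma~\ref{lem:condition_pairing} distinct strips meet only along their type-$2$ boundary edges, and the label of every edge of $S_B$ is forced by the unique type II piece whose non-$2$ sides are the pair $B$ coloured $c(B)$; every leftover face then carries the all-$2$ triangle. For surjectivity one runs this in reverse: given $(\mathcal E,c)$ together with a covering $\mathcal P_v$ and a partition $\mathcal P_e$ as in the statement, place on each block of $\mathcal P_v$ the type I piece dictated by Lemma~\ref{lem:block_vertex} (whose admissible colourings are precisely those of Figure~\ref{fig:type_I_pieces}), place on each pair $B\in\mathcal P_e$ the type II strip on $S_B$ with sides coloured $c(B)$, and colour $2$ every edge not yet labelled; the non-crossing hypothesis makes these assignments agree on edges lying in several of these regions, the third bullet keeps the type I and type II regions from clashing, and one obtains a map $E_N\to\{0,\dots,7\}$ whose $\Phi$-image is $(\mathcal E,c)$.

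The delicate step is the consistency and admissibility check that completes surjectivity: one must show that the chosen type I pieces and the strips $S_B$ never assign conflicting labels to a common edge, and that around \emph{every} triangular face the resulting triple of labels is one of the configurations of Figure~\ref{fig:pieces} and not a forbidden local pattern. This amounts to re-deriving, purely from the abstract conditions on $\mathcal P_v$ and $\mathcal P_e$, the structural facts proved for genuine puzzles inside Lemmas~\ref{lem:block_vertex} and~\ref{lem:condition_pairing} (in particular that $S_B\cap\mathcal E=B$ and that the interior of a long type I piece avoids the interior of any strip), followed by a finite but somewhat lengthy local case analysis. I expect this global-to-local compatibility argument, rather than any single computation, to be the main obstacle.
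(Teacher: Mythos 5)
Your outline matches the paper's strategy precisely — well-definedness via Lemmas~\ref{lem:block_vertex} and~\ref{lem:condition_pairing}, then an explicit reconstruction of a puzzle from $(\mathcal{E},c,\mathcal{P}_v,\mathcal{P}_e)$ by laying down type I pieces from $\mathcal{P}_v$, type II strips from $\mathcal{P}_e$, and filling the rest with all-$2$ triangles — and you even identify the correct structural facts to be re-derived in the abstract setting ($S_B\cap\mathcal{E}=B$, and the interior of a long type~I piece avoiding every strip). The weakness is that you stop exactly where the work starts. You call the consistency and admissibility check ``a finite but somewhat lengthy local case analysis'' and the ``main obstacle,'' and then do not attempt it; but this step \emph{is} the proof of surjectivity, and the paper's argument for it is not a brute-force case check but a structured deduction that leans repeatedly on two of your hypotheses in a specific way.

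Concretely, to rule out a conflict between the coloring induced by a long block $B\in\mathcal{P}_v$ of type $i$ and the coloring induced by some strip $S_{\tilde B}$ with $\tilde B\in\mathcal{P}_e$, one supposes that an edge $e^0$ of type $i$ or $i-1$ inside $S_B$ also lies in $S_{\tilde B}$. If $\tilde B$ is not a singleton, then $e^0$ is adjacent to edges of $\mathcal{E}$ of type $i+1$ coming from $B$, the non-crossing hypothesis forces one of them into $\tilde B$, hence $\tilde B$ has type $i+1$ and in fact must equal a pair of opposite type-$(i{+}1)$ edges of $B$; this contradicts your third bullet (two elements of the same $\mathcal{P}_v$ block never form a $\mathcal{P}_e$ pair). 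If $\tilde B=\{e^0\}$ is a singleton, the third bullet (a singleton of $\mathcal{P}_e$ must lie in two $\mathcal{P}_v$ blocks) forces a chain of elements of $\mathcal{E}$ inside $S_B$, and tracing the type constraints one reaches a contradiction with $\partial_2 S_{B}$ having at most one non-$\mathcal{E}$ edge. Only after this does one color the remaining edges $2$ and verify that every triangular face is in Figure~\ref{fig:pieces}. Because this chain of deductions is the crux — and it uses the hypotheses in a targeted, not enumerative, way — a proposal that defers it is genuinely incomplete rather than merely terse.

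Two smaller remarks. First, for injectivity you propose to reconstruct $P$ from $\mathcal{P}_v$ and $\mathcal{P}_e$; the paper observes that $\mathcal{P}_v$ alone already fixes the type~I pieces and hence the puzzle, which is slightly cleaner. Second, the well-definedness paragraph is fine, but note that the non-crossing criterion you cite (``the non-crossing criterion established just above'') is used in Lemma~\ref{lem:condition_pairing} via the formulation $S_B\cap S_{B'}\subset\partial_2 S_B\cap\partial_2 S_{B'}$, and this is exactly the property that must be taken as a hypothesis, not re-derived, in the surjectivity direction — worth stating explicitly to avoid circularity.
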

\begin{proof}
Let us build the candidate inverse bijection, and consider a subset $\mathcal{E}\subset E_N$ with a coloring $c:\mathcal{E}\rightarrow \{0,1,3\}$ and a covering $\mathcal{P}_v$ and a partition $\mathcal{P}_e$ satisfying the conditions of Proposition \ref{prop:puzzle_from_partition}. For each pair $B=(e,e')\in\mathcal{P}_e$ of type $i$ and color $c$, color all edges of type $i$ (resp. $i+1$, resp. $i+2$) of the strip $S_B\setminus \{e,e'\}$ with the color $c$ (resp. $2$, resp. $c+5$). Remark that such a coloring is possible, since by properties of Lemma \ref{lem:condition_pairing}, any edge belonging to $S_B\cap S_{B'}$ for two strips of respective types $i,i'$ must be included in the boundary $\partial_2S_B\cap \partial_2S_{B'}$, which consists then of edges of same type $i+1=i'+1$ colored $2$ by the above rule. 

Then, consider any block $B\in\mathcal{P}_v$ of order $2(r+1)$ whose edges $e,e'$ colored $1$ are of type $i$. Remark then that by the properties of Lemma \ref{lem:block_vertex}, $B=\{e,e'\}$ is an admissible pair of edges of $\mathcal{E}$ of same color $1$. Moreover, all edges but two of the boundary $\partial_2S_{B}$ of the strip $S_{B}$ consists of edges of $\mathcal{E}$ colored $0$. 

Let $i$ be the type of $B$, and suppose by contradiction that there is an edge $e^0$ of type $i$ or $i-1$ inside the strip $S_{B}$ which is contained in a strip $S_{\tilde{B}}$ for some $\tilde{B}\in\mathcal{P}_e$. Suppose first that $\tilde{B}\not=\{e^0\}$. Then, since $e^0$ shares a vertex with at least three edges of type $i+1$ colored $0$, $S_{\tilde{B}}$ must contain one of those three edges, called $f^1$; since $\mathcal{P}_e$ is non-crossing and $f^1\in \mathcal{E}\cap S_{\tilde{B}}$, $f^1\in \tilde{B}$ and thus $\tilde{B}$ is of type $i+1$. Since $e^0\in S_{\tilde{B}}$ and $\tilde{B}$ is of type $i+1$, the other edge $f^2$ of $S_{B}$ with same height as $f^1$ must also belong to $S_{\tilde{B}}$, and by the non-crossing condition we have $\tilde{B}=\{f^1,f^2\}$. This contradicts the fact that two elements of the same block of $\mathcal{P}_v$ do not belong to the same block of $\mathcal{P}_e$. 

If $e^0$ is a singleton of $\mathcal{P}_e$, then $e\in\mathcal{E}$ and belongs to at least two blocks of $\mathcal{P}_v$. Hence, there must be another edge $e^1$ between $e^0$ and $e$ in the strip $S_{B}$ which belongs to $\mathcal{E}$. Iterating the process yields an edge $\tilde{e}$ such that $e$ and $\tilde{e}$ belong to a same block $\tilde{B}$ in $\mathcal{P}_v$. Then, $\tilde{e}$ cannot be of type $i-1$ otherwise this block would be a block of order $3$, contradicting the fact that the edge $f$ of type $i+1$ with $f_i=e_i, f_{i+1}=e_{i+1}-1$, which would then belong to this block $\tilde{B}$, does not belong to $\mathcal{E}$. Similarly, if $\tilde{e}$ is of type $i$, then $e$ and $\tilde{e}$ would be boundary edges of type I piece with $2(r'+1)$ edges, with $1\leq r'<r$. This is impossible since the boundary $\partial_2S_{\{e,\tilde{e}\}}$ contains at most one edge which is not in $\mathcal{E}$. Hence, no edge of type $i$ or $i-1$ inside $S_{B}$ belongs to some strip $S_B$ for $B\in \mathcal{P}_e$ and thus none of those edges has been colored $2$ in the previous labelling. Therefore, one can color all type $i$ edge in $S_{\tilde{B}}$ different from $e,e'$ with the label $7$ and all type $i-1$ edge in $S_{\tilde{B}}$ with label $6$.

Finally, the edge $f$ of type $i+1$ with $f_i=e_i, f_{i+1}=e_{i+1}-1$ can not be part of a strip $S_{\tilde{B}}$ for some $\tilde{B}=(f^1,f^2)$ of type $i+1$ or $i-1$, for otherwise $e$ would also belong to $S_{\tilde{B}}$ and $\mathcal{P}_e$ would not be non-crossing. Hence, either $f\in \partial_2 S_B$ for some strip $S_B$ and $f$ has been labelled $2$ in the first coloring step, or $f$ has not been colored before and thus $f$ can be colored $2$. 

Color all remaining edges with the label $2$. One then checks that the labels on the boundary of any triangle of the puzzle satisfy the conditions of Figure \ref{fig:pieces}, so that the labelling of edges of $E_N$ yields a genuine puzzle $P$. It is then straightforward to check that $P_v=\mathcal{P}_v$ and $P_e=\mathcal{P}_e$. The map $\Phi$ is thus surjective.

For the injectivity, remark that the data of $\mathcal{P}_v$ alone gives the list and position of type I pieces of the puzzle, which uniquely characterizes it.
\end{proof}

\subsection{Graph of a puzzle}\label{sec:graph_puzzle}
\begin{definition}[Graph of a puzzle]
The graph of a puzzle $P$ is the graph $\mathcal{G}_P$ whose set of vertices is $P_v$, set of edges is $P_e$ and set faces is $P_f$. 

The endpoints of an edge $B_e\in P_e$ are the vertices $B_v,B_{v'}\in P_v$ such that $B_e\cap B_v\not=\emptyset$ and $B_{e}\cap B_{v'}\not=\emptyset$.

The boundary of a face $B_f\in P_v$ are the edges $B\in P_e$ such that there is $e\in B, v\in B_f$ such that $v$ is an endpoint of $e$. A face $B_f\in P_f$ is called an outer face (resp. inner face) if there is an element (resp. no element) $v\in B_f$ on the border of $T_N$. 
\end{definition}

Remark that elements of $P_v$ and $P_e$ are sets of edges of $T_N$ while elements of $P_f$ are set of vertices of $T_N$. Moreover, any edge $B\in P_e$ has a type $\ell\in\{0,1,2\}$ and a color $c\in\{0,1,3\}$, which is the type and the color of the edges of $T_N$ in $B$.

Let $B_f\in \mathcal{P}_f$ and denote by $\partial B_f$ the set of edges on the boundary of $B_f$. Then, there is a natural cyclic order on $\partial B_f$ such that $\partial B_f=(B_1<\ldots<B_p)$ where $B_i$ and $B_{i+1}$ share a vertex of $\mathcal{P}_v$ and the edges of $\partial B_f$ are read in the clockwise order around the region $B_f$.

\begin{lemma}[Type of face boundaries]
\label{lem:type_face_edge}
Let $B_f\in \mathcal{P}_f$. Then, the sequence of type of edges on the boundary of $B_f$ is a subsequence of $(0,1,2,0,1,2)$ (up to cyclic rotation), and two consecutive edges $B<B'$ on the boundary of $B_f$ sharing a vertex $B_v\in \mathcal{P}_v$ are 
\begin{itemize}
\item of type $(\ell,\ell+1)$ if $B_v$ is a block of size three and the color of $B$ and $B'$ are either both $0$, both $1$ or $(3,0)$, $(0,1)$ or $(1,3)$, or a block of size $2r,\, r\geq 2$ and $B,B'$ have respective color $(0,1)$. 
\item of type $(\ell,\ell+2)$ if $B_v$ is a block of size $2r, \,r\geq 2$ and $B$ and $B'$ have color $(0,1)$,
\item of type $\ell$ if $B_v$ is a block of size $2r, \,r\geq 3$ and $B$ and $B'$ have color $0$.
\end{itemize}
\end{lemma}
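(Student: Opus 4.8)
The plan is to localise the statement around the vertex $B_v\in\mathcal P_v$ at which the two consecutive boundary edges $B<B'$ of $\partial B_f$ meet. First I would fix $B_f$, $B_v$, $B$, $B'$ and write $\pi$ for the type~I piece whose $\{0,1,3\}$-coloured edges are exactly the $T_N$-edges forming $B_v$. By the incidence rules in $\mathcal G_P$ there are $T_N$-edges $e\in B\cap B_v$ and $e'\in B'\cap B_v$; they are boundary edges of $\pi$ and carry the type and colour of $B$, respectively of $B'$. The hypothesis that $B<B'$ are \emph{consecutive} on $\partial B_f$ while both meeting $B_v$ translates into: reading once around $\partial\pi$, the edges $e$ and $e'$ are met one after the other among the edges of $\mathcal E$ that face the $2$-region $B_f$, with nothing of $\mathcal E$ in between on that side --- either $e$ and $e'$ share a $T_N$-vertex, or the only boundary edges of $\pi$ between them are coloured $2$ and all belong to $B_f$. (Justifying that exactly one type~I piece faces $B_f$ here --- so that $e,e'$ genuinely lie in a single block $B_v$ --- uses the non-crossing property of $\mathcal P_e$ and the reconstruction of Proposition~\ref{prop:puzzle_from_partition}.)

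The ``type'' half of the statement I would extract from one fixed local fact: around every vertex of $T_N$ the six incident edges occur, cyclically, in the order of types $(0,2,1,0,2,1)$, which is immediate from Definition~\ref{def:edge_coordinate}. Consequently two edges of $\mathcal E$ that share a $T_N$-vertex have types differing by $+1$ or by $+2$ modulo $3$, whereas (for the second possibility above) two $\mathcal E$-edges of $\pi$ that are separated only by colour-$2$ edges are --- when $\pi$ is a staircase piece --- two of the $f$-edges, which by Lemma~\ref{lem:block_vertex} all have the same type $i+1$, so here the types coincide. This already shows that $(\mathrm{type}(B),\mathrm{type}(B'))$ is of one of the three forms $(\ell,\ell+1)$, $(\ell,\ell+2)$, $(\ell,\ell)$, and, running this along the whole directed cycle $\partial B_f$, that the cyclic sequence of types is --- up to rotation --- a subsequence of $0,1,2,0,1,2,\dots$; this last point is elementary bookkeeping on the cyclic walk.

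To correlate size, colours and type I would then split on the shape of $B_v$ given by Lemma~\ref{lem:block_vertex}. If $B_v$ has size $3$, then $\pi$ is a basic type~I triangle (the first pieces of Figure~\ref{fig:type_I_pieces}); $e$ and $e'$ are two of its three edges, hence of types $\{\ell,\ell+1\}$ by Lemma~\ref{lem:triangle_sum}, and their colours form one of $(0,0)$, $(1,1)$, $(3,0)$, $(0,1)$, $(1,3)$ by restricting the admissible triangle colourings $(0,0,0)$, $(1,1,1)$ and the cyclic rotations of $(0,1,3)$ of Lemma~\ref{lem:block_vertex}; read in the clockwise orientation of $\partial B_f$ this gives type $(\ell,\ell+1)$, which is the triangle case of the first bullet. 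If $B_v$ has size $2r$ with $r\ge 2$, then by Lemma~\ref{lem:block_vertex} and the description of the type~I pieces $\pi$ is a staircase piece carrying exactly two edges of colour $1$ and common type $i$ (the edges $e^0,e^{0'}$) and $2r-2$ edges of colour $0$ and type $i+1$ (the $f$-edges). The pair $(e,e')$ is then either an adjacent $(e\text{-edge},\,f\text{-edge})$ pair --- colours $\{0,1\}$, types $\{i,i+1\}$, which read clockwise along $\partial B_f$ is $(i,i+1)=(\ell,\ell+1)$ or $(i+1,i)=(\ell,\ell+2)$ according to which side of $\pi$ the region $B_f$ lies on, i.e.\ the remaining case of the first bullet or the second bullet --- or a pair of $f$-edges separated only by a colour-$2$ edge --- colours $(0,0)$, types $(i+1,i+1)=(\ell,\ell)$, possible only for $2r\ge 6$, i.e.\ the third bullet. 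One also checks that no further pair of $\mathcal E$-edges of $\pi$ can be $\partial B_f$-consecutive at $B_v$: the two $e$-edges of a staircase are never mutually available in this way, and $f$-edges lying on the two different coordinate-$i$ levels are never so either.

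The step I expect to be the main obstacle is precisely this last analysis of the staircase pieces: reconstructing, from the height data of Lemma~\ref{lem:block_vertex}, the exact cyclic order of the $2r$ edges of $B_v$ along $\partial\pi$, identifying which of them are mutually $\partial B_f$-consecutive, and keeping careful track of the clockwise orientation so as to separate the first-bullet from the second-bullet case. A lesser but still necessary point is the ``exactly one type~I piece faces $B_f$'' claim of the first paragraph, which rests on the non-crossing of $\mathcal P_e$ (as in Lemma~\ref{lem:condition_pairing}) together with Proposition~\ref{prop:puzzle_from_partition}.
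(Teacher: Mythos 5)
Your local analysis at a shared vertex $B_v$ is essentially the paper's: split on whether $B_v$ has size $3$ or size $2(r+1)$, use Lemma \ref{lem:block_vertex} (and the triangle colourings) to read off that the two $T_N$-edges $e\in B\cap B_v$, $e'\in B'\cap B_v$ force the type pairs $(\ell,\ell+1)$, $(\ell,\ell+2)$ or $(\ell,\ell)$ with exactly the colour combinations in the statement, the last case needing at least $6$ edges in the block. That part of your proposal is sound (the uniqueness of the type I piece facing $B_f$ is not even needed, since sharing a vertex $B_v\in\mathcal P_v$ is how consecutivity on $\partial B_f$ is defined), and the cyclic order $(0,2,1,0,2,1)$ of edge types around a $T_N$-vertex that you invoke is correct.

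The genuine gap is the global claim, which you dismiss as ``elementary bookkeeping on the cyclic walk.'' The local constraints only say that consecutive types on $\partial B_f$ differ by $+1$, $+2$ or $0$ modulo $3$; they do not bound the number of boundary edges nor the total type increment around the cycle. For instance a face whose boundary types read $(0,1,2,0,1,2,0,1,2)$ satisfies every one of your local conclusions, yet it is not a subsequence of $(0,1,2,0,1,2)$. Ruling this out requires a planar/angle argument, which is exactly what the paper supplies: at a corner where the type jumps by $r_i$ the interior angle of the region $B_f$ is $(1-r_i/3)\pi$, and since the interior angles of a face with $p$ boundary edges sum to at most $(p-2)\pi$ (equality for an inner face), one gets $\sum_i r_i\le 6$, whence the type sequence is a subsequence of $(0,1,2,0,1,2)$ up to cyclic rotation. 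Your proposal contains no substitute for this total-turning estimate, so as written the first assertion of the lemma is not proved; you also misidentify the main difficulty, which is this global step rather than the staircase bookkeeping you flag.
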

\begin{proof}
Let $(B_1,\ldots,B_p)$ be the previously defined cyclic ordering of the edges around $B_f$ such that $B_i,B_{i+1}$ share a vertex in $\mathcal{P}_v$. Let $1\leq i\leq p$ and denote by $\ell$ the type of $B_{i}$. Since $B_i,B_{i+1}$ share the vertex $B_v$, there exist $e^i\in B_i$ and $e^{i+1}\in B_{i+1}$ such that $e^i,e^{i+1}\in B_v$. Since $e^i,e^{i+1}$ are not colored $2$, the type and colors of $e^i$ (resp. $e^{i+1}$) are the ones of $B_{i}$ (resp. $B_{i+1}$).

If $B_v$ is a block of size $3$, then it is a triangle vertex whose boundary colors in the clockwise order are either $(0,0,0)$, always $(1,1,1)$, or $(1,3,0)$ up to a rotation. Since the angle between $e^{i}$ and $e^{i+1}$ is $-\pi/3$, the type of $e_{i+1}$ is $\ell+1$, and the colors $B_i, B_{i+1}$ are either $(0,0)$, $(1,1)$, $(3,0),(0,1)$ or $(1,3)$.

If $B_v$ is a block of size $2(r+1)$, $r\geq 1$, then $3$ configurations can occur depending on the colors of the consecutive edges :
\begin{itemize}
\item if $e^i$ is colored $1$ and $e^{i+1}$ is colored $0$, then the type of $e^{i+1}$ is $\ell+1$,
\item if $e^i$ is colored $0$ and $e^{i+1}$ is colored $1$, then the type of $e^{i+1}$ is $\ell-1$,
\item if $e^i$ and $e^{i+1}$ are both colored $0$ then the edges are adjacent and have same type $\ell$. Remark that in this case, the vertex $B_v$ must have at least $6$ edges.
\end{itemize}
Finally, remark that the angle between two consecutive edges $B_i,B_{i+1}$ is equal to $(1-r_i/3)\pi$ if the difference of the type from $B_i$ to $B_{i+1}$ is $r_i$ (with $r_i=3$ if $B_i$ and $B_{i+1}$ have both type $\ell$). Since the sum of the angles must be equal to the $(p-2)\pi$ if $B_f$ is an inner face and smaller otherwise, we must have $\sum_{i=1}^p(1-r_i/3)\leq p-2$, so that 
$$\sum_{i=1}^pr_i\leq 6.$$
We deduce that the sequence of types of edges of the boundary must be a subsequence of $(0,1,2,0,1,2)$, up to cyclic permutation.
\end{proof}

\section{Discrete two-colored dual hive model}\label{Sec:discrete_two_colored_hive}
In this section, we associate to each puzzle of size $N$ a two-colored hive in the same spirit as in \cite{Knutson_1999}. Beware that because of the rigid crossings from Figure \ref{fig:type_I_pieces}, the discrete hives won't be actual hives as in \cite{Knutson_1999} but rather a dual hive. Let us fix in this section the number $k$ of edges colored $0$ or $1$ on one edge of the puzzles.  This number is the same on each edge of the puzzle and is part of the boundary data of the puzzle. The two colored dual hive associated to a puzzle will then be a decoration of the triangular grid $T_k$ instead of $T_N$.  All the notation introduced for $T_N$ are thus still valid for $T_k$. 

\begin{definition}[Two-colored discrete dual hive]
\label{def:two_col_hives}
A two-colored discrete dual hive of size $(k,N)$ is given by the following combinatorial data on $T_k$:
\begin{itemize}
    \item a color map $C:E_k\rightarrow \{0,1,3,m\}$, such that the boundary colors around each triangular face in the clockwise order is either $(0,0,0),(1,1,1),(1,0,3)$ or $(0,1,m)$ up to a cyclic rotation.
    \item a label map $L:E_k\rightarrow \mathbb{N}$, with the two following conditions:
    \begin{enumerate}
        \item  for all $f\in F_k$ with boundary edges $e^0,e^1,e^2$, $L(e^0)+L(e^1)+L(e^2)=N-1$ except when $f\in F_k^-$ with boundary colors different from $\{0,1,m\}$, in which case $L(e^0)+L(e^1)+L(e^2)=N-2$,
        \item if $e,e'$ are edges of same type $\ell\in\{0,1,2\}$ on the boundary of a same lozenge, then 
     \begin{enumerate}
     \item $L(e)=L(e')$ if the middle edge is colored $m$,
     \item $L(e)\geq L(e')$ if $e'_{\ell+1}=e_{\ell+1}+1$ and no edge different from $e'$ is colored $m$,
     \item $L(e)>L(e')$ if either $e_{\ell}>e'_{\ell}$ or if both $e'_{\ell+1}=e_{\ell+1}+1$ and one of the boundary edges different from $e'$ is colored $m$.
     \end{enumerate}
    \end{enumerate}
\end{itemize}
The boundary value $[(c^{(0)},c^{(1)},c^{(2)}),(l^{(0)},l^{(1)},l^{(2)})]$ of a two-colored discrete dual hive is the restriction of $(C,L)$ to $\partial T_k$, where $c^{(i)}\in \{0,1,3,m\}^k$ (resp. $l^{(i)}\in \mathbb{N}^k$) is the restriction of $C$ (resp. $L$) to $\partial T_k^{(i)}$, for $0\leq i\leq 2$. 
\end{definition}

For $(c,l)=(c^{(0)},c^{(1)},c^{(2)},l^{(0)},l^{(1)},l^{(2)})\in \{0,1\}^{3k}\times \mathbb{N}^{3k}$, we denote by $\mathcal{H}(c,l,N)$ the set of two-colored discrete dual hives with boundary value $(c,l)$. 
\begin{remark}\label{rem:strict_increasing_label}
As a corollary of the Condition $(2)$ on the label map, we have $L(e)>L(e')$ for any pair of edges $e,e'$ of same type $\ell$ such that $e_{\ell}>e'_{\ell}$ and $e_{\ell+1}\leq e'_{\ell+1}$. Indeed, it suffices to show this for $e,e'$ such that $e_{\ell}=e'_{\ell}+1$ and $e'_{\ell+1}\in\{e_{\ell+1},e_{\ell+1}+1\}$. The case $e'_{\ell+1}=e_{\ell+1}$ is given by Condition $(2.c)$, and we now suppose that $e'_{\ell+1}=e_{\ell+1}+1$. Let $e''$ be such that $e''_{\ell+1}=e'_{\ell+1}$ and $e''_{\ell}=e_{\ell}=e_{\ell}'+1$. If the middle edge of the lozenge with boundary $e',e''$ is not colored $m$, then by Condition $(2.c)$ we have $L(e'')>L(e')$, and then by $(2.a)$ or $(2.b)$, we get $L(e)\geq L(e'')>L(e')$. If the middle edge of the lozenge with boundary $e',e''$ is colored $m$, then $L(e')=L(e'')$. Then, the middle edge colored $m$ of this lozenge is then a boundary edge of the lozenge with boundary $e'',e$ different from $e''$ and $e$, so that $(2.c)$ implies that $L(e)>L(e'')=L(e')$.
\end{remark}

For any triple $\omega=(\omega_0,\omega_1,\omega_2)$ of words $\{0,1,2\}^N$ with $k_0$ occurrences of $0$ and $k_1$ occurrences of $1$, denote by $(c(\omega),l(\omega))$ the sequence $(c^{(0)},c^{(1)},c^{(2)},l^{(0)},l^{(1)},l^{(2)})\in \{0,1\}^{3k}\times \mathbb{N}^{3k}$ where $k=k_0+k_1$ and $c^{(i)}$ is the word obtained from $\omega_i$ by deleting the letters $2$ and $l^{(i)}$ is the sequence of positions of the letters $0$ or $1$ in $\omega_i$. The following result gives a formulation of Theorem \ref{thm:puzzle_two_steps} in terms of integer points counting of polytopes.

\begin{theorem}[Dual hive in the two-step case]
\label{thm:bijection_puzzle_dual_hive}
For any triple $\omega=(\omega_0,\omega_1,\omega_2)$ of words $\{0,1,2\}^N$ with $k_0$ occurrences of $0$ and $k_1$ occurrences of $1$,
\begin{equation}
\langle\sigma_{\omega_0}\sigma_{\omega_1}\sigma_{\omega_2},\sigma_0\rangle_{H\mathbb{F}(k_0,k_1,N)}=\# \mathcal{H}(c(\omega),l(\omega),N).
\end{equation}

\end{theorem}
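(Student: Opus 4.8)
The plan is to combine Theorem \ref{thm:puzzle_two_steps} with the structural description of puzzles from Proposition \ref{prop:puzzle_from_partition} to produce an explicit bijection between the set $P(\omega_0,\omega_1,\omega_2)$ of puzzles with boundary colouring $(\omega_0,\omega_1,\omega_2)$ and the set $\mathcal{H}(c(\omega),l(\omega),N)$ of two-colored discrete dual hives. Since Theorem \ref{thm:puzzle_two_steps} already identifies the left-hand side of the claimed identity with $\# P(\omega_0,\omega_1,\omega_2)$, it suffices to build such a bijection, together with the verification that it matches boundary data.

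First I would set up the forward map. By Proposition \ref{prop:puzzle_from_partition} a puzzle $P$ is recorded faithfully by the data $(\mathcal{E},c,\mathcal{P}_v,\mathcal{P}_e)$, and Section \ref{sec:graph_puzzle} builds from it the graph $\mathcal{G}_P$ with vertex set $P_v$, edge set $P_e$ and face set $P_f$. The geometric picture to establish is that $\mathcal{G}_P$ realises the triangular grid $T_k$: its edges correspond to the strips $S_B$, $B\in\mathcal{P}_e$ (each a thin parallelogram of $T_N$ of constant height), its triangular faces to the single-triangle type I pieces, and its vertices to the maximal monochromatic-$2$ regions of $P$; the gashed type I pieces of Figure \ref{fig:type_I_pieces} contribute the extra structure recorded by the colour $m$, namely runs of rigid lozenges. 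Lemma \ref{lem:type_face_edge} supplies the local compatibility — around any region the boundary edges have types forming a cyclic subsequence of $(0,1,2,0,1,2)$ — and the boundary data, which by hypothesis has exactly $k$ edges coloured $0$ or $1$ on each side, fixes the global shape. One then sets $L\colon E_k\to\mathbb{N}$ to send $e$ to the common height $h$ of the $T_N$-edges of the strip attached to $e$ (well defined by admissibility, Lemma \ref{lem:condition_pairing}), and $C\colon E_k\to\{0,1,3,m\}$ to send $e$ to the common colour of that strip in $\{0,1,3\}$, or to $m$ on the edges internal to a gashed piece.

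Next I would verify that $(C,L)$ satisfies the axioms of Definition \ref{def:two_col_hives}. The colour condition around a triangular face of $T_k$ is a direct inspection of the type I pieces of Figure \ref{fig:type_I_pieces}: single triangles give $(0,0,0)$, $(1,1,1)$ or a cyclic rotation of $(1,0,3)$, while a gashed piece produces rigid lozenges whose middle edge carries $m$, hence the local pattern $(0,1,m)$. The two label–sum identities $L(e^0)+L(e^1)+L(e^2)=N-1$ (resp.\ $N-2$) are Lemma \ref{lem:triangle_sum}, equivalently the first bullet of Lemma \ref{lem:block_vertex}, applied to the $T_N$-triangle underlying a type I piece, the $N-2$ case being the reversed triangle of mixed colour. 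The lozenge label inequalities $(2.a)$–$(2.c)$ come from the geometry of a strip: reading Definition \ref{def:strip}, as one crosses a strip of type $j$ the heights of the successive type-$(j{+}1)$ cross edges change monotonically, strictly except across a rigid crossing where they stay constant, and Remark \ref{rem:strict_increasing_label} packages exactly this monotonicity. Finally, by construction the restriction of $(C,L)$ to $\partial T_k$ deletes precisely the information carried by the boundary letters $2$ and retains the colours and positions of the letters $0,1$, so it equals $(c(\omega),l(\omega))$. For the inverse I would run this backwards: an edge of $T_k$ of classical colour $c\in\{0,1,3\}$ and label $\ell$ is declared to be an edge of $\mathcal{E}\subset E_N$ of the same type and colour at height $\ell$, an $m$-edge encodes the interior of a gashed piece, and the faces and edges of $T_k$ produce the blocks of $\mathcal{P}_v$ and $\mathcal{P}_e$; the hive axioms then translate term by term into the hypotheses of Proposition \ref{prop:puzzle_from_partition} (the colour and label–sum conditions give the block descriptions of Lemmas \ref{lem:block_vertex} and \ref{lem:condition_pairing}, the inequalities give admissibility and the non-crossing condition), so that $\Phi^{-1}$ of this data is a genuine puzzle with boundary colouring $\omega$, and a short bookkeeping check shows the two maps are mutually inverse.

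I expect the main obstacle to be the combinatorial–topological identification of $\mathcal{G}_P$ with $T_k$ together with the faithful treatment of the $m$-colored edges: one must argue that contracting the strips leaves no degeneracies and tiles the triangle exactly as $T_k$, and that the gashed type I pieces correspond bijectively to the runs of rigid lozenges in a way producing precisely the strict-versus-non-strict dichotomy of conditions $(2.b)$ and $(2.c)$. A related subtlety, flagged after Definition \ref{def:edge_coordinate}, is that the height of a $T_N$-edge does not determine its position; in the reconstruction this forces one to exploit the full incidence structure of $T_k$ rather than the label map alone, and checking that the edge positions are thereby recovered unambiguously is the delicate point.
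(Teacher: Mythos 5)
Your overall strategy coincides with the paper's: build the candidate bijection by contracting a puzzle's graph to $T_k$ (really to the planar dual of that graph), transfer the colour and height data to get the pair $(C,L)$, and run the construction backwards via Proposition \ref{prop:puzzle_from_partition}. However, as written there are two genuine gaps.

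First, the blowup of the gashed type~I pieces is not just a ``related subtlety'': it is what makes the dual graph a triangular grid at all. Your set-up identifies triangular faces of $T_k$ with single-triangle type~I pieces, which leaves the higher-order vertices of $\mathcal{G}_P$ (coming from the gashed pieces of degree $2(r+1)$) unaccounted for; without blowing those up into $2r$ trivalent vertices (Definition \ref{def:blowup_vertex}), the planar dual of $\mathcal{G}_P$ has faces of degree larger than three and is not $T_k$. More concretely, your definition of the label map $L$ --- ``send $e$ to the common height of the $T_N$-edges of the strip attached to $e$'' --- simply does not apply to the edges of $T_k$ that end up coloured $m$ or to the interior $1$-edges of a gashed block, since those are not attached to any strip in $\mathcal{P}_e$. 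Definition \ref{def:blowup_vertex} is precisely where the paper prescribes those heights (e.g.\ $h(\tilde B^{2i-1})=N-1-h(B^1)-h(B^{2i})$), and this choice is what makes the face-sum condition and the equality case $(2.a)$ hold on the resulting dual hive. Saying the gashed pieces ``contribute runs of rigid lozenges'' is the right intuition but does not supply a well-defined $L$.

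Second, you substantially underestimate the inverse direction. You correctly flag, after Definition \ref{def:edge_coordinate}, that a height does not determine an edge's position in $T_N$; but resolving that ambiguity, and then verifying that the reconstructed $\mathcal{P}_e$ is non-crossing (so Proposition \ref{prop:puzzle_from_partition} actually applies), is the bulk of the proof. The paper reconstructs the edge positions from the local colour/label relations of Figure \ref{Fig:coordinates_label} and then proves non-crossing in Lemmas \ref{lem:two_color_non_crossing_same_type} and \ref{lem:two_color_non_crossing_different_type} by a delicate case analysis on whether the two blocks have the same or adjacent types; only once that is in place can one invoke Proposition \ref{prop:puzzle_from_partition}. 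Dismissing this as ``a short bookkeeping check'' leaves the hardest and most original part of the theorem unproved. Relatedly, in the forward direction, appealing to Remark \ref{rem:strict_increasing_label} to justify the inequalities $(2.b)$--$(2.c)$ is not quite what is needed: that remark is a consequence of the hive axioms, whereas what must be shown is that the strip geometry of a puzzle \emph{produces} those inequalities, which is what the case analysis via Figure \ref{Fig:coordinates_label} accomplishes.
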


\noindent
By Corollary \ref{cor:puzzle_qLR}, the latter theorem directly yields a similar expression of the quantum Littlewood-Richardson coefficients as the number of integer points in discrete dual hives. For three partitions $\lambda,\mu,\nu$ of length $n$ with first part smaller than $N-n$ and such that $\vert \lambda\vert+\vert \mu\vert=\vert \nu\vert+dN$, set 
$$H(\lambda,\mu,\nu,N)=\mathcal{H}(c(\omega),l(\omega),N),$$
where $\omega$ is the triple of words in $\{0,1,2\}^N$ built as in Corollary \ref{cor:puzzle_qLR} for $\lambda^1=\lambda,\,\lambda^2=\mu,\,\lambda^0=\nu$ and for the corresponding $d$.
\begin{corollary}[Dual hive for the $q$-LR coefficients]\label{cor:qLR_dual_hive} 
For $\lambda,\mu,\nu$ of length $n$ with first part smaller than $N-n$ and such that $\vert \lambda\vert+\vert \mu\vert=\vert \nu\vert+dN$,
$$c_{\lambda,\mu}^{\nu,d}=\#H(\lambda,\mu,\nu,N).$$
\end{corollary}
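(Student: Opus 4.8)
The plan is to obtain Corollary \ref{cor:qLR_dual_hive} as a direct concatenation of Corollary \ref{cor:puzzle_qLR}, Theorem \ref{thm:puzzle_two_steps} and Theorem \ref{thm:bijection_puzzle_dual_hive}; the only genuine verification is that the triple of words produced by the recipe of Corollary \ref{cor:puzzle_qLR} really satisfies the hypotheses shared by those three statements. So first I would fix $\lambda,\mu,\nu$ as in the statement and let $\omega=(\omega_0,\omega_1,\omega_2)$ be the triple of words in $\{0,1,2\}^N$ built by the procedure of Corollary \ref{cor:puzzle_qLR} applied to $\lambda^1=\lambda$, $\lambda^2=\mu$, $\lambda^0=\nu$ and the unique $d$ with $|\lambda|+|\mu|=|\nu|+dN$, so that by definition $H(\lambda,\mu,\nu,N)=\mathcal{H}(c(\omega),l(\omega),N)$.

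Next I would check the letter multiplicities. After steps (1)--(2) of the construction, each $\omega_\ell$ has exactly $n$ letters equal to $0$ and $N-n$ letters equal to $2$: the positions $\lambda^\ell_i+(n-i)$ (resp. $N-1-(\lambda^0_i+(n-i))$) form $n$ distinct elements of $\{0,\dots,N-1\}$ because the sequences involved are strictly decreasing in $i$ and the first parts are bounded by $N-n$. Step (3) replaces $d$ zeros and $d$ twos of each $\omega_\ell$ by ones, so in the end each $\omega_\ell$ has exactly $k_0:=n-d$ letters $0$ and $k_1:=2d$ letters $1$; in particular the three words share the same pair $(k_0,k_1)$, which is precisely the hypothesis required by Theorems \ref{thm:puzzle_two_steps} and \ref{thm:bijection_puzzle_dual_hive}. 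Moreover $k:=k_0+k_1=n+d$, so the two-colored dual hives of $H(\lambda,\mu,\nu,N)$ live on $T_{n+d}$, consistently with Definition \ref{def:two_col_hives}. (One uses here that $d\le\min(n,N-n)$, which holds whenever $c_{\lambda,\mu}^{\nu,d}\neq 0$; for larger $d$ the construction is vacuous and both sides of the claimed identity vanish.) One should also note that the boundary datum $(c(\omega),l(\omega))$ defined just before Theorem \ref{thm:bijection_puzzle_dual_hive} is exactly the boundary value in the sense of Definition \ref{def:two_col_hives}, so that $\mathcal{H}(c(\omega),l(\omega),N)$ in Theorem \ref{thm:bijection_puzzle_dual_hive} and $H(\lambda,\mu,\nu,N)$ are literally the same set.

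With this in place the proof is the chain
\[
c_{\lambda,\mu}^{\nu,d}=\#P(\omega_0,\omega_1,\omega_2)=\langle\sigma_{\omega_0}\sigma_{\omega_1}\sigma_{\omega_2},\sigma_0\rangle_{H\mathbb{F}l(k_0,k_1,N)}=\#\mathcal{H}(c(\omega),l(\omega),N)=\#H(\lambda,\mu,\nu,N),
\]
where the first equality is Corollary \ref{cor:puzzle_qLR}, the second is Theorem \ref{thm:puzzle_two_steps}, the third is Theorem \ref{thm:bijection_puzzle_dual_hive}, and the last is the definition of $H(\lambda,\mu,\nu,N)$. There is essentially no obstacle here: all the substance is carried by Theorem \ref{thm:bijection_puzzle_dual_hive} (itself resting on Proposition \ref{prop:puzzle_from_partition} and the coordinate/label bookkeeping of this section). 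If one must single out a delicate point, it is precisely the bookkeeping of the second paragraph --- matching the multiplicities $(k_0,k_1)$ across the three words so that all quoted results refer to the same two-step flag manifold $H\mathbb{F}l(k_0,k_1,N)$, and identifying $(c(\omega),l(\omega))$ with a bona fide boundary value of a two-colored discrete dual hive.
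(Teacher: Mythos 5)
Your proposal is correct and matches the paper's implicit argument: the corollary is presented there without a separate proof, as an immediate concatenation of Corollary \ref{cor:puzzle_qLR}, Theorem \ref{thm:puzzle_two_steps}, Theorem \ref{thm:bijection_puzzle_dual_hive} and the definition $H(\lambda,\mu,\nu,N)=\mathcal{H}(c(\omega),l(\omega),N)$. Your added bookkeeping (that each $\omega_\ell$ has $k_0=n-d$ zeros and $k_1=2d$ ones, hence all three words refer to the same $\mathbb{F}l(k_0,k_1,N)$ and the hives live on $T_{n+d}$) is a correct and useful sanity check that the paper leaves to the reader.
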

The rest of this section is then devoted to a proof of Theorem \ref{thm:bijection_puzzle_dual_hive}, which is obtained by exhibiting a bijection $\zeta: P(\omega)\rightarrow \mathcal{H}(c(\omega),l(\omega),N)$.

Given a puzzle $P\in P(\omega)$, let $\mathcal{G}_{P}$ be the corresponding graph introduced in Section \ref{sec:graph_puzzle}. Let us first transform the graph $\mathcal{G}_{P}$ into a new graph $\widehat{G}_{P}$ by blowing up each vertex $v\in \mathcal{P}_v$ of size $2(r+1)$ as follows.

\begin{definition}[Blowup of vertex]\label{def:blowup_vertex}
        Let vertex $v\in \mathcal{P}_v$ be a vertex of size $2(r+1)$ with adjacent edges $(B^1,\ldots,B^{2r+2})$ (indexed in the cyclic order) such that $B^1,B^{r+2}$ have type $\ell \in\{0,1,2\}$ and are colored $1$ and $B^i, i\not\in \{1,r+2\}$ have type $\ell+1$ and are colored $0$. Introduce $2r-1$ new edges $\tilde{B}^1,\ldots,\tilde{B}^{2r-1}$ of  type $\ell-1,\ell,\ldots, \ell-1$ and colored $m,1,\ldots,m$ and transform $v$ into $2r$ vertices $v^{1},\ldots,v^{2r}$ such that the edges adjacent to $v^{2j+1}$ are $(\tilde{B}^{2j},B^{2r+2-j},\tilde{B}^{2j+1})$ and edges adjacent to $v^{2j+2}$ are $(\tilde{B}^{2j+1},\tilde{B}^{2j+2},B^{j+2} )$ with the convention $\tilde{B}^0=B^1$ and $\tilde{B}^{2r}=B^{r+2}$. We define the height of $\tilde{B}^i$ as $h(\tilde{B}^{2i})=h(B^1)$ and $h(\tilde{B}^{2i-1})=N-1-h(B^1)-h(B^{2i})$ for $1\leq i\leq r-1$. \\
        \\
        The resulting graph is called the blowup of $v$. 
\end{definition}
The picture of the blowup of a vertex of size $6$ is given in Figure \ref{fig:blowup_vertex}.
\begin{figure}[H]
    \centering
    \scalebox{0.8}{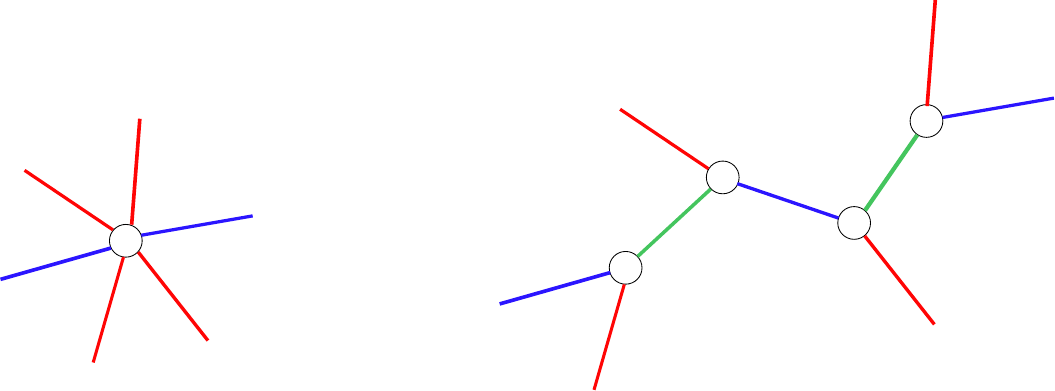}
    \caption{Blowup of a vertex of size $6$ : edges in blue (resp. red, resp. green) are of type $\ell$ (resp. $\ell+1$, resp. $\ell+2$) and colored $1$ (resp. $0$, resp. $m$).}
    \label{fig:blowup_vertex}
\end{figure}
\begin{definition}[Blowup of the graph of a puzzle]
The blowup $\widehat{G}_{P}$ of the graph $\mathcal{G}_P$ is the graph obtained by blowing up every vertex of size $2(r+1)$, $r\geq 1$.
\end{definition}
Remark that the blowup of  graph is well-defined because any vertex of size $2(r+1)$ of $\mathcal{G}_{P}$ has the form of Definition \ref{def:blowup_vertex} thanks to Lemma \ref{lem:block_vertex}. The blowup graph has then only vertices of degree $3$ or singletons (which correspond to edges of $T_N$ on the boundary of the triangle). 
\begin{lemma}[Faces in $\widehat{G}_P$]
Let $B_f$ be a face in $\widehat{G}_P$. Then, the boundary of $B_f$ has 
\begin{itemize}
\item $6$ edges if no edge of the boundary of $B_f$ is a boundary edge of $\mathcal{G}_P$,
\item $4$ edges if two edges of the boundary of $B_f$ are boundary edges of $\mathcal{G}_P$ of the same type,
\item $2$ edges if two edges of the boundary of $B_f$ are boundary edges of $\mathcal{G}_P$ of different type.
\end{itemize}
\end{lemma}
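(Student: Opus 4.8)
The plan is to run the same angle-and-Euler argument that proved Lemma \ref{lem:type_face_edge}, now exploiting that every internal vertex of $\widehat G_P$ has degree exactly $3$. I would first check that each such vertex is incident to exactly one edge of each type $0,1,2$: for a vertex coming from a size-$3$ block of $\mathcal P_v$ this is the first case of Lemma \ref{lem:block_vertex}, and for a vertex $v^j$ produced by blowing up a size-$2(r+1)$ block it is immediate from Definition \ref{def:blowup_vertex}, since $v^{2j+1}$ carries $(\tilde B^{2j},B^{2r+2-j},\tilde B^{2j+1})$ of types $(\ell,\ell+1,\ell-1)$ and $v^{2j+2}$ carries $(\tilde B^{2j+1},\tilde B^{2j+2},B^{j+2})$ of types $(\ell-1,\ell,\ell+1)$, with $\tilde B^0=B^1$ and $\tilde B^{2r}=B^{r+2}$ of type $\ell$. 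Hence the three half-edges at any internal vertex point in the directions $\{0,\tfrac{2\pi}{3},\tfrac{4\pi}{3}\}$ (a \emph{direct} vertex) or $\{\tfrac{\pi}{3},\pi,\tfrac{5\pi}{3}\}$ (a \emph{reversed} vertex), so that all three sectors there have angle $\tfrac{2\pi}{3}$.

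Next I would walk along the boundary cycle of a face $B_f$. Whenever it passes through an internal vertex, the two incident boundary edges are distinct, hence of distinct types, so the type increment $r$ is $1$ or $2$ (never $0$ or $3$), and the interior angle of $B_f$ there equals $(1-\tfrac r3)\pi$ exactly as in Lemma \ref{lem:type_face_edge}. But every sector at an internal vertex has angle $\tfrac{2\pi}{3}$, so in fact $r=1$ at every internal vertex of $\partial B_f$: along the boundary the type only ever increases by $1$ — the key difference from $\mathcal G_P$, where jumps of $2$ or $3$ could occur at the un-blown-up vertices of size $\ge 4$. Thus if no edge of $\partial B_f$ is a boundary edge of $\mathcal G_P$, all vertices of $B_f$ are internal, and summing interior angles gives $\sum_{i=1}^p(1-\tfrac{r_i}{3})\pi=(p-2)\pi$ with all $r_i=1$, so $p=6$; equivalently the type sequence of $\partial B_f$ is a full period $(0,1,2,0,1,2)$. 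This is the first case.

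If $\partial B_f$ does contain boundary edges of $\mathcal G_P$ they form a single connected run $S$ along $\partial T_N$ (a second run, or a run interrupted by an interior edge, would disconnect $B_f$), and $\partial B_f\setminus S$ is a path $Q$ of interior edges joining the two ends of $S$; by the previous paragraph each internal vertex of $Q$ contributes angle $\tfrac{2\pi}{3}$, so $Q$ is short, and one closes the angle balance using the turning accumulated along $S$. Two situations occur according to whether $S$ stays inside one side of the bounding triangle or crosses one of its three corners: in the first, the two extreme edges of $S$ have the same type (that of the side), $S$ crosses no corner, and the angle count forces $p=4$; in the second, the two extreme edges of $S$ lie on two different sides, hence have different types, the corner supplies the missing angle, and $p=2$. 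In both cases the boundary edges of $\mathcal G_P$ lying on $\partial B_f$ are exactly the two extreme edges of $S$, which yields the stated trichotomy, and together with the interior case it is exhaustive.

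The main obstacle is this last step: one has to make sure that where $S$ meets a blown-up caterpillar touching $\partial T_N$ no extra short face is created, and to handle the corner case producing the $2$-gon; for this the explicit heights assigned to the new edges $\tilde B^i$ in Definition \ref{def:blowup_vertex} are precisely what guarantees that the caterpillar is attached to the boundary cleanly. Everything else is the same bookkeeping as in the proof of Lemma \ref{lem:type_face_edge}.
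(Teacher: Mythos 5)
Your proof is correct and follows essentially the same route as the paper's: both arguments rest on the observation that after blowup every internal vertex of $\widehat{G}_P$ is trivalent with exactly one incident edge of each type $0,1,2$, so consecutive edges around a face increment type by exactly one, and then the same angle/turning count that proved Lemma \ref{lem:type_face_edge} forces $p=6$ for inner faces and $p\in\{2,4\}$ for outer faces according to whether a corner of $T_N$ is involved. You make explicit the verification that the blown-up caterpillar vertices $v^{2j+1}$ and $v^{2j+2}$ each carry one edge of each type, which the paper leaves implicit under the phrase ``by Lemma \ref{lem:type_face_edge} and the blowing-up of vertices of degree larger than $4$''; this is the only real difference in presentation, and it is a detail worth spelling out. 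The closing paragraph of your proposal raises a worry about caterpillars touching $\partial T_N$ but resolves it only by assertion; since only the edge types (not the heights $h(\tilde B^i)$) enter the angle count and you have already verified those, that paragraph is unnecessary and can be dropped.
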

\begin{proof}
By Lemma \ref{lem:type_face_edge} and the blowing-up of vertices of degree larger than $4$, two edges $B,B'$ on the boundary of $B_f$ sharing a vertex  are of type $(\ell,\ell+1)$, and the edge type of boundary edges of $B_f$ is a subsequence of $(0,1,2,0,1,2)$ (up to a cyclic rotation). The only possibility for $B_f$ to have less than $6$ edges on the boundary is then having edges which have a singleton as boundary vertex. The edge of $\mathcal{E}$ corresponding to this singleton is necessary a boundary edge of the graph and thus $B_f$ is a connected component of $\mathcal{P}_2$ touching the boundary of $T_N$. There are then two possibilities : either $B_{f}$ contain one of the three extreme vertex of $T_N$, in which case the boundary edges of $B_f$ have different type and by convexity, $B_f$ has only two edges in $\widehat{G}_P$, or $B_f$ contains only boundary vertices which are not extreme points of $T_N$, in which case the boundary edges have same type $\ell$ and the boundary of $B_f$ consists of four edges of type $(\ell,\ell+1,\ell+2,\ell)$. 
\end{proof}

\subsubsection*{Construction of a discrete two-colored dual hive from a puzzle}

The resulting planar graph $\widehat{\mathcal{G}}_P$ is thus a graph with only trivalent vertices and hexagonal inner faces. From each side of the triangle $T_N$, there are $k-2$ faces $B\in\mathcal{P}_f$ which have degree $4$ (one for each pair of consecutive boundary edge labeled $0$ or $1$ on a same side of $T_N$) and from each extreme vertex of $T_N$ there is a face of degree $2$.

Let us denote by $\widetilde{\mathcal{G}}_P$ the dual graph, namely the graph whose vertices are faces of $\widehat{\mathcal{G}}_P$, faces are vertices of $\widehat{\mathcal{G}}_P$ and such that there is one edge between each neighboring faces of $\widetilde{\mathcal{G}}_P$  (which correspond then to vertices of $\widehat{\mathcal{G}}_P$). 

\begin{lemma}[Dual graph to $T_k$]
There is an isomorphism from $\widetilde{\mathcal{G}}_P$ to $T_k$ mapping edges of type $\ell$ of $\widetilde{\mathcal{G}}_P$ to edges of type $\ell$ of $T_k$.
\end{lemma}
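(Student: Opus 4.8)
The plan is to show that the dual graph $\widetilde{\mathcal{G}}_P$, which by the preceding lemma is a trivalent planar graph whose bounded faces are hexagons (plus the degenerate boundary faces of degree $4$ and $2$), is combinatorially the hexagonal tiling dual to $T_k$, and that the type labels are preserved. The essential input is that the edges of $\widehat{\mathcal{G}}_P$ carry a well-defined type in $\{0,1,2\}$ (coming from the type of the underlying edges of $T_N$, together with the types assigned to the blown-up edges $\tilde B^i$ in Definition~\ref{def:blowup_vertex}), and that around every trivalent vertex of $\widehat{\mathcal{G}}_P$ the three incident edges have the three distinct types $0,1,2$. This last point is exactly what Lemma~\ref{lem:type_face_edge} gives for the original vertices of $\mathcal{G}_P$ (the sequence of types around a face is a subsequence of $(0,1,2,0,1,2)$, and after blowing up all vertices of degree $\geq 4$ every vertex has degree $3$ with the three types present), and it must be checked directly for the new trivalent vertices created by the blowup, using the type list $(\ell-1,\ell,\ell-1,\ldots)$ prescribed in Definition~\ref{def:blowup_vertex}.

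**First I would** set up the map on vertices: a vertex of $\widetilde{\mathcal{G}}_P$ is a face $B_f\in \mathcal{P}_f$ of $\widehat{\mathcal{G}}_P$, and I assign to it an element of $T_k$ via its three ``coordinates''. Concretely, for each of the three edge-types $\ell\in\{0,1,2\}$, the boundary edges of $B_f$ of type $\ell$ (there are at most two, and by the hexagonal structure exactly two when $B_f$ is an inner face, and $1$ or $0$ on the boundary faces) all share the same $h$-height by the toric triangle-sum relations (Lemma~\ref{lem:triangle_sum}, now applied inside $\widehat{\mathcal{G}}_P$, where the relevant identity is that two opposite edges of a hexagonal face have equal height and the three ``heights'' sum to a constant determined by $N$); call this common height $c_\ell(B_f)$. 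One then verifies $c_0(B_f)+c_1(B_f)+c_2(B_f)=$ const, so that $B_f\mapsto (c_0(B_f),c_1(B_f),c_2(B_f))$ lands in $V_k$ (the precise constant follows from counting the $k$ boundary edges colored $0,1$ along each side of $T_N$). The map on edges of $\widetilde{\mathcal{G}}_P$ sends an edge — i.e.\ a trivalent vertex $w$ of $\widehat{\mathcal{G}}_P$ — to the edge of $T_k$ joining the images of the two faces adjacent to $w$; one checks its type is the type of the one edge of $\widehat{\mathcal{G}}_P$ incident to $w$ that does \emph{not} lie on the boundary between those two faces, which is the unique type not appearing among the two edges shared by $w$ and the face-pair.

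**The hard part will be** checking surjectivity and injectivity simultaneously, i.e.\ that the coordinate map is a bijection $\mathcal{P}_f \to V_k$. Injectivity: two distinct faces $B_f\neq B_f'$ cannot have the same triple — if they did, by the hexagonal/trivalent structure and the monotonicity of heights along the three pencils of parallel edge-types (Lemma~\ref{lem:triangle_sum} forces heights to change by $\pm 1$ across adjacent faces sharing an edge of a given type, exactly as in $T_k$), one could find a closed path in $\widehat{\mathcal{G}}_P$ with contradictory winding. Surjectivity is then a counting argument: $\widehat{\mathcal{G}}_P$ is a planar triangulation-dual of a triangle of ``size'' $k$ — it has the same number of inner hexagonal faces as $T_k$ has inner vertices, the same number of degree-$4$ boundary faces as $T_k$ has non-extreme boundary vertices, and $3$ degree-$2$ faces matching the $3$ corners — so $\#\mathcal{P}_f = \#V_k$, and an injection between finite sets of equal cardinality is a bijection. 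Compatibility with the edge sets and the type labels then follows formally, since adjacency of faces in $\widehat{\mathcal{G}}_P$ corresponds to adjacency of their images (differing by $\pm 1$ in exactly one coordinate), which is precisely the adjacency in $T_k$. The only genuinely delicate bookkeeping is verifying that the heights of the blown-up edges $\tilde B^i$ (Definition~\ref{def:blowup_vertex}) are exactly what is needed for the three coordinates of each new face to be consistent integers in the right range; I would handle this by a local computation on the single blown-up vertex picture of Figure~\ref{fig:blowup_vertex}, using the toric triangle-sum identity repeatedly, and then invoke planarity to globalize.
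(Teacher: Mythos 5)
Your construction via explicit height-coordinates takes a genuinely different route from the paper's, which argues structurally: faces of $\widetilde{\mathcal{G}}_P$ are triangles (dual to trivalent vertices), inner vertices have degree $6$ (dual to hexagonal faces), hence $\widetilde{\mathcal{G}}_P$ is a polygon of the triangular lattice, and the outer degree sequence $(2,4,\dots,4,2,4,\dots,4,2,4,\dots,4)$ then pins that polygon down to $T_k$; the type labels are matched at the very end by one global rotation. However, your route as written contains a genuine gap.

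The claim that the two type-$\ell$ boundary edges of an inner hexagonal face $B_f$ share a common $h$-height is false. Consider the simplest possible inner face: six triangles of $T_N$ glued around a single vertex $x=(x_0,x_1,x_2)$, with the six interior edges colored $2$. Working out types and heights directly from~\eqref{eq:coor_vertices_edge}, the six boundary edges, read cyclically, have types $1,0,2,1,0,2$ with respective heights $x_1,\ x_0-1,\ x_2,\ x_1-1,\ x_0,\ x_2-1$; in particular the two opposite type-$0$ edges have heights $x_0-1$ and $x_0$, differing by one. This is not an artifact of the example: the label map $L=h\circ\zeta^{-1}$ that this very lemma is used to define is later subject, across opposite edges of each lozenge of $T_k$, to the strict and non-strict \emph{inequalities} $(2b)$ and $(2c)$ of Definition~\ref{def:two_col_hives}, with equality only when the middle edge is colored $m$. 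So $c_\ell(B_f)$ is simply not well-defined, the injectivity argument via ``monotonicity of heights across adjacent faces'' has no quantity to be monotone, and the appeal to Lemma~\ref{lem:triangle_sum} (a statement about single triangles of $T_N$, not about larger faces of $\widehat{\mathcal{G}}_P$) does not supply the invariance you assert. A coordinate that \emph{would} work is the number of edges of $\widehat{\mathcal{G}}_P$ of type $\ell$ crossed by a path from a fixed reference face to $B_f$ — a combinatorial crossing count, not a height of underlying edges of $T_N$ — and verifying that this is well-defined and bijective is essentially the content of the folklore fact the paper relies on, namely that a simply connected planar graph with triangular faces and degree-$6$ inner vertices is a polygon of the triangular grid.
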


\begin{proof}
Since vertices of $\widehat{\mathcal{G}}_P$ are trivalent, faces of $\widetilde{\mathcal{G}}_P$ are triangular. Similarly, inner faces of $\widehat{\mathcal{G}}_P$ have degree $6$, and thus inner vertices of $\widetilde{\mathcal{G}}_P$ have degree $6$. Hence, $\widetilde{\mathcal{G}}_P$ is isomorphic to a polygon $H$ of the planar triangular grid.
Since the sequence of degrees of the $3k$ outer faces of $\widehat{\mathcal{G}}_{P}$ is 
$$(2,\underset{k\text{ times}}{\underbrace{4,\ldots,4}},2,\underset{k\text{ times}}{\underbrace{4,\ldots,4}},2,\underset{k\text{ times}}{\underbrace{4,\ldots,4}}),$$
the same holds for the sequence of degrees of outer vertices of $\widetilde{\mathcal{G}}_{P}$. Remark that for each vertex of degree $4$ (resp. $2$) in $\widetilde{\mathcal{G}}_P$, the angle of the boundary at the corresponding vertex in $H$ is $\pi$ (resp. $5\pi/3$). We deduce that the boundary of the $\widetilde{\mathcal{G}}_{P}$ is isomorphic to the one of $T_k$, and thus $\widetilde{\mathcal{G}}_{P}$ is isomorphic to $T_k$. Let us denote by $\zeta: E(\widetilde{\mathcal{G}}_P) \rightarrow E_k$ the corresponding bijection between set of edges.

Remark that around every triangle of $\widetilde{\mathcal{G}}_P$ the type of the edges is $(\ell,\ell+1,\ell+2)$ (this is true for faces coming from  trivalent vertex of $\mathcal{G}_P$ and true by construction for faces coming from the blowing up of higher order vertices of $\widehat{\mathcal{G}}_P$). We deduce that all edges with the same type in $\widetilde{\mathcal{G}}_P$ are sent through $\zeta$ to edges with the same orientation in $T_k$. Up to composing $\zeta$ with an internal rotational symmetry of $T_k$, we can thus assume that $\zeta$ preserves the type of the edges.
\end{proof}

\noindent
Each edge $B$ of $\widetilde{\mathcal{G}}_P$ has then a color $c(B)$ and a height $h(B)$ coming from the dual edge of $\widehat{\mathcal{G}}_P$. Composing with $\zeta^{-1}$ yields maps $C:E_k\rightarrow \mathbb{N}$ and $L:E_k\rightarrow \mathbb{N}$ with $C= c \circ \zeta^{-1} $ and $L = h \circ \zeta^{-1}$.  

\begin{lemma}[Image is a dual hive]
The resulting pair of maps $(C,L)$ is a discrete two-colored dual hive $\zeta(P)$ in $\mathcal{H}(c(\omega),l(\omega),N)$.
\end{lemma}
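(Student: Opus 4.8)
The plan is to verify that $(C,L)$ satisfies every clause of Definition \ref{def:two_col_hives}: the colour constraint on each triangular face of $T_k$, the two label constraints $(1)$ and $(2)$, and the identification of the boundary value $(C,L)|_{\partial T_k}$ with $(c(\omega),l(\omega))$. By the lemmas just proved, $\zeta$ is a type-preserving isomorphism from the dual of $\widehat{\mathcal{G}}_P$ onto $T_k$, so every one of these checks can be read directly on $\widehat{\mathcal{G}}_P$: a triangular face of $T_k$ is the dual of a trivalent vertex of $\widehat{\mathcal{G}}_P$, its three edges being the three edges incident to that vertex; a lozenge of $T_k$ is the dual of an edge $\widehat B$ of $\widehat{\mathcal{G}}_P$ together with its two endpoints, its four boundary edges being the remaining four edges incident to those endpoints; and $\partial T_k$ is dual to the degree-$2$ and degree-$4$ outer faces of $\widehat{\mathcal{G}}_P$, i.e. to the extreme vertices of $T_N$ and to the pairs of consecutive boundary edges of $P$ coloured $0$ or $1$. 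Throughout, $L$ of an edge of $T_k$ equals the height $h$ of the corresponding edge of $\widehat{\mathcal{G}}_P$, and $C$ equals its colour in $\{0,1,3,m\}$.

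The colour constraint and the first label constraint are both local at a vertex of $\widehat{\mathcal{G}}_P$, so I would handle them together by distinguishing two kinds of trivalent vertices. First, the vertices already present in $\mathcal{G}_P$, namely the blocks of $\mathcal{P}_v$ of size $3$: by Lemma \ref{lem:block_vertex} their three incident colours are $(0,0,0)$, $(1,1,1)$ or a cyclic permutation of $(0,1,3)$ (with the fixed chirality carried by the admissible pieces of Figure \ref{fig:type_I_pieces}), and by Lemma \ref{lem:triangle_sum} their heights sum to $N-1$ if the underlying triangle of $T_N$ is direct and to $N-2$ if it is reversed; passing to the dual triangle of $T_k$, which reverses orientation, turns these patterns into $(0,0,0),(1,1,1),(1,0,3)$ up to cyclic rotation, and identifies the reversed $(0,1,3)$-blocks with exactly the reversed $T_k$-triangles whose colours differ from $\{0,1,m\}$. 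Second, the vertices $v^j$ created by the blowup of Definition \ref{def:blowup_vertex}: by construction each carries colours forming a cyclic permutation of $(0,1,m)$, and substituting the heights $h(\tilde B^{2i})=h(B^1)$, $h(\tilde B^{2i-1})=N-1-h(B^1)-h(B^{2i})$ together with the relations $h(e^0)=h(e^{0'})$ and $h(f^s)=h(f^{s'})$ of Lemma \ref{lem:block_vertex}, a telescoping computation shows each such face sums to $N-1$; this is consistent with the rule since the $(0,1,m)$-coloured faces are precisely the declared exception to ``reversed faces sum to $N-2$''.

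The heart of the proof is the rhombus concavity, label constraint $(2)$. A lozenge of $T_k$ with middle edge of type $\ell$ dually consists of $\widehat B$ and its two endpoints, whose remaining incident edges split into two pairs $\{e,e'\}$ of equal type; given such a pair with say $e'_{\ell'+1}=e_{\ell'+1}+1$, the first label constraint makes the two triangle sums at the endpoints known, so $L(e)-L(e')$ is controlled by the colours involved and the direct/reversed status of the two triangles and of $\widehat B$. When $\widehat B$ is coloured $m$ — equivalently, the middle edge of the lozenge is $m$ — both incident triangles are of $(0,1,m)$-type, both sums equal $N-1$, and cancellation gives $L(e)=L(e')$, as $(2a)$ requires. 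In the remaining configurations I would combine the coordinate relations $f^s_i=f^{s'}_i+1$ of Lemma \ref{lem:block_vertex}, the non-crossing and strip description of Lemmas \ref{lem:condition_pairing} and \ref{lem:type_face_edge} (which governs how heights of edges of a fixed type vary across the type-II pieces of $P$), and the strict-monotonicity argument of Remark \ref{rem:strict_increasing_label}, obtaining $L(e)\ge L(e')$ in case $(2b)$ and $L(e)>L(e')$ in case $(2c)$ and whenever one of the other boundary edges of the lozenge is coloured $m$. The boundary value is then read off along each side of $T_k$ from the outer degree-$2$ and degree-$4$ faces of $\widehat{\mathcal{G}}_P$: the colours are the $0,1$-labels of the boundary edges of $P$ on that side (the $2$'s having collapsed into the degree-$2$ corner faces) and the heights record their positions, which is exactly the recipe $\omega_i\mapsto(c^{(i)},l^{(i)})$ defining $(c(\omega),l(\omega))$. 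I expect the genuine difficulty, and the step requiring the most care, to be the exhaustive case analysis in the rhombus constraint: tracking which boundary edges of a lozenge can be coloured $m$ — equivalently, where the blowup edges $\tilde B^i$ sit relative to the strips $S_B$ — and keeping the chirality conventions straight so that $(2a)$–$(2c)$ come out with the correct (non-)strictness.
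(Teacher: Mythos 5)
Your overall strategy matches the paper's: verify the colour rule and the sum rule $(1)$ at each trivalent vertex of $\widehat{\mathcal{G}}_P$ via Lemma \ref{lem:triangle_sum}, Lemma \ref{lem:block_vertex} and the blowup conventions, then handle the lozenge inequalities $(2)$ through the strip structure. But one of the ingredients you list for $(2)$ is circular: Remark \ref{rem:strict_increasing_label} is derived \emph{as a corollary of} condition $(2)$, so it cannot serve as an input when establishing that very condition. More importantly, your sketch never isolates the genuine geometric input that replaces it. The paper deduces $(2b)$--$(2c)$ from the following fact: for a lozenge of $T_k$ whose middle edge is dual to a strip $S_f$ of the puzzle, the two type-I vertices $v,v'$ of $P_v$ sitting on either side of $S_f$ contribute boundary edges $\tilde f,\tilde f'\in S_f$ that satisfy $\tilde f_\ell\ge\tilde f'_\ell$; otherwise an edge coloured $0$, $1$ or $3$ coming from $v$ would lie in the interior of $S_f$, impossible for a type II piece by Lemma \ref{lem:condition_pairing}. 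This monotonicity inside the strip is then converted into the required inequalities on $L$ via the coordinate/colour dictionary of Figure \ref{Fig:coordinates_label}, which records how $\tilde f_\ell$ relates to the height of the adjacent type-I edge according to the colouring and orientation of their common triangle. Your phrase ``how heights of edges of a fixed type vary across the type-II pieces'' gestures in this direction but leaves the decisive step implicit, and instead relies on a remark whose validity presupposes the conclusion. You should also add the case, treated separately in the paper, where the middle edge comes from a blowup but is coloured $1$ rather than $m$: there $e,e'$ are the two edges coloured $0$ of the even vertex and the strict inequality follows directly from the height assignments of Definition \ref{def:blowup_vertex} and Lemma \ref{lem:block_vertex}, not from the strip argument.
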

\begin{proof}
Lemma \ref{lem:block_vertex} and the color rules introduced before in case of blowing-up of even degree vertices yield that edges around each trivalent vertex of $\widetilde{\mathcal{G}}_P$ are colored $(0,0,0)$, $(1,1,1)$, $(1,0,3)$ or $(0,1,m)$ in the clockwise order, which translates into the same color rule around each triangular face of $T_k$.

It remains to prove that the map $L$ on $E_k$ satisfies the two conditions of Definition \ref{def:two_col_hives}. The sum condition (1) around a triangle is a consequence of Lemma \ref{lem:triangle_sum} in case no edge is colored $m$, and the direct deduction of the blowing up of vertices of even degree in case one of the edges is colored $m$.

The condition (2) is checked case by case. By Lemma \ref{lem:block_vertex} and the definition of $L$ on edges colored $1$ coming from the blowing-up of even vertices, $L(e)=L(e')$ for any opposite edges $e,e'$ of a lozenge with middle edge colored $m$, yielding the condition $(2.a)$.

Without loss of generality, suppose that $e\in E_k$ has type $\ell$ and let $s$ be a lozenge such that $e$ is a border edge of type $\ell$ of $s$ and the opposite edge $e'$ is a translation of $e$ such that $h(e)\geq h(e')$ and $e_{\ell+1}\leq e'_{\ell+1}$. Hence, either $h(e)=h(e')$, $e_{\ell+1}=e'_{\ell+1}-1$ and the middle edge $f$ of $s$ is of type $k=\ell-1$ or $h(e')=h(e)-1$, $e'_{\ell+1}=e_{\ell+1}$ and the middle edge $f$ is of type $k=\ell+1$.  Suppose now that at least one of the edges of the lozenge is colored $m$ and the middle edge $f$ is not dual to an edge of $\hat{\mathcal{G}}_P$ coming from blowing-up an even vertex. Hence, $f$ corresponds to a strip $S_{f}$ of type $k$. 
Moreover, there exist $\tilde{e},\tilde{e'}\in E_N^2$ of type $\ell$ with $h(\tilde{e})=L(e)$ and $h(\tilde{e}')=L(e')$, $\tilde{f},\tilde{f'}\in S_{f}$  of type $k$ such that $\tilde{e}$ and $\tilde{f}$ (resp. $\tilde{e}'$ and $\tilde{f}'$) comes from a same vertex $v$ (resp. $v'$) of $P_v$ (either directly or after a blowing-up).

Remark that $\tilde{f}_\ell\geq\tilde{f}'_{\ell}$ for otherwise, in the strip $S_f$, there would be an edge of type different from $f$ and labeled $0$ or $1$ coming from $v$, which is not possible from Figure \ref{fig:type_II_pieces}. Then, if $\tilde{e}$ is of type $\ell$ and $\tilde{f}$ is of type $\ell-1$ coming from a same triangle of $T_N$, we resume in Figure \ref{Fig:coordinates_label} the relation between $\tilde{f}_{\ell}$ and $h(\tilde{e})$ depending on the orientation of the triangle and the colors of the boundary edges (the color and position of $\tilde{f}$ is bold). Those relations are consequences of Lemma \ref{lem:block_vertex} and blowups of even degree vertices.

\vspace{0.5cm}
\begin{figure}[H]
\begin{tabular}{|c|c|c|c|c|}
\hline 
Coloring of a direct triangle & \begin{tikzpicture}

\draw (0+0*1/2,0*0.866)--(1+0*1/2,0*0.866)  node[midway,below] {$y$};
\draw (1+0*1/2,0*0.866)--(0+1*1/2,1*0.866) node[midway,right] {$z$};
\draw (0+1*1/2,1*0.866)--(0+0*1/2,0*0.866)node[midway,left] {$\mathbf{x}$};
\node at (2.5,0.2) {$x,y,z\not=m$}; 
\end{tikzpicture}& \begin{tikzpicture}

\draw (0+0*1/2,0*0.866)--(1+0*1/2,0*0.866)  node[midway,below] {$0$};
\draw (1+0*1/2,0*0.866)--(0+1*1/2,1*0.866) node[midway,right] {$m$};
\draw (0+1*1/2,1*0.866)--(0+0*1/2,0*0.866)node[midway,left] {$\mathbf{1}$};

\end{tikzpicture} & \begin{tikzpicture}

\draw (0+0*1/2,0*0.866)--(1+0*1/2,0*0.866)  node[midway,below] {$m$};
\draw (1+0*1/2,0*0.866)--(0+1*1/2,1*0.866) node[midway,right] {$1$};
\draw (0+1*1/2,1*0.866)--(0+0*1/2,0*0.866)node[midway,left] {$\mathbf{0}$};

\end{tikzpicture} & \begin{tikzpicture}

\draw (0+0*1/2,0*0.866)--(1+0*1/2,0*0.866)  node[midway,below] {$1$};
\draw (1+0*1/2,0*0.866)--(0+1*1/2,1*0.866) node[midway,right] {$0$};
\draw (0+1*1/2,1*0.866)--(0+0*1/2,0*0.866)node[midway,left] {$\mathbf{m}$};

\end{tikzpicture} \\ 
\hline 
$\tilde{f}_{\ell}$ & $h(\tilde{e})+1$ & $h(\tilde{e})+2$ & $h(\tilde{e})+1$ & $h(\tilde{e})+1$ \\ 
\hline 
\end{tabular} 

\vspace{0.5cm}

\begin{tabular}{|c|c|c|c|c|}
\hline 
Coloring of a reversed triangle & \begin{tikzpicture}

\draw (0+0*1/2,-0*0.866)--(1+0*1/2,-0*0.866)  node[midway,above] {$y$};
\draw (1+0*1/2,-0*0.866)--(0+1*1/2,-1*0.866) node[midway,right] {$\mathbf{x}$};
\draw (0+1*1/2,-1*0.866)--(0+0*1/2,-0*0.866)node[midway,left] {$z$};
\node at (2.5,-0.2) {$x,y,z\not=m$}; 
\end{tikzpicture}& \begin{tikzpicture}

\draw (0+0*1/2,-0*0.866)--(1+0*1/2,-0*0.866)  node[midway,above] {$0$};
\draw (1+0*1/2,-0*0.866)--(0+1*1/2,-1*0.866) node[midway,right] {$\mathbf{1}$};
\draw (0+1*1/2,-1*0.866)--(0+0*1/2,-0*0.866)node[midway,left] {$m$};

\end{tikzpicture} & \begin{tikzpicture}

\draw (0+0*1/2,-0*0.866)--(1+0*1/2,-0*0.866)  node[midway,above] {$1$};
\draw (1+0*1/2,-0*0.866)--(0+1*1/2,-1*0.866) node[midway,right] {$\mathbf{m}$};
\draw (0+1*1/2,-1*0.866)--(0+0*1/2,-0*0.866)node[midway,left] {$0$};

\end{tikzpicture} & \begin{tikzpicture}

\draw (0+0*1/2,-0*0.866)--(1+0*1/2,-0*0.866)  node[midway,above] {$m$};
\draw (1+0*1/2,-0*0.866)--(0+1*1/2,-1*0.866) node[midway,right] {$\mathbf{0}$};
\draw (0+1*1/2,-1*0.866)--(0+0*1/2,-0*0.866)node[midway,left] {$1$};

\end{tikzpicture} \\ 
\hline 
$\tilde{f}_{\ell}$ & $h(\tilde{e})+1$ & $h(\tilde{e})$ & $h(\tilde{e})+1$ & $h(\tilde{e})$ \\ 
\hline 
\end{tabular} 
\caption{Coordinates of an edge in function of the coloring and height of the next edge in a triangle ($\tilde{f}$ correspond to the bold edge and $\tilde{e}$ corresponds to the horizontal edge).\label{Fig:coordinates_label}}
\end{figure}

\noindent
From those relation and the fact that $\tilde{f}_\ell\geq \tilde{f}'_{\ell}$, we deduce that $h(\tilde{e})\geq h(\tilde{e}')$, i.e $L(e) \geq L(e')$, in the case $e'_{\ell+1}=e_{\ell+1}+1$ and that $h(\tilde{e})> h(\tilde{e}')$, i.e $L(e) > L(e')$, if one of the boundary edge of $s$ different from $e$ is colored $m$. The case $e_{\ell}=e'_{\ell}+1$ is done similarly, yielding always $h(\tilde{e})> h(\tilde{e}')$, i.e $L(e) > L(e')$. \\
\noindent
Finally, if the middle edge is coming from the blowing up of an even vertex and is not colored $m$, then this edge is necessarily colored $1$, and thus $e$ and $e'$ are colored $0$ and the opposite edges of their lozenge are colored $1$. The strict inequality is directly deduced from construction of $L = h \circ \zeta^{-1}$ and Lemma \ref{lem:block_vertex} giving the height of edges colored $0$ in an even vertex.

Hence, $T_k$ with the labelling $(C,L)$ is a genuine discrete two-colored dual hive, which we denote by $\zeta(P)$. The boundary values are directly deduced from the boundary of $P$, so that the resulting hive is in $\mathcal{H}(c(\omega),l(\omega),N)$.
\end{proof}
\noindent

\begin{proof}[Proof of Theorem \ref{thm:bijection_puzzle_dual_hive}]
Let us construct the reverse bijection. Let $H = (C, L)$ be a two-colored dual hive in $\mathcal{H}(c(\omega),l(\omega),N)$. We first define the candidate vertex partition (without the coloring for now) $\mathcal{P}_v$ as follows :
\begin{itemize}
\item for each triangle face $t=(t^0,t^1,t^2)\in T_k$, with $t^\ell$ of type $\ell$, with boundary colors in $(0,0,0)$, $(1,1,1)$ or $(1,0,3)$ (up to a cyclic order), we define a block $B_t\subset E_N$ with edges $e^0,e^1,e^0$, with $e^\ell$ of type $\ell$, and such that :
$$h(e^\ell)=L(t^\ell),\, e_{\ell+1}^\ell=L(t^{\ell+1})+1.$$
\item for each long rhombus with boundary $u=(s^0,t^1,\ldots,t^r,s^1,t^{r+1},\ldots,t^{2r})$ with $s^i$ of type $\ell$ and $t^i$ of type $\ell+1$ and $t^1_i=s^0_i+1$ such that $C(s^0)=C(s^1)=1$, $C(t^i)=0$, $L(t^i)=L(t^{2r+1-i})=L(t^1)-(i-1)$ for $1\leq i\leq 2r$, and which is not included in an other rhombus satisfying such property, we define a block $B_u\subset E_N$ with edges $(e^0,e^1,f^1,\ldots,f^{2r})$ with $h(v)=L(v)$ for $v\in B_u$ and 
$$e^0_{\ell+1}=L(t^1)+2,\,e^1_{\ell+1}=L(t^{r}), \,f^i_{\ell+2}=f^{2r+1-i}_{\ell+2}-1=e^0_{\ell+2}+i,\, 1\leq i\leq r.$$
Remark that for 
$1\leq i\leq r$, $f^{i}_{\ell+2}=f^{2r+1-i}_{\ell+2}-1=t(g^i)$, where $g^i$ is the edge of type $\ell+2$ colored $m$ adjacent to $f^i$ or $f^{2r+1-i}$.
\item for each edge $s$ of type $\ell$ on the boundary of $T_k$, we define a singleton in $B_e\subset E_N$ consisting of the unique edge $e$ of type $\ell$ with $h(e)=L(s)$.
\end{itemize}

 Moreover, by Lemma \ref{lem:two_color_non_crossing_same_type} below, edges $e,e'$ coming from different edges $t,t'\in T_k$ by the previous constructions are distinct. Let $\mathcal{E}=\bigcup_{B\in \mathcal{P}_v} B$. We can thus define a coloring $c$ on $\mathcal{E}$ by setting $c(e)=C(t)$ when $e$ is constructed from $t$ above. By construction and the property $(1)$ of Definition \ref{def:two_col_hives}, the covering $\mathcal{P}_v$ satisfies all the properties of Lemma \ref{lem:block_vertex}. With the above constructions and the conditions (1) and (2.a) of a two-color dual hive, we can then check that all the relations from Figure \ref{Fig:coordinates_label} is still satisfied when the bold edge is an element of $\mathcal{E}$.

Define then a relation $\sim$ on $\mathcal{E}$ by saying that $e\sim e'$ if $e,e'$ are coming from a same edge of $T_k$ through the previous construction, and denote by $\mathcal{P}_e$ the set partition coming from this relation. By the properties of a two-colored dual hive, any long rhombus considered before of border edges of type $\ell,\ell+1$ has its inner middle edges of type $\ell-1$ colored $m$, so that none of the triangles inside this long rhombus yields block of $\mathcal{P}_v$ through the first step. Hence, each edge of $T_k$ yields at most $2$ edges of $\mathcal{E}$. Remark that an edge of $T_k$ is either adjacent to two faces or to one face and the boundary of $T_k$, so that $\mathcal{P}_e$ consists of pairs or singletons, and in the latter case the singleton belongs to two blocks of $\mathcal{P}_v$. If $e\sim e'$, by the above construction $c(e)=c(e'), L(e)=L(e')$ and $e,e'$ have the same type, so that $\mathcal{P}_e$ only has admissible pairs (which can be reduced to a singleton).
If $e\not= e'$ belong to a same block of $\mathcal{P}_v$ they come from different edges of $T_k$ and thus $\{e,e'\}\not\in \mathcal{P}_e$.

In view of applying Proposition \ref{prop:puzzle_from_partition}, it suffices to prove that two pairs of $\mathcal{P}_e$ do not cross.  Suppose that $B=\{e^1,e^2\}$ and $B'=\{e^3,e^3\}$ are two blocks of $\mathcal{P}_{e}$. If they are of same type $\ell$, then Lemma \ref{lem:two_color_non_crossing_same_type} yields that $B\cap S_{B'}=B'\cap S_{B}=\emptyset$. If $B$ are of different type $\ell$ and $\ell+1$, Lemma \ref{lem:two_color_non_crossing_different_type} yields that the second condition of crossing strips is never satisfied, and the first condition may only be satisfied in the case (4) of Lemma \ref{lem:two_color_non_crossing_different_type} where $t'_{\ell}\geq t_{\ell}$ and $t'_{\ell+1}\geq t_{\ell+1}$, when $e^2_{\ell+1}=e^3_{\ell+1}$. But in the latter case, by Definition \ref{def:strip}, edges of type $\ell+1$ of the strip $S_{B}$ have $\ell+1$-coordinate strictly smaller than $e^2_{\ell+1}$, so that $B'\cap S_B=\emptyset$.  Likewise, edges of the strip $S_{B'}$ of type $\ell$ have $\ell+1$-coordinate strictly larger than $\min(e^3_{\ell+1},e^4_{\ell+1})$ so that $B\cap S_{B'}=\emptyset$. Hence, $S_B$ and $S_{B'}$ do not cross.

Pairs of $\mathcal{P}_e$ are admissible and any two different pairs $B,B'\in \mathcal{P}_e$ do not cross, thus partition $\mathcal{P}_e$ satisfies the properties of Lemma \ref{lem:condition_pairing}. Finally, by Proposition \ref{prop:puzzle_from_partition} applied to $(\mathcal{P}_v,\mathcal{P}_e)$, there exists a unique puzzle $P$ such that the corresponding vertex and edge partitions are respectively $\mathcal{P}_v$ and $\mathcal{P}_e$. Denote by $\chi(H)$ this puzzle. It is clear from the above constructions that $\chi(H)\in P(\omega)$ and that $\chi\circ \zeta$ and $\zeta\circ \chi$ are respectively identity maps of $P(\omega)$ and $\mathcal{H}(c(\omega),l(\omega),N)$.
\end{proof}

\begin{lemma}[Same type blocks do not cross]
\label{lem:two_color_non_crossing_same_type}
Let $t\not=t'\in T_k$ of same type $\ell$ in blocks of $\mathcal{P}_v$, and suppose without loss of generality that $t_{\ell+1}> t'_{\ell+1}$ or $t_{\ell+1}=t'_{\ell+1}$ and $h(t')>h(t)$. Then, if $h(t)< h(t')$ and $t_{\ell+1}>t'_{\ell+1}$, the edges $e^1,e^2$ (resp. $e^3,e^4$) of $T_N$ associated to $t$ (resp. $t'$) satisfy 
$$h(e^1)=h(e^2)<h(e^3)=h(e^4),$$
if $h(t)< h(t')$ and $t_{\ell+1}=t'_{\ell+1}$,
$$\min(e^1_{\ell-1}, e^2_{\ell-1})>\max(e^{3}_{\ell-1}, e^4_{\ell-1}).$$
and if $h(t)\geq h(t')$ and $t_{\ell+1}> t'_{\ell+1}$,
$$\min(e^1_{\ell+1},e^2_{\ell+1})>\max( e^3_{\ell+1},e^4_{\ell+1}).$$
\end{lemma}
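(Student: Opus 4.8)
The plan is to rest everything on one structural feature of the reverse construction of Theorem~\ref{thm:bijection_puzzle_dual_hive} and then to reduce to the monotonicity of the label map recorded in Definition~\ref{def:two_col_hives}$(2)$ and Remark~\ref{rem:strict_increasing_label}. That feature is a \emph{height invariance}: an edge $s$ of $T_k$ of type $\ell$ lies on the boundary of at most two faces of $T_k$ (or is a type-$\ell$ side of a long rhombus and borders one face), and every edge of $E_N$ produced from such a face (or rhombus) and associated to $s$ has height exactly $L(s)$ — immediate from the prescription $h(e^\ell)=L(t^\ell)$, resp. $h(v)=L(v)$, in the construction of $\mathcal{P}_v$. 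Consequently $h(e^1)=h(e^2)=L(t)$ and $h(e^3)=h(e^4)=L(t')$, so the whole statement is a comparison of four explicit edges of $E_N$ whose heights are pinned to $L(t)$ and $L(t')$. By the threefold rotational symmetry of $T_k$ it suffices to argue for $\ell=0$.

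The first case is then immediate: $h(t)<h(t')$ and $t_{\ell+1}>t'_{\ell+1}$ say, for the $T_k$-edges, that $t'_{\ell}>t_{\ell}$ and $t'_{\ell+1}\le t_{\ell+1}$, so Remark~\ref{rem:strict_increasing_label} (applied to the dual hive $(C,L)$ on $T_k$) gives $L(t')>L(t)$, i.e. $h(e^1)=h(e^2)=L(t)<L(t')=h(e^3)=h(e^4)$. Note this already forces $e^1,e^2\notin S_{B'}$ and $e^3,e^4\notin S_B$ via Definition~\ref{def:strip}, which is exactly the use made of the lemma in the proof of Theorem~\ref{thm:bijection_puzzle_dual_hive}; the sharper conclusions of cases $(2)$ and $(3)$ are what is needed precisely when this height separation is unavailable.

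For those two cases I would compute the remaining coordinate. By \eqref{eq:coor_vertices_edge}, $e_{\ell-1}=N-e_\ell-e_{\ell+1}$ for any type-$\ell$ edge of $E_N$, and since $e_\ell$ is already pinned it suffices to control $e^i_{\ell+1}$ for $i=1,\dots,4$. The construction of $\mathcal{P}_v$ together with the relations tabulated in Figure~\ref{Fig:coordinates_label} — which hold for edges of $\mathcal{E}$ by the same verification as in the proof that $\zeta(P)$ is a dual hive — expresses each $e^i_{\ell+1}$ as $L(u_i)$, $L(u_i)+1$ or $L(u_i)+2$, where $u_i$ is the type-$(\ell+1)$ boundary edge of the face (or rhombus) from which $e^i$ comes; the position of $u_i$ relative to $t$ (resp. $t'$) and the value of the shift depend only on the orientation of that face and on its local colors, so both are read off from the coordinates of a direct and of a reversed triangle adjacent to a type-$0$ edge. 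In case $(2)$ one moreover has $L(t)<L(t')$ exactly as in case $(1)$; feeding the inequality between $L(t)$ and $L(t')$ together with inequalities $L(u_i)\ge L(u_j)$ (or strict), obtained from Definition~\ref{def:two_col_hives}$(2)$ and Remark~\ref{rem:strict_increasing_label} — and chained, when $t$ and $t'$ do not lie on a common face, through the intermediate parallel type-$(\ell+1)$ edges between them — into $e^i_{\ell\pm1}=N-L(\cdot)-e^i_{\ell+1}$ yields $\min(e^1_{\ell-1},e^2_{\ell-1})>\max(e^3_{\ell-1},e^4_{\ell-1})$ in case $(2)$ and $\min(e^1_{\ell+1},e^2_{\ell+1})>\max(e^3_{\ell+1},e^4_{\ell+1})$ in case $(3)$.

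The hard part is the bookkeeping of this last step: one must run the comparison over all configurations of the faces (or long rhombi) incident to $t$ and to $t'$ — the two orientations, the admissible colorings $(0,0,0)$, $(1,1,1)$, $(1,0,3)$, the long-rhombus incidence, and the boundary case in which $t$ or $t'$ is a singleton block on $\partial T_k$ — reading the correct shift from Figure~\ref{Fig:coordinates_label} each time and checking that the invoked $L$-monotonicity is strict exactly when the target inequality is. The delicate configurations are those where the shift is $0$ on one side and $2$ on the other: there the gap must be closed using the strict increase of $L$ forced by an $m$-colored middle edge, i.e. condition $(2.c)$ of Definition~\ref{def:two_col_hives}, precisely in the manner of Remark~\ref{rem:strict_increasing_label}.
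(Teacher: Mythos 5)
Your scaffolding is the same as the paper's: pin $h(e^1)=h(e^2)=L(t)$ and $h(e^3)=h(e^4)=L(t')$ from the construction of $\mathcal{P}_v$, dispatch the first case with Remark~\ref{rem:strict_increasing_label}, and for the remaining two cases control the residual coordinate $e^i_{\ell\pm1}$ via Figure~\ref{Fig:coordinates_label}. The first case is correct. But for cases $(2)$ and $(3)$ what you write is a plan rather than a proof, and the plan has a concrete gap in case $(2)$.

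You assert that in case $(2)$ ``one moreover has $L(t)<L(t')$ exactly as in case $(1)$'', and then propose to chain this with the $L(u_i)$ inequalities. The paper's own proof treats this differently: it isolates the degenerate sub-case in which every lozenge between $t$ and $t'$ has its middle edge colored $m$, where condition $(2.a)$ iterated gives $L(t')=L(t)$, not $L(t')>L(t)$. In that sub-case your proposed chain yields at best a non-strict separation, while the lemma requires a strict one. The paper does not close the gap the way you suggest (``strict increase of $L$ forced by an $m$-colored middle edge, in the manner of Remark~\ref{rem:strict_increasing_label}'', applied to type-$\ell$ edges); instead it pivots to auxiliary edges $s,s'$ of type $\ell-1$ completing $t$ to a reverse triangle and $t'$ to a direct triangle, derives $L(s)\geq L(s')$ from the coordinate relations $s_{\ell-1}>s'_{\ell-1}$, $s_\ell<s'_\ell$, and then reads off $e^2_{\ell-1}=L(s)+1$ and $e^3_{\ell-1}=L(s')$ from Figure~\ref{Fig:coordinates_label} together with the face-sum constraint; the $+1$ coming from the direct/reverse asymmetry is what makes the inequality strict. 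Case $(3)$ likewise pivots to auxiliary edges of type $\ell+1$ and runs a color case analysis. This change of auxiliary type is the actual idea behind cases $(2)$ and $(3)$, and it is absent from your sketch.

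Beyond that, the lemma's conclusion is a list of explicit coordinate inequalities, so the ``bookkeeping'' you defer --- the choice of $s,s'$, the shift $0$, $+1$, or $+2$ read off Figure~\ref{Fig:coordinates_label} according to the orientation and colors of the adjacent face, and the verification that the direct/reverse $+1$ survives the worst-case color configuration --- is the entire content of the argument. Your proposal identifies the correct tools and the correct outcome for case $(1)$, but it does not supply the argument for cases $(2)$ and $(3)$.
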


\begin{proof}
Since $t$ and $t'$ are in blocks of $\mathcal{P}_v$, neither $t$ nor $t'$ are colored $m$. If $h(t)<h(t')$ and $t_{\ell+1}> t'_{\ell+1}$, $L(t)<L(t')$ by Remark \ref{rem:strict_increasing_label}. Since $h(e^1)=h(e^2)=L(t)$ and $h(e^3)=h(e^4)=L(t')$, this implies
$$h(e^1)=h(e^2)<h(e^3)=h(e^4).$$
If $t_{\ell+1}=t'_{\ell+1}$ and $h(t')>h(t)$, then by Condition $(2.c)$ of Definition \ref{def:two_col_hives}, $L(t')>L(t)$ except if the middle edge of all lozenges between $t$ and $t'$ are colored $m$. In the latter case, let $s$ (resp. $s'$)  be the edge of type $\ell-1$ such that $t,s$ form a reverse triangle (resp. $t',s'$ form a direct triangle). Since $s_{\ell}=t_{\ell}+1$ and $s'_{\ell}=t'_{\ell}+1$, the inequality $t'_{\ell}>t_{\ell}$ implies that $s'_{\ell}>s_{\ell}$. Similarly, since $s_{\ell-1}=t_{\ell-1}-1$ and $s'_{\ell-1}=t'_{\ell-1}$, we have 

$$s_{\ell-1}=t_{\ell-1}-1=N-t_{\ell}-t_{\ell+1}-1> N-t'_{\ell}-t'_{\ell+1} -1\geq  t'_{\ell-1}\geq s'_{\ell-1}.$$

Hence, $L(s')\leq L(s)$. Let us introduce the third edge $r$ (resp. $r'$) of the triangle with edges $s,t$ (resp. $s',t')$. By Figure \ref{Fig:coordinates_label} and Condition (1) from Definition \ref{def:two_col_hives}, we get that $e^3_{\ell-1}=N-e^3_{\ell}-e^3_{\ell+1}=N-L(t')-L(r')-1=L(s')$ and $e^2_{\ell-1}=N-e^2_{\ell}-e^2_{\ell+1}=N-L(t)-L(r)=L(s)+1$ and thus 
$e^2_{\ell-1}=L(s)+1>L(s')=e^3_{\ell-1}$, so that 
$$e^1_{\ell-1}\geq e^2_{\ell-1}>e^{3}_{\ell-1}\geq e^4_{\ell-1}.$$ \\
\noindent
If $h(t)\geq h(t')$ and $t_{\ell+1}> t'_{\ell+1}$, then $t'_{\ell+2}>t_{\ell+2}$. Suppose without loss of generality that $e^1_{\ell+1}\geq e^{2}_{\ell+1}$ and $e^3_{\ell+1}\geq e^4_{\ell+1}$. Let us consider the edges $s,s'$ of type $\ell+1$ such that $(t,s,u)$ and $(t',s',u')$ are respectively direct and reverse triangles of $T_k$ so that the corresponding edge of $t$ and the piece containing the direct triangle is $e^2$ and the corresponding edge for $t'$ and the reverse triangle is $e^3$. Since $h(s)=t_{\ell+1}-1$ and $s_{\ell+2}=t_{\ell+2}+1$ and $h(s')=t'_{\ell+1}-1$ and $s'_{\ell+2}=t'_{\ell+2}$, $h(s)>h(s')$ and $s'_{\ell+2}\geq s_{\ell+2}$, so that $L(s)>L(s')$ by Remark \ref{rem:strict_increasing_label}. Then, since $(t,u,s)$ is a direct triangle, Figure \ref{Fig:coordinates_label},
$e^2_{\ell+1}=L(s)+1$, except if $c(t)=1, c(s)=0$ and $c(u)=m$ where $e^2_{\ell+1}=L(s)+2$. Likewise, since $(t',s',u')$ is a reverse triangle, $e^3_{\ell+1}=L(s')+1$ except if $c(t)=1, c(s)=0,c(u)=m$ or $c(t)=0,c(s)=m,c(u)=1$ where $e^3_{\ell+1}=L(s')$. Hence, in any case,
$$e^3_{\ell+1}\leq L(s')+1<L(s)+1\leq e^2_{\ell+1}$$
and
$$e^1_{\ell+1},e^2_{\ell+1}> e^3_{\ell+1},e^4_{\ell+1}.$$

\end{proof}

\begin{lemma}[Different type blocks do not cross]
\label{lem:two_color_non_crossing_different_type}
Let $t,t'\in T_k$ be of respective type $\ell,\ell+1$ yielding edges in $\mathcal{E}$, and denote by $e^1,e^2$ (resp. $e^3,e^4$) the edges of $T_N$ corresponding to $t$ (resp. $t'$). Then, 
\begin{enumerate}
\item if $t'_{\ell}>t_{\ell}$ and $t'_{\ell+1}< t_{\ell+1}$, then 
$$e^1_{\ell}=e^2_{\ell}< e^3_{\ell}\leq e^4_{\ell}.$$
\item if $t'_{\ell}< t_{\ell}$ and $t'_{\ell+1}\geq t_{\ell+1}$, then 
$$e^3_{\ell}\leq e^4_{\ell}< e^1_{\ell}=e^2_{\ell}.$$
\item if $t'_{\ell}\leq t_{\ell}$ and $t'_{\ell+1}<t_{\ell+1}$,
$$e^3_{\ell+1}\leq e^4_{\ell+1}< e^1_{\ell+1}\leq e^2_{\ell+1}.$$
\item if $t'_{\ell}\geq t_{\ell}$ and $t'_{\ell+1}\geq t_{\ell+1}$,
$$e^1_{\ell+1}\leq e^2_{\ell+1}\leq e^3_{\ell+1}=e^4_{\ell+1}.$$
\end{enumerate}
\end{lemma}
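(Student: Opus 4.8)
The plan is to follow the same pattern as the proof of Lemma~\ref{lem:two_color_non_crossing_same_type}, tracking the coordinates of the four edges $e^1,\dots,e^4\in E_N$ through the construction of the vertex partition $\mathcal{P}_v$ carried out in the proof of Theorem~\ref{thm:bijection_puzzle_dual_hive}. Since every edge of $\mathcal{E}$ coming from $t$ (resp.\ $t'$) has height $L(t)$ (resp.\ $L(t')$), one reads off immediately that $e^1_\ell=e^2_\ell=L(t)$ and $e^3_{\ell+1}=e^4_{\ell+1}=L(t')$, so the content of the lemma lies entirely in the remaining free coordinates, namely $e^i_{\ell+1}$ for the type-$\ell$ edges $e^1,e^2$ and $e^i_\ell$ for the type-$(\ell+1)$ edges $e^3,e^4$. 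Each $e^i$ is produced by one of the at most two faces of $T_k$ incident to $t$ or to $t'$ — a direct triangle, a reversed triangle, a boundary singleton when $t$ or $t'$ lies on $\partial T_k$ (in which case the associated pair degenerates to a single edge), or a long rhombus, treated via the blow-up of Definition~\ref{def:blowup_vertex}.

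For each incident face I would determine the free coordinate of the attached $E_N$-edge from the tables of Figure~\ref{Fig:coordinates_label}, exactly as in the proof of Lemma~\ref{lem:two_color_non_crossing_same_type}: when a type-$\ell$ edge of $\mathcal{E}$ and the adjacent type-$(\ell-1)$ edge issue from a common triangle of $T_k$, their coordinates differ by $0$, $1$ or $2$ according to the orientation of that triangle and the position of its $m$-coloured edge, and the triangle-sum condition~(1) of Definition~\ref{def:two_col_hives} allows one to pass between the $\ell$- and $(\ell+1)$-coordinates of an edge through $e^i_{\ell-1}=N-e^i_\ell-e^i_{\ell+1}$. Combining this with the triangle relations of Lemma~\ref{lem:triangle_sum} read inside $T_k$, one expresses all the free coordinates of $e^1,\dots,e^4$ in terms of the $L$-values of a few $T_k$-edges of types $\ell$ and $\ell+1$ sitting next to $t$ and $t'$; for instance in case (1) the type-$\ell$ edge $a$ of the direct $T_k$-triangle through $t'$ satisfies $a_\ell=t'_\ell$ and $a_{\ell+1}=t'_{\ell+1}+1$. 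The positional hypothesis of each case then forces these auxiliary edges to be strictly ordered in one coordinate and weakly ordered in the other — in case (1), $a_\ell=t'_\ell>t_\ell$ while $a_{\ell+1}=t'_{\ell+1}+1\le t_{\ell+1}$ — so Remark~\ref{rem:strict_increasing_label} yields a strict inequality between the corresponding $L$-values, here $L(a)>L(t)$, and feeding this back into the coordinate expressions gives the announced chain, here $e^1_\ell=e^2_\ell=L(t)<L(a)\le e^3_\ell\le e^4_\ell$. By the threefold cyclic symmetry of the colouring rules of Definition~\ref{def:two_col_hives}, cases (1) and (2) are mirror images of one another, and so are (3) and (4), so it is enough to carry out the argument in full for, say, (1) and (3).

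The main difficulty is purely combinatorial bookkeeping: the free coordinate of each $e^i$ depends on the triple consisting of the orientation of the incident $T_k$-triangle, its colouring, and which neighbouring edge is used, so each of the two genuinely distinct cases fans out into several sub-cases, and in every one of them one must check that the $0$/$1$/$2$ shift attached to an $m$-coloured edge never reverses the inequality supplied by Remark~\ref{rem:strict_increasing_label}; this is exactly the subtle point already encountered at the end of the proof of Lemma~\ref{lem:two_color_non_crossing_same_type}, where the $+1$ versus $+2$ alternatives had to be weighed against each other. A secondary point requiring care is the degenerate incident faces: when $t$ or $t'$ lies on $\partial T_k$ one of the two edges it produces is simply absent and the claimed pair of inequalities collapses harmlessly to a single one, and when $t$ or $t'$ is a boundary edge of a long rhombus the relevant coordinates must be read from the blow-up picture of Definition~\ref{def:blowup_vertex} rather than from an honest triangle of $T_k$ — but in all of these situations the inequalities can only weaken or the pair degenerate, so the statement remains valid.
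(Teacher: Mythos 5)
The overall plan — read off the four $T_N$-edges' free coordinates via Figure~\ref{Fig:coordinates_label} and the triangle-sum relation, and compare the relevant $L$-values of auxiliary $T_k$-edges adjacent to $t$ and $t'$ through Remark~\ref{rem:strict_increasing_label} — does match the paper's proof in broad strokes. However, two concrete errors prevent this from being a correct proof.

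First, the claimed reduction from four cases to two does not work. The threefold cyclic symmetry of the colouring rules merely permutes the edge types $0\to 1\to 2\to 0$, so it allows one to normalise $\ell$, not to exchange the position of $t'$ relative to $t$. Relating cases (1) and (2), or (3) and (4), would require a reflection, but a reflection reverses the cyclic (clockwise) order in which the boundary colours $(0,0,0)$, $(1,1,1)$, $(1,0,3)$, $(0,1,m)$ of Definition~\ref{def:two_col_hives} are read, so it is not a symmetry of the two-coloured hive model. Indeed, in the paper's proof, cases (1)/(2) are handled with the reverse/direct triangle through $t'$ respectively, with the conclusion running in opposite directions and with asymmetric strict/weak hypotheses; all four cases are worked out separately.

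Second, the worked example for case (1) chooses the wrong auxiliary edge and the claimed chain is false. The edge $a$ you pick (the type-$\ell$ edge of the \emph{direct} $T_k$-triangle through $t'$, i.e.\ the edge $s''$) controls the coordinate $e^4_\ell$, not $e^3_\ell$: from the triangle relations, $e^4_\ell\in\{L(a),L(a)+1\}$, whereas the edge $e^3$ is produced by the \emph{reverse} triangle $(t',s',r')$. The paper shows $e^3_\ell=L(s')+1$ with $s'_\ell=t'_\ell-1$, $s'_{\ell+1}=t'_{\ell+1}+1$. Since $a$ and $s'$ sit across the non-$m$-coloured middle edge $t'$ of a lozenge, Condition $(2.c)$ gives $L(a)>L(s')$, hence $L(a)\ge L(s')+1=e^3_\ell$; so the asserted $L(a)\le e^3_\ell$ fails whenever $L(s'')>L(s')+1$, and even when it happens to hold the inequality points the wrong way for the chain. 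The correct lower bound for $e^3_\ell$ requires comparing $t$ to $s'$, for which the positional hypothesis $t'_\ell>t_\ell$, $t'_{\ell+1}<t_{\ell+1}$ yields only $s'_\ell\ge t_\ell$ and $s'_{\ell+1}\le t_{\ell+1}$, hence only the \emph{weak} inequality $L(s')\ge L(t)$; the strictness of $e^3_\ell>e^2_\ell$ then comes from the $+1$ offset $e^3_\ell=L(s')+1$, not from a strict inequality supplied by Remark~\ref{rem:strict_increasing_label}. So the guiding principle of the proposal — that the Remark always produces a strict $L$-inequality which directly propagates to the coordinates — is also not the mechanism that actually closes case (1).
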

\begin{proof}
The proof of the four assertions are similar.

\begin{enumerate}
\item  Suppose that $t'_{\ell}>t_{\ell}$ and $t'_{\ell+1}<t_{\ell+1}$. Let $(t',s',r')$ and $(t',s'',r'')$ be the reverse and direct triangles belonging to pieces yielding respectively $e^3$ and $e^4$, with $s',s''$ of type $\ell$. Since then $e^4_{\ell}\geq e^3_{\ell}$, it suffices to show that $e^3_{\ell}>e^2_{\ell}$. Since $s'_{\ell}=t'_{\ell}-1$ and $s'_{\ell+1}=t'_{\ell+1}+1$, we have $s'_{\ell}\geq t_{\ell}$ and $t_{\ell+1}\geq s'_{\ell+1}$, so that $L(s')\geq L(t)$. Since $t'$ is of type $\ell+1$ not and colored $m$ and $(t',s',r')$ is a reverse triangle, Figure \ref{Fig:coordinates_label} and Condition (1) from Definition \ref{def:two_col_hives} yield that either $e^3_{\ell+2}= L(r')+1$ and $L(r')+L(s')+L(t')=N-2$ or $e^3_{\ell+2}= L(r')$ and $L(r')+L(s')+L(t')=N-1$. In any case, $e^3_{\ell+2}=N-1-L(t')-L(s')$, so that
$$e^3_{\ell}=N-e^3_{\ell+1}-e^3_{\ell+2}= N-L(t')-(N-1-L(t')-L(s'))=L(s')+1> L(t)=e^2_{\ell}=e^1_{\ell}.$$
\item Suppose that $t'_{\ell}<t_{\ell}$ and $t'_{\ell+1}\geq t_{\ell+1}$, and let $s'$ be the edge of type $\ell$ such that $(t',s',r')$ is a direct triangle. Since $s'_{\ell}=t'_{\ell}$ and $s'_{\ell+1}=t'_{\ell+1}+1$, $s'_{\ell}<t_{\ell}$ and $s'_{\ell+1}>t_{\ell+1}$, so that $L(t)>L(s')$ by Remark \ref{rem:strict_increasing_label}. Since $(t',s',r')$ is a direct triangle, Figure \ref{Fig:coordinates_label} and Condition (1) from Definition \ref{def:two_col_hives} yield by a same reasoning as above that $e^4_{\ell+2}\geq L(r')+1=N-L(s')-L(t')$, so that, using that $e^4_{\ell+1}=L(t')$, 
$$e^4_{\ell}=N-e^4_{\ell+1}-e^4_{\ell+2}\leq L(s')<L(t)=e^1_{\ell}=e^2_{\ell}.$$
\item Suppose that $t'_{\ell}\leq t_{\ell}$ and $t'_{\ell+1}<t_{\ell+1}$, and let $s$ be the edge of type $\ell+1$ such that $(t,s)$ is part of a direct triangle. Then, $s_{\ell}=t_{\ell}$ and $s_{\ell+1}=t_{\ell+1}-1$, so that $s_{\ell}\geq t'_{\ell}$ and $s_{\ell+1}\geq t'_{\ell+1}$. We deduce that $s_{\ell+2}\leq t'_{\ell+2}$, and thus $L(s)\geq L(t')$. Since $e^1_{\ell+1}\geq L(s)+1$ by Figure \ref{Fig:coordinates_label}, we thus have 
$$e^3_{\ell+1}=e^4_{\ell+1}=L(t')\leq L(s)<e^1_{\ell+1}\leq e_{\ell+1}.$$
\item Suppose that $t'_{\ell}\geq t_{\ell}$ and $t'_{\ell+1}\geq t_{\ell+1}$, and let $s$ be the edge of type $\ell+1$ such that $(t,s,r)$ is a reverse triangle. Then, $s_{\ell}=t_{\ell}+1$ and $s_{\ell+1}=t_{\ell+1}-1$. Hence, $t'_{\ell+1}>s_{\ell+1}$ and $t'_{\ell+2}=N-t'_{\ell+1}-t'_{\ell}\leq N-s_{\ell+1}-1-s_{\ell}+1\leq s_{\ell+2}$ and the inequality is strict except when $t'_{\ell}=t_{\ell}$ and $t'_{\ell+1}=s_{\ell+1}$. Hence, by Remark \ref{rem:strict_increasing_label} in the case of strict inequality and Condition (2.b) and (2.c) from Definition \ref{def:two_col_hives}, $L(t')>L(s)$, except when $t'_{\ell}=t_{\ell}, t'_{\ell+1}=s_{\ell+1}$ and $C(r)=m$, in which case $L(t')=L(s)$. In the first case, by Figure \ref{Fig:coordinates_label} we have $e^2_{\ell+1}\leq L(s)+1\leq L(t')$. In the second case, since $C(r)=m$ we have $e^2_{\ell+1}=L(s)\leq L(t')$, so that in any case 
$$e^1_{\ell+1}\leq e^2_{\ell+1}\leq e^3_{\ell+1}= e^4_{\ell+1}.$$
\end{enumerate}
\end{proof}

\section{Color swap}\label{Sec:color_swap}
\label{sec:color_swap}

The goal of this combinatorial section is to exhibit a convex body of dimension $D = \frac{(n-1)(n-2)}{2}$ having integer points counted by quantum Littlewood-Richardson coefficients, so that the limit expression \eqref{eq:density_limit_quantum_coefs} converges to the volume of a polytope. 
From this section to the end of the paper, we set $k = n+d$ and $\xi = \ed^{\frac{i\pi}{3}}$. We also assume that the color maps of dual hives are regular in the sense of Definition \ref{def:reg_boundary}. 

\begin{definition}[Regular boundaries]
\label{def:reg_boundary}
    A color map $C: E_k \rightarrow \{ 0,1,3,m \}$ is regular (or has regular boundaries) if for every $i \in \{ 0,1,2 \}$,
    \begin{equation}
        c^{(i)} = (\underset{d\text{ times}}{\underbrace{1,\ldots,1}},\underset{n-d\text{ times}}{\underbrace{0,\ldots,0}},\underset{d\text{ times}}{\underbrace{1,\ldots,1}}).
    \end{equation}
    Moreover, we say that a dual hive $H = (C, L)$ is regular if its color map $C$ is.
\end{definition}

\subsection{Arrows and hexagons}

Let us start with some definitions on local configurations of edges in $E_k$.

\begin{definition}[Openings and arrows]

    Let $x \in T_k$. An opening of type $l \in \{ 0,1,2 \}$ at $x$ is the pair of edges $(e, e') \in E_k^2$ such that 
    \begin{align*}
        \{ e, e' \} &= \{ (x, x + \xi^{1+l}), (x, x + \xi^{5+l}) \} \text{ or } \{ e, e' \} = \{ (x + \xi^{2+l}, x), (x + \xi^{4+l}, x) \} \\
        C(e) &= C(e') \in \{ 0,1 \}. 
    \end{align*}
    The color of the opening is defined as the color of edges $e$ and $e'$.
\end{definition}

\noindent
Consider an opening $a = (e, e')$ at $x$ of type $l$ and color $c \in \{ 0,1 \}$. Let $e'' = e''(a)$ be the edge such that $e, e'$ are edges of the lozenge with middle edge $e''$. The only possible colors of the edge $e''$ are $C(e'') \in \{ 0,1 \}$. If $C(e'') = c$, faces of the lozenge with middle edge $e''$ have all of their edges colored $c$. If $C(e'') \neq c$, then there is an opening $a'$ of type $l$ and color $c$ at the other endpoint of $e''$. Note that there can only be finitely many such openings $r \geq 0$ before $C(e'') = c$.

\begin{definition}[Arrow]
    Let $a = (e, e')$ be an opening of type $l$ and color $c$. Let $ r \geq 0$ be the number of successive openings having middle edge $e''$ such that $C(e'') \neq c$ with $C(e'') \in \{ 0,1 \}$ as in the previous paragraph. We say that the configuration of edges consisting of the $r \geq 0$ successive pairs of $3$ and $m$ lozenges together with the pair of direct and reverse faces with boundary edges of color $c$ is an arrow of length $r \geq 0$ at the opening $a$. 
\end{definition}

\noindent
See Figure \ref{fig:arrows} for examples of openings and arrows.
\begin{figure}[H]
    \centering
    \includegraphics[scale=0.7]{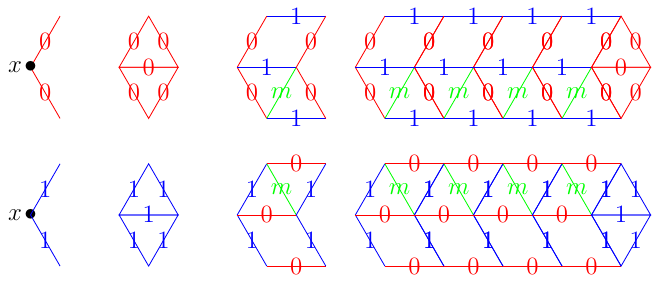}
    \caption{First row from left to right : an opening $a$ with color $0$ and type $0$ at $x$, the case $C(e'') = c$, the case $C(e'') \neq c$ and an arrow of length $r=4$. The second row is the analog for color $1$. Uncolored edges have color $3$.}
    \label{fig:arrows}
\end{figure}

\begin{definition}[ABC hexagons]
\label{def:hexagons}
    Let $C$ be a color map and let $h$ be a hexagon, that is, the union of six triangular faces sharing one vertex in $T_k$. We say that $h$ is an ABC hexagon (for the color map $C$) if the color map $C$ restricted to $h$ is any of the three configurations in Figure \ref{fig:hexagons} up to a rotation.
    \\
    A rotation of an ABC hexagon $h$ is the replacement of the values of $C$ by the ones obtained from a rotation of $h$ which preserves the value of $C$ on the boundary $\partial h \cap E_k$.

\end{definition}

\begin{figure}[H]
    \centering
    \includegraphics[scale=1]{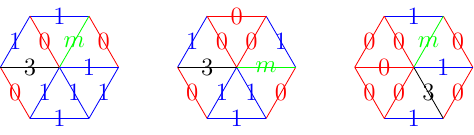}
    \caption{The three types of hexagons : A (left), B (center) and C (right).}
    \label{fig:hexagons}
\end{figure}
\noindent
Note that type B has three possible rotations whereas A and C only have two. For a hexagon $h$, denote $E_h$ the edges of $E_k$ which are in $h$. \\
\\
Let $A$ be an arrow of length $r \geq 1$ and type $l$ at an opening with center $x$. The center of the last opening is $y \in \{ x + r\xi^l, x - r \xi^l \}$. Notice that if the color $c$ of $A$ is $0$ (resp. $1$), then the hexagon $h(y)$ with center $y$ is of type $C$ (resp.) $A$. Applying a rotation to $h(y)$ yields an arrow $A'$ of length $r-1$ of type $l$ at the same opening with center $x$. By applying hexagon rotations to $ x + r\xi^l,  x + (r-1)\xi^l, \dots,  x + \xi^l$ (or $ x - r\xi^l,  x - (r-1)\xi^l, \dots,  x - \xi^l$) in this order, one gets an arrow $R(A)$ of length $r$ of type $l$ the same opening with center $x + r\xi^l$. We call this sequence of $r$ hexagon rotations the reversal of the arrow $A$. An example of arrow reversal is given in Figure \ref{fig:arrow_reversal}

\begin{figure}[H]
    \centering
    \includegraphics{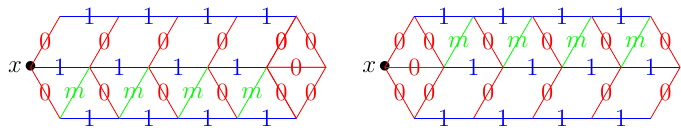}
    \caption{Reversal of an arrow of length $4$ at $x$. Uncolored edges have color $3$.}
    \label{fig:arrow_reversal}
\end{figure}

\subsection{Gash propagation}

\noindent
We define a local configuration called a gash in Definition \ref{def:gash} which one can propagate in a dual hive. Local propagation rules are given in Definition \ref{def:gash_propag} and the general propagation algorithm on dual hives is defined in Definition \ref{def:propagation_algorithm}. The goal of the propagation algorithm is to find rigid lozenges of a dual hive in view of the next section.

\begin{definition}[Gash]
\label{def:gash}
    Let $x \in T_k$. A gash $g$ with center $x = x(g)$ is the union of the two edges $(x, x - \xi^{2l}), (x + \xi^{2l}, x)$ for $l \in \{0,1,2 \}$ such that
    \begin{align*}
        C((x, x - \xi^{2l})) = 1, \ C((x + \xi^{2l}, x)) = 0 &\text{ if } l \in \{ 0,1 \} \\
        C((x, x - \xi^{2l})) = 0, \ C((x + \xi^{2l}, x)) = 1 &\text{ if } l = 2.
    \end{align*}
    The type of a gash denoted $t(g)$ is defined as the type $l \in \{0,1,2 \}$ of its edges.
\end{definition}

\noindent
Note that this definition only depends on the color map $C$ of $H$. Let $g$ be a gash. There are only six possible configurations given in Figure \ref{fig:gash_completions} adjacent to $g$. In this section, we show that such a gash $g$ can be moved across the color map $C$ using local moves until reaching configuration $(v)$ or $(vi)$ of Figure \ref{fig:gash_completions}.
\begin{figure}[H]
    \centering
    \includegraphics[scale=0.8]{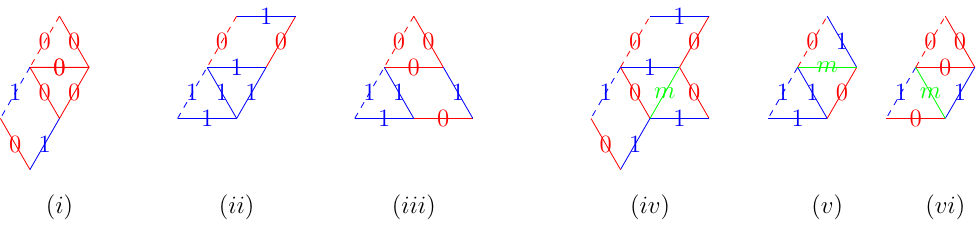}
    \caption{The six possible adjacent configurations to a gash of type $2$ shown in dashed edges. The same holds for a gash of other types up to rotations.}
    \label{fig:gash_completions}
\end{figure}

\begin{definition}[Gash propagation]
\label{def:gash_propag}
    Let $g$ be gash of type $l$ with center $x$. 
    \begin{enumerate}
        \item Suppose that $g$ is adjacent to a configuration $(i)$. Let $y = x+ \xi^4$, (resp. $y=x + 1$, $y=x + \xi^5$) if $l= 0$, (resp. $l=1, l=2$). We call the propagation of $g$ the gash $g'$ of type $l$ at center $y$. 
        \item Suppose that $g$ is adjacent to a configuration $(ii)$. Let $y = x+ \xi^5$, (resp. $y = x + \xi, y = x + 1$) if $l= 0$, (resp. $l=1, l=2$). We call the propagation of $g$ the gash $g'$ of type $l$ at center $y$. 
        \item Suppose that $g$ is adjacent to a configuration $(iii)$. Let $y = x+1$ if $l= 1$ or $l=2$. We call the propagation of $g$ the gash $g'$ of type $3-l$ at center $y$. 
        \item Suppose that $g$ is adjacent to a configuration $(iv)$. Notice that there is a $0$ opening at $x$ and thus an arrow of color $0$ at $x$ with type $l+1$. Reverting this arrow yields a configuration $(i)$ adjacent to $g$ and we define the propagation of $g$ to be the gash $g'$ of type $l$ as in step $(1)$. See Figure \ref{fig:prop_iv} for an illustration.
    \end{enumerate}
\end{definition}

\begin{figure}[H]
    \centering
    \includegraphics[scale=0.8]{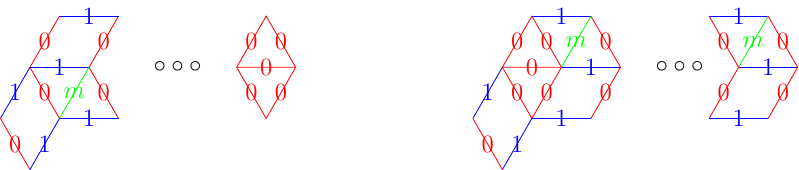}
    \caption{Propagation of a gash $g$ adjacent to a configuration $(iv)$. The arrow of color $0$ has been reversed yielding a configuration $(i)$ adjacent to $g$.}
    \label{fig:prop_iv}
\end{figure}

\noindent
Remark that the only propagation in which the type of the gash changes is $(iii)$.
We now give a general procedure using local propagations from Definition \ref{def:gash_propag}. This procedure starts from a gash and propagates it until reaching a configuration that is either $(v)$ or $(vi)$. 

\begin{definition}[Propagation algorithm]
\label{def:propagation_algorithm}
    Define the following algorithm.
    \begin{enumerate}
    \item[\textbf{Input:}] A color map $C$ and a gash $g$ of type $l \in \{1,2\}$.
    \item Set $g^{(0)} = g$, $x^{(0)} = x(g)$, $t^{(0)} = t(g)$.
        \item \textbf{WHILE} $g^{(s)}$ is adjacent to $(i), (ii), (iii)$ or $(iv)$: set $g^{(s+1)}$ to be the propagation of $g^{(s)}$ with center $x^{(s+1)}$ and type $t^{(s+1)}$.
    \end{enumerate}
\end{definition}

\begin{proposition}[Propagation algorithm is correct]
\label{prop:gash_propagation}
    Let $g$ be a gash of type $2$ in $T_k$. The propagation algorithm terminates at a gash $\Tilde{g}$ adjacent to configuration of type $(v)$ or $(vi)$.
\end{proposition}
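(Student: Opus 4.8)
The natural approach is to show the propagation algorithm terminates and that it can only terminate at a gash adjacent to a configuration of type $(v)$ or $(vi)$. The second point is essentially automatic from the design of Definition \ref{def:propagation_algorithm}: the WHILE loop runs exactly as long as $g^{(s)}$ is adjacent to one of $(i),(ii),(iii),(iv)$, and since Figure \ref{fig:gash_completions} enumerates all six possible configurations adjacent to a gash, the only way to exit the loop is to reach $(v)$ or $(vi)$. One must however check that the propagation in Definition \ref{def:gash_propag} is well-defined at each step, i.e.\ that after an arrow reversal in case $(iv)$ one genuinely obtains configuration $(i)$ (this is exactly the content of the paragraph preceding Definition \ref{def:propagation_algorithm} on arrow reversals, via the type-$C$ hexagon rotations), and that the gash $g'$ produced stays inside $T_k$. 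The latter is where the regular boundary hypothesis (Definition \ref{def:reg_boundary}) and the starting condition $t(g)=2$ will be used: I would track that the center $x^{(s)}$ never tries to leave the triangle, using that along the boundary edges the color pattern $(1^d,0^{n-d},1^d)$ forbids the local configurations $(i)$--$(iv)$ that would push the gash outward.

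The crux is termination. Here the plan is to exhibit a monovariant — a nonnegative integer-valued quantity that strictly decreases (or decreases lexicographically) at each propagation step. Inspecting Definition \ref{def:gash_propag}, moves $(1)$, $(2)$, $(4)$ translate the gash center $x^{(s)}$ by one of the unit vectors $\xi^4, 1, \xi^5$ (depending on the type), i.e.\ always ``downward'' in a suitable sense, keeping the type $l$ fixed; move $(3)$ translates by $1$ and flips the type between $l$ and $3-l$ (so between $1$ and $2$). The candidate monovariant is a linear functional $\varphi(x) = a\,x_0 + b\,x_1 + c\,x_2$ on vertices of $T_k$, with coefficients chosen so that each of the three displacement vectors $\xi^4,1,\xi^5$ decreases $\varphi$; because $x_0+x_1+x_2=k$ is constant, one has two genuine degrees of freedom, and a short computation of how $\xi^4,1,\xi^5$ act on the coordinates $(x_0,x_1,x_2)$ should show such $(a,b,c)$ exists (for instance something like $\varphi = x_1 + 2x_2$ or a type-dependent variant). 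For the type-changing move $(3)$ one checks that $\varphi$ still strictly decreases, possibly after bundling $(3)$ with the subsequent moves or by using a lexicographic pair $(\varphi(x), \text{type})$. Since $\varphi$ takes finitely many values on the finite set $T_k$ and strictly decreases, the loop halts.

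The main obstacle I anticipate is the bookkeeping around case $(iv)$: there the ``propagation step'' is not a single local move but an arrow reversal (a whole cascade of hexagon rotations, of length $r$ which can be large) followed by a translation, and one must be careful that (a) this does not disturb the monovariant argument — the reversal changes the color map $C$ but not the gash center until the final translation, so $\varphi(x^{(s)})$ is unaffected by the reversal itself and then drops by the translation, which is fine; and (b) the reversal is always legal, i.e.\ the arrow it acts on genuinely exists and lies inside $T_k$, which again should follow from the analysis of arrows in the previous subsection together with regularity of the boundary. A secondary subtlety is confirming that configurations $(i)$--$(vi)$ of Figure \ref{fig:gash_completions} really are \emph{exhaustive} — this needs the observation that a gash edge of color $0$ meeting a gash edge of color $1$ at the center vertex forces, via the allowed triangle colorings $(0,0,0),(1,1,1),(1,0,3),(0,1,m)$ around each face, exactly the six listed local pictures. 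Once exhaustiveness, well-definedness of each propagation, and the monovariant are in hand, the proposition follows: the algorithm terminates (monovariant) and at termination the gash is, by exhaustiveness and the loop condition, adjacent to $(v)$ or $(vi)$.
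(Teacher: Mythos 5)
Your termination argument is on the right track and matches the paper's in spirit: the paper's own proof observes that at every propagation step $x^{(s+1)}_0 < x^{(s)}_0$ or $x^{(s+1)}_1 > x^{(s)}_1$ while $x_0$ never increases and $x_1$ never decreases, so $x_1 - x_0$ is a strict monovariant on a finite set and the loop halts. (Your candidate $\varphi = x_1 + 2x_2$ does not in fact decrease monotonically — under a type-$2$ move of kind $(i)$ the center goes $x \mapsto x + \xi^5$, i.e.\ $(x_0,x_1,x_2)\mapsto(x_0,x_1+1,x_2-1)$, which lowers $\varphi$, while the other moves raise it — but the ``type-dependent variant'' you hedge with, e.g.\ $x_1 - x_0$, works uniformly.) Your side remark that the arrow reversal in case $(iv)$ does not disturb the monovariant because it does not move the center is also correct.

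The genuine gap is in the second half. You assert that terminating at $(v)$ or $(vi)$ is ``essentially automatic'' because Figure \ref{fig:gash_completions} is exhaustive, and propose to patch the boundary issue by showing the center ``never tries to leave the triangle'' since the boundary colors ``forbid the configurations $(i)$--$(iv)$ that would push the gash outward.'' This is not what happens, and a direct forward argument along these lines does not close. In the paper, the loop can stop for a reason other than hitting $(v)$/$(vi)$: the center can land on $\partial T_k^{(1)}$ where the putative adjacent configuration is not among $(i)$--$(vi)$. The paper does not exclude this a priori; it argues by contradiction. It shows that such a terminal gash would be of type $1$ with $x^{(\infty)}_1 = n$, hence lies inside the corner triangle $R_d$ whose edges are all colored $1$ by Corollary \ref{cor:regular_corner}. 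It then traces the propagation path backward: locate the last type-change move $(iii)$ producing a type-$1$ gash $g'$, then the last move $(i)$ (or $(iv)$) after $g'$, giving a gash $g$ at center $x$; all subsequent moves are of type $(ii)$, which fix $x_1$, so $x_1 = n$ and $x \in R_d$. But the preceding $(i)$ or $(iii)$ configuration at $x$ forces a $0$-colored type-$0$ edge with both endpoints in $R_d$, contradicting Corollary \ref{cor:regular_corner}. Your sketch does not contain the backward-tracking of the propagation history, does not identify the specific boundary position $x_1 = n$, and never invokes Corollary \ref{cor:regular_corner}, which is the key algebraic input — so the crucial part of the proof is missing rather than merely abbreviated.

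Two smaller remarks: your worry (a) about case $(iv)$ is well placed and is handled correctly; and the exhaustiveness of Figure \ref{fig:gash_completions} for a gash with all six incident edges present is indeed a routine case check on the four allowed face colorings, as you say. The load-bearing difficulty is entirely in ruling out the boundary termination, and that is where your proposal needs the paper's actual argument.
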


\noindent
For the proof of Proposition \ref{prop:gash_propagation}, we need Lemma \ref{lem:regular_triangle} which shows that any triangular region having two of its sides with edges colored $c \in \{0,1 \}$ has all its edges colored $c$. 

\begin{lemma}[Regular equilateral triangles]
\label{lem:regular_triangle}
    Let $C: E_k \rightarrow \{ 0, 1, 3, m \}$ be a color map on edges of $T_k$. Let $R$ be any subset of edges of $E_k$ such that $\partial R$ is an equilateral triangle of size $s \geq 1$. Let $c \in \{ 0,1 \}$ and assume that two boundaries of $R$ have edges $e$ which are all colored $c$. Then, every edge in $R$ is colored $c$. 
\end{lemma}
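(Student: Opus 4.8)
The plan is to argue by induction on the size $s$ of the triangular region $R$, using the local constraint imposed by the allowed colorings around each triangular face, namely that the clockwise boundary colors of any face are $(0,0,0)$, $(1,1,1)$, $(1,0,3)$ or $(0,1,m)$ (up to cyclic rotation). First I would fix $c\in\{0,1\}$ and suppose, say by symmetry of the color rule under the cyclic relabeling, that the two monochromatic sides of $\partial R$ are the ones of type $\ell$ and type $\ell+1$ for some $\ell$ (the remaining cases being identical up to rotation). The base case $s=1$ is immediate: a single triangle with two of its three boundary edges colored $c$ must be the face $(c,c,c)$, since inspection of the four allowed face-colorings shows $(0,0,0)$ and $(1,1,1)$ are the only ones with two edges of the same color in $\{0,1\}$; hence the third edge is colored $c$ as well.

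For the inductive step, let $R$ have size $s\geq 2$ with the two sides $S_\ell$ (type $\ell$) and $S_{\ell+1}$ (type $\ell+1$) all colored $c$. Consider the face $t$ of $R$ at the corner where $S_\ell$ and $S_{\ell+1}$ meet. This face has two boundary edges in $\{S_\ell,S_{\ell+1}\}$, both colored $c$, so as in the base case $t$ is the face $(c,c,c)$; in particular the interior edge $e$ of $R$ opposite to that corner (the edge shared by $t$ and the rest of $R$) is colored $c$. Now peel off this corner: working along $S_\ell$ one finds a strip of faces each of which, having one boundary edge colored $c$ inherited from $S_\ell$ and sharing an edge with the previously-treated face (also colored $c$), is forced to be $(c,c,c)$; the same sweep can be carried out along $S_{\ell+1}$. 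Concretely, I would first show by a one-dimensional induction along $S_\ell$ that every face with an edge on $S_\ell$ is monochromatic $c$, which in particular colors the next ``diagonal'' of edges parallel to $S_{\ell+1}$ entirely with $c$; together with the already-established fact that the remaining region $R'$ (obtained by removing the row of faces adjacent to $S_\ell$) has $\partial R'$ an equilateral triangle of size $s-1$ with two sides — one old side $S_{\ell+1}$ restricted, one new side made of the interior edges just colored — monochromatic $c$, the induction hypothesis applies to $R'$ and finishes the argument.

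The routine part is the bookkeeping of which face is forced next and in which order to sweep; the only genuine subtlety — and the step I expect to be the main obstacle — is making sure that when we ``peel'' a boundary row, the newly exposed inner boundary is indeed an equilateral triangle of size $s-1$ with \emph{two} full sides colored $c$, rather than just one. This is why one must propagate the color along \emph{both} boundary directions near the corner first (establishing two new monochromatic sides for $R'$) before invoking the inductive hypothesis, and it is also why the hypothesis that \emph{two} sides of $R$ are monochromatic (not just one) is essential: a single monochromatic side cannot force anything beyond its adjacent row. With that care taken, the local face constraints propagate the color $c$ throughout $R$.
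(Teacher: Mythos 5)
There is a genuine gap in the inductive step, and it sits exactly where you flagged a ``genuine subtlety'' --- but the real obstruction is earlier than you thought. Your one-dimensional sweep along $S_\ell$ claims that each face with an edge on $S_\ell$ is forced to be $(c,c,c)$ because it shares an edge with the ``previously-treated face''. But the faces with an edge on $S_\ell$ are exactly the direct (upward-pointing) triangles, and consecutive direct triangles do \emph{not} share an edge; they are separated by reversed (downward-pointing) triangles with no edge on $S_\ell$. The corner face is indeed $(c,c,c)$, but the adjacent reversed triangle then carries only \emph{one} edge colored $c$ (the one it shares with the corner), and a single edge colored $c$ does not force the face: for $c=0$, inspection of the allowed face-colorings leaves three clockwise possibilities --- $(0,0,0)$, $(3,1,0)$, $(1,m,0)$ (up to placing the known $0$ in the right slot) --- and similarly for $c=1$. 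So the edge this reversed triangle passes to the second direct triangle need not be colored $c$, and the peeling stalls at the very second face. In short: the face-coloring rule alone does not propagate a single monochromatic boundary side one row inward.

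This is precisely the difficulty the paper's proof is built to handle. The paper does not peel a boundary row. It introduces ``cups'' (a bent chain of edges of types $i$, $i{+}1$, $i{-}1$ around the corner) and proves by an inner induction that once all edges of a cup of length $r$ are colored $c$, the entire convex hull of the cup is colored $c$. The crucial step there is the ``arrow at an opening has length zero'' claim. An opening (two $c$-colored edges meeting at a $2\pi/3$ angle) determines a lozenge whose middle edge is colored either $c$ --- in which case the lozenge is entirely $c$ --- or $1-c$, in which case an \emph{arrow} propagates to the next lozenge. The reason the arrow cannot have positive length is a global contradiction: a positive-length arrow forces, along the whole cup, lozenges whose opposite type-$i$ edges are colored $c$ and $1-c$ respectively, and this clashes with the $c$-coloring imposed by the \emph{other} opening of the cup, at the far end. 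So the argument essentially uses both ends of the corner region simultaneously; a purely local face-by-face propagation as in your proposal cannot see this obstruction and so cannot rule out the ``escape'' colorings $(3,1,0)$ and $(1,m,0)$ for the first reversed triangle. To repair your proof you would need some version of the arrow argument (or an equivalent global constraint linking the two ends of the boundary row), and at that point the structure becomes essentially the paper's cup-and-arrow proof.
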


\begin{proof}
    Let us first show a general fact about a shape described below that we call a cup. For $r \geq 1$, we call a cup of length $r$ and type $i$ the union of $r$ consecutive type $i$ edges together with one edge of type $i+1$, respectively of type $i-1$, forming an angle of $\frac{2 \pi}{3}$ with type $i$ edge with maximal and minimal heights. See Figure \ref{fig:cup} for an example. Suppose that edges of a cup are all colored $c \in \{ 0,1 \}$. Let us show by induction on $r$ that the only possible color of edges in the convex hull of the cup is $c$. 
    \begin{figure}[H]
        \centering
        \includegraphics{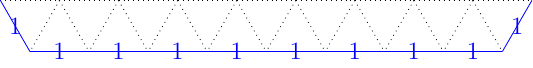}
        \caption{A cup of length $r=8$ and type $0$ with color $c=1$. Edges in the convex hull of the cup are dotted}
        \label{fig:cup}
    \end{figure}
    \noindent
    Two adjacent edges of the cup of different types form an opening of color $c$. One checks that the arrow at this opening has length zero for otherwise all the $r$ edges of type $i$ would belong to some non-rigid lozenges with opposite edges of type $i$ (resp. $i-1$) colored $c$ (resp. $1-c$) which is incompatible with the color $c$ of the other opening of the cup. Thus, the arrow has zero length. Then, the rest of the convex hull is a cup of length $r-1$ which completes the proof of cup completion by induction. \\
    \\
    Let us consider the region $R$ of the statement, and suppose that the boundary of type $i-1$ and $i+1$ of $R$ are colored $c$. The corner of the triangular face between boundaries of type $i-1$ and $i+1$ has all of its edges colored $c$, as two of them lie on the boundary of $R$. This induces a cup of length $1$ and type $i$ having edges of color $c$. By the previous reasoning, its only color completion consists of edges of color $c$. Each completion of a cup of size $r$ with $r<s$ yields a cup of size $r+1$ with edges colored $c$. Filling cups of sizes $1, 2, \dots, s-1$ with edges of color $c$ fills then $R$ with edges colored $c$ and thus proves the claim. 
\end{proof}

\begin{corollary}[Corners of regular boundaries]
\label{cor:regular_corner}
    Let $C: E_k \rightarrow \{ 0,1,3,m \}$ be a regular color map. Then, the three equilateral regions of side length $d$ each containing an extremal vertex of $T_k$ have all of their edges colored $1$.
\end{corollary}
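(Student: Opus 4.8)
The plan is to deduce the statement directly from Lemma \ref{lem:regular_triangle}, the only real work being the verification of its hypotheses on the three corner regions.

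First, note that if $d = 0$ the three regions are empty and there is nothing to prove, so assume $d \geq 1$. Fix an extremal vertex $v$ of $T_k$ and let $R$ be the equilateral region of side length $d$ containing $v$. By construction $\partial R$ is an equilateral triangle of size $d$; moreover two of its three sides are contained in $\partial T_k$, namely in the two sides of the big triangle $T_k$ that meet at $v$, while the third side is a line of the triangular grid cutting across $T_k$. Thus $R$ is exactly of the form to which Lemma \ref{lem:regular_triangle} applies, with $s = d$, provided we can exhibit two of its boundaries all of whose edges are colored $1$.

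The heart of the argument is therefore to check that the two sides of $R$ lying on $\partial T_k$ consist entirely of edges colored $1$. Here I would use the explicit coordinates of $\partial T_k^{(0)},\partial T_k^{(1)},\partial T_k^{(2)}$ recorded after the definition of the discrete boundary: each side of $T_k$ is a chain of $k$ edges of a single type, ordered by height $h \in \{0,\dots,k-1\}$, and for a given side the $d$ edges closest to one endpoint are those with $h \in \{0,\dots,d-1\}$ while the $d$ edges closest to the other endpoint are those with $h \in \{n,\dots,k-1\}$ (this is just the observation that the plane position of a boundary edge is an affine function of its height, so extreme heights correspond to the two ends of the side). Since $C$ is regular, $c^{(i)} = (1^d,0^{n-d},1^d)$ for each $i \in \{0,1,2\}$ by Definition \ref{def:reg_boundary}, so on every side of $T_k$ both the first $d$ and the last $d$ edges, measured by height, are colored $1$. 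In particular, on each of the two sides of $T_k$ through $v$, the $d$ edges nearest $v$ are colored $1$; and these two chains are precisely the two sides of $R$ that lie on $\partial T_k$.

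With this in hand, Lemma \ref{lem:regular_triangle} applied to $R$ with $c = 1$ and $s = d$ shows that every edge of $R$ is colored $1$. Carrying this out for each of the three extremal vertices of $T_k$ gives the corollary. The only mildly delicate point is the bookkeeping in the previous paragraph, matching ``the $d$ edges of $\partial T_k$ nearest a corner'' with ``the first or last $d$ entries (by height) of the boundary word $c^{(i)}$''; once the coordinate formulas for $\partial T_k^{(i)}$ are written down this is immediate, and I do not expect any genuine obstacle.
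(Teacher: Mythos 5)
Your proposal is correct and follows the same route as the paper: identify the two sides of the corner region that lie on $\partial T_k$, use regularity to see they are all colored $1$, and invoke Lemma \ref{lem:regular_triangle}. You spell out the coordinate bookkeeping (that the first and last $d$ entries of the boundary word $c^{(i)}$, ordered by height, are precisely the edges nearest the two endpoints of that side) which the paper states in one line; this is a sound and useful elaboration, not a deviation.
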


\begin{proof}
    Notice that the regularity of the color map $C$ implies that each such triangular region of side length $d$ has two boundaries which lie on $\partial T_k^{(0)} \cup \partial T_k^{(1)} \cup \partial T_k^{(2)}$ having edges colored $1$. Applying Lemma \ref{lem:regular_triangle} yields the result.
\end{proof}

\begin{proof}[Proof of Proposition \ref{prop:gash_propagation}]

    At each step of the gash propagation, one has that $ x^{(s+1)}_0 < x^{(s)}_0$ or $x^{(s+1)}_1 > x^{(s)}_1$ and $t^{(s)} \in \{ 1,2 \}$ which implies that the while loop terminates on a last gash $g^{(\infty)}$. Assume for the sake of contradiction that $g^{(\infty)}$ is not adjacent to a configuration $(v)$ or $(vi)$. Since $g^{(\infty)}$ is the last gash, it is of type $1$ with center $x^{(\infty)} \in \partial T_k^{(1)}$. As the boundary $\partial T_k^{(1)}$ is regular, the edge of color $1$ in $g^{(\infty)}$ is the edge with height $n$ so that $x^{(\infty)} _1 = n$, see Figure \ref{fig:last_i}. Moreover, by Corollary \ref{cor:regular_corner}, the equilateral triangle $R_d \subset E_{k}$ of length $d$ having one of its boundaries between $x^{(\infty)}$ and $(n, 0)$ has all of its edges colored $1$. \\
    \\
    Consider the last $(iii)$ configuration encountered before reaching $\partial T_k^{(1)}$ having a gash $g'$ of type $1$. Note that such a configuration exists as all other configurations preserve the type of the gash during propagation of Definition \ref{def:gash_propag}. Consider the last gash $g$ with center $x$ resulting from a propagation of type $(i)$ after $g'$ with the convention that $g = g'$ if no configuration $(i)$ or $(iv)$ happen after $g'$ (recall that configuration $(iv)$ reduces to a propagation of type $(i)$, see step $(4)$ of Definition \ref{def:gash_propag}). Propagations $(g^{(s)}, s \geq 0)$ after $g = g^{(0)}$ are then of type $(ii)$ so that for $s \geq 0$, $x^{(s+1)}_1 = x^{(s)}_1$ and $x^{(s+1)}_0 = x^{(s)}_0 -1$. Since $x^{(\infty)} \in R_d$, we would have $x \in R_d$ and the edge $e' \in E_k$ with origin $x + \xi^5$ of type $0$ and color $0$ in the last $(i)$ configuration before $g$ (or in the last configuration $(iii)$ if no such configuration $(i)$ exists) would have both its endpoints in $R_d$ which contradicts the fact that edges in $R_d$ are all colored $1$. See Figure \ref{fig:last_i} for an illustration of the above argument.
\end{proof}

\begin{figure}[H]
    \centering
    \includegraphics{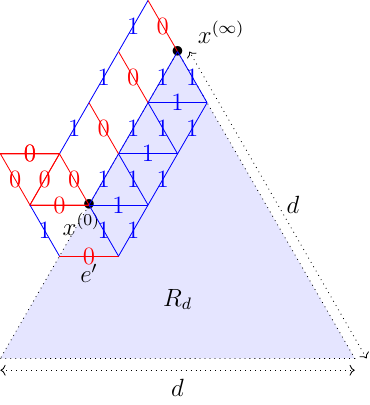}
    \caption{The $0$ colored edge $e'$ in the last configuration $(i)$ would be in $R_d$.}
    \label{fig:last_i}
\end{figure}

\begin{remark}[Type $1$ propagation algorithm]
\label{rmk:type_1_propagation}
    Note that the argument given in the proof of Proposition \ref{prop:gash_propagation} remains valid in the case of a type $1$ gash $g$ with center $x$ such that the type $0$ edge with origin $x + \xi^5$ is colored $0$. 
\end{remark}

\subsection{Color swap path}

In Proposition \ref{prop:gash_propagation}, we showed that any gash $g$ of type $2$ can be propagated to find a configuration $(v)$ or $(vi)$ having a rigid lozenge in it. In this section, we show that via hexagon rotations of Definition \ref{def:hexagons}, one can bring this rigid lozenge at the location of $x = x(g)$. 

\begin{lemma}[Back propagation]
\label{lem:back_propagation}
    Let $g$ be gash with center $x$ adjacent to a configuration $(i, ii)$ or $(iii)$ and suppose that its propagation $g'$ with center $x'$ is adjacent to a configuration $(v)$ or $(vi)$. Then, the hexagon $h'$ with center $x'$ is an ABC hexagon. Moreover, using a hexagon rotation of $h'$, the hexagon $h$ with center $x$ is an ABC hexagon.
\end{lemma}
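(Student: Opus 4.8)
The statement is local: it concerns two adjacent hexagons $h$ (center $x$) and $h'$ (center $x'$), where $g$ is the gash at $x$ and $g'$ is its propagation at $x'$, and we are told $g'$ is adjacent to a configuration of type $(v)$ or $(vi)$, i.e.\ $h'$ contains a rigid $3$-$m$ lozenge arrangement with the surrounding edges colored so as to close up. The plan is to exhaust the finitely many cases for the propagation step $(i)$, $(ii)$, $(iii)$ according to Definition \ref{def:gash_propag}, together with the finitely many ways the color map can complete $h'$ to a $(v)$ or $(vi)$ configuration (Figure \ref{fig:gash_completions}). For each, I would read off the full restriction of $C$ to $E_{h'}$ and check directly against the three ABC patterns of Figure \ref{fig:hexagons} (up to rotation), which establishes the first assertion. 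Since there are only three propagation types and two target configurations, this is a bounded verification; the main bookkeeping is keeping the type $l$ of the gash straight, noting that only propagation $(iii)$ changes the type (from $l$ to $3-l$), while $(i)$ and $(ii)$ preserve it and merely shift the center by one of the unit vectors $1,\xi,\xi^4,\xi^5$ prescribed in Definition \ref{def:gash_propag}.

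For the second assertion I would argue as follows. Knowing $h'$ is an ABC hexagon, I apply the hexagon rotation of Definition \ref{def:hexagons} to $h'$ that preserves $C$ on $\partial h' \cap E_k$; this changes the interior edges of $h'$ — in particular the middle edge of the former gash edges — in a controlled way, and the claim is that after this rotation the six faces around $x$ present one of the ABC patterns. Concretely, $h$ and $h'$ share exactly one face (the face between $x$ and $x'$), hence two edges; a rotation of $h'$ that does not disturb $\partial h'$ can only alter the interior edges of $h'$, which meet $h$ only along that shared face. So I track how the two shared edges get recolored by the rotation, and combine this with the already-known colors on $\partial h \cap E_k$ coming from the fact that $g$ was adjacent to configuration $(i)$, $(ii)$ or $(iii)$: in each of those three cases the boundary $\partial h$ together with the gash edges forces almost all of $C|_{E_h}$, leaving just the interior edges (the middle edge of the gash lozenge and its companions) undetermined, and the rotation of $h'$ fixes precisely those so as to produce an ABC pattern. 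Again this is a finite case check keyed to the same $(i)/(ii)/(iii)$ trichotomy.

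The step I expect to be the genuine obstacle is the \emph{compatibility} between the two hexagons: I need that the rotation of $h'$ forced by the $(v)$/$(vi)$ completion is consistent with — and in fact delivers — the colors on the shared face that make $h$ an ABC hexagon. There is a real risk that the naive case list has to be organized carefully so that ``reversing one step of propagation'' and ``rotating the target hexagon'' commute in the right sense; I would handle this by first establishing, for each propagation type, the full local picture on the union $h \cup h'$ (a region of $10$ faces) before any rotation, using the gash-configuration classification of Figure \ref{fig:gash_completions} and Lemma \ref{lem:triangle_sum}/the color rules of Definition \ref{def:two_col_hives}, and only then performing the rotation and rereading $C|_{E_h}$. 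A secondary subtlety is the role of arrows: if the $(v)$/$(vi)$ configuration at $x'$ is reached only after an arrow reversal (as in the propagation of type $(iv)$, reduced to $(i)$ per step $(4)$ of Definition \ref{def:gash_propag}), I should note that the relevant arrow has length $0$ here — otherwise $g'$ would be adjacent to $(iv)$, not $(v)$ or $(vi)$ — so no extra hexagon rotations from the arrow intervene and the local analysis on $h \cup h'$ is unaffected.
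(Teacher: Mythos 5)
Your plan is essentially the paper's proof: the paper disposes of this lemma with a one-line exhaustive verification of all possible local configurations against the ABC patterns of Figure \ref{fig:hexagons}, which is exactly the finite case check on $h\cup h'$ (organized by propagation type and by the $(v)$/$(vi)$ completion) that you describe. One bookkeeping slip to correct when executing it: two hexagons with adjacent centers share two faces (five edges), not one, and the rotation of $h'$ may recolor the three spokes of $h'$ that lie in $E_h$; this does not harm your argument, and your count of ten faces for the union $h\cup h'$ is already the correct one.
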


\begin{proof}
    One can check that the statement of Lemma \ref{lem:back_propagation} holds in all possible cases of configurations, see Figure \ref{fig:hexagons}.
\end{proof}

\begin{proposition}[Gash reduction]
\label{prop:gash_reduction}
    Let $C$ be a color map and let $g$ be a gash of type $2$ in $C$. Let $\Tilde{g} = g^{(s)}$ for some $s \geq 0$ be the last gash as in Proposition \ref{prop:gash_propagation}. Using hexagon rotations of ABC hexagons with centers given by $x^{(s)}, x^{(s-1)}, \dots, x^{(0)} $ in this order, $C$ can be mapped to a color map $C'$ such that $g$ is a gash for $C'$ adjacent to a configuration $(v)$ or $(vi)$.
\end{proposition}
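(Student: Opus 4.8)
The plan is to proceed by induction on the number $s$ of propagation steps produced by the algorithm of Definition \ref{def:propagation_algorithm}, running backwards from the terminal gash $\widetilde g = g^{(s)}$ toward the initial gash $g = g^{(0)}$. The induction hypothesis at stage $j$ (for $0 \le j \le s$) will be the following: after performing the hexagon rotations at the centers $x^{(s)}, x^{(s-1)}, \dots, x^{(s-j+1)}$ in this order, one obtains a color map $C_j$ for which $g^{(s-j)}$ is a gash adjacent to a configuration of type $(v)$ or $(vi)$, and moreover the rotations performed so far have only altered $C$ inside the hexagons with those centers (so in particular $C_j$ still agrees with $C$ away from the "path" swept out by the algorithm). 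The base case $j = 0$ is exactly the output of Proposition \ref{prop:gash_propagation}: $g^{(s)}$ is adjacent to $(v)$ or $(vi)$ and $C_0 = C$. At the end, $j = s$, we set $C' = C_s$, and then $g = g^{(0)}$ is a gash for $C'$ adjacent to $(v)$ or $(vi)$, which is the claim.

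The inductive step from $j$ to $j+1$ is where Lemma \ref{lem:back_propagation} does the work. By the induction hypothesis, $g^{(s-j)}$ with center $x^{(s-j)}$ is, for the current color map $C_j$, a gash adjacent to a configuration $(v)$ or $(vi)$. Now $g^{(s-j)}$ was obtained as the propagation of $g^{(s-j-1)}$, which (for the color map $C_j$, since $C_j$ and $C$ agree in the relevant region by the second part of the hypothesis) is adjacent to a configuration $(i)$, $(ii)$, $(iii)$ or $(iv)$ — the while loop only propagates from those. For the cases $(i)$, $(ii)$, $(iii)$, Lemma \ref{lem:back_propagation} applies directly: the hexagon $h^{(s-j)}$ with center $x^{(s-j)}$ is an ABC hexagon, and a suitable rotation of it produces a color map $C_{j+1}$ for which the hexagon $h^{(s-j-1)}$ with center $x^{(s-j-1)}$ is an ABC hexagon and, crucially, $g^{(s-j-1)}$ becomes adjacent to a configuration $(v)$ or $(vi)$. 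For the case $(iv)$, one must first recall from step $(4)$ of Definition \ref{def:gash_propag} that the propagation there was defined by first reversing an arrow of color $0$ at $x^{(s-j-1)}$ (turning the local picture into a configuration $(i)$) and then propagating as in case $(i)$; here one undoes this by the arrow reversal $R(\,\cdot\,)$ of the "Arrows and hexagons" subsection, which is itself a composition of ABC-hexagon rotations, and then invokes Lemma \ref{lem:back_propagation} in its $(i)$ form. Either way one obtains $C_{j+1}$ with the required properties, and the "locality" part of the hypothesis is preserved because every move is a hexagon rotation (which by Definition \ref{def:hexagons} changes $C$ only on the interior edges of a single hexagon, keeping the boundary fixed).

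The main obstacle I expect is bookkeeping rather than conceptual: one must verify that the hexagon rotations at distinct centers $x^{(s)}, \dots, x^{(0)}$ do not interfere with one another, i.e. that when we rotate $h^{(s-j)}$ we do not disturb the edges that later-stage rotations (at $x^{(s-j-1)}, x^{(s-j-2)}, \dots$) rely on, nor the edges of the gash $g^{(s-j-1)}$ itself that we have not yet reached. This is what forces the specific order (from the terminal gash back to the start) and is implicitly guaranteed by the monotonicity observed in the proof of Proposition \ref{prop:gash_propagation} (at each step $x^{(s+1)}_0 < x^{(s)}_0$ or $x^{(s+1)}_1 > x^{(s)}_1$), which shows the centers are pairwise distinct and that consecutive hexagons overlap only in the controlled way recorded in Figure \ref{fig:hexagons} and Figure \ref{fig:gash_completions}. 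A secondary subtlety is the type-$(iii)$ propagation, the only one changing the gash type (from $l$ to $3-l$, cf. Definition \ref{def:gash_propag}(3)): here Lemma \ref{lem:back_propagation} must be read with the correct type on each side, but since the lemma is stated for a gash of arbitrary type adjacent to $(iii)$ this causes no real difficulty. Once these local compatibility checks are in place, the induction closes and the proposition follows.
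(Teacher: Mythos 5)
Your proof is correct and takes essentially the same approach as the paper, whose proof is a one-line backward application of Lemma~\ref{lem:back_propagation} at each center $x^{(s)},\dots,x^{(1)}$; you flesh out the induction and the locality bookkeeping that the paper leaves implicit. One small point: in your treatment of case $(iv)$ the forward arrow reversal should not be ``undone'' --- it simply persists in the final color map $C'$, and Lemma~\ref{lem:back_propagation} is applied directly to the $(i)$-like configuration that the arrow reversal already produced, which already yields the rigid lozenge at $x^{(s-j-1)}$.
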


\begin{proof}
    Applying Lemma \ref{lem:back_propagation} to every center $x^{(s)}, x^{(s-1)}, \dots, x^{(1)} $ in this order yields the desired configuration.
\end{proof}

\begin{remark}[Reduction of a type $1$ gash]
\label{rem:type_1_reduction}
    As in Remark \ref{rmk:type_1_propagation}, the result of Proposition \ref{prop:gash_reduction} still holds if one considers $g$ of type $1$ with center $x$ such that the type $0$ edge with origin $x + \xi^5$ is colored $0$. 
\end{remark}

\subsection{Color map reduction}

Recall that we only consider regular boundary conditions for $T_k$ that is, the color map $C: E_k \rightarrow \{0,1,3,m \}$ is given by $1 \dots 1 0 \dots 0 1 \dots 1$ on every boundary of $T_k$, where there are $d$ ones on each side of the $n-d$ zeros. The goal of this section is to show that any regular color map can be mapped via hexagon rotations to the simple color map of Definition \ref{def:c0}.

\begin{definition}[Simple color map]
\label{def:c0}
    Let $n\geq d \geq 0$ and $k = n+d$. The color map $C_0: E_k \rightarrow \{ 0,1,3,m \}$ called the simple color map is defined by 
    \begin{enumerate}
        \item $C_0$ is regular and thus $C_0(e) = 1$ for every edge $e \in E_k$ in any corner equilateral triangle of side length $d$ in $T_k$ as in Corollary \ref{cor:regular_corner},
        \item $C_0(e) = 1$ for every edge $e \in E_k$ in the lozenge of side length $d$ in $T_k$ having outer vertices $n\xi, n\xi + d\xi, n\xi + d, n\xi + d \xi^5$, 
        \item $C_0(e) = 0$ for every edge $e \in E_k$ in the equilateral triangle having outer vertices $d, n$ and $d + n \xi$, 
        \item $C_0(e) = m$, respectively $C_0(e) = 3$ for every edge $e \in E_k$ of type $0$ with origin $x$ such that $d \leq x_0 \leq n-1$ and $1 \leq x_1 \leq d$, respectively for every edge $e \in E_k$ of type $0$ with origin $y$ such that $0 \leq y_0 \leq d-1$ and $d+1 \leq y_1 \leq n$. 
    \end{enumerate}
\end{definition}

\noindent
Figure \ref{fig:c0_example} shows an example of the simple color map $C_0$ for $k = 5$ and $d = 2$. 

\begin{figure}[H]
    \centering
    \includegraphics[scale=0.5]{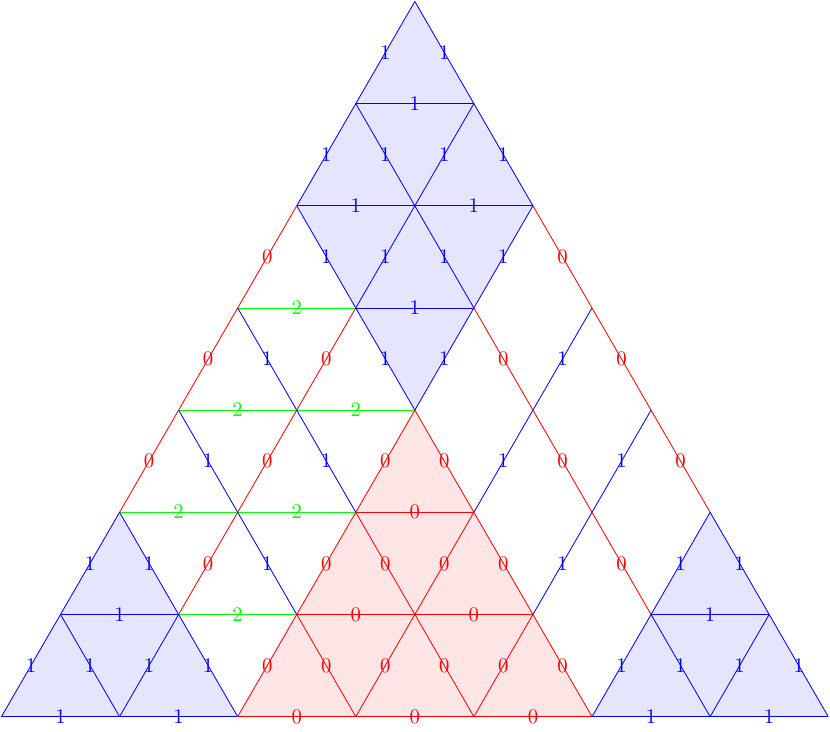}
    \caption{The color map $C_0$ for $k = 5$ and $d = 2$. Uncolored edges have color $3$ (picture done with the module {\it Knutson-Tao puzzles} of Sage \cite{sagemath}).}
    \label{fig:c0_example}
\end{figure}

\begin{proposition}[Color map reduction]
\label{prop:reduction_to_simple_color_map}
    Let $C: E_k \rightarrow \{0,1,3,m \}$ be a regular color map. Using hexagon rotations, one can map $C$ to $C_0$, where $C_0$ is the simple color map of Definition \ref{def:c0}.
\end{proposition}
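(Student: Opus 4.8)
The strategy is to transform $C$ into $C_0$ by handling its gashes (Definition \ref{def:gash}) one at a time, using Propositions \ref{prop:gash_propagation} and \ref{prop:gash_reduction}, sweeping through the gashes in an order compatible with the direction in which they propagate. The first step is to identify $C_0$ as the unique \emph{fully reduced} regular color map. Concretely, I would check that for $C_0$ the propagation algorithm of Definition \ref{def:propagation_algorithm} applied to \emph{any} gash terminates after zero steps, i.e. every gash of $C_0$ is immediately adjacent to a configuration $(v)$ or $(vi)$ of Figure \ref{fig:gash_completions}; this is a finite verification from Definition \ref{def:c0}. Conversely, if a regular color map $C$ has the property that every gash propagates in zero steps, then starting from the regular boundary data, from the all-$1$ corner triangles of side $d$ furnished by Corollary \ref{cor:regular_corner}, and from the rigid lozenges forced at the gashes, Lemma \ref{lem:regular_triangle} (applied repeatedly to the all-$0$ and all-$1$ triangular subregions so produced) determines the color of every edge of $E_k$ and forces $C=C_0$. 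This reduces the Proposition to the statement: any regular $C$ can be brought by hexagon rotations to a regular color map all of whose gashes propagate in zero steps.

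To prove the latter, fix a total order on gashes by ordering their centers lexicographically so that a gash is \emph{earlier} the larger its coordinate $x_0$ is and, among those with the same $x_0$, the smaller its coordinate $x_1$. Recall from the proof of Proposition \ref{prop:gash_propagation} that along the propagation algorithm one always has $x^{(s+1)}_0<x^{(s)}_0$ or $x^{(s+1)}_1>x^{(s)}_1$, so the successive centers $x^{(0)},x^{(1)},\dots$ of a propagating gash are strictly increasing in this order; and recall from Proposition \ref{prop:gash_reduction} and Lemma \ref{lem:back_propagation} that reducing a gash $g$ only rotates ABC hexagons (in the sense of Definition \ref{def:hexagons}) centered at $x^{(s)},x^{(s-1)},\dots,x^{(1)}$, all strictly later than $x(g)$. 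These are the two facts that make a safe sweep possible.

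Now argue by induction on the number $j$ of gashes already reduced. Given a regular color map in which the first $j$ gashes propagate in zero steps, apply Proposition \ref{prop:gash_reduction} (and Remark \ref{rem:type_1_reduction} when the $(j+1)$-st gash has type $1$, whose hypothesis is guaranteed by the regular boundary together with the state of the first $j$ gashes) to the $(j+1)$-st gash $g_{j+1}$; by the monotonicity just recalled, the hexagon rotations involved touch only edges incident to centers that come strictly after $x(g_{j+1})$, hence after all of $x(g_1),\dots,x(g_j)$, so the first $j$ reduced gashes together with their surrounding rigid lozenges are left intact while $g_{j+1}$ now propagates in zero steps. Since $T_k$ has finitely many edges, hence finitely many gashes, finitely many applications of Proposition \ref{prop:gash_reduction} produce a regular color map all of whose gashes propagate in zero steps, which by the first paragraph equals $C_0$.

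The main obstacle I expect is the monotonicity bookkeeping in the inductive step: one must pin down exactly which edges of $E_k$ can change color when Proposition \ref{prop:gash_reduction} is invoked for $g_{j+1}$ and verify that this set is disjoint from the edges forming the rigid lozenges of the gashes already processed — a priori two centers consecutive in the chosen order may still be spatially adjacent, so the naive lexicographic order may have to be refined (for instance by processing the six boundary-to-interior transitions of the regular boundary in a fixed cyclic order, or by strengthening the induction hypothesis to control an entire normalized subregion of $T_k$ rather than just the reduced gashes). A secondary, more routine obstacle is the finite local casework of the first step, where one checks that configurations $(v)$/$(vi)$ forced at all gashes, together with regularity and Lemma \ref{lem:regular_triangle}, pin down exactly the color map of Definition \ref{def:c0}.
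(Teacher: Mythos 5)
Your overall strategy — sweep through $T_k$ in a lexicographic order, using the gash propagation/reduction machinery at each step, with the directionality of propagation guaranteeing that earlier work is not disturbed — is the right one and is essentially what the paper does (the paper's enumeration $x^{(s)}=(d+s_1)\xi+s_2\xi^5$ with $s-1=s_1 d+s_2$ realizes precisely the order you describe: decreasing $x_0$ first, then increasing $x_1$). However, your framing introduces two genuine gaps that the paper's formulation avoids. First, inducting on ``the number $j$ of gashes already reduced'' is not well posed: gashes are local color configurations, and hexagon rotations change the color map, so the set of gashes mutates as you go — they can appear, disappear, or change type. The paper instead inducts on a \emph{fixed} set of sweep vertices $x^{(1)},\dots,x^{(d(k-d))}$ and on the invariant ``the type~$0$ edge at $x^{(j)}+1$ is colored $m$,'' which is a monotone, position-indexed property unaffected by what happens elsewhere. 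Moreover, at a given sweep vertex $x^{(s+1)}$ the paper first observes that there is a $01$ opening there and branches on its three completions (Figure \ref{fig:01opneing}); one of these completions already has the $m$-edge in place and involves no gash at all, a case your gash-centric induction does not naturally accommodate.

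Second, your characterization of $C_0$ as ``the unique regular color map all of whose gashes propagate in zero steps'' is a detour that is neither proved nor used in the paper, and its converse direction (zero-step gashes $\Rightarrow C=C_0$) is not obviously a consequence of Corollary \ref{cor:regular_corner} and Lemma \ref{lem:regular_triangle} alone; the rigid lozenges you obtain from zero-step gashes need not immediately assemble into the all-$0$ triangle, all-$1$ lozenge, and diagonal band of $m$- and $3$-edges prescribed in Definition \ref{def:c0}. The paper sidesteps this entirely: once the sweep has placed $m$-colored type~$0$ edges at all $x^{(j)}+1$, the remaining colors are forced directly, with no appeal to a ``fully reduced'' criterion. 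You correctly flag the remaining bookkeeping (that hexagon rotations in the reduction of the current gash must not overwrite edges in previously established configurations) as a danger, but with the paper's invariant this amounts to checking that the propagation path of a gash at $x^{(s+1)}+1$ — which by Proposition \ref{prop:gash_propagation} always moves to strictly smaller $x_0$ or strictly larger $x_1$ — never carries an ABC hexagon over any of the earlier positions $x^{(j)}+1$, a concrete geometric statement; with your gash-indexed invariant it is not even clear what set of edges must be protected, since the ``reduced'' gashes are not tied to fixed positions.
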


\begin{proof}
    Let $\{ x^{(s)}, 1 \leq s \leq d(k-d) \}$ be the vertices in $T_k$ ordered such that for $1 \leq s \leq d(k-d)$, $s-1 = s_1d + s_2$ with $s_1 \geq 0$ and $0 \leq s_2 \leq d-1$,
    \begin{equation*}
        x^{(s)} = (d+s_1) \xi + s_2 \xi^5.
    \end{equation*}
    Let $C$ be a regular color map on $E_k$. Let us show by induction on $s$ that using hexagon rotations, $C$ can be mapped to a regular color map $C^{(s)}$ such that the type $0$ edges with origins $x^{(1)} + 1, \dots, x^{(s)}+1$ are colored $m$. 
    We first prove it for $s=1$. Notice that since $C$ is regular, there is a gash $g^{(1)}$ of type $2$ with center $x^{(1)}$. Applying Proposition \ref{prop:gash_reduction} yields that using hexagon rotations, $C$ can be mapped to $C^{(1)}$ such that $g^{(1)}$ is adjacent to a configuration $(v)$ or $(vi)$. Since $C^{(1)}$ is regular, this configuration is necessarily $(v)$ which implies that the edge of type $0$ with origin $x^{(1)} + 1$ is colored $m$. \\
    Assume that $C^{(s)}$ is a color map such that the type $0$ edges with origins $x^{(1)} + 1, \dots, x^{(s)}+1$ are colored $m$. Notice that there is a $01$ opening at $x^{(s+1)}$ that is, the edges $(x^{(s+1)}, x^{(s+1)} + \xi)$ and $(x^{(s+1)}, x^{(s+1)}+\xi^5)$ are colored respectively $0$ and $1$. A $01$ opening has only three possible completions showed in Figure \ref{fig:01opneing}.
    \begin{figure}[H]
        \centering
        \includegraphics{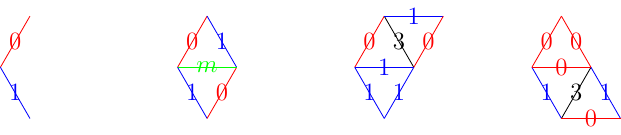}
        \caption{A $01$ opening (left) and its possible completions}
        \label{fig:01opneing}
    \end{figure}
    \noindent
    In the case of the first completion, the color map $C^{(s+1)} = C^{(s)}$ satisfies the desired conditions. In the case of the third and fourth completion, there is a gash $g^{(s+1)}$ of type $2$ and $1$ respectively with center $x^{(s+1)} + 1$. Applying Proposition \ref{prop:gash_reduction} and Remark \ref{rem:type_1_reduction} respectively shows that using hexagon rotations along the propagation path started from $g^{(s+1)}$, the latter is adjacent to a configuration $(v)$ or $(vi)$. Note that rotations in this propagation path do not affect edges colored $m$ with origins $x^{(1)} + 1, \dots, x^{(s)}+1$ thanks to the above ordering. Using a hexagon rotation for the hexagon with center $x^{(s+1)} + 1$ yields a color map $C^{(s+1)}$ such that the type $0$ edges with origins $x^{(0)} + 1, \dots, x^{(s+1)}+1$ are colored $m$ and ends the induction.
    Therefore, $C$ is equivalent up to hexagon rotations to the color map where type $0$ edges with origins $x^{(0)} + 1, \dots, x^{(d(k-d))}+1$ are colored $m$. From this configuration, there is only one possible color map to complete the rest of the hive $E_k$ which is the simple color map $C_0$.
\end{proof}

\noindent
Since the number of pieces of each type is preserved under hexagon rotations, one derives the following enumerations (only the enumeration of edges colored $m$ will be used afterwards).

\begin{corollary}[Tiles enumeration]
    Let $H$ be a dual hive with regular boundaries. Let $hc(H)$, respectively, $sc(H), $ be the number of $m$, respectively $3$ colored edges inside $H$. Then,
    \begin{equation}
    \label{eq:nb_hard_soft_crossing}
        hc(H) = d(n-d) = sc(H).
    \end{equation}
    Moreover, for $i \in \{ 0, 1 \}$, denote $nt^{(i)}(H)$, respectively $st^{(i)}(H)$, the number of direct, respectively reversed, triangular pieces of size $1$ with color $i$ on each side. Then, 
    \begin{equation}
    \label{eq:nb_triangular_pieces}
        nt^{(i)}(H) = \frac{n(i)(n(i)+1)}{2} \ \text{ and } \ st^{(i)}(H) = \frac{n(i)(n(i)-1)}{2}. 
    \end{equation}
    where $n(0) = n-d$ and $n(1) = 2d$. 
\end{corollary}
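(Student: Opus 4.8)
The plan is to reduce the whole statement to the single explicit color map $C_0$ and then count by hand. By Proposition \ref{prop:reduction_to_simple_color_map}, the color map of any regular dual hive $H$ is obtained from $C_0$ by a sequence of hexagon rotations of ABC hexagons, and each hexagon rotation leaves unchanged both the number of edges of every color and the number of triangular pieces of every type — a statement one verifies by inspecting the finitely many hexagons of Figure \ref{fig:hexagons} together with their admissible rotations, checking that the move permutes the six filling faces as an unordered collection. Granting this invariance, it suffices to establish \eqref{eq:nb_hard_soft_crossing} and \eqref{eq:nb_triangular_pieces} for $C = C_0$, i.e. to count inside the explicit configuration of Definition \ref{def:c0}.

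For the crossings I would simply read the answer off item (4) of Definition \ref{def:c0}: the edges colored $m$ are exactly the type $0$ edges whose origin $x$ satisfies $d\le x_0\le n-1$ and $1\le x_1\le d$, an $(n-d)\times d$ array of vertices, and symmetrically the edges colored $3$ form a $d\times(n-d)$ array; hence $hc(H)=sc(H)=d(n-d)$. Two observations will be reused below: no edge of $\partial T_k$ is colored $m$ or $3$, since the regular boundary words take values in $\{0,1\}$, so every edge colored $m$ or $3$ is interior to $T_k$ and therefore lies on exactly one direct and one reversed face.

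For the triangular pieces I would argue complementarily, rather than enumerate the color $1$ region directly. By the admissible face colorings recalled in Definition \ref{def:two_col_hives}, a face of $T_k$ is either monochromatic (colors $(0,0,0)$ or $(1,1,1)$) or carries exactly one edge colored $m$ or $3$, the other two being colored $0$ and $1$; moreover both faces incident to a given edge colored $m$ or $3$ are non-monochromatic, one direct and one reversed. Combined with the previous paragraph this yields a bijection between the non-monochromatic direct (resp. reversed) faces and the edges colored $m$ or $3$, so $H$ has exactly $hc(H)+sc(H)=2d(n-d)$ non-monochromatic direct faces and as many non-monochromatic reversed faces. On the other hand, for $C=C_0$ the edges colored $0$ are, by item (3) of Definition \ref{def:c0}, exactly those of the equilateral sub-triangle of side length $n-d$, so the monochromatic $0$ faces are precisely its unit faces, giving $nt^{(0)}(H)=\frac{(n-d)(n-d+1)}{2}$ and $st^{(0)}(H)=\frac{(n-d)(n-d-1)}{2}$ — this is \eqref{eq:nb_triangular_pieces} for $i=0$ since $n(0)=n-d$. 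Finally, $T_k$ has $\frac{k(k+1)}{2}$ direct and $\frac{k(k-1)}{2}$ reversed faces and $k=n+d$, so subtracting the non-monochromatic faces and the monochromatic $0$ faces leaves
\[
\begin{aligned}
nt^{(1)}(H)&=\frac{k(k+1)}{2}-2d(n-d)-\frac{(n-d)(n-d+1)}{2},\\
st^{(1)}(H)&=\frac{k(k-1)}{2}-2d(n-d)-\frac{(n-d)(n-d-1)}{2},
\end{aligned}
\]
and a one-line simplification using $k=n+d$ and $(n+d)^2-(n-d)^2=4nd$ collapses these to $\frac{2d(2d+1)}{2}$ and $\frac{2d(2d-1)}{2}$, which are exactly the claimed values since $n(1)=2d$.

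The step I expect to be the genuine obstacle is the invariance of all these counts under a single hexagon rotation; everything else is routine bookkeeping. I anticipate it being a short but somewhat tedious case check over the three hexagon types of Figure \ref{fig:hexagons} and their admissible rotations, confirming that each move preserves the unordered collection of triangular pieces filling the hexagon — and in particular does not disturb the balance between direct and reversed monochromatic pieces — from which the invariance of the quantities $hc,sc,nt^{(i)},st^{(i)}$ follows at once.
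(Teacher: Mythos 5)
Your proposal is correct and follows essentially the same route as the paper, whose proof is precisely the observation that hexagon rotations preserve the number of pieces of each type, so that Proposition \ref{prop:reduction_to_simple_color_map} reduces the count to the explicit simple color map $C_0$. Your explicit bookkeeping in $C_0$ (the $d(n-d)$ arrays of $m$- and $3$-edges, the bijection between non-monochromatic faces and $m/3$-edges, and the complementary count giving $nt^{(1)},st^{(1)}$) is just a fleshed-out version of the enumeration the paper leaves implicit.
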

The detailed number of each type of triangles is thus
$$nt^{(0)}(H) = \frac{(n-d)(n-d+1)}{2},\,st^{(0)}(H) = \frac{(n-d)(n-d-1)}{2},\,nt^{(1)}(H) = d(2d+1),\,st^{(1)}(H) = d(2d-1).$$
\subsection{Quasi dual hives }

The goal of this section is to extend hexagon rotations to hives. As of now, hexagon rotations map one color map to another. To also change label maps, we need to relax the inequality constraints of Definition \ref{def:two_col_hives} leading to quasi hives of Definition \ref{def:quasi_hive}. From this section to the end, we view regular dual hives of $T_{n+d}$ as in the discrete hexagon $R_{d, n}$, see Definitions \ref{def:hexagonal_dual_hives}, \ref{def:boundary_hexagon} and Figure \ref{fig:from_tri_to_hexa} below.

\begin{definition}[Hexagonal dual hives]
\label{def:hexagonal_dual_hives}
    Let $n, d \geq 1$. Denote $E_{n,d} = \{ \{u, v\} \in R_{d,n}^2: d(u,v) = 1 \}$ the set of edges of the discrete hexagon $R_{d,n}$. A hexagon dual hive is a pair of maps $(C, L)$, $C: E_{n,d} \rightarrow \{ 0, 1, 3, m \}$ and $L: E_{n,d} \rightarrow \frac{1}{N} \Z$ such that $NL(.)$ satisfies the conditions of Definition \ref{def:two_col_hives} restricted to edges $e \in E_{n,d}$. 
\end{definition}

\begin{definition}[Boundary value of a hexagonal dual hives]
\label{def:boundary_hexagon}
    Define the following subsets of $E_{n, d}$ for $l \in \{ 0,1,2 \}$.
    \begin{align*}
    \partial^{(l, l)} E_{n, d} &= \{ e \in E_{n, d}: e \text{ is of type } l \text{ and } e_{l-1} = 0 \} \\
    \partial^{(l, l+1)} E_{n, d} &= \{ e \in E_{n, d}: e \text{ is of type } l \text{ and } e_{l-1} = n \}.
\end{align*}
    The boundary value $[(c^{(0,0)}, c^{(1,1)}, c^{(2,2)}, c^{(0,1)}, c^{(1, 2)}, c^{(2, 0)}), (\ell^{(0,0)}, \ell^{(1,1)}, \ell^{(2,2)}, \ell^{(0,1)}, \ell^{(1, 2)}, \ell^{(2, 0)})]$ of a hexagonal dual hive is the restriction of $(C, L)$ to $\partial E_{n, d}$ where
    \begin{itemize}
        \item $c^{(l,l)} \in \{ 0,1,3,m \}^{n-d}$ (resp. $c^{(l,l+1)} \in \{ 0,1,3,m \}^{d}$) is the restriction of $C$ to $ \partial^{(l, l)} E_{n, d} $ (resp. $ \partial^{(l, l+1)} E_{n, d} $ ), 
        \item $\ell^{(l,l)} \in (\frac{1}{N} \N)^{d}$ (resp. $\ell^{(l,l+1)} \in (\frac{1}{N} \N)^{d}$) is the restriction of $L(.)$ to $ \partial^{(l, l)} E_{n, d} $ (resp. $ \partial^{(l, l+1)} E_{n, d} $).
    \end{itemize}
    For $(c, l) = (c^{(0,0)}, c^{(1,1)}, c^{(2,2)}, c^{(0,1)}, c^{(1, 2)}, c^{(2, 0)}, \ell^{(0,0)}, \ell^{(1,1)}, \ell^{(2,2)}, \ell^{(0,1)}, \ell^{(1, 2)}, \ell^{(2, 0)}) \in \{ 0,1 \}^{3(n-d)} \times  \{ 0,1 \}^{3d} \times (\frac{1}{N} \N)^{3(n-d)} \times (\frac{1}{N} \N)^{3d}$, we denote by $\mathcal{H}_{hex}(c,l)$ the set of hexagonal dual hives with boundary value $(c, l)$.
\end{definition}

\noindent
Let $(\lambda, \mu, \nu)$ be partitions of length $n$ with first part smaller than $N-n$ such that $\vert \lambda \vert+\vert \mu\vert=\vert \nu\vert+Nd$ and such that $\min(\lambda_n, \mu_n, N-n-\nu_1) \geq d-1$. Then, the associated boundary labels $(l^{(0)}, l^{(1)}, l^{(2)})$ defined in Definition \ref{def:two_col_hives} on $T_{n+d}$ are given by
\begin{align*}
    l^{(0)} &= (0, \dots, d-1, N-n- \nu_1, N-n- \nu_2+1, \dots, N - \nu_{n-d}-d-1, N - \nu_{n-d+1} -d, \dots, N - \nu_n -1) \\
    l^{(1)} &= (0, \dots, d-1, \mu_n, \mu_{n-1}+1, \dots, \mu_{d+1} +n-d-1, \mu_{d} +n-d, \dots, \mu_1 + n -1) \\
    l^{(2)} &= (0, \dots, d-1, \lambda_n, \lambda_{n-1}+1, \dots, \lambda_{d+1} +n-d-1, \lambda_{d} +n-d, \dots, \lambda_1 + n -1)
\end{align*}
so that the associated boundary colors $(c^{(0)}, c^{(1)}, c^{(2)})$ are regular in the sense of Definition \ref{def:reg_boundary} which means that on each boundary of $T_{n+d}$, the $d$ first and last edges are colored $1$ and the remaining $n-d$ edges are colored $0$. By Lemma \ref{cor:regular_corner}, any dual hive $H = (C, L) \in H(\lambda, \mu, \nu, N)$ with regular boundary conditions has every of its equilateral triangles of size $d$ anchored in a corner of $T_{n+d}$ colored $1$. Each of these triangular regions have one boundary with labels equal to $(0, \dots, d-1)$. 
The corresponding region in the puzzle yields a unique position of the corresponding triangular pieces with edge colors $(1,1,1)$ and this unique configuration gives labels to the third boundary edge of the region. 
Let $\tilde{l}^{(0)}, \tilde{l}^{(1)}, \tilde{l}^{(2)}$ be the respective labels of edges in $\partial^{(0, 1)} E_{n, d}, \partial^{(1, 2)} E_{n, d}, \partial^{(2, 0)} E_{n, d}$. Then, reading decreasingly with respect to edges heights $h(e)$, these labels are given by the following (see Figure \ref{fig:from_tri_to_hexa} below),
\begin{align}
    \tilde{l}^{(0)} &= (N-n-\lambda_d + d-1, N-n-\lambda_{d+1}+d, \dots, N-n-\lambda_1), \\
    \tilde{l}^{(1)} &= (N-n-\mu_d + d-1, N-n-\mu_{d+1}+d, \dots, N-n-\mu_1), \\
    \tilde{l}^{(2)} &= (\nu_{n-d+1} + d-1, \dots, \nu_{n-1} +1, \nu_n).
\end{align}

\begin{figure}
    \centering
    \includegraphics[scale=0.8]{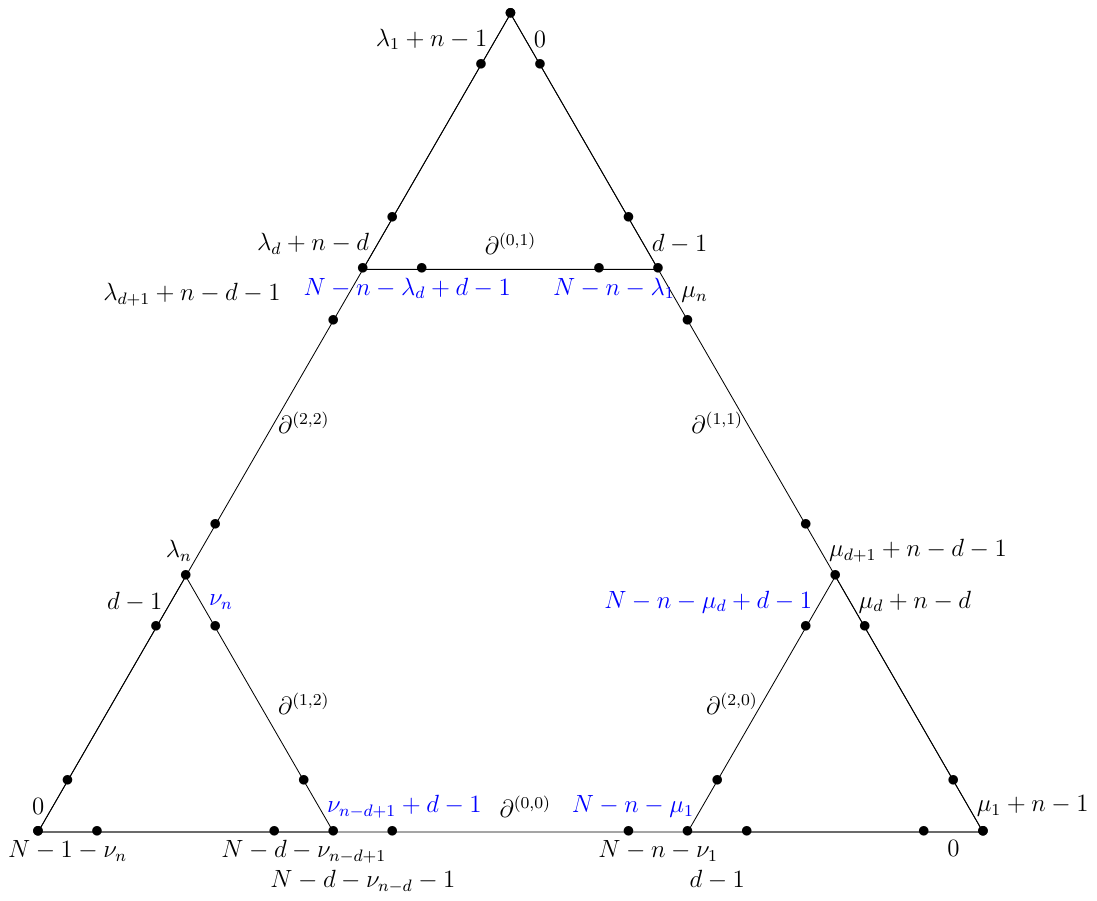}
    \caption{The restriction of a regular dual hive on $E_{n+d}$ to the hexagon $E_{n, d}$ and induced boundary value for the label map. Blue labels are determined by the unique label map on each triangular corner. Multiplying edge labels by $\frac{1}{N}$ yields boundary conditions in $H_{hex}(\lambda, \mu, \nu, N)$.}
    \label{fig:from_tri_to_hexa}
\end{figure}
\noindent
In this section, we now view regular dual hives $H = (C, L)$ on $T_{n+d}$ as hexagonal dual hives by restricting $C$ and $L$ to $E_{n,d}$. By the above, the boundary conditions on $E_{n, d}$ are given by
\begin{equation}
\label{eq:boundary_value_from_triangle_to_hexagon}
    c^{(l,l)} = (0, \dots, 0), \ c^{(l,l+1)} = (1, \dots, 1), \ \ell^{(l,l)} = \frac{1}{N} L_{\vert \partial^{(l, l)} E_{n, d}}, \ \ell^{(l,l+1)} = \frac{1}{N} \tilde{l}^{(l)}.
\end{equation}
for $l \in \{0,1,2 \}$.
We write $H_{hex}(\lambda, \mu, \nu, N)$ for the set of hexagonal dual hives having boundary colors and labels given by \eqref{eq:boundary_value_from_triangle_to_hexagon} coming from restriction of dual hives on $T_{n+d}$. This restriction
\begin{equation}
   H(\lambda, \mu, \nu, N) \rightarrow H_{hex}(\lambda, \mu, \nu, N)
\end{equation}
is a bijection where the inverse map is given by extending $C$ from the hexagon $R_{d, n}$ to the triangle $T_{n+d}$ setting $C(e) = 1$ for $e \in E_{n+d} \setminus E_{n, d}$  and completing the labels $L$ in the unique possible way in corner triangles. In this section, we now view dual hives in the hexagon $R_{d, n}$ and we will omit the subscript writing $H(\lambda, \mu, \nu, N)$ for notation convenience. 

\begin{definition}[Quasi label map, quasi dual hive]
\label{def:quasi_hive}
    Let $C: E_{n, d} \rightarrow \{ 0,1,3,m \}$ be a color map and $N \geq 1$. A quasi label map is a map $L: E_{n, d} \rightarrow \frac{1}{N} \Z $ such that $NL(.)$ satisfying the equality conditions of Definition \ref{def:two_col_hives}, that is, equality condition on every triangular face and rigid lozenges with respect to the color map $C$, and boundary values on $\partial E_{n,d}$ given by the corresponding two-colored dual hives. Denote $\widetilde{L}^C(\lambda, \mu, \nu, N)$ the set of such label maps. \\
    \noindent
    A quasi dual hive is the data of a color map $C$ and a quasi label map $L$. We denote by $\widetilde{H}(\lambda, \mu, \nu, N)$ the set of quasi dual hives with boundary conditions $(\lambda, \mu, \nu)$. 
\end{definition}
The difference with label maps of dual hives is that one does not impose the inequality constraints of Definition \ref{def:two_col_hives} inside the hexagonal region $E_{n,d}$. 
\noindent
Note that dual two-colored hives are in particular quasi dual hives that is, $H(\lambda, \mu, \nu,N)\subset \widetilde{H}(\lambda, \mu, \nu, N)$.

\begin{lemma}[Boundary value determine interior]
\label{lem:boundary_values_det_interior}
    Let $H = (C, L)$ be a quasi dual hive and $h$ an ABC hexagon for its color map $C$. Then, the values of $L$ on $E_h^\mathrm{o}$, the set of interior edges of $h$, are uniquely determined by the values of $L$ on boundary edges of $h$ and by the position of the rigid lozenge in $h$. Moreover, the values of $L$ on $E_h^\mathrm{o}$ are affine combinations of the values of $L$ on boundary edges of $h$.
\end{lemma}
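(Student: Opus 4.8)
The plan is to exploit the very rigid combinatorial structure of an ABC hexagon. Recall that such an $h$ is the union of six triangular faces $f_1,\dots,f_6$ of $T_k$ around a central vertex $x$; write $v_1,\dots,v_6$ for the outer vertices in cyclic order, with $f_i$ spanning $x,v_i,v_{i+1}$ and indices read mod $6$, write $s_i=\{x,v_i\}$ for the six interior edges (``spokes''), which constitute $E_h^{\mathrm o}$, and $b_i=\{v_i,v_{i+1}\}$ for the six boundary edges. First I would record, for each face $f_i$, the equality constraint of Definition~\ref{def:two_col_hives}(1): $L(s_i)+L(s_{i+1})+L(b_i)=\kappa_i$, where $\kappa_i\in\{1-\tfrac1N,\,1-\tfrac2N\}$ is a constant read off from $C|_h$ alone (it equals $1-\tfrac1N$ unless $f_i$ is reversed with boundary colours not of type $\{0,1,m\}$). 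Thus the six face constraints form a cyclic linear system in the six unknowns $L(s_1),\dots,L(s_6)$ whose right-hand sides are known affine functions of the boundary labels; by themselves they have rank $5$ and leave a one-parameter family of solutions.

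The extra ingredient that kills this parameter is the rigid lozenge. By inspecting the three hexagon types of Figure~\ref{fig:hexagons} and their rotations, $h$ contains exactly one rigid lozenge $f_{j-1}\cup f_j$, namely one whose middle edge $s_j$ is coloured $m$; its position is precisely the additional datum named in the statement. The boundary $4$-cycle of this rhombus is $s_{j-1},b_{j-1},b_j,s_{j+1}$, so its two pairs of opposite (hence same-type) edges are $\{s_{j-1},b_j\}$ and $\{s_{j+1},b_{j-1}\}$, and condition $(2.a)$ of Definition~\ref{def:two_col_hives} forces $L(s_{j-1})=L(b_j)$ and $L(s_{j+1})=L(b_{j-1})$. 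Hence two of the spoke values are immediately pinned to boundary labels.

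The remaining step is a cascade: knowing $L(s_{j-1})$ and $L(s_{j+1})$, the face relations of $f_{j-2},f_{j+1},f_{j-3},f_{j+2}$ successively determine $L(s_{j-2})$, $L(s_{j+2})$, $L(s_{j-3})$ and $L(s_{j+3})=L(s_{j-3})$, while either of $f_{j-1},f_j$ determines $L(s_j)$ — each one obtained as $\kappa_\bullet$ minus two already-known values. This expresses every $L(s_i)$ as an affine combination, with coefficients in $\{0,\pm1\}$, of $L(b_1),\dots,L(b_6)$, plus a constant depending only on $C|_h$ and on $j$, which is exactly the claim. The two redundant face relations (from $f_{j+2}$ re-deriving $L(s_{j-3})$ and from $f_j$ re-deriving $L(s_j)$) are automatically compatible because $H$ is assumed to be a genuine quasi dual hive, so no existence question arises — only uniqueness, which is what is asserted.

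I do not expect a real obstacle: the statement reduces to a finite check on the three hexagon types of Figure~\ref{fig:hexagons}. The only point needing a little care is verifying, type by type and rotation by rotation, that the rigidity identity $L(e)=L(e')$ of the unique rigid lozenge ties a spoke to a \emph{boundary} edge (not to another spoke), which is what lets the cascade start and terminate; this is transparent from the figure, and tracking the constants $\kappa_i$ according to face orientation and colour type is routine bookkeeping.
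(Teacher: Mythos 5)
Your proposal is correct and follows essentially the same route as the paper's proof: both use the rigid-lozenge opposite-edge equalities (condition (2.a) of Definition \ref{def:two_col_hives}) to pin two spokes to boundary labels, and then the six face-sum equalities to propagate to the remaining spokes, with two of the eight constraints being redundancies guaranteed by the quasi-hive hypothesis. You present the resolution as a cascade starting at the two pinned spokes, whereas the paper records the same computation as the invertible lower-triangular $6\times 6$ system \eqref{eq:system_hexagon}, but the underlying linear algebra is identical.
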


\begin{proof}
    Suppose that the values of $L$ on $\partial h$ are given by $l_1, \dots, l_6$ and values on $E_h^\mathrm{o}$ by $l_7, \dots, l_{12}$. We will do the proof for a type A hexagon and the other types B and C can be treated using similar arguments. Without loss of generality up to some permutation of the indexes suppose that $C(e_{7}) = m$, see Figure \ref{fig:label_h}.
    \begin{figure}[H]
        \centering
        \includegraphics{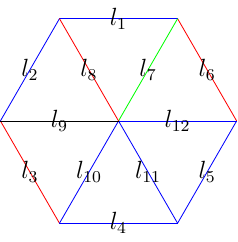}
        \caption{Labels of edges in $h$ a type A hexagon.}
        \label{fig:label_h}
    \end{figure}    
    \noindent
    By the equality conditions on opposite sides of the rigid lozenge, one has $l_{12} = l_1$ and $l_{8} = l_6$. Let us show that the values $l_7, l_9, l_{10}, l_{11}$ are uniquely determined in the region. Writing equality conditions on the five triangular faces gives
    \begin{align*}
        l_9 + l_8 + l_2 &= 1 - \frac{1}{N} \\
        l_{10} + l_9 + l_3 &= 1 - \frac{2}{N} \\
        l_{11} + l_{10} + l_4 &= 1 - \frac{1}{N} \\
        l_{12} + l_{11} + l_5 &= 1 - \frac{2}{N} \\
        l_{12} + l_{7} + l_6 &= 1 - \frac{1}{N}
    \end{align*}
    which implies
    \begin{equation}
    \label{eq:boundary_hexagon_condition}
        l_1 + l_3 + l_5 = l_2 + l_4 + l_6 - \frac{2}{N}. 
    \end{equation}
    Suppose that the former holds. The fourth equations of the system is redundant with the others and \eqref{eq:boundary_hexagon_condition} so that $l_7, l_8, l_9, l_{10}, l_{11}, l_{12}$ is solution to the invertible system
    
    \begin{equation}
    \label{eq:system_hexagon}
        \begin{pmatrix}
            1 & 0 & 0 & 0 & 0 & 0 \\
            0 & 1 & 0 & 0 & 0 & 0 \\
            0 & 0 & 1 & 0 & 0 & 0 \\
            0 & 0 & 1 & 1 & 0 & 0 \\
            0 & 0 & 0 & 1 & 1 & 0 \\
            0 & 0 & 0 & 0 & 0 & 1 \\
        \end{pmatrix} \begin{pmatrix}
            l_7 \\
            l_8 \\
            l_9 \\
            l_{10} \\
            l_{11} \\
            l_{12} \\
        \end{pmatrix} = \begin{pmatrix}
            1 - \frac{1}{N} - l_1 - l_6 \\
            l_6 \\
            1 - \frac{1}{N} - l_2 - l_6 \\
            1 - \frac{2}{N} - l_3 \\
            1 - \frac{1}{N} - l_{4} \\
            l_{1} \\
        \end{pmatrix}.
    \end{equation}
The labels on the inner edges are thus uniquely determined as linear combinations of the labels on the outer edges.

\end{proof}

\noindent
We now give a definition of hexagon rotations that incorporates the label map of a quasi hive. 

\begin{definition}[Rotation map]
\label{def:rotation_map}
    Let $C, C'$ be two color maps that differ by a hexagon rotation $h \rightarrow h'$ that is, $C'(e) = C(rot(e))$ where $rot: E_{n,d} \rightarrow E_{n,d}$ is the permutation of edges induces by the rotation mapping $h$ to $h'$. Define 
    \begin{align}
        Rot[C \rightarrow C']:  \widetilde{L}^C(\lambda, \mu, \nu, N) & \rightarrow \widetilde{L}^{C'}(\lambda, \mu, \nu, N) \\
         L & \mapsto Rot[C \rightarrow C'](L) = L'
    \end{align}
    by setting $L'(e) = L(e)$ for $e \in E_{n,d} \setminus E_h^\mathrm{o}$ and extending $L'$ to $E_h^\mathrm{o}=E_{h'}^\mathrm{o}$ by Lemma \ref{lem:boundary_values_det_interior}.
\end{definition}

\noindent
For notation convenience, we will call a hexagon rotation the image of a map of Definition \ref{def:rotation_map} for some ABC hexagon $h$ inside a quasi hive. 

\begin{lemma}[Rotation is affine bijection]
\label{lem:rotation_linear}
    Let $C, C'$ be two color maps that differ by a hexagon rotation. The map $R[C \rightarrow C']: \ \widetilde{L}^C(\lambda, \mu, \nu, N) \rightarrow \widetilde{L}^{C'}(\lambda, \mu, \nu, N)$ is an affine bijection with coefficients in $\mathbb{Z}[\lambda_i, \mu_i, \nu_i, 1/N]$ and whose inverse is $R[C' \rightarrow C]$.
\end{lemma}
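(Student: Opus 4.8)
The plan is to reduce everything to the local picture already analyzed in Lemma \ref{lem:boundary_values_det_interior}. Recall that $Rot[C\rightarrow C']$ leaves $L$ untouched outside $E_h^{\mathrm o}$, and on $E_h^{\mathrm o}=E_{h'}^{\mathrm o}$ it reconstructs the interior labels from the six boundary labels $l_1,\dots,l_6$ of $h$ (equivalently of $h'$, since the hexagon rotation permutes only the boundary edges of $h$ among themselves) together with the position of the rigid lozenge in $h'$. The content of Lemma \ref{lem:boundary_values_det_interior} is precisely that, once the compatibility relation \eqref{eq:boundary_hexagon_condition} holds, the interior labels are the solution of an invertible integer linear system, hence affine functions of $l_1,\dots,l_6$ with coefficients in $\mathbb{Z}[1/N]$ — indeed with integer coefficients for the homogeneous part, the only denominators $1/N$ coming from the constant column. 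First I would check that the compatibility relation \eqref{eq:boundary_hexagon_condition} is automatically satisfied when $L\in\widetilde L^C(\lambda,\mu,\nu,N)$: since $h$ is an ABC hexagon for $C$, it contains a rigid lozenge for $C$, and $NL$ satisfies all the triangular-face equalities and all the rigid-lozenge equalities of Definition \ref{def:two_col_hives} on the edges of $h$; adding up the five triangular equalities around the center of $h$ (as done in the proof of Lemma \ref{lem:boundary_values_det_interior}) yields exactly \eqref{eq:boundary_hexagon_condition}. The same computation, run with $C'$ and the rigid lozenge of $h'$, shows the boundary data of $h'$ satisfies the analogous relation, so the reconstruction producing $L'$ is well-defined.

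Next I would verify that the output $L'$ indeed lies in $\widetilde L^{C'}(\lambda,\mu,\nu,N)$. The boundary values on $\partial E_{n,d}$ are unchanged because $h$ (being an ABC hexagon, a union of six faces around an interior vertex) has all its edges in the interior once we restrict attention to the edges modified, hence $\partial E_{n,d}\subset E_{n,d}\setminus E_h^{\mathrm o}$ and $L'=L$ there. The triangular-face equalities and rigid-lozenge equalities for $C'$ hold: on faces and lozenges disjoint from $h$ they are inherited from $L$; on the faces inside $h'$ they hold by construction of $L'$ via system \eqref{eq:system_hexagon} written for $C'$; and on faces straddling $\partial h$ the relevant edges lie on $\partial h$ where $L'=L$, and the equalities there are the same constraints for $C$ and $C'$ since $C$ and $C'$ agree outside $E_h^{\mathrm o}$. (One must note that a rigid lozenge for $C'$ meeting $\partial h$ would already be a rigid lozenge for $C$ with the same constraint, because the color map is unchanged on $\partial h$; this is where one uses that hexagon rotation preserves $C$ on $\partial h\cap E_k$, part of Definition \ref{def:hexagons}.) Thus $Rot[C\rightarrow C']$ is a well-defined affine map with coefficients in $\mathbb{Z}[1/N]\subset\mathbb{Z}[\lambda_i,\mu_i,\nu_i,1/N]$.

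Finally, bijectivity and the identification of the inverse. A hexagon rotation is an involution on color maps in the sense that rotating $h'$ back to $h$ recovers $C$; so $Rot[C'\rightarrow C]$ is defined on $\widetilde L^{C'}(\lambda,\mu,\nu,N)$. I would show $Rot[C'\rightarrow C]\circ Rot[C\rightarrow C']=\mathrm{id}$ directly: both maps fix labels outside $E_h^{\mathrm o}$, and on $E_h^{\mathrm o}$ the composition first reconstructs interior labels from $(l_1,\dots,l_6)$ using the rigid lozenge of $h'$, then reconstructs them again from the same $(l_1,\dots,l_6)$ (the boundary labels are untouched throughout) using the rigid lozenge of $h$; by the uniqueness clause of Lemma \ref{lem:boundary_values_det_interior} applied to the original hive $H=(C,L)$, this returns the original interior labels. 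The symmetric argument gives the other composition, so $Rot[C\rightarrow C']$ is an affine bijection with inverse $Rot[C'\rightarrow C]$, which also has coefficients in $\mathbb{Z}[\lambda_i,\mu_i,\nu_i,1/N]$. The only mildly delicate point — and the one I would be most careful about — is the bookkeeping that the rigid lozenge of $h'$ is correctly the one prescribed by the rotation and that no rigid lozenge for $C'$ crosses $\partial h$ in a way not already present for $C$; this is exactly guaranteed by $C=C'$ on $\partial h$, but it should be stated explicitly rather than left implicit.
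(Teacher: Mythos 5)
Your proof is correct and takes the same route as the paper's: both reduce to the invertible lower-triangular integer system of Lemma \ref{lem:boundary_values_det_interior}, from which the integer-coefficient affine claim follows immediately. The paper's own proof records only this last point and leaves the verification of the target space, the bijectivity, and the identification of the inverse implicit; you supply those explicitly, using the uniqueness clause of Lemma \ref{lem:boundary_values_det_interior} to get $Rot[C'\to C]\circ Rot[C\to C']=\mathrm{id}$. One small imprecision worth flagging: your caveat about rigid lozenges straddling $\partial h$ is a reasonable concern, but the fact you invoke (namely $C=C'$ on $\partial h$) only shows that such a lozenge would be rigid simultaneously for $C$ and $C'$; it does not by itself show that the spoke label appearing in that constraint is left unchanged by $Rot[C\to C']$, which is what the constraint actually requires. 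What renders the issue moot is the structure of ABC hexagons in Figure \ref{fig:hexagons}: their single $m$-colored edge is a spoke, not a boundary edge of $h$, so no rigid lozenge straddles $\partial h$ and the constraint you worried about never arises. Phrased that way, your argument closes cleanly.
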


\begin{proof}
    For every edge $e$ not interior to $h$, one has $L'(e) = L(e)$. In particular, $L'(e) = L(e)$ for $e \in \partial h$. Since the matrix of \eqref{eq:system_hexagon} has integer coefficients and is lower triangular, the values $\{ L'(e), e \in E_h^\mathrm{o} \}$ are integer combinations of the values $\{ L(e) = L'(e), e \in \partial h \}$ and $\frac{1}{N}\mathbb{Z}$. 
\end{proof}

\noindent
Figure \ref{fig:ex_rotation} shows an example of a hexagon rotation and the corresponding affine map $L \mapsto L'$.
\begin{figure}[H]
    \centering
    \includegraphics{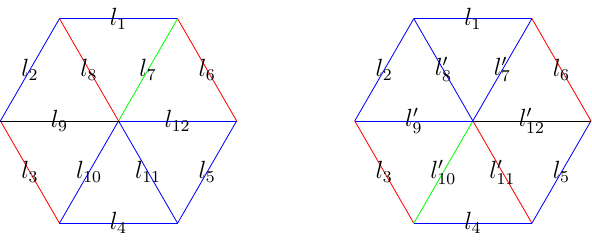}
    \caption{Action of a rotation on labels of inner edges.}
    \label{fig:ex_rotation}
\end{figure}

\noindent Using face summation constraints together with equality constraints in the rigid lozenge in $h'$ one has 
\begin{align*}
    l'_9 &= l_4, & l'_7 &= 1 - \frac{1}{N} - l_6 - l'_{12} = \frac{1}{N} + l_3 + l_5 - l_6, \\
    l'_{11} &= l_3, & l'_8 &= 1 - \frac{2}{N} - l_1 -l '_7 = \frac{1}{N} + l_3 + l_5 -l_6, \\
    l'_{12} &= 1 - \frac{2}{N} - l_3 - l_5, & l'_{10} &= 1 - \frac{1}{N} - l_4 - l'_3.
\end{align*}
which is a affine combination of values of $L$ with integer coefficients. \\
\\
\noindent For two color maps $C$, $C'$ that differ by more than one hexagon rotation, we denote $Rot[C \rightarrow C']$ the composition of maps in Definition \ref{def:rotation_map} for each hexagon rotation needed to go from $C$ to $C_0$ and then from $C_0$ to $C'$ where the existence of such paths was given by Proposition \ref{prop:reduction_to_simple_color_map}. Note that there might be multiple rotation paths from $C$ to $C'$ so that such a map is not unique. \\
\\
\noindent
Let $I \subset E_{n, d}$ be the set of edges of $E_{n, d}$ that are not in a rigid lozenge of $C_0$ except the edges of type $2$ between a rigid lozenge and a triangular face with colors $(0,0,0)$ and edges of type $1$ between a rigid lozenge and a triangular face with colors $(1,1,1)$, see Figure \ref{fig:region_I} below for an example. Denote $I_0$ the edges of $I$ of type $0$ where we remove the east-most such edge on each row. By a counting argument, there are $D=\frac{(n-1)(n-2)}{2}$ such edges so that $I_0 = (e_1, \dots, e_D)$.

\begin{lemma}[Simple quasi hive]
\label{lem:unique_label_map_c0}
    For any $z = (z_1, \dots, z_D) \in \left( \frac{1}{N}  \Z \right)^D$, there exists a unique label map $\Phi^{C_0}(z) \in L^{C_0}(\lambda, \mu, \nu, N)$ such that for all $ 1 \leq i \leq D$: $\Phi^{C_0}(z)(e_i) = z_i$. Moreover, for all $1 \leq i \leq D$, $\Phi^{C_0}(z)(e_i)$ is given by an affine combination with integer coefficients of $(z_1, \dots, z_D, \lambda, \mu, \nu, \frac{1}{N})$.
\end{lemma}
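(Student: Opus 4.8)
The plan is to set up the linear system imposed by the equality constraints of a quasi label map for the simple color map $C_0$ and show it has a unique solution once the values on $I_0$ are prescribed, by propagating values across the hexagon in a well-chosen order. First I would recall the structure of $C_0$ from Definition \ref{def:c0}: the hexagon $R_{d,n}$ decomposes into the central ``$0$-region'' equilateral triangle with outer vertices $d, n, d+n\xi$ (all edges colored $0$), the $d$-sided ``$1$-lozenge'' near the top, three corner $1$-triangles (which lie outside $E_{n,d}$ after the restriction, or equivalently are pinned), and a staircase of $d(n-d)$ rigid $3$/$m$ lozenges between the $0$-region and the $1$-region. A quasi label map must satisfy: the triangle-sum condition $L(e^0)+L(e^1)+L(e^2) = 1-\frac1N$ (resp. $1-\frac2N$ for reversed triangles not bounding an $m$-edge) on every face, the equality $L(e)=L(e')$ on opposite edges of every rigid lozenge, and the prescribed boundary values $\ell^{(l,l)},\ell^{(l,l+1)}$ on $\partial E_{n,d}$ coming from $(\lambda,\mu,\nu)$ as computed before Definition \ref{def:quasi_hive}.

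The key step is a propagation/counting argument. I would argue that the number of free parameters is exactly $D = \frac{(n-1)(n-2)}{2}$: the total number of edges of $E_{n,d}$ minus the number of independent equality constraints (one per face, minus redundancies of the type recorded in \eqref{eq:boundary_hexagon_condition}, plus one per rigid lozenge) minus the number of boundary edges equals $D$. Rather than doing this dimension count abstractly, the cleaner route is constructive: prescribe $L$ on $I_0 = (e_1,\dots,e_D)$ and on $\partial E_{n,d}$, then show every remaining edge value is forced. Within each rigid lozenge the two ``missing'' type-$(l{+}2)$ edges are determined by the opposite-edge equalities and the two triangle sums, exactly as in the computation in the proof of Lemma \ref{lem:boundary_values_det_interior} (this is where the exceptional edges of type $1$ and $2$ adjacent to rigid lozenges that we kept in $I$ get consumed). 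In the monochromatic $0$-triangle and $1$-lozenge regions, one sweeps row by row: knowing the type-$0$ edges in $I_0$ on a row together with the one east-most type-$0$ edge that was removed (which lies on $\partial E_{n,d}$ or is determined by the adjacent rigid lozenge), every triangle sum on that row determines the type-$1$ and type-$2$ edges, and then the next row's type-$0$ edges that are \emph{not} in $I_0$ — but those are precisely the removed east-most ones, which are the boundary/rigid-adjacent edges already known. Carefully choosing the sweep direction (from the boundary inward, processing rigid lozenges as encountered) makes every step a solved-for assignment with integer coefficients in the previously determined quantities and $\frac1N$, giving both existence, uniqueness, and the claimed affine-integrality of $\Phi^{C_0}(z)(e_i)$ in $(z_1,\dots,z_D,\lambda,\mu,\nu,\frac1N)$ (the boundary values $\ell^{(\cdot)}$ are themselves affine-integral in $(\lambda,\mu,\nu,\frac1N)$).

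The main obstacle I anticipate is the bookkeeping that the $D$ edges of $I_0$ together with the boundary are genuinely a \emph{basis}: one must verify that the removal of the east-most type-$0$ edge on each row (and the retention of the special type-$1$/type-$2$ edges adjacent to rigid lozenges) makes the propagation both never under-determined (so a solution exists for every $z$) and never over-determined (so it is unique), which amounts to checking that the redundancies among the face equations — one per ABC-hexagon-like relation \eqref{eq:boundary_hexagon_condition}, plus the global sum relation — are exactly accounted for. I would handle this by an explicit induction on the rows of $R_{d,n}$ (equivalently on $v_2$), treating the three regimes $v_2 < $ start of staircase, within the staircase, and within the $1$-lozenge separately, and in each regime exhibiting the forced values; the count $D = \binom{n-1}{2}$ then follows a posteriori from the fact that exactly one type-$0$ edge per row of the $0$-region is removed and the $1$-region contributes none after pinning, matching the cardinality computation already invoked in the statement.
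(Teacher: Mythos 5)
Your overall route is the paper's: fix the boundary values and the $D$ values on $I_0$, then propagate the face-sum and rigid-lozenge equality constraints across $E_{n,d}$ row by row, checking at each step that exactly one new value is forced and that it is an integer-affine combination of $(z,\lambda,\mu,\nu,\tfrac1N)$. However, the bookkeeping you single out as the main obstacle is exactly where your scheme, as written, does not close, and the specific mechanism you propose is wrong on two points. First, in $C_0$ the non-middle edges of the rigid lozenges are of types $1$ and $2$ only, and their labels are not obtained by a computation local to each lozenge (as in Lemma \ref{lem:boundary_values_det_interior}); they are pinned by a global transport step: chaining the opposite-edge equalities along the staircase, every such type-$2$ (resp.\ type-$1$) edge equals a boundary edge of $\partial E_{n,d}$ obtained by translation by a multiple of $e^{2i\pi/3}$ (resp.\ $e^{i\pi/3}$). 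This transport, which your plan omits, is what supplies the known anchor at the east end of each row. Second, the east-most type-$0$ edge of each row (the one removed from $I_0$) is neither on $\partial E_{n,d}$ nor determined by the adjacent rigid lozenge — rigid-lozenge faces contain no type-$0$ edge other than the $m$-colored middle one — so it cannot be treated as an input. In the paper's band argument it is the \emph{output} of the row propagation: the row is anchored at its west end by the boundary type-$0$ and type-$2$ edges and at its east end by the type-$1$ edge transported from the boundary, and the west-to-east sweep determines exactly one new edge per face, the last one being precisely the removed type-$0$ edge.

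If instead, as you describe, you take all type-$0$ edges of the row (including the removed one) as known and solve the triangle sums for the slanted edges, the system is over-determined at the east end: the transported type-$1$ label must also be matched, and you would owe a separate consistency argument (and symmetrically, leaving that edge genuinely free would make the count exceed $D$). This is a genuine gap in the proposed induction rather than a different method; the repair is exactly the paper's band decomposition, after which uniqueness, existence and the integer-affine dependence follow step by step, with no need for the dimension count or the redundancy analysis via \eqref{eq:boundary_hexagon_condition} that you mention and then set aside.
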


\begin{figure}[H]
    \centering
    \includegraphics{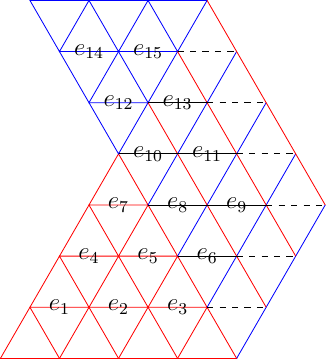}
    \caption{The region $I$ for $n = 7$ and $d=3$. $I_0 = \{e_i, 1 \leq i \leq 15=D \}$. Dashed edges are edges of type $0$ in $I \setminus I_0$.}
    \label{fig:region_I}
\end{figure}

\begin{proof}
    Let $L: \partial E_{n,d}\cup I_0 \rightarrow \frac{1}{N} \Z$ be a function satisfying the boundary condition $(\lambda, \mu, \nu, N)$ as in \eqref{eq:boundary_value_from_triangle_to_hexagon}. We will show that $L$ can be extended to a quasi label $E_{n,d}$ in a unique way. For any edge of type $2$ part of an rigid lozenge, there exists an edge $e_{\partial}(e) \in \partial E_{n,d}$ of the same type obtained by translation of $e$ by a multiple of $\ed^{i\frac{2\pi}{3}}$. Likewise, for any edge of type $1$ part of an rigid lozenge, there exists an edge $e_{\partial}(e) \in \partial E_{n,d}$ of the same type obtained by translation of $e$ by a multiple of $\ed^{i\frac{\pi}{3}}$. Assign $L(e) = L(e_{\partial}(e))$ for each such edge $e$. By the equality condition on opposite edges of rigid lozenges, any quasi label map has the same values on these edges. \\
    \\
    It remains to extend $L$ to edges $e \in I$. The values of $\partial I$ are already determined uniquely by the boundary conditions. Set $L(e_i) = z_i$ for $1 \leq i \leq D$. We call a band the following configuration of adjacent faces where the west-most triangular face has both its edges of type $0$ and $2$ labeled, the east-most triangular face has its type $1$ edge labeled and all other faces in between have their type $0$ edge labeled. 

    \begin{figure}[H]
        \centering
        \includegraphics{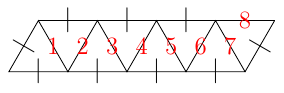}
        \caption{A band of size four. Marked edges are the already labeled edges. The labels of other edges are determined in the order of the red numbers by face summation constraints.}
        \label{fig:band}
    \end{figure}
    \noindent
    We claim that there in a unique labeling of the edges in the band such that the face summation constraints hold. The west-most face of the band has two of its three edges labeled so that the third label is determined uniquely. The south pointing triangular face east to it has two out of three edges labeled so that the third one is also fixed. By inductively propagating east, one labels the edges of the band. Note that the label of the last east-most edge of type $0$ is determined. This is why we do not require to fix the values of type $0$ edges in $I \setminus I_0$. \\
    \\
    The previous paragraph shows that one can extend $L$ to all edges in the south most band of $I$. Notice then that the region above is also a band. By induction, one extends $L$ to the $n-d$ south most bands of $I$. The remaining region consists of bands turned upside down which are also uniquely determined by the same reasoning. The resulting map $L$ on $E_{n,d}$ satisfies the face summation constraints so that $L \in \widetilde{L}^{C_0}(\lambda, \mu, \nu, N)$. Since the band completions are unique at each step, the labels of any other quasi label map $L' \in \widetilde{L}^{C_0}(\lambda, \mu, \nu, N)$ with same values on $I_0$ would agree with $L$.
\end{proof}

\begin{definition}[Label map associated to edge coordinates]
\label{lem:def_phi_C}
    Define $\Phi^{C_0}(z) \in L^{C_0}(\lambda, \mu, \nu)$ to be the unique label map constructed in Lemma \ref{lem:unique_label_map_c0} from specifying edge coordinates $z \in \left( \frac{1}{N}  \Z \right)^D$ on $I_0$. Let $C: E_{n,d} \rightarrow \{0,1,3,m \}$ be any color map and let $D = (n-1)(n-2)/2$. Define the following map
    \begin{align*}
        \Phi^C: \hspace{1,5cm}\left( \frac{1}{N}  \Z \right)^D & \rightarrow L^C(\lambda, \mu, \nu, N) \\
        z = (z_1, \dots, z_D) & \mapsto \Phi^C(z) = Rot[C_0 \rightarrow C](\Phi^{C_0}(z)).
    \end{align*}
\end{definition}

\begin{proposition}[Quasi hive structure]
\label{prop:quasi_hive_struct}
    The map $\Phi^C$ of Definition \ref{lem:def_phi_C} is bijective. Moreover, for any $z \in  \left( \frac{1}{N}  \Z \right)^D$ and edge $e \in E_{n,d}$ , $\Phi^C(z)(e)$ is an affine combination of $(z_1, \dots, z_D, \lambda, \mu, \nu, \frac{1}{N})$ with integer coefficients.
\end{proposition}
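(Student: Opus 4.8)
The plan is to deduce Proposition \ref{prop:quasi_hive_struct} by assembling the results already established in this section, since by Definition \ref{lem:def_phi_C} the map $\Phi^C$ factors as $\Phi^C = Rot[C_0\to C]\circ\Phi^{C_0}$, a composition of an explicitly built bijection with a composite of hexagon rotations.

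First I would check that $\Phi^{C_0}\colon (1/N\,\Z)^D \to \widetilde{L}^{C_0}(\lambda,\mu,\nu,N)$ is a bijection: surjectivity and well-definedness are exactly Lemma \ref{lem:unique_label_map_c0}, while injectivity is immediate from $\Phi^{C_0}(z)(e_i)=z_i$, which recovers $z$ from its image. That same lemma gives that every value $\Phi^{C_0}(z)(e)$ is an affine combination with integer coefficients of $(z_1,\dots,z_D,\lambda,\mu,\nu,1/N)$. Next, since $C$ is regular, Proposition \ref{prop:reduction_to_simple_color_map} provides a finite chain $C_0=C^{(0)},C^{(1)},\dots,C^{(r)}=C$ of regular color maps in which consecutive ones differ by a single ABC--hexagon rotation, and $Rot[C_0\to C]$ is the composite $Rot[C^{(r-1)}\to C^{(r)}]\circ\cdots\circ Rot[C^{(0)}\to C^{(1)}]$ of the maps of Definition \ref{def:rotation_map}. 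By Lemma \ref{lem:rotation_linear} each factor is an affine bijection with coefficients in $\Z[\lambda_i,\mu_i,\nu_i,1/N]$, with inverse again of this form. A finite composite of such maps is again an affine bijection with coefficients in $\Z[\lambda_i,\mu_i,\nu_i,1/N]$: bijections compose to bijections, and matrices and translation vectors over a commutative ring are closed under products and sums. Hence $Rot[C_0\to C]$ is an affine bijection $\widetilde{L}^{C_0}(\lambda,\mu,\nu,N)\to\widetilde{L}^{C}(\lambda,\mu,\nu,N)$ whose value on any edge $e$ is an integer-coefficient affine function of the (fixed, explicitly given) boundary values of $E_{n,d}$, translated by a constant in $1/N\,\Z$.

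Combining the two factors, $\Phi^C$ is a composition of bijections, hence a bijection; and $\Phi^C(z)(e)$ is obtained from the values $\Phi^{C_0}(z)(f)$ by the integer-coefficient affine substitutions of $Rot[C_0\to C]$, so it is again an affine combination of $(z_1,\dots,z_D,\lambda,\mu,\nu,1/N)$ with integer coefficients, as claimed. There is no genuine obstacle at this stage: the real content has already been carried out in Propositions \ref{prop:gash_reduction}--\ref{prop:reduction_to_simple_color_map} and Lemmas \ref{lem:unique_label_map_c0}--\ref{lem:rotation_linear}. The only points to watch are that regularity of $C$ is what allows Proposition \ref{prop:reduction_to_simple_color_map} to be invoked, and that the rotation path from $C_0$ to $C$ (hence $Rot[C_0\to C]$ and $\Phi^C$) is not canonical, so the assertion is understood for the map attached to whichever path was fixed in Definition \ref{lem:def_phi_C}; the argument above is insensitive to that choice.
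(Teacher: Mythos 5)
Your proof is correct and follows essentially the same route as the paper's: factor $\Phi^C$ as $Rot[C_0\to C]\circ\Phi^{C_0}$, invoke Lemma \ref{lem:unique_label_map_c0} for the first factor and Lemma \ref{lem:rotation_linear} (via Proposition \ref{prop:reduction_to_simple_color_map}) for the second, and note that integer-coefficient affine bijections are closed under composition. You supply somewhat more detail than the paper's terse two-sentence argument, and your remark about the non-canonicity of the rotation path is a sensible precaution, but there is no substantive difference in method.
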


\begin{proof}
    The map $\Phi^C$ is the composition of two bijections : $z \mapsto \Phi^{C_0}(z) \in L^{C_0}(\lambda, \mu, \nu, N)$ and $L \mapsto Rot[C_0 \rightarrow C](L)$ which are both affine in $(z_1, \dots, z_D, \lambda, \mu, \nu, \frac{1}{N})$ by Lemma \ref{lem:rotation_linear} and Lemma \ref{lem:unique_label_map_c0}. Its inverse is given by $(\Phi^C)^{-1}(L) = Rot[C_0 \rightarrow C](L)_{\vert I_0}$ where to a label map $L \in L^{C_0}(\lambda, \mu, \nu, N)$, $L_{\vert I_0} = (L(e_1), \dots, L(e_D))$ are the labels of the edges in $I_0$.

\end{proof}

\noindent
So far we have defined the maps $Rot[C \rightarrow C']$ and $\Phi^C$ from $\widetilde{L}^C(\lambda, \mu, \nu, N) \rightarrow \widetilde{L}^{C'}(\lambda, \mu, \nu, N)$ and $\left( \frac{1}{N}  \Z \right)^D \rightarrow \widetilde{L}^C(\lambda, \mu, \nu, N)$ respectively. We will now extend their definitions to quasi hives.
\begin{definition}[Extension to dual hives]
    Let $C, C'$ be two color maps. We extend the maps $Rot[C \rightarrow C']$ and $\Phi^C$ of Definitions \ref{def:rotation_map} and \ref{lem:def_phi_C} to quasi hives by
    \begin{align}
        Rot[C \rightarrow C']: & \ \widetilde{H}^C(\lambda, \mu, \nu, N) \rightarrow \widetilde{H}^{C'}(\lambda, \mu, \nu, N) \\
        & \ H = (C, L) \mapsto Rot[C \rightarrow C'](H) = (C', Rot[C \rightarrow C'](L)),  
    \end{align}
    and 
    \begin{align}
        \Phi^C: \left( \frac{1}{N}  \Z \right)^D & \rightarrow \widetilde{H}^C(\lambda, \mu, \nu, N) \\
        z = (z_1, \dots, z_D) & \mapsto (C, \Phi^C(z)).
    \end{align}
\end{definition}

\section{Convergence to a volume of hives}\label{Sec:convergence}

Since $\diff \prob[\gamma+2r | \alpha+r, \beta+r]=\diff \prob[\gamma | \alpha, \beta]$ for $r>0$, assume in this section without loss of generality that $\alpha,\beta,\gamma \in \mathcal{H}_{reg}$ are such that $\alpha_n,\beta_n>0$ and $\gamma_1 < 1$. In particular, if $\lambda_N,\mu_N,\nu_N$ are such that $\frac{1}{N}\lambda_N\rightarrow \alpha, \frac{1}{N}\mu_N\rightarrow \beta, \frac{1}{N}\nu_N\rightarrow \gamma$, then $\min( (\lambda_N)_n, (\mu_N)_n, N-n-(\nu_N)_1 ) > d+1$ for $N$ large enough. Hence, we will assume throughout this section that the hives with boundary $(\lambda_N,\mu_N,\nu_N)$ are regular in the sense of \eqref{eq:boundary_value_from_triangle_to_hexagon}, see Figure \ref{fig:from_tri_to_hexa}.

\subsection{Limit dual hives}

This section introduces limit dual hives which will be linked to the limit of quantum Littlewood-Richardson coefficients of Theorem \ref{th:density_limit_quantum_coef}.

\begin{definition}[Limit dual hive]
\label{def:limit_dual_hive}
    For $\alpha, \beta, \gamma \in (\R_{\geq 0})^3$, the limit dual hive $H(\alpha, \beta, \gamma, \infty)$ is the set of pairs $(C, L)$ on the hexagon edges $E_{n,d}$ such that :
    \begin{enumerate}
        \item $C: E_{n,d} \rightarrow \{ 0, 1, 3, m \}$ is a color map,
        \item $L: E_{n,d} \rightarrow \R_{\geq 0} $ is the label map satisfying 
        \begin{enumerate}
            \item $L(e_1) + L(e_2) + L(e_3) = 1 $ for every triangular face of $F_k$,
            \item if $e,e'$ are edges of same type on the boundary of a same lozenge $f$,
            \begin{enumerate}
                \item $L(e)=L(e')$ if the middle edge of $f$ is colored $m$,
                \item $L(e)\geq L(e')$ if $h(e)> h(e')$.
            \end{enumerate}
            \item The values of $L$ on $\partial E_{n,d}$ are given by $(\alpha, \beta, \gamma)$ so that, sorted in decreasing height of edges, see Figure \ref{fig:boundary_limit_hive} below.
            \begin{align*}
                \ell^{(0, 1)} &= (1 - \alpha_d, \dots, 1 - \alpha_1),
                &\ell^{(2, 2)} &= (\alpha_{d+1}, \dots, \alpha_{n}) \\
                \ell^{(2, 0)} &= (1 - \beta_d, \dots, 1 - \beta_1),
                &\ell^{(1, 1)} &= (\beta_{n}, \dots, \beta_{d+1}) \\
                \ell^{(1, 2)} &= (\gamma_n, \dots, \gamma_{n-d+1}), 
                &\ell^{(2, 2)} &= (1-\gamma_{n-d}, \dots, 1- \gamma_{1}).
            \end{align*}
        \end{enumerate}
    \end{enumerate}
\end{definition}

\begin{figure}[H]
    \centering
    \includegraphics[scale=0.75]{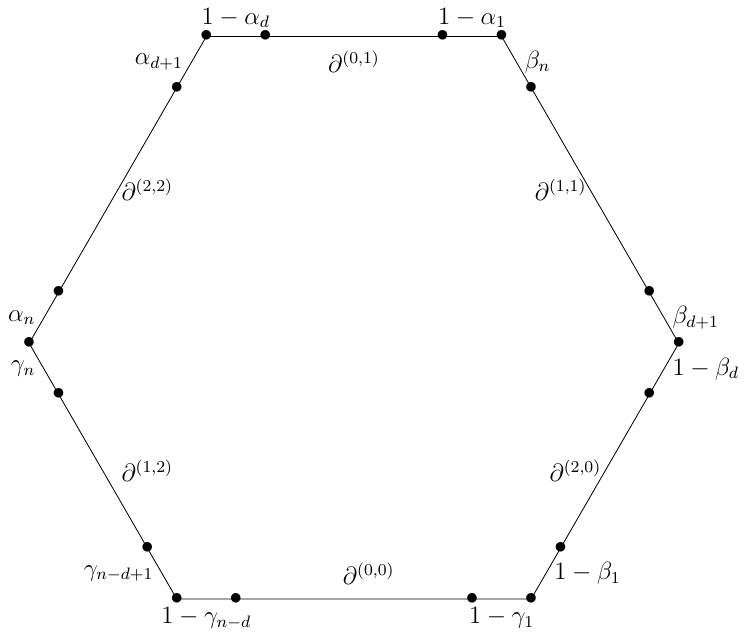}
    \caption{Boundary labels for limit hives in $H(\alpha, \beta, \gamma, \infty)$.}
    \label{fig:boundary_limit_hive}
\end{figure}

\noindent
As in the discrete case, if $C$ is a color map, we denote by $H^C(\lambda, \mu, \nu, \infty) \subset H(\lambda, \mu, \nu, \infty)$ the subset of limit dual hives of having color map $C$. As in the previous section, we denote by $\widetilde{H}(\lambda, \mu, \nu, \infty)$ the set of pairs $(C, L)$ as above where we remove the inequality conditions $(2.b.ii)$ on $L$.
\begin{remark}[Maps $\Phi^C$ and $Rot$ on limit dual hives]
    Note that Lemmas \ref{lem:boundary_values_det_interior}, \ref{lem:rotation_linear} and \ref{lem:unique_label_map_c0} hold for limit quasi dual hives $H = (C, L) \in \widetilde{H}(\lambda, \mu, \nu, \infty)$ extending $z \in \left( \frac{1}{N} \Z \right)^D$ to $z \in \R^D$. Using the same construction as in Section \ref{sec:color_swap}, we define $Rot[C \rightarrow C']: \widetilde{H}^C(\lambda, \mu, \nu, \infty) \rightarrow \widetilde{H}^{C'}(\lambda, \mu, \nu, \infty)$ and $\Phi^C: \R^D \rightarrow \widetilde{H}^C(\lambda, \mu, \nu, \infty)$ which are affine bijections with coefficients in $\mathbb{Z}[\alpha,\beta,\gamma]$ obtained by setting $\frac{1}{N}=0$.
\end{remark}

\subsection{Convergence to a volume}

The goal of this part is to prove Proposition \ref{prop:convergence_volume_dual} which expresses the limit quantum cohomology coefficients as the volume involving limit dual hives. The proof relies on Lemma \ref{lem:pt_convergence_indicator} and Lemma \ref{lem:bound_indicator} below.

\begin{proposition}[Convergence to volume of dual hives]
\label{prop:convergence_volume_dual}

    \begin{equation}
        \lim_{n \rightarrow \infty} N^{-D} c_{ \lambda_N, \mu_N}^{\nu_N, d} = \sum_{C} Vol \left( u \in \R^D, \Phi^C[u]\in H^C(\alpha, \beta, \gamma, \infty) \right).
    \end{equation} 
\end{proposition}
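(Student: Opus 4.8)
\textbf{Proof strategy for Proposition \ref{prop:convergence_volume_dual}.}

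The plan is to combine the discrete counting results already established with a standard Ehrhart/Riemann-sum argument. By Corollary \ref{cor:qLR_dual_hive}, $c_{\lambda_N,\mu_N}^{\nu_N,d} = \#H(\lambda_N,\mu_N,\nu_N,N)$, and by the color-map reduction of Proposition \ref{prop:reduction_to_simple_color_map} together with the quasi-hive parametrization of Proposition \ref{prop:quasi_hive_struct}, every dual hive decomposes uniquely according to its color map $C$, and within each color class the label maps are parametrized by the coordinates $z \in (\frac{1}{N}\Z)^D$ on $I_0$ via the affine bijection $\Phi^C$. Thus I would first write
\begin{equation*}
  N^{-D} c_{\lambda_N,\mu_N}^{\nu_N,d} = N^{-D}\sum_{C} \#\left\{ z \in (\tfrac{1}{N}\Z)^D : \Phi^C(z) \in H^C(\lambda_N,\mu_N,\nu_N,N) \right\},
\end{equation*}
where the sum over $C$ is finite (the number of regular color maps is bounded independently of $N$). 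For each fixed $C$, the condition $\Phi^C(z) \in H^C$ is a system of the inequality constraints of Definition \ref{def:two_col_hives} on the non-rigid lozenges; since $\Phi^C(z)(e)$ is by Proposition \ref{prop:quasi_hive_struct} an affine function of $(z,\lambda_N/N,\mu_N/N,\nu_N/N,1/N)$ with integer coefficients, rescaling $u = z$ (already of the form $\frac{1}{N}\Z^D$) exhibits $\#\{z\}$ as the number of lattice points $\frac{1}{N}\Z^D$ inside a rational polytope $P^C_N \subset \R^D$ whose defining affine inequalities converge coefficient-wise, as $N\to\infty$, to those defining $\{u \in \R^D : \Phi^C_\infty(u) \in H^C(\alpha,\beta,\gamma,\infty)\}$ (the limit obtained by setting $1/N = 0$ and using $\lambda_N/N \to \alpha$, etc.).

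Next I would invoke the pointwise convergence and domination lemmas referenced in the statement (Lemma \ref{lem:pt_convergence_indicator} and Lemma \ref{lem:bound_indicator}): the former says that for Lebesgue-a.e.\ $u \in \R^D$ the rescaled indicator $\mathbf{1}[\Phi^C(u/N \text{ snapped to } \frac{1}{N}\Z^D)\in H^C_N] \to \mathbf{1}[\Phi^C_\infty(u)\in H^C_\infty]$, which holds because the boundary of the limit polytope has measure zero and the perturbed constraints converge; the latter provides an integrable dominating function (e.g.\ the indicator of a fixed compact box containing all $P^C_N$ for $N$ large, which exists because the boundary data stay bounded and the constraints are non-degenerate enough to force compactness in the regular-boundary regime). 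Then
\begin{equation*}
  N^{-D} \#\left\{ z \in (\tfrac{1}{N}\Z)^D : \Phi^C(z) \in H^C_N \right\} = \int_{\R^D} \mathbf{1}\!\left[\Phi^C(z_N(u)) \in H^C_N\right] du + o(1),
\end{equation*}
where $z_N(u)$ is $u$ rounded to $\frac{1}{N}\Z^D$, and dominated convergence yields the limit $\mathrm{Vol}(u\in\R^D : \Phi^C_\infty(u) \in H^C(\alpha,\beta,\gamma,\infty))$. Summing over the finitely many color maps $C$ and using linearity of the limit gives the claimed formula.

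The main obstacle I anticipate is the a.e.\ pointwise convergence step, i.e.\ controlling that the limit polytope $\{u : \Phi^C_\infty(u)\in H^C_\infty\}$ has negligible boundary and that no mass escapes to infinity: one must check that the regularity hypothesis $\alpha,\beta,\gamma\in\mathcal H_{reg}$ (with $\alpha_n,\beta_n>0$, $\gamma_1<1$, as arranged at the start of Section \ref{Sec:convergence}) makes the limiting inequality system have nonempty interior equal to the interior of a genuine (bounded) polytope, so that the discrete counts are not corrupted by lower-dimensional faces appearing in the limit — this is precisely the phenomenon that forced the passage from Berenstein--Zelevinsky polytopes to hives in \cite{Knutson_1999}, and here it is resolved by the color-swap construction of Section \ref{sec:color_swap} guaranteeing that $\Phi^C$ is an honest affine isomorphism onto $\widetilde L^C$. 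The uniform domination is comparatively routine once compactness of the family $\{P^C_N\}$ is established from the bounded, convergent boundary labels.
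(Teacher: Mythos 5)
Your proposal is correct and follows essentially the same route as the paper's proof: Corollary \ref{cor:qLR_dual_hive} to identify $c_{\lambda_N,\mu_N}^{\nu_N,d}$ with the dual-hive count, the decomposition by color map $C$ together with the affine parametrization $\Phi^C$ from Proposition \ref{prop:quasi_hive_struct} to rewrite the count as a lattice-point sum (equivalently, an integral of step-function indicators as in \eqref{eq:discrete_volume}), and then the dominated convergence theorem using Lemma \ref{lem:pt_convergence_indicator} for a.e.\ pointwise convergence and Lemma \ref{lem:bound_indicator} for the compact dominating set.
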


\noindent
Recall from Corollary \ref{cor:qLR_dual_hive} that 

\begin{align}
        N^{-D} c_{ \lambda_N, \mu_N}^{\nu_N, d} &= N^{-D} | H(\lambda_N, \mu_N, \nu_N, N) | = N^{-D} | \widetilde{H}(\lambda_N, \mu_N, \nu_N, N) \cap Ineq(n) | \\
        &= \sum_{C} \int_{\R^D} \sum_{z =(z_1, \dots, z_d) \in (\frac{1}{N} \Z)^D : \Phi^C(z) \in \widetilde{H}(\lambda_N, \mu_N, \nu_N, N) \cap Ineq(n) } \mathbf{1}(u)_{z + [-\frac{1}{N}, \frac{1}{N}[^D} du.
        \label{eq:discrete_volume}
\end{align}
where $Ineq(n)$ are the inequality constraints given in $(2b)$ and $(2c)$ of Definition \ref{def:two_col_hives}.

\begin{lemma}[Pointwise convergence]
\label{lem:pt_convergence_indicator}
    For any color map $C$ and $N \geq 1$, define 
    
    \begin{align*}
        f_N^C :  \R^D &\rightarrow \R \\
         u &\mapsto \sum_{z =(z_1, \dots, z_d) \in (\frac{1}{N} \Z)^D : \Phi^C(z) \in \widetilde{H}(\lambda_N, \mu_N, \nu_N, N) \cap Ineq(n) } \mathbf{1}(u)_{ \{ z + [-\frac{1}{N}, \frac{1}{N}[^D \}}.
    \end{align*}
    Recall that $\frac{1}{N} \lambda_N = \alpha + o(1)$, $\frac{1}{N} \mu_N = \beta + o(1)$ and $\frac{1}{N} \nu_N = \gamma + o(1)$ as $N \rightarrow + \infty$.
    Then, for almost all $u \in \R^D$ with respect to the Lebesgue measure:
    \begin{equation}
    \label{eq:convergence_indicator}
         \lim_{N \rightarrow \infty} f_N^C(u) = \mathbf{1}(u)_{ \{ \Phi^C[u] \in H^C(\alpha, \beta, \gamma, \infty) \} }.
    \end{equation}
\end{lemma}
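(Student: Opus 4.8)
\textbf{Plan of proof of Lemma \ref{lem:pt_convergence_indicator}.} The statement is a pointwise convergence of indicator-type functions, and the natural strategy is to compare, for fixed $u$, the finite sum $f_N^C(u)$ with the single indicator on the right-hand side. The first observation is that $f_N^C(u)$ counts exactly one lattice point: the map $\Phi^C$ is an affine bijection from $(\frac{1}{N}\Z)^D$ onto $\widetilde{L}^C(\lambda_N,\mu_N,\nu_N,N)$ by Proposition \ref{prop:quasi_hive_struct}, so for each $u\in\R^D$ there is at most one $z\in(\frac{1}{N}\Z)^D$ with $u\in z+[-\frac1N,\frac1N[^D$, namely the componentwise rounding $z_N(u)$ of $u$ to the grid $(\frac1N\Z)^D$, and $f_N^C(u)$ is the indicator that this particular $z_N(u)$ satisfies $\Phi^C(z_N(u))\in\widetilde H(\lambda_N,\mu_N,\nu_N,N)\cap Ineq(n)$. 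As $N\to\infty$ we have $z_N(u)\to u$, and since $\frac1N\lambda_N\to\alpha$, $\frac1N\mu_N\to\beta$, $\frac1N\nu_N\to\gamma$, and $\Phi^C$ depends affinely on $(z,\lambda/N,\mu/N,\nu/N,1/N)$ with integer coefficients (Proposition \ref{prop:quasi_hive_struct}, and its limit form from the preceding remark), the label map $\Phi^C(z_N(u))$, rescaled by $1/N$, converges edge by edge to the limit label map $\Phi^C[u]$ of Definition \ref{def:limit_dual_hive}. So the whole question reduces to: does the membership in the discrete feasibility set $\widetilde H\cap Ineq(n)$ converge to membership in the limit feasibility set $H^C(\alpha,\beta,\gamma,\infty)$?

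\textbf{Key steps.} First I would make precise that $z\mapsto\Phi^C(z)$ and the limit map $z\mapsto\Phi^C[z]$ fit into a commuting family: writing $L_N=\Phi^C(z_N(u))$ we have $\frac1N L_N(e)\to\Phi^C[u](e)$ uniformly over the finitely many edges $e\in E_{n,d}$, because the coefficients of the affine expression are bounded and $1/N\to0$, $z_N(u)/1\to u$ in the rescaled variables. (Here one must be careful: the natural scaling in Section \ref{sec:color_swap} already normalised edge labels by $1/N$, so $L_N$ itself is $O(1)$; I would just track that the affine coefficients of $\Phi^C$ in $(z,\lambda/N,\mu/N,\nu/N,1/N)$ converge to those of $\Phi^C[\cdot]$ obtained by setting $1/N=0$, which is exactly the content of the remark "Maps $\Phi^C$ and $Rot$ on limit dual hives".) Second, the boundary and face-summation (equality) constraints are continuous: a finite set of affine equalities in the labels, with the discrete ones (e.g. $L(e^0)+L(e^1)+L(e^2)=1-j/N$) converging to the limit ones ($=1$), and likewise rigid-lozenge equalities $L(e)=L(e')$ are identical in both. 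So $\Phi^C(z_N(u))\in\widetilde H(\lambda_N,\mu_N,\nu_N,N)$ automatically for all $N$ (it is in $\widetilde L^C$ by construction) and the limit $\Phi^C[u]$ satisfies all equality constraints of the limit dual hive. The only genuine content is the inequality constraints $Ineq(n)$ from Definition \ref{def:two_col_hives}(2b)(2c). For a \emph{strict} inequality $L(e)>L(e')$ in the limit, the corresponding discrete inequality (which is $L(e)>L(e')$, or $L(e)\geq L(e')$, or with an additive $1/N$ shift) holds for $N$ large by continuity and the reverse is also stable; for a \emph{non-strict} limit inequality $L(e)\geq L(e')$ matched by a discrete $L(e)\geq L(e')$, again continuity gives both directions — \emph{except} on the measure-zero set where $\Phi^C[u]$ makes one of these inequalities an equality. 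This is why the statement is "for almost all $u$": the bad set is $\bigcup$ over the finitely many inequality pairs of an affine hyperplane in $u$, hence Lebesgue-null.

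\textbf{Main obstacle.} The delicate point, and the step I expect to demand the most care, is reconciling the \emph{mixed strict/non-strict} structure of the constraints in Definition \ref{def:two_col_hives}(2) with the clean strict-or-non-strict dichotomy of the limit in Definition \ref{def:limit_dual_hive}(2b): in the discrete hive some lozenge inequalities are strict ($2.c$), some are $\geq$ ($2.b$), and some carry an explicit $+1/N$ gap, whereas in the limit all of these collapse to either $L(e)=L(e')$ (rigid, $m$-colored middle edge) or $L(e)\geq L(e')$ (by $h(e)>h(e')$). I would argue that (i) a discrete strict inequality $L_N(e)>L_N(e')$ with $\frac1N$-spaced values is equivalent to $L_N(e)\geq L_N(e')+\frac1N$, which in the limit becomes $\Phi^C[u](e)\geq\Phi^C[u](e')$ — consistent with $(2.b.ii)$ — and conversely if $\Phi^C[u](e)>\Phi^C[u](e')$ strictly then $L_N(e)\geq L_N(e')+\frac1N$ holds eventually, while on the null set $\{\Phi^C[u](e)=\Phi^C[u](e')\}$ we simply discard $u$; and (ii) the rigid-lozenge equalities are imposed identically in $\widetilde H$, $\widetilde H(\ldots,N)$ and in the limit, so they cause no trouble. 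Assembling: off the null set $\mathcal N$, for $N$ large the finite conjunction of discrete constraints defining $\widetilde H(\lambda_N,\mu_N,\nu_N,N)\cap Ineq(n)$ is satisfied by $\Phi^C(z_N(u))$ if and only if $\Phi^C[u]$ satisfies the limit constraints of $H^C(\alpha,\beta,\gamma,\infty)$, which is \eqref{eq:convergence_indicator}. A final bookkeeping remark is that one must check the equality constraints that the discrete quasi-hive automatically satisfies (being in the image of $\Phi^C$) are consistent with the limit ones — i.e. that the compatibility condition \eqref{eq:boundary_hexagon_condition} and its analogues pass to the limit — which again follows from the affine dependence on $1/N$.
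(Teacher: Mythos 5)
Your proposal is correct and follows essentially the same route as the paper: both identify $f_N^C(u)$ as the indicator that the rounded lattice point $z_N(u)=\lfloor Nu\rfloor/N$ lands in the discrete feasibility set, use the affine dependence of $\Phi^C$ on $(z,\lambda/N,\mu/N,\nu/N,1/N)$ to get edgewise convergence $\Phi^C(z_N(u))\to\Phi^C[u]$, note that the equality (quasi-hive) constraints hold automatically, and then argue that the finitely many inequality constraints transfer by continuity off a Lebesgue-null union of affine hyperplanes — exactly the set where $\Phi^C[u]$ sits on the boundary of $H^C(\alpha,\beta,\gamma,\infty)$, which the paper phrases as restricting to $u$ with $\Phi^C[u]$ in the interior. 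Your explicit discussion of the strict-vs.-nonstrict dichotomy (discrete $L(e)>L(e')$ in $\tfrac1N\mathbb{Z}$ becoming $L(e)\geq L(e')$ in the limit) is the same point the paper handles implicitly via the interiority gap $\epsilon_{e,e'}>0$; no genuinely different lemma or decomposition is used.
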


\begin{remark}
    Note that a priori, $\Phi^C[u] \in \widetilde{H}^C(\alpha, \beta, \gamma, \infty)$ is a label map such that $(C, L)$ is a limit hive of Definition \ref{def:limit_dual_hive} without inequality constraints $(2c)$ and (2d). Here, the right hand side is more restrictive as it requires that $ (C, \Phi^C[u]) \in H^C(\alpha, \beta, \gamma, \infty) = \widetilde{H}^C(\alpha, \beta, \gamma, \infty) \cap Ineq(n)$.
\end{remark}

\begin{proof}
    Take $u$ such that $\Phi^C[u]$ in the interior of $H^C(\alpha, \beta, \gamma, \infty)$. We want to show that $f_N^C(u) = 1$ for $N \geq N_0$ which means that one can find a sequence $(z^{(N)}, N \geq N_0) = ( (z_1^{(N)}, \dots, z_D^{(N)}), N \geq N_0)$ such that for each $N\geq N_0$ : $\Phi^C(z^{(N)})\in\widetilde{H}(\lambda_N, \mu_N, \nu_N, N) \cap Ineq(n)$ and  $u \in z^{(N)} +[- 1/N,1/N[$. Let us take the label map
  \begin{equation}
        L^{(N)}: e \mapsto \Phi^C_N(\lfloor Nu \rfloor/N)(e) \in \frac{1}{N} \Z.
    \end{equation}
    associated to $z^{(N)} = \lfloor Nu \rfloor /N$ : $L^{(N)} = \Phi^C_N(z^{(N)})$ where $\Phi^C_N$ is associated to the boundaries $(\lambda_N, \mu_N, \nu_N)$. One has $|z^{(N)} - u | < 1/N$ by construction. We need to check that $L^{(N)} \in \widetilde{H}(\lambda_N, \mu_N, \nu_N, N) \cap Ineq(n)$. \\
    \\
    By definition, $\Phi^C_N$ is a quasi label map with boundary conditions $(\lambda_N, \mu_N, \nu_N)$ so that $L^{(N)} \in \widetilde{H}(\lambda_N, \mu_N, \nu_N, N) $.
    Let us check the inequality constraints of $Ineq(n)$. Take any pair of edges $(e, e')$ subject to an inequality. Since $\Phi^C[u] \in H^C(\alpha, \beta, \gamma, \infty)$ is in the interior, this equality is sharp for $\Phi^C[u]$ that is 

    \begin{equation}
        \exists \epsilon_{e, e'} >0 : \Phi^C[u](e) \leq \Phi^C[u](e') + \epsilon_{e, e'}. 
    \end{equation}

    \noindent
    Since $\lim_{N  \rightarrow +\infty} L^{(N)}(e) = \Phi^C[u](e)$ for any edge $e$, there exists $N_0(e, e')$ such that for $N \geq N_0(e, e')$:
    \begin{equation}
        L^{(N)}(e)  \leq L^{(N)}(e).
    \end{equation}
    Hence, $L^{(N)}$ satisfies all the inequality constraints for $N \geq N_0$, where $N_0$ is the largest of the thresholds $N_0(e, e')$ for $(e, e')$ related by an inequality constraint in Definition \ref{def:limit_dual_hive}. Therefore, 

    \begin{equation}
        \forall N \geq N_0: L^{(N)} \in \widetilde{H}(\lambda_N, \mu_N, \nu_N, N) \cap Ineq(n).
    \end{equation}
    so that $\lim_{N \rightarrow + \infty} f_N^C(u) = 1$ as desired. For $\Phi^C[u] \notin H^C(\alpha, \beta, \gamma, \infty)$, one of the inequalities in $(2.b.ii)$ of Definition \ref{def:limit_dual_hive} is violated, for all other conditions being satisfied by construction of $\Phi^C$. Let $(e, e')$ be a pair of edge such that $(2.b.ii)$ is not satisfied : $\Phi^C[u](e) < \Phi^C[u](e')$ while $h(e)>h(e')$ for some pair of edges of same type adjacent to a same lozenge. Using that $\lim_{N  \rightarrow +\infty} L^{(N)}(e) = \Phi^C[u](e)$, one has that for $N$ large enough $L^{(N)}(e) < L^{(N)}(e')$ so that $L^{(N)} \notin \widetilde{H}(\lambda_N, \mu_N, \nu_N, N) \cap Ineq(n)$. Hence, \eqref{eq:convergence_indicator} holds for almost all $u\in \mathbb{R}^D$ with respect to the Lebesgue measure.

\end{proof}

\begin{lemma}[Uniform bound]
\label{lem:bound_indicator}
    Let $f_N^C(u)$ be as in \eqref{eq:convergence_indicator}. Then, there exists a compact $K^{(C)} \subset \R^D$ such that for every $N \geq 1$,
    \begin{equation}
        |f_N^C(u)| \leq \mathbf{1}_{ \{ u \in K^{(C)} \} }.
    \end{equation}
\end{lemma}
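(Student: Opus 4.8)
\textbf{Proof plan for Lemma \ref{lem:bound_indicator}.}

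The plan is to show that the support of $f_N^C$ is contained in a fixed compact set, uniformly in $N$. Recall that $f_N^C(u) \neq 0$ forces the existence of some $z \in (\frac{1}{N}\Z)^D$ with $\Phi^C(z) \in \widetilde{H}(\lambda_N,\mu_N,\nu_N,N)\cap Ineq(n)$ and $u \in z + [-\tfrac1N,\tfrac1N[^D$. So it suffices to bound the set of possible values $\Phi^C(z)(e_i) = z_i$, $1\leq i\leq D$, over all $N$ and all such $z$, since $u$ then lies within distance $\sqrt D/N \leq \sqrt D$ of such a point.

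First I would recall that $\Phi^C(z) \in \widetilde{H}(\lambda_N,\mu_N,\nu_N,N)\cap Ineq(n)$ means precisely that $(C, L)$ with $L = \Phi^C(z)$ is a genuine discrete two-colored dual hive in $H(\lambda_N,\mu_N,\nu_N,N)$, viewed on the hexagon $E_{n,d}$ via the bijection of the previous section. Now the key point is that the boundary labels are controlled: dividing by $N$, the restriction of $NL$ to $\partial E_{n,d}$ equals the boundary values \eqref{eq:boundary_value_from_triangle_to_hexagon} built from $(\lambda_N,\mu_N,\nu_N)$, and by hypothesis $\frac1N\lambda_N\to\alpha$, $\frac1N\mu_N\to\beta$, $\frac1N\nu_N\to\gamma$, so these boundary values are uniformly bounded, say all lying in $[-R_0,R_0]$ for some $R_0$ depending only on $\alpha,\beta,\gamma$ (for $N$ large; the finitely many small $N$ contribute a compact set anyway, which can be absorbed into $K^{(C)}$). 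Then I would invoke the label conditions of Definition \ref{def:two_col_hives}: for any edge $e$ of type $\ell$ in $E_{n,d}$, Remark \ref{rem:strict_increasing_label} and the monotonicity conditions $(2.b)$, $(2.c)$ chain $L(e)$ between the labels of two boundary edges of the same type obtained by sliding $e$ in the $\xi^{\ell+1}$ direction to the two opposite sides of the hexagon (using the strip/lozenge structure), giving
\begin{equation*}
    \min_{f \in \partial E_{n,d}} L(f) \;\leq\; L(e) \;\leq\; \max_{f \in \partial E_{n,d}} L(f) \;\leq\; R_0.
\end{equation*}
Hence $|L(e)| \leq R_0$ for every $e\in E_{n,d}$, and in particular $|z_i| = |\Phi^C(z)(e_i)| \leq R_0$ for $1\leq i\leq D$.

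Therefore any $u$ with $f_N^C(u)\neq 0$ satisfies $\|u\|_\infty \leq R_0 + 1$, so one may take $K^{(C)} = [-R_0-1, R_0+1]^D$ (enlarged, if needed, by a compact set covering the finitely many exceptional small $N$), which is independent of $N$. Since $f_N^C$ is a sum of indicator functions of disjoint cubes $z + [-\tfrac1N,\tfrac1N[^D$ whose union over admissible $z$ has volume at most that of $[-R_0-\tfrac1N, R_0+\tfrac1N]^D$ and which are pairwise disjoint (distinct $z$'s are $\tfrac1N$-separated in each coordinate), in fact $f_N^C(u) \in \{0,1\}$ for a.e.\ $u$, giving $|f_N^C(u)| \leq \mathbf{1}_{\{u\in K^{(C)}\}}$ as claimed. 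The only mildly delicate step is the chaining argument producing the two-sided bound $L(e)$ by boundary labels; this is where one must be careful that the monotonicity of Definition \ref{def:two_col_hives} and Remark \ref{rem:strict_increasing_label} indeed propagate an edge of each type all the way to the boundary of the hexagon $R_{d,n}$ without obstruction, which follows from the fact that moving in the $\xi^{\ell+1}$ direction stays within $R_{d,n}$ until one hits $\partial E_{n,d}$.
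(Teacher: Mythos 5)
Your overall structure---bound the label values of a genuine dual hive and transfer that to a bound on the coordinates $z$---is the right idea, but there is a gap at the very end that is precisely the content of the paper's proof for $C\neq C_0$. You conclude from $|L(e)|\leq R_0$ for all $e$ that $|z_i|=|\Phi^C(z)(e_i)|\leq R_0$, but $\Phi^C(z)(e_i)=z_i$ only when $C=C_0$. By Definition \ref{lem:def_phi_C}, the coordinates satisfy $z_i=\Phi^{C_0}(z)(e_i)$, and $\Phi^C(z)=Rot[C_0\to C](\Phi^{C_0}(z))$, so for $C\neq C_0$ the edges $e_i$ do not carry the values $z_i$ in the label map $\Phi^C(z)$. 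What is missing is the reverse direction: from $L=\Phi^C(z)\in[0,1]^{E_{n,d}}$, apply the affine bijection $Rot[C\to C_0]$ (which by Lemma \ref{lem:rotation_linear} has integer linear part and constant terms bounded uniformly in $N$, using the convergence $\tfrac1N\lambda_N\to\alpha$, $\tfrac1N\mu_N\to\beta$, $\tfrac1N\nu_N\to\gamma$) to get $\Phi^{C_0}(z)=Rot[C\to C_0](L)$ in a fixed compact set, so that $z_i=\Phi^{C_0}(z)(e_i)$ is bounded independently of $N$. Without this $Rot$ step your argument only establishes the bound for $C=C_0$.

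A secondary remark: the chaining argument you invoke to bound $L(e)$ by boundary labels is more than needed, and whether ``sliding in the $\xi^{\ell+1}$ direction'' always connects $e$ monotonely to the boundary is not obviously guaranteed by Remark \ref{rem:strict_increasing_label} alone. The bound $L(e)\in[0,1]$ for genuine dual hives is immediate from the definitions: by Definition \ref{def:hexagonal_dual_hives}, $NL$ takes values in $\mathbb{N}$, and around each triangular face the three labels sum to $1-\tfrac1N$ or $1-\tfrac2N$; non-negativity then gives $0\leq L(e)\leq 1$ for every edge with no propagation through lozenges required. This is what the paper's proof relies on when it says the values are in $[0,1]^{E_{n,d}}$.
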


\begin{proof}[Proof of Lemma \ref{lem:bound_indicator}]
    If $C = C_0$, the values $z \in \Z^D $ such that $\Phi_N^{C_0}(z) \in \widetilde{H}^{C_0}(\lambda_N, \mu_N, \nu_N, N) \cap Ineq(n) $ are in $[0, 1]^D$ since $(z_1, \dots, z_D)$ are the values of $(\Phi_N^{C_0}(z)(e_1), \dots, \Phi_N^{C_0}(z)(e_D) )$ for some horizontal edges $(e_1, \dots, e_D) \in E_{n,d}$ which are in $[0, 1]$ by construction.
    \\
    If $C \neq C_0$ by definition 
    \begin{equation}
        \Phi^C(z) = Rot[C_0 \rightarrow C] (\Phi^{C_0}(z))
    \end{equation}
    where $\Phi^{C_0}(z)$ is the label map of the quasi hive with simple color map $C_0$ having horizontal edge labeled $z$. For $z \in \Z^D $ such that $\Phi_N^{C}(z) \in \widetilde{H}^{C}(\lambda_N, \mu_N, \nu_N, N) \cap Ineq(n) $, we know from $Ineq(n)$ that values $ \{ \Phi_N^{C}(z)(e), e \in E_{n,d} \}$ are in $[0, 1]^{E_{n,d}}$. Applying the affine hence continuous map $Rot[C\rightarrow C_0]$, we get that $Rot[C\rightarrow C_0]\Phi^C(z)=\Phi^{C_0}(z)\in Rot[C\rightarrow C_0]([0,1]^{E_{n,d}})$ which is compact. In particular, the labels $(z_i = \Phi^{C_0}(z)(e_i), 1 \leq i \leq D)$ of horizontal edges $e_i$ in $I$ are in compact sets hence bounded.


\end{proof}

\begin{proof}[Proof of Proposition \ref{prop:convergence_volume_dual}]

By Lemma \ref{lem:pt_convergence_indicator} and Lemma \ref{lem:bound_indicator}, using dominated convergence theorem in \eqref{eq:discrete_volume}:

\begin{equation}
    \lim_{n \rightarrow \infty} N^{-D} c_{ \lambda_N, \mu_N}^{\nu_N, d} = \sum_{C} Vol \left( u \in \R^D, \Phi^C[u]\in H^C(\alpha, \beta, \gamma, \infty) \right).
\end{equation}
    
\end{proof}

\subsection{Volume preserving map}

This subsection aims at proving that there is a volume preserving map between dual hives $H(\alpha, \beta, \gamma, \infty)$ and hives $P_{\alpha,\beta,\gamma}^g$ of Definition \ref{def:p_g_alpha_beta_gamma}. Refer to the notations of Section \ref{Sec:Notation_statement} for the hives and related notions. \\
\\
\noindent
Let $H = (C, L) \in \tilde{H}^C(\alpha, \beta, \gamma, \infty)$. We assign to $H$ an function $\Psi^C: R_{d, n} \rightarrow \R$ constructed as follows. Set $\Psi^C[L](v) = d$ where $v$ is the south-east vertex of $R_{d,n}$. For any other vertex $v \in R_{d, n}$ such that $e=(u, v) \in E_{n, d}$ and $\Psi^C(u)$ has been set, the value $\Psi^C(v)$ is given by 
\begin{equation}
\label{eq:Psi_values_edges}
    \Psi^C(v) =
    \begin{cases}
        \Psi^C(u) + L(e) &\text{ if $e$ is of type $1$ or $2$ } \\
        \Psi^C(u) + 1 - L(e) &\text{ if $e$ is of type $0$. }
    \end{cases} 
\end{equation}
See Figure \ref{fig:edge_to_vertices} for a picture of the recursive construction of $\Phi^C$ along edges.
    \begin{figure}[H]
        \centering
        \includegraphics{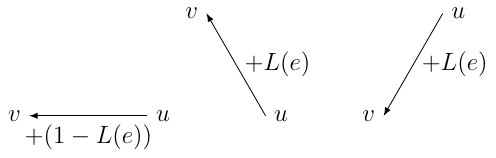}
        \caption{Values at vertices when traversing an edge $e = (u, v)$.}
        \label{fig:edge_to_vertices}
    \end{figure}

\begin{definition}[Dual hive to hive]
\label{def:dual_map}
    Let $C$ be a color map. Define 
    \begin{equation}
        S^C = \{ v_4 : l=(v_1, v_2, v_3, v_4) \text{ is a rigid lozenge} \} \subset R_{d,n}
    \end{equation}
    and 
    \begin{equation}
        \tilde{P}^C_{\alpha,\beta,\gamma}:=\{f: R_{d,n} \setminus S^C \rightarrow \mathbb{R},f_{\partial R_{d,n}} \text{ given by } \alpha,\beta,\gamma \}.
    \end{equation}
    Moreover, define
        \begin{align}
            \Psi^C: \ & \tilde{H}^C(\alpha, \beta, \gamma, \infty) \rightarrow \tilde{P}^C_{\alpha,\beta,\gamma} \\
            & (C, L) \longmapsto \Psi^C[L]
        \end{align}
    where $\Psi^C$ is given by \eqref{eq:Psi_values_edges} in the above construction.
\end{definition}

\noindent
Remark that the choice of $v_4$ and the coloring on the boundary ensures that $v_4$ is never on the boundary of $R_{d,n}$ in the above definition.
\noindent
That the map $\Psi^C$ is well defined is due to the face summation constraint $L(e_1) + L(e_2) = 1 - L(e_0)$ around every face $f \in F_k$ having edges $(e_0, e_1, e_2)$ of respective types $0, 1, 2$ for the label maps $L$ of dual hives in $\tilde{H}^C(\alpha, \beta, \gamma, \infty)$. Remark that $\Psi^C$ depends on $C$ only through its domain and target space.

\begin{remark}[Extension of functions of $\tilde{P}^C_{\alpha,\beta,\gamma}$ ]
\label{rem:extension_functions}
    Let $f \in \tilde{P}^C_{\alpha,\beta,\gamma}$. Then $f$ can be uniquely extended to a map $f: R_{d,n} \rightarrow \mathbb{R}$ by setting $f(v_4)=f(v_3)+f(v_1)-f(v_2)$ for any $v_4 \in S^C$.
\end{remark}

\noindent
Let $C: E_{n,d} \rightarrow \{0, 1, 3, m \}$ be a regular color map. Define 
\begin{align*}
    g[C]: R_{d, n} &\rightarrow \Z_3 \\
    v &\mapsto g[C](v)  
\end{align*}
where the value at vertex $v \in R_{d,n}$ is set as follows. If $v = A_0$ the south-east most vertex in $R_{d, n}$, set $g[C](v) = 0$. Orient the edges in $E_{n, d}$ around direct triangles clockwise and edges around reversed triangles counterclockwise. For any oriented edge $e = (u,v)$, set 
\begin{equation}
    g[C](v) = \begin{cases}
     g[C](u) + 1 & \text{ if } C((u,v)) = 1 \\
     g[C](u) + 2 & \text{ if } C((u,v)) = 0 \\
     g[C](u) & \text{ if } C((u,v)) \in \{ 3, m \} .
\end{cases} 
\end{equation}
\begin{proposition}[From color maps to regular labelings]
\label{prop:regular_label_color}
    The above map $C \mapsto g[C]$ is a bijection between color maps on $E_{n,d}$ and regular labelings on $R_{d, n}$. For any regular labeling $g$, its inverse is given by 
    \begin{align*}
        C[g]: E_{n, d} &\rightarrow  \{0, 1, 3, m \} \\
        e = (u,v) &\mapsto C[g](e)  
    \end{align*}
    where, if $w \in R_{d, n}$ denotes the third vertex so that $(u, v, w)$ is a direct triangular face, 
    \begin{equation}
        C[g](e)  = \begin{cases}
        1 & \text{ if } g(v) - g(u) = 1 \\
        0 & \text{ if } g(v) - g(u) = 2 \\
        3 & \text{ if } g(v) = g(u) \text{ and } g(w) = g(u) - 1 = g(v) - 1 \\
        m & \text{ if } g(v) = g(u) \text{ and } g(w) = g(u) + 1 = g(v) + 1.
    \end{cases} 
    \end{equation}
\end{proposition}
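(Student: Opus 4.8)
The plan is to construct the two maps, check that each lands in the asserted codomain, and then check they are mutually inverse; the substance is a purely local translation between the face--colouring rules of Definition~\ref{def:two_col_hives} and the $\mathbb{Z}_3$--rhombus rule of Definition~\ref{def:regular_labeling}. Throughout I would fix the edge orientation implicit in both constructions: an interior edge of $E_{n,d}$ is oriented clockwise around the direct face it bounds, equivalently counterclockwise around the reversed face it bounds, the two prescriptions agreeing on the shared edge; a boundary edge bounding a direct face is oriented the same way, and the boundary edges bounding no direct face simply carry their prescribed colour. With this convention an edge $e=(u,v)$ coloured $0$, $1$ or lying in $\{3,m\}$ contributes the increment $g(v)-g(u)$ equal to $2$, $1$ or $0$ in $\mathbb{Z}_3$ respectively, which is exactly the content of the $C[g]$ formula read backwards.

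\emph{$g[C]$ is a regular labelling.} As $R_{d,n}$ is simply connected, $g[C]$ is well defined once we set $g[C](A_0)=0$ provided the increments sum to $0$ around every triangular face; and the admissible clockwise colourings $(0,0,0),(1,1,1),(1,0,3),(0,1,m)$ have increment triples $(2,2,2),(1,1,1),(1,2,0),(2,1,0)$, each of sum $0$ in $\mathbb{Z}_3$. For the boundary values I would walk $\partial R_{d,n}$ from $A_0$: by \eqref{eq:boundary_value_from_triangle_to_hexagon} and Figure~\ref{fig:from_tri_to_hexa}, regularity of $C$ means each boundary word is $d$ ones, then $n-d$ zeros, then $d$ ones, and accumulating increments gives $g^A_i=n+i\,[3]$ and $g^B_i=g^C_i=i\,[3]$. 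For the rhombus rule, take a lozenge $\ell=(v^1,v^2,v^3,v^4)$ with middle edge $e=\{v^2,v^4\}$: if $C(e)\in\{0,1\}$ then $g[C](v^2)\neq g[C](v^4)$ and the implication is vacuous, while if $C(e)\in\{3,m\}$ then $g[C](v^2)=g[C](v^4)$, and since each of the two faces of $\ell$ has $e$ among its edges its colouring is a rotation of $(1,0,3)$ or of $(0,1,m)$, so on each face the two edges other than $e$ are coloured $0$ and $1$; tracking those increments around the two faces, each closing up to $0$, forces $\{g[C](v^1),g[C](v^3)\}=\{g[C](v^2)+1,g[C](v^2)+2\}$.

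\emph{$C[g]$ is a colour map.} Given a regular $g$ and an oriented edge $e=(u,v)$, let $w$ be the remaining vertex of the direct face bounded by $e$. The four cases of the formula are exhaustive and disjoint since $g(v)-g(u)\in\{0,1,2\}$: the values $1,2$ give cases $1,2$, and if $g(v)=g(u)$ then $e$ is the middle edge of a unique lozenge, and the rhombus rule forces its vertex $w$ opposite $e$ to satisfy $g(w)\in\{g(u)+1,g(u)-1\}$, i.e.\ case $3$ or $4$. To check the face condition of Definition~\ref{def:two_col_hives} one first notes that around no triangular face are all three $g$-labels equal --- an edge with equal endpoints is a lozenge middle edge whose opposite vertex, lying on the face, would be forbidden its label by the rhombus rule --- so the labels are either pairwise distinct, forcing increments $(1,1,1)$ or $(2,2,2)$ hence a $(1,1,1)$ or $(0,0,0)$ face, or exactly two coincide, giving a face with one edge of colour in $\{3,m\}$ and two of colours $0$ and $1$; in this last case the rule ``$g(w)=g(u)-1\mapsto 3$, $g(w)=g(u)+1\mapsto m$'' is precisely what makes the clockwise cyclic order of the colour triple equal $(1,0,3)$, respectively $(0,1,m)$.

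Finally, the identities $C[g[C]]=C$ and $g[C[g]]=g$ follow by comparing the two local rules edge by edge, both maps being determined by their increments and by the normalisation at $A_0$; this yields the bijection and shows the inverse is exactly $C[g]$. The step I expect to be the main obstacle is the cyclic-order matching in the case where two of the three face labels coincide: it is a finite check over the three rotations, but one has to keep careful track of the orientation conventions (clockwise around direct faces versus counterclockwise around reversed ones) to be sure the $\{3\}$ and $\{m\}$ cases are assigned to the correct rotation. Everything else is bookkeeping.
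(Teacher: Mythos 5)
Your proposal is correct and follows essentially the same route as the paper's proof: both establish well-definedness of $g[C]$ by checking that the admissible face-colour triples have increment sums $0$ in $\mathbb{Z}_3$, both handle the rhombus rule by noting the middle edge is coloured $3$ or $m$ precisely when the opposite endpoints carry equal labels, and both prove $C[g]$ is a colour map by ruling out a constant face via the lozenge condition and matching the three remaining increment patterns $(1,1,1),(2,2,2),(0,1,2),(0,2,1)$ to the four admissible colourings. Your explicit comment that the main care point is the orientation convention (clockwise around direct, counterclockwise around reversed faces) to get $3$ versus $m$ right is exactly the step the paper treats by the case distinction on whether $(v^1,v^2,v^4)$ is the direct face of the lozenge.
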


\begin{proof}
    Let us show that $g[C]$ is well defined. Since $C$ is a color map, the only colors around a triangular face in $E_{n, d}$ are up to cyclic permutations $(0,0,0), (1,1,1), (1,0,3) $ and $(0,1,m)$. One checks that summing the clockwise differences of values of $g$ going from a vertex to itself around any such color triple gives a zero contribution in $\Z_3$. Therefore, the value of $g[C](v)$ does not depend on the choice of the path from $A_0$ to $v$. 
    That $(g[C]^A, g[C]^B, g[C]^C)$ has the right boundary conditions is due to the fact that $C$ is regular. It remains to check the lozenge condition on $g[C]$ from Definition \ref{def:regular_labeling}. Take any lozenge $l = (v^1, v^2, v^3, v^4)$ and suppose that $g[C](v^2) = g[C](v^4)$. Note that from Figure \ref{Fig:possible_lozenges}, the edge between $v^2$ and $v^4$ is always oriented from $v^4$ to $v^2$. The edge $e = (v^4, v^2)$ has color either $3$ or $m$. Since $C$ is a color map, the two faces adjacent to $e$ have either $(1,0,3)$ or $(0,1,m)$ colors. The face with vertices $(v^1, v^2, v^4)$, respectively $(v^3, v^2, v^4)$ is always direct, respectively reversed, see Figure \ref{Fig:possible_lozenges}. If $C(e) = 3$, $g[C](v^1) = g[C](v^2) + 1$ and $g[C](v^3) = g[C](v^2) + 2$ whereas if $C(e) = m$, $g[C](v^1) = g[C](v^2) + 2$ and $g[C](v^3) = g[C](v^2) + 1$. In both cases, $\{ g[C](v^1), g[C](v^3) \} = \{g[C](v^2) + 1, g[C](v^2) + 2  \} $ and thus $g[C]$ is a regular labeling. \\
    \\
    Let us show that $g \mapsto C[g]$ maps a regular labeling $g$ to a color map. Since $g$ is regular, $C[g]$ also is by the same argument as above. Let us show that the only cyclic colors triples around any triangular face are $(0,0,0), (1,1,1), (1,0,3)$ and $(0,1,m)$. Take any triangular face and denote $X, Y, Z$ the clockwise differences of values of $g$. Then, $X+Y+Z = 0 [3]$ so that $(X, Y, Z) \in \{ (1,1,1), (2,2,2), (0,1,2), (0,2,1) \}$ up to cyclic rotation. Note that we exclude $(0,0,0)$ by the lozenge condition in Definition \ref{def:regular_labeling} since no lozenge can have three vertices with equal values, for otherwise the second condition of Definition \ref{def:regular_labeling} is violated. These possible height differences give the clockwise colors $\{ (1,1,1), (0,0,0), (3,1,0), (m,0,1) \}$ respectively. Therefore, $C[g]$ is a color map and by construction $g \mapsto C[g]$ is the inverse of $C \mapsto g[C]$. 
\end{proof}

\begin{lemma}[Image of limit dual hives are toric concave functions]
\label{lem:img_limit_dual_hive_is_toric}
    For any regular color map $C$,
    \begin{equation}
    \label{eq:img_conversion}
    \Psi^C( H^C(\alpha, \beta, \gamma, \infty) ) = P^{g[C]}_{\alpha,\beta,\gamma}. 
\end{equation}
    where $g[C]$ is the regular labeling associated to $C$ as in Proposition \ref{prop:regular_label_color} and $P^{g[C]}_{\alpha,\beta,\gamma}$ is the polytope defined in Definition \ref{def:p_g_alpha_beta_gamma}.
\end{lemma}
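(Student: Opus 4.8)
\textbf{Proof plan for Lemma \ref{lem:img_limit_dual_hive_is_toric}.}

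The plan is to prove the identity \eqref{eq:img_conversion} by showing two inclusions, exploiting the dictionary between the discrete structures established so far: $\Psi^C$ converts the \emph{edge} data of a dual hive into \emph{vertex} data on $R_{d,n}$, and $g[C]$ converts the color map $C$ into the regular labeling recording, on each edge, whether the labeling increases by $1$ or $2$ (Proposition \ref{prop:regular_label_color}). First I would translate each of the defining conditions of $H^C(\alpha,\beta,\gamma,\infty)$ (Definition \ref{def:limit_dual_hive}) into a condition on $f=\Psi^C[L]$, using the local recursion \eqref{eq:Psi_values_edges}. Concretely: (i) the boundary labels $\ell^{(l,l)},\ell^{(l,l+1)}$ in Definition \ref{def:limit_dual_hive} telescope, via \eqref{eq:Psi_values_edges}, into exactly the prescribed boundary values $f^A,f^B,f^C$ of Definition \ref{def:p_g_alpha_beta_gamma} — this is a direct summation check, and the base point $\Psi^C(A_0)=d$ matches $f^C_0=d$; (ii) the face-summation constraint $L(e_0)+L(e_1)+L(e_2)=1$ around each triangular face is precisely what makes $\Psi^C$ well defined (as already noted after Definition \ref{def:dual_map}), and carries no extra content; (iii) the lozenge inequalities and equalities $(2.b)$ of Definition \ref{def:limit_dual_hive} must be shown to match, lozenge by lozenge, the toric rhombus-concavity inequalities $f(v_2)+f(v_4)\geq f(v_1)+f(v_3)$ of the toric hive cone, with equality on rigid lozenges.

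The heart of the argument is step (iii), together with the bookkeeping of which vertices are omitted. For the bookkeeping: I would check that $S^C$ of Definition \ref{def:dual_map} coincides with $R_{d,n}\setminus Supp(g[C])$, i.e.\ that the vertices $v_4$ of rigid lozenges of the labeling $g[C]$ are exactly the vertices killed by $\Psi^C$; this follows because a rigid lozenge of $g[C]$ is, under the bijection of Proposition \ref{prop:regular_label_color}, precisely a lozenge of $R_{d,n}$ whose middle edge is colored $m$ (the $(0,1,m)$ configuration), and for such a lozenge the equality $L(e)=L(e')$ on the two type-$\ell$ edges translates into $f(v_2)=f(v_4)$... more precisely into the affine relation $f(v_4)=f(v_1)+f(v_3)-f(v_2)$ that is Remark \ref{rem:extension_functions}'s extension rule, so $f(v_4)$ is determined and need not be recorded. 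For the inequalities: given a non-rigid lozenge $\ell=(v^1,v^2,v^3,v^4)$ of $R_{d,n}$, I would pick the two parallel edges $e,e'$ of $E_{n,d}$ of the same type $\ell$ on the two sides of the corresponding lozenge in the hexagon picture, note that by the hexagonal/vertex correspondence one has, say, $f(v^2)-f(v^1)$ and $f(v^4)-f(v^3)$ expressed via \eqref{eq:Psi_values_edges} in terms of $L(e)$ and $L(e')$ (with the sign conventions depending on edge type $0$ versus $1,2$), so that $f(v^2)+f(v^4)-f(v^1)-f(v^3)$ equals $\pm(L(e)-L(e'))$; then condition $(2.b.ii)$, $L(e)\geq L(e')$ whenever $h(e)>h(e')$, becomes exactly $f(v^2)+f(v^4)\geq f(v^1)+f(v^3)$, and condition $(2.b.i)$, $L(e)=L(e')$ when the middle edge is colored $m$, becomes the rigid equality case. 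Doing this requires care with the three lozenge shapes of Figure \ref{Fig:possible_lozenges} and the position of the $m$-colored middle edge in each, which is where I expect the proof to be most delicate. The main obstacle is precisely verifying that the sign conventions in \eqref{eq:Psi_values_edges} (type $0$ contributing $1-L(e)$, types $1,2$ contributing $L(e)$) combine correctly across all three lozenge orientations so that the direction of every inequality is preserved and never reversed; this is a finite case check but must be done exhaustively.

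Having established that $f=\Psi^C[L]$ lies in $P^{g[C]}_{\alpha,\beta,\gamma}$ whenever $(C,L)\in H^C(\alpha,\beta,\gamma,\infty)$, the reverse inclusion follows from the fact that the recursion \eqref{eq:Psi_values_edges} is invertible: given $f\in P^{g[C]}_{\alpha,\beta,\gamma}$, first extend it to all of $R_{d,n}$ via Remark \ref{rem:extension_functions}, then define $L$ on each edge $e=(u,v)$ by $L(e)=f(v)-f(u)$ if $e$ has type $1$ or $2$ and $L(e)=1-(f(v)-f(u))$ if $e$ has type $0$; the face-summation and lozenge conditions of Definition \ref{def:limit_dual_hive} for this $L$ are then equivalent, by the same computation run backwards, to the toric concavity and boundary conditions defining $P^{g[C]}_{\alpha,\beta,\gamma}$, and the boundary labels of $L$ recover $(\alpha,\beta,\gamma)$. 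Thus $\Psi^C$ restricts to a bijection $H^C(\alpha,\beta,\gamma,\infty)\to P^{g[C]}_{\alpha,\beta,\gamma}$, proving \eqref{eq:img_conversion}. (The fact that $\Psi^C$ is moreover affine with integer coefficients, which will matter for the volume computation in the next subsection, is immediate from \eqref{eq:Psi_values_edges}.)
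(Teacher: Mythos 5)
Your proposal is correct and follows essentially the same route as the paper: translate the boundary, face-summation and lozenge conditions of Definition \ref{def:limit_dual_hive} through the recursion \eqref{eq:Psi_values_edges} into the boundary values and toric rhombus concavity of Definition \ref{def:p_g_alpha_beta_gamma} (with rigid lozenges of $g[C]$ corresponding to $m$-colored middle edges and handled by the extension of Remark \ref{rem:extension_functions}), then invert by setting $L[f](e)=f(v)-f(u)$ for types $1,2$ and $1-(f(v)-f(u))$ for type $0$. The only difference is presentational: you flag the lozenge-by-lozenge sign check as a task to be carried out exhaustively, whereas the paper simply asserts the equivalence, so your plan is if anything more explicit about the same verification.
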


\begin{proof}

    The image $\Psi^C[(C, L)]$ of any limit hive $(C, L) \in H^C(\alpha, \beta, \gamma, \infty) $ can be extended by Remark \ref{rem:extension_functions} to a function $f: R_{d,n} \rightarrow \mathbb{R}$ such that by construction $\Psi^C[(C, L)] = f_{\vert R_{d, n} \setminus S^C} = f_{\vert Supp(g[C])}$. Let us check that $f \in P^{g[C]}_{\alpha,\beta,\gamma}$. By construction, the values of $f$ on $\partial R_{d, n}$ are as in Definition \ref{def:p_g_alpha_beta_gamma}, see Figure \ref{fig:bound_p_alpha_beta_gamma}. By definition of the extension, $f$ satisfies the equality constraints over any rigid lozenge in $R_{d,n}$. For any other lozenge $l = (v_1, \dots, v_4)$, the inequality $f(v_2) + f(v_4) \geq f(v_1) + f(v_3)$ is equivalent to the inequality (2c) of Definition \ref{def:limit_dual_hive}. \\
    \\
    Conversely, to any function $f \in P^{g[C]}_{\alpha,\beta,\gamma}$, associate the label map $L[f]: E_{n,d} \rightarrow \R_{\geq 0}$, $e = (u,v) \mapsto L[f](e) = f(v) - f(u)$ if $e$ has type $1$ or $2$ and $L[f](e) = 1- (f(v) - f(u))$ if $e$ has type $0$. We have that $\Psi^C(C, L) = f$. Moreover, the equality and inequality constraints in Definition \ref{def:limit_dual_hive} are equivalent to the rhombus concavity of $f$ so that $(C, L) \in H^C(\alpha, \beta, \gamma, \infty)$.
\end{proof}

\begin{figure}
    \centering
    \includegraphics[scale=0.6]{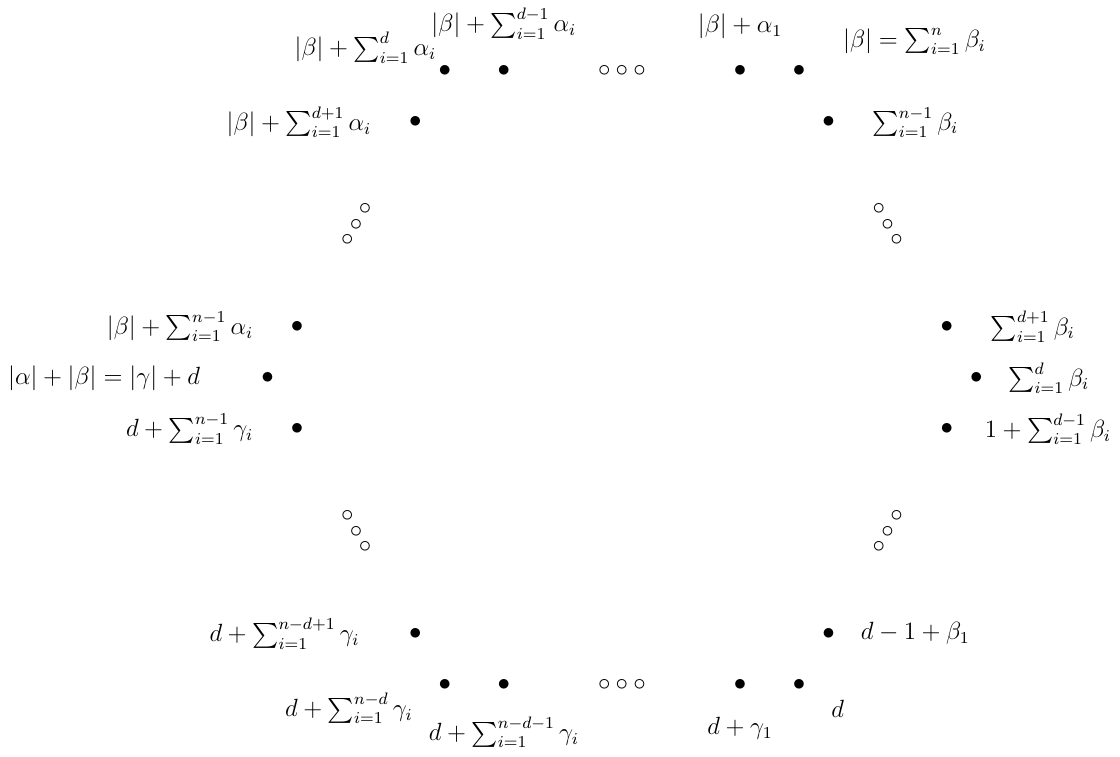}
        \caption{Boundary conditions on $P^{g[C]}_{\alpha,\beta,\gamma}$ induced by $\Psi^C$ on boundary conditions in Figure \ref{fig:boundary_limit_hive}.}
        \label{fig:bound_p_alpha_beta_gamma}
\end{figure}

\noindent
With the definitions above, we have an affine map $\Psi^C \circ \Phi^C: \R^D \rightarrow \tilde{P}^C_{\alpha,\beta,\gamma}$. In the rest of this section, we write $\det(\Psi^C \circ \Phi^C)$ for the determinant of the linear part of this application.

\begin{proposition}[Volume preservation by duality]
\label{prop:volume_preserving}

Let $C$ be a color map. Then, the map
\begin{equation}
    \Psi^C \circ \Phi^C: \R^D \rightarrow \tilde{P}^C_{\alpha,\beta,\gamma}
\end{equation}
satisfies 
\begin{equation}
    \vert \det(\Psi^C \circ \Phi^C) \vert=1
\end{equation}
and thus 
\begin{equation}
\label{eq:vol_equality}
    Vol(u\in\mathbb{R}^D,\Phi^C(u) \in  H^C(\lambda,\mu,\nu,\infty)) = Vol(u\in\mathbb{R}^D,\Psi^C\circ\Phi^C[u]\in P^{g[C]}_{\alpha,\beta,\gamma}) = Vol(P^{g[C]}_{\alpha,\beta,\gamma}).
\end{equation}

\end{proposition}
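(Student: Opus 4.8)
The plan is to show that $\Psi^C\circ\Phi^C$ is a bijective affine map between two integral affine spaces of the same dimension $D$, and that its linear part is represented (in suitable coordinates) by a matrix with determinant $\pm 1$. Since both $\Phi^C$ and $\Psi^C$ are already known to be bijections by Proposition \ref{prop:quasi_hive_struct} (extended to the limit case in the remark following Definition \ref{def:limit_dual_hive}) and Lemma \ref{lem:img_limit_dual_hive_is_toric}, the content of the statement is purely the Jacobian computation. The composition restricts to a bijection from $\{u\in\R^D:\Phi^C(u)\in H^C(\alpha,\beta,\gamma,\infty)\}$ onto $P^{g[C]}_{\alpha,\beta,\gamma}$ by Lemma \ref{lem:img_limit_dual_hive_is_toric}, so once $|\det(\Psi^C\circ\Phi^C)|=1$ is established, the volume equality \eqref{eq:vol_equality} follows immediately from the change of variables formula.

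First I would analyze each factor separately. For $\Phi^{C_0}$ one reads off Lemma \ref{lem:unique_label_map_c0}: the label map on $E_{n,d}$ is reconstructed from its values $z_1,\dots,z_D$ on the horizontal edges $I_0$ by a sequence of ``band completions,'' each of which determines one new edge label as $1-a-b$ or similar from two already-known ones. Thus in the basis of edge-labels-on-$I_0$ versus all-interior-edge-labels, the linear part of $\Phi^{C_0}$ is (block-)triangular with $\pm1$ diagonal entries, hence unimodular. For a general color map $C$, $\Phi^C=Rot[C_0\to C]\circ\Phi^{C_0}$, and by Lemma \ref{lem:rotation_linear} each elementary hexagon rotation $Rot$ is an affine bijection whose linear part, by the explicit system \eqref{eq:system_hexagon} and the displayed formulas following Figure \ref{fig:ex_rotation}, is again triangular with $\pm1$ diagonal (it fixes all edges outside $E_h^{\mathrm o}$ and rewrites the six interior labels by integer affine combinations of boundary labels). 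So $\det Rot=\pm1$ for each rotation, and $\det\Phi^C=\pm1$.

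Next I would treat $\Psi^C$. By construction \eqref{eq:Psi_values_edges}, the value $f(v)$ at a vertex $v\in R_{d,n}$ is obtained from $f$ at the previous vertex along an edge $e$ by adding $\pm L(e)$ plus a constant; iterating from the fixed base vertex $A_0$ shows that $\Psi^C[L](v)$ is a signed sum of edge labels $L(e)$ (plus a boundary-dependent constant). Reversing, the map $f\mapsto L[f]$ of Lemma \ref{lem:img_limit_dual_hive_is_toric} recovers $L(e)=\pm(f(v)-f(u))$. Concretely, if one uses as coordinates on $\tilde P^C_{\alpha,\beta,\gamma}$ the values of $f$ at the vertices of $Supp(g[C])\setminus\partial R_{d,n}$, and as coordinates on $\tilde H^C$ the labels of a spanning set of edges whose associated vertex-differences form a basis, then $\Psi^C$ and its inverse are both integral, so $\det\Psi^C=\pm1$ as well. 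Hence $|\det(\Psi^C\circ\Phi^C)|=|\det\Psi^C|\cdot|\det\Phi^C|=1$.

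The main obstacle is bookkeeping: one must pin down compatible coordinate systems on the three spaces $\R^D$, $\tilde H^C(\alpha,\beta,\gamma,\infty)$ (equivalently $\widetilde L^C$), and $\tilde P^C_{\alpha,\beta,\gamma}$, all of which have dimension $D=(n-1)(n-2)/2$ (this dimension count itself, matching $|Supp(g)\setminus\partial R_{d,n}|$, uses the tile enumeration of the corollary after Proposition \ref{prop:reduction_to_simple_color_map} and must be invoked), and then verify that in these coordinates every elementary building block (a band completion, a hexagon rotation, a dualization step) has a triangular $\pm1$ linear part. None of the individual verifications is deep — each follows from an already-displayed explicit formula — but assembling them into a clean statement that the composite determinant is $\pm1$ is where care is needed. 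A slick alternative worth attempting is to bypass coordinates entirely: exhibit $\Psi^C\circ\Phi^C$ and a candidate inverse $(\Psi^C\circ\Phi^C)^{-1}=(\Phi^C)^{-1}\circ(\Psi^C)^{-1}$ both as affine maps with integer coefficients between two lattices (the integer points being exactly $\{z\in(\tfrac1N\Z)^D\}$ on one side and integral label maps on the other, for every $N$), which forces the linear part to be a lattice isomorphism and hence of determinant $\pm1$.
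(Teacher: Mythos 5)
Your primary route — factoring $|\det(\Psi^C\circ\Phi^C)|=|\det\Psi^C|\cdot|\det\Phi^C|$ and arguing each factor is $1$ — has a genuine bookkeeping gap that the wording "suitable coordinates" does not close. To speak of $\det\Phi^C$ and $\det\Psi^C$ separately one must fix a coordinate system on the intermediate space $\widetilde{L}^C(\alpha,\beta,\gamma,\infty)$, and the product of the two determinants equals the determinant of the composition only if the \emph{same} coordinates on $\widetilde{L}^C$ are used for both factors. The coordinates implicit in your analysis of $\Phi^C=Rot[C_0\to C]\circ\Phi^{C_0}$ are pulled back from $I_0$ through the rotation chain, whereas for $\Psi^C$ you propose "a spanning set of edges whose associated vertex-differences form a basis" — a different choice, and for a general $C$ the set $I_0$ need not even parameterize $\widetilde{L}^C$ directly. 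Without reconciling the two choices, the factorization is not justified.

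Your "slick alternative" at the end, however, is correct and does close the gap, and it is a genuinely different route from the paper's. You would show directly that $\Psi^C\circ\Phi^C:\R^D\to\tilde P^C_{\alpha,\beta,\gamma}$ has an integer-coefficient linear part (using that $\Phi^C$ is integral by Proposition \ref{prop:quasi_hive_struct} and $\Psi^C$ is a signed cumulative sum of edge labels by \eqref{eq:Psi_values_edges}), and that the compositional inverse $(\Phi^C)^{-1}\circ(\Psi^C)^{-1}$ also has integer linear part ($(\Psi^C)^{-1}$ takes signed vertex differences after the integral extension of Remark \ref{rem:extension_functions}, and $(\Phi^C)^{-1}=Rot[C\to C_0](\cdot)|_{I_0}$ is integral by Lemma \ref{lem:rotation_linear}); the two integralities together force $\det\in\{\pm1\}$, and the middle space $\widetilde{L}^C$ never needs coordinates at all. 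The paper instead avoids the issue by a different factorization: it introduces an affine bijection $\tilde R_{C\to C'}:\tilde P^C_{\alpha,\beta,\gamma}\to\tilde P^{C'}_{\alpha,\beta,\gamma}$ (adjusting only the value at the rotated hexagon's center) and proves a commutative diagram $\Psi^{C'}\circ Rot[C\to C']=\tilde R_{C\to C'}\circ\Psi^C$, so that $\Psi^C\circ\Phi^C$ equals a chain of $\tilde R$'s composed with the explicit base case $\Psi^{C_0}\circ\Phi^{C_0}$, each factor visibly unimodular. Your alternative is shorter and does not need the commutativity argument; the paper's route buys the structural observation that hexagon rotations on the hive side correspond, through the duality $\Psi$, to an explicit integral affine move on the polytope side.
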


\begin{proof}

For $C = C_0$, enumerate $e_1,\ldots,e_D$ the horizontal edges in $C_0$ as in Lemma \ref{lem:unique_label_map_c0}. Then, for $u \in \R^D$ and $v \in R_{d, n}$,
\begin{equation*}
    [\Psi^{C_0} \circ \Phi^{C_0}(u)](v) = \sum_{e_i}(1-u_i) + (d-v_2)^+ + \sum_{i=1}^{v_2} \beta_i
\end{equation*}
where the sum is over edges $e_i$ of type $0$ connecting $v$ to the east boundary of $E_{n,d}$
with inverse
$$[\Psi^{C_0}\circ\Phi^{C_0}(f)]^{-1}(i)=1-(f(v)-f(v'))$$
where $v,v'$ the both endpoint of $e_i$ with the correct orientation.
Since $\Psi^{C_0} \circ \Phi^{C_0}$ and $[\Psi^{C_0} \circ \Phi^{C_0}(f)]^{-1}$ have integer coefficients, $\det(\Psi^{C_0} \circ \Phi^{C_0})=1$. \\
\\
If $C$ is general, introduce for each hexagon rotation $C \rightarrow C'$ given by an hexagon $h$ the map $\tilde{R}_{C \rightarrow C'}:\tilde{P}^C_{\alpha,\beta,\gamma}\rightarrow \tilde{P}^{C'}_{\alpha,\beta,\gamma}$ by 

\begin{enumerate}
\item For $f \in \tilde{P}^c_{\alpha,\beta,\gamma}$, extend $f$ uniquely to a function  $f: R_{d,n} \rightarrow \R$,

\item Let us describe how the hexagon rotation $C \rightarrow C'$ maps $f$ to another function $f': R_{d,n} \rightarrow \R$. The value of the center vertex $c$ of $h$ is uniquely determined by the position of the rigid lozenge in $h$ and the values of $f$ on $\partial h$. Indeed, if $(v, v', v'')$ are the three other vertices of the rigid lozenge such that $C((c, v'')) = m$, then $f(c) = f(v) + f(v') - f(v'')$. Note that $v, v', v'' \in \partial h$. For every vertex $u \in R_{d,n}$ other that the center vertex $c$ of $h$, we set $f'(u) = f(u)$. In the hexagon rotation $C \rightarrow C'$, the position of the the rigid lozenge changes and we set $f'(c) = f(w) + f(w') - f(w'')$ where $w, w', w'' \in \partial h$ are the new vertices of the rigid lozenge which are located on $ \partial h = \partial h$. 

\item The map $\tilde{R}_{C \rightarrow C'}(f)$ is defined as the restriction of $f'$ of the previous step to $R_{d,n} \setminus S^{C'}$.
\end{enumerate}

Note that the map $\tilde{R}_{C \rightarrow C'}$ is an affine bijection with integer coefficients whose inverse is given by $\tilde{R}_{C' \rightarrow C}$. Let us check that the following diagram is commutative
\begin{equation}
\label{eq:diagram}
    \begin{tikzcd}
\tilde{P}^{C}_{\alpha,\beta,\gamma} \arrow{r}{\tilde{R}_{C \rightarrow  C'}} & \tilde{P}^{C'}_{\alpha,\beta,\gamma}  \\
\tilde{H}^C(\alpha, \beta, \gamma, \infty) \arrow{u}{\Psi^C}  \arrow{r}{Rot[C, C']} & \tilde{H}^{C'}(\alpha, \beta, \gamma, \infty) \arrow[swap]{u}{\Psi^{C'}}
    \end{tikzcd}
\end{equation}
Let $H = (C, L) \in \tilde{H}^C(\alpha, \beta, \gamma, \infty)$ having an ABC hexagon $h$ with center vertex $c$. Denote by $C'$ the color map obtained after any rotation $h \mapsto h'$ and set $(C,L')=Rot[C, C'](C,L)$. For any vertex $u \neq c \in R_{d,n}$, one has $\tilde{R}_{C \rightarrow C'}( \Psi^C) (u) = \Psi^C(u)$. Moreover, if $u \neq c$ then one can find a path of edges from the south-east most vertex of $R_{d,n}$ to $u$ without any edge incident to $c$. Since the labels of these edges are not changed by $Rot[C \rightarrow C']$, $\Psi^{C'}(L')(u) = \Psi^{C}(L)(u) = \tilde{R}_{C \rightarrow C'}( \Psi^C(L)) (u) $ as desired. It remains to check that the same property holds for $u = c$. Denote $(v,v',v'')$, respectively $(w, w', w'')$ the vertices on $\partial h$ such that up to cyclic rotation $(v,c, v', v'' )$, respectively  $(w,c, w', w'')$, are vertices of the rigid lozenge in $h$, respectively $h'$, and that $C((u, v'')) = C'((u, w'')) = m$. Then, 

\begin{equation*}
    \tilde{R}_{C \rightarrow C'}( \Psi^C(L)) (u) = \Psi^C(L)(w) + \Psi^C(L)(w') - \Psi^C(L)(w'')
\end{equation*}
and 
\begin{equation*}
    \Psi^{C'}(Rot[C, C'](L)) (u) = \Psi^{C'}(L')(u)
\end{equation*}
Since the values of $\Psi$ do not depend on the chosen path, let us choose the following four paths. Take any path $p = (e_1, \dots, e_r) \in (E_{n,d})^r$ from the south-east vertex $A_0$ to $w$ such that for each $1 \leq i \leq r$, $e_i$ is not an interior edge of $h$. then,

\begin{enumerate}
    \item To evaluate $\Psi^C(L)(w)$, we choose the path $p$,
    \item To evaluate $\Psi^C(L)(w'')$, we append the edge $(w, w'') \in \partial h$ to $p$,
    \item To evaluate $\Psi^C(L)(w')$, we append edges $(w, w''), (w'', w') \in (\partial h)^2$ to $p$,
    \item To evaluate $\Psi^{C'}(L')(u)$, we append the edge $(w, u)$ to $p$,
\end{enumerate}
which gives
\begin{align*}
    \Psi^C(L)(w) &= \sum_{1 \leq i \leq r} L(e_i) + d \\
    \Psi^C(L)(w'') &= \sum_{1 \leq i \leq r} L(e_i) + L((w, w'')) +d \\
    \Psi^C(L)(w') &= \sum_{1 \leq i \leq r} L(e_i) +  L((w, w'')) + L((w'', w')) +d \\
    \Psi^{C'}(L')(u) &= \sum_{1 \leq i \leq r} L'(e_i) + L'((w, u)) +d.
\end{align*}
Since we have chosen edges $e_i \in p$ not interior to $h$, $L(e_i) = L'(e_i)$ for $1 \leq i \leq r$. The commutativity of the diagram is thus equivalent to 
\begin{equation*}
     L'((w, u)) = L((w, w'')) + L((w'', w')) - L((w, w'')) = L((w'', w')),
\end{equation*}
i.e 
\begin{equation*}
    L'((w, u)) = L((w'', w')).
\end{equation*}
Notice that $(w, u), (w'', w')$ are two edges of the same type in the rigid lozenge in $C'$ which implies that $L'((w, u)) = L'((w'', w')) = L((w'', w'))$, where the last equality is due to the fact that $(w', w'') \in \partial h$ so that its label value is unchanged by $Rot[C, C']$. The commutativity of \eqref{eq:diagram} is thus showed. \\
\\
Using \eqref{eq:diagram}, we have for any sequence of hexagon rotations $C_0\rightarrow C_1 \rightarrow \dots \rightarrow C$,

$$\prod \tilde{R}_{C_i\rightarrow C_{i+1}} \Psi^{C_0} \circ \Phi^{C_0} = \Psi^C \prod Rot[C_i \rightarrow C_{i+1}] \Phi^{C_0} = \Psi^C \circ \Phi^C.$$
On the left hand side, every map is affine with integer coefficient and with inverse having integer coefficients, so the same is true on the right hand-side, and thus 
$$\det(\Psi^C \circ \Phi^C)=1.$$
\noindent
The first equality of \eqref{eq:vol_equality} is due to Lemma \ref{lem:img_limit_dual_hive_is_toric} and the second is a consequence of $\det(\Psi^C \circ \Phi^C)=1.$

\end{proof}

\subsection{Proof of Theorem \ref{thm:main} and Corollary \ref{cor:volume_flat_connection}}

\begin{proof}[Proof of Theorem \ref{thm:main}]

By Theorem \ref{th:density_limit_quantum_coef} we have 
    
\begin{equation}
     \diff \prob[\gamma | \alpha, \beta] = \frac{ sf(n-1)(2\pi)^{(n-1)(n-2)/2} \Delta(\ed^{2i \pi \gamma})} {n!\Delta(\ed^{2i \pi \alpha})\Delta(\ed^{2i \pi \beta}) } \lim_{N \rightarrow \infty} N^{-(n-1)(n-2)/2}c_{ \lambda_N, \mu_N}^{\nu_N, d}.
\end{equation}
By Proposition \ref{prop:convergence_volume_dual} and Proposition \ref{prop:volume_preserving},

\begin{align}
     \diff \prob[\gamma | \alpha, \beta] &= \frac{ sf(n-1)(2\pi)^{(n-1)(n-2)/2}\Delta(\ed^{2i \pi \gamma})} {n!\Delta(\ed^{2i \pi \alpha})\Delta(\ed^{2i \pi \beta}) }  \sum_{C} Vol[u\in\mathbb{R}^D,\Phi^C[u]\in H^C(\alpha, \beta, \gamma, \infty)] \\
     &= \frac{sf(n-1)(2\pi)^{(n-1)(n-2)/2} \Delta(\ed^{2i \pi \gamma})} {n!\Delta(\ed^{2i \pi \alpha})\Delta(\ed^{2i \pi \beta}) }  \sum_{C} Vol(P^{g[C]}_{\alpha,\beta,\gamma}) \\
     &=\frac{sf(n-1)(2\pi)^{(n-1)(n-2)/2} \Delta(\ed^{2i \pi \gamma})} {n!\Delta(\ed^{2i \pi \alpha})\Delta(\ed^{2i \pi \beta}) }  \sum_{g:R_{d,n}\rightarrow \mathbb{Z}_3 \text{ regular}}Vol_g(P^g_{\alpha,\beta,\gamma}).
\end{align}

\end{proof}

\begin{proof}[Proof of Corollary \ref{cor:volume_flat_connection}]
From the expression \cite[Eq. (4.116)]{Witten_quantum_gauge} proven in \cite{jeffrey1998intersection}, we have 
$$Vol\left[\mathcal{M}(\Sigma_0^3,\alpha,\beta,\gamma)\right]=\frac{\#Z(SU_n)Vol(SU_n)}{Vol((\mathbb{R}/2\pi\mathbb{Z})^{n-1})^3}\sum_{\lambda\in \mathbb{Z}^n_{\geq 0}}\frac{1}{\dim V_{\lambda}}\chi_\lambda(e^{2i\pi\alpha})\chi_\lambda(e^{2i\pi\beta})\chi_\lambda(e^{2i\pi\gamma}),$$
where $Z(SU_n)$ is the center of $SU_n$. From \eqref{eq:density_horn_1}, we deduce that 
$$Vol\left[\mathcal{M}(\Sigma_0^3,\alpha,\beta,\gamma)\right]=\frac{\#Z(SU_n)Vol(SU_n)(2 \pi)^{n-1}n!}{Vol((\mathbb{R}/2\pi\mathbb{Z})^{n-1})^3|\Delta(\ed^{2i \pi \gamma})|^2}\diff \prob[-\gamma | \alpha, \beta] .$$
Corollary \ref{cor:volume_flat_connection} is then deduced from Theorem \ref{thm:main} and the fact that $Z(SU_n)=2^{(n+1)[2]}$, $Vol(SU_n)=\frac{(2\pi)^{n(n+1)/2-1}}{\prod_{k=1}^nk!}$ and $Vol((\mathbb{R}/2\pi\mathbb{Z})^{n-1})=(2\pi)^{n-1}$.
\end{proof}

\bibliographystyle{plain}
\bibliography{mybib}

\end{document}